\newtheorem{theorem}{Theorem}[section]
\newtheorem{lemma}[theorem]{Lemma}
\newtheorem{proposition}[theorem]{Proposition}
\newtheorem{corollary}[theorem]{Corollary}
\newtheorem{definition}[theorem]{Definition}
\newtheorem{remark}[theorem]{Remark}
\newtheorem{thm}[theorem]{Theorem}
\newtheorem{prop}[theorem]{Proposition}
\numberwithin{equation}{section}
\newcommand{\mc}[1]{{\mathcal #1}}
\newcommand{\mf}[1]{{\mathfrak #1}}
\newcommand{\bb}[1]{{\mathbb #1}}
\newcommand{\bs}[1]{{\boldsymbol #1}}
\newcommand{\ms}[1]{{\mathscr #1}}
\newcommand{\mtt}[1]{{\mathtt #1}}
\newcommand{\rom}[1]{%
  \textup{\uppercase\expandafter{\romannumeral#1}}%
  }
\newcommand{\cb}[1]{{\color{blue} #1}}
\newcommand{\cp}[1]{{\color{purple} #1}}
\def\wt{\widetilde}
\def\build#1_#2^#3{ \mathrel{\mathop{\kern 0pt#1}\limits_{#2}^{#3}}}
\title[Martingale problems for critical zero range processes]{
Dimension-decaying diffusion processes as the scaling limit of
condensing zero-range processes} \author{Johel Beltr\'an,
Kyuhyeon Choi, Claudio Landim}
\address{\noindent PUCP, Av. Universitaria cdra. 18, San Miguel,
Ap. 1761, Lima 100, Per\'u.  \newline e-mail: \rm
\texttt{johel.beltran@pucp.edu.pe} }
\address{\noindent
Massachusetts Institute of Technology, 
Department of Mathematics,
77 Massachusetts Avenue
Cambridge, MA 02139-4307, USA. \newline e-mail: \rm
\texttt{kyuhchoi@mit.edu} } 
\address{IMPA, Estrada Dona Castorina 110, J. Botanico, 22460 Rio de
Janeiro, Brazil and Univ. Rouen Normandie, CNRS,
LMRS UMR 6085,  F-76000 Rouen, France. \\
e-mail: \texttt{landim@impa.br} }
\begin{document}

\begin{abstract}
In this article, we prove that, on the diffusive time scale,
condensing zero-range processes converge to a dimension-decaying
diffusion process on the simplex
\[
\Sigma = \{(x_1,\dots,x_S) : x_i \ge 0,\; \sum_{i\in S} x_i = 1\},
\]
where $S$ is a finite set. This limiting diffusion has the distinctive
feature of being absorbed at the boundary of the simplex. More
precisely, once the process reaches a face
\[
\Sigma_A = \{(x_1,\dots,x_S) : x_i \ge 0,\; \sum_{i\in A} x_i = 1\},
\qquad A \subset S,
\]
it remains confined to this set and evolves in the corresponding
lower-dimensional simplex according to a new diffusion whose
parameters depend on the subset $A$. This mechanism repeats itself,
leading to successive reductions of the dimension, until one of the
vertices of the simplex is reached in finite time. At that point, the
process becomes permanently trapped.

The proof relies on a method to extend the domain of the associated
martingale problem, which may be of independent interest and useful
in other contexts.
\end{abstract}

\maketitle

\section{Introduction}

Metastability is a physical phenomenon that is ubiquitous in
first-order phase transitions. Early attempts at a precise description
can be traced back, at least, to Maxwell \cite{M75}. Following the
seminal work of Cassandro, Galves, Olivieri, and Vares \cite{cgov84},
and building on the foundational contributions of Lebowitz and Penrose
\cite{LP71}, numerous rigorous mathematical theories have been
developed to describe metastable phenomena \cite{BL10, M22, RES,
B25}. We refer to \cite{OV05, BH15, l-review} for recent monographs.

These theories have been applied and further developed in a wide range
of contexts, including statistical mechanics \cite{LPN22, KS25},
neural networks \cite{BN13, LM22}, molecular dynamics \cite{LRR24},
finance \cite{MS23}, population dynamics \cite{VA24}, and dynamical
systems \cite{DW12}, to mention a few.

Many of these theories were developed from the study of specific
examples of stochastic dynamics, with the goal of formalizing the
metastable behavior observed in such systems. The model reduction
approach \cite{BL10, BL12B, RES}, for instance, originated from the
analysis of condensing zero-range processes.

This stochastic dynamics, introduced by Evans \cite{E00}, describes
the evolution of particles on a finite set $S$ and is conservative, in
the sense that the total number of particles is preserved. Its
stationary states—one for each fixed number of particles—exhibit a
peculiar structure known in the physics literature as
\emph{condensation}.  Mathematically, this means that, under the
stationary distribution and above a certain critical density, a
macroscopic number of particles concentrates on a single site
\cite{E00, JMP00}. This phenomenon has been observed and studied in a
variety of contexts, including shaken granular systems, growing and
rewiring networks, traffic flows, and wealth condensation in
macroeconomics. We refer to \cite{EH05} for a comprehensive review.

Condensation in a class of zero-range dynamics was first established
rigorously in \cite{gss} by proving the equivalence of ensembles in
the thermodynamic limit. This result was later refined in \cite{al,
al2, agl} and shown to remain valid even when the total number of
sites is fixed while the number of particles diverges \cite{BL12}.

Once the presence of a condensate in the stationary state is
established, it is natural to investigate its time evolution
\cite{gl}. This problem was addressed in \cite{BL12} for
super-critical reversible dynamics, where the authors proved that, on
an appropriate time scale, the position of the condensate evolves as a
random walk whose jump rates are proportional to the capacities of the
underlying random walks. This result was subsequently extended to
super-critical totally asymmetric dynamics on a finite discrete torus
in \cite{Landim2014}, to the general super-critical case in
\cite{SZRP}, and to the critical symmetric case in \cite{MMC, RES}.

To describe the problem we examine in this article, fix a finite set
$S$, and jump rates $r\colon S\times S\to \bb R_+$.  Assume that the
continuous-time $S$-valued Markov chain associated to the jump rates
$r(i,j)$ is irreducible. Denote by $(m_i :i\in S)$ its unique
stationary state. Fix the jump rate $g\colon\bb N\to [0, \infty)$
given by $g(0)=0$, $g(n) = 1 + (b/n)$, $n\ge 1$, where $b>0$ is a
fixed parameter.  The zero-range dynamics associated to the pair
$(r,g)$ is the $\bb N^S$-valued Makov chain in which a particle at
site $i$ jumps to $j$ at rate $g(p)\, r(i,j)$ if there are $p$
particles at site $i$.

In this article, we investigate the mechanism through which a
condensate is formed.  In this model, two relevant nucleation time
scales arise.  Starting from an initial configuration with positive
particle density at each site, the process evolves on a linear time
scale according to the solution of an ordinary differential equation,
converging to a state in which all particles concentrate on the sites
where the invariant measure $m$ attains its maximum \cite{ABCJ}.

We consider here the next time-scale.  The dynamics is said to be
super-critical if $b>1$, and critical if $b=1$ because in the first
case the condensate evolves is the time-scale $N^{1+b}$ \cite{BL12,
Landim2014, SZRP}, while it in the second one it evolves in the
time-scale $N^2\ln N$ \cite{MMC, RES}. If $b<1$ there is no
condensation. 

Assume, for simplicity, that the stationary state
of the the underlying $S$-valued Markov chain is the uniform measure,
and that the process starts from an initial configuration with
positive particle density at each site.

In \cite{BJL}, the authors showed that in the super-critical case on
the diffusive time scale, the evolution of the particle density
— defined as the number of particles at each site divided by the total
number of particles — converges to an absorbed diffusion process on the
simplex
$\Sigma = \{ \, (x_1, \dots , x_{S} ) \in \bb R^{S} : x_i\geq 0 \,,
\,\; \sum_{i\in S}x_i = 1 \, \}$. The generator $\mf L$ of this
diffusion is given by
\begin{equation*}
(\mf LF) (x) \, =\,   \sum_{i\in S} (\nabla_{\bs b_i} F)(x)
+ \frac{1}{2} \sum_{i,j\in S} m_i \, r(i,j)\,
[\, (\partial_{x_j} - \partial_{x_i})^2 F]\, (x) \,, \quad x\in \Sigma\,,
\end{equation*}
where 
\begin{equation*}
(\nabla_ { {\bs b}_i}  F) (x)  \,= \, b\, \mtt 1\{x_i > 0\}
\left( \frac{{m}_i}{x_i}\right) \sum_{j\in S}
r(i,j)\, [ (\partial_{x_j} - \partial_{x_i}) F]\, (x) \,, 
\quad x\in \Sigma\, .
\end{equation*}
Note that the drift diverges as the diffusion approaches the boundary,
and that the parameter $b$ appears only as a multiplicative constant
of the drift.

As mentioned in the abstract, this limiting diffusion has the
distinctive feature of being absorbed at the boundary of the
simplex. More precisely, once the process reaches a face
$ \Sigma_A = \{(x_1,\dots,x_S) : x_i \ge 0,\; \sum_{i\in A} x_i =
1\}$, $A \subset S$, it remains confined to this set and evolves in
the corresponding lower-dimensional simplex according to a new
diffusion whose parameters depend on the subset $A$. This mechanism
repeats itself, leading to successive reductions of the dimension,
until one of the extreme points of the simplex is reached in finite
time. At that instant, the process becomes permanently trapped.  We
named such a process a {\it dimension-decaying} diffusion process.

One might be tempted to attribute the absorption at the boundary to
the divergence of the drift. This interpretation, however, is
incorrect, since for $b<1$ the process is expected to be reflected at
the boundary. Thus, the multiplicative parameter $b$, which may appear
innocuous at first glance, plays a fundamental role in determining the
qualitative behavior of the diffusion.

Although multidimensional diffusions with boundaries have been
extensively studied since the seminal work of Wentzell \cite{W60} (see
also \cite{SU65} and \cite[Section V.6]{IW14}), we are not aware of
examples in the literature exhibiting this type of behavior, nor of a
theoretical framework that adequately accounts for it. A distinctive
feature of the process considered here is the divergence of the drift
at the boundary, which implies that $\mf Lu$ fails to be continuous up
to the boundary even for smooth functions $u$, a regularity assumption
that is typically imposed in the study of diffusions with boundaries.

The proof presented in \cite{BJL} is divided into two main steps. First,
the authors showed that any limiting distribution of the process
solves an associated martingale problem. Second, they proved that this
martingale problem admits a unique solution. While the argument in the
second step is fairly general and extends to the critical case, the
first step relies on the construction of a superharmonic function
belonging to the domain of the generator. This part of the argument is
specific to the supercritical case and, in fact, contains a flaw, which
is corrected in the present work.

In this article, we propose a method for extending the domain of
generators that is particularly useful for the study of
dimension-decaying diffusions. This extension allows the inclusion in
the generator’s domain of functions that are discontinuous at the
boundary of the simplex. More precisely, functions for which
$\lim_{n} F(x^{(n)} )$ may be different from $F(x)$ for sequences
$x^{(n)} = (x^{(n)} _1, \dots, x^{(n)} _S)$ such that $x^{(n)}_i>0$,
$\lim_{n} x^{(n)}=x$, $x_i=0$.  Such an extension considerably
simplifies the construction of functions with prescribed properties in
the domain of the generator, in particular superharmonic
functions. Therefore, the interest of this article lies both in the
specific result on nucleation for critical condensing zero-range
processes and in the general methodology proposed to address such
problems.

To conclude this introduction, let us reinforce that for $b<1$, there
is no condensation, and one expects reflection at the boundary with a
positive local time at the boundary. Proving the convergence of
zero-range dynamics to the corresponding diffusion is an interesting
open problem.

\section{Model and Main Results}
\label{sec02}

We present in this section the main results of the article, and
introduce the notation used throughout the article.  Let
$S= \{1, \dots, \mtt p \}$ be a finite set with at least two elements,
$|S| = \mtt p \geq 2$.  Elements of $S$ are represented by the letters
$i$, $j$, $k$.  Denote by $\mc L_S$ the generator of a $S$-valued,
continuous time Markov chain
\begin{equation}
\label{23}
\cb{ (\mc L_S f)(i) } \,:=\, \sum_{j\in S} r(i,j)\, \big[\, f(j) -
f(i)\,\big]\,, \quad f\colon S\to \bb R\,.
\end{equation}
Assume that the Markov chain is irreducible and that $r(i,i)=0$, for
all $i\in S$. Denote by $\lambda_i$ the holding rates:
$\cb{\lambda_i}: = \sum_{j\neq i} r(i,j)$, and by $\cb{ (m_i)_{i\in S}
}$ the unique stationary state.

Let $\cb{(\bs e_i)_{i\in S}}$ stand for the canonical vectors in
$\bb R^S$ and define
\begin{equation}
\label{17}
{\color{blue} \bs v_i} \,: = \,\sum_{j\in S} r(i,j)\, 
(\bs e_j - \bs e_i), \quad i \in S\,.
\end{equation}
Throughout this work, we adopt the convention that vectors are denoted
in boldface, while scalars are typeset in standard font.  As
$m(\cdot)$ is the stationary state,
\begin{equation}
\label{eq:invar}
\sum_{i\in S} {m}_i {\bs v}_i = \bs 0\,.
\end{equation}

\subsection*{Condensing Zero-range process}

Denote by $\cb{\eta = (\eta_i)_{i\in S}}$ the elements of $\bb N^S$,
called configurations, and by $\mc H_N\subset \bb N^S$, $N\in \bb N$,
the set of configurations with $N$ particles:
\begin{equation*}
\cb{\mc H_N}  \,:=\,  \{ \eta = (\eta_i)_{i\in S}\in\bb N^S:
\sum_{i\in S} \eta_i = N\}\,.
\end{equation*}

For each $i\in S$, let $g_i\colon\bb N\to [0, \infty)$ be the jump
rate of particles at site $i$, so that $g_i(0)=0$, $g_i(n)>0$ for
$n>0$. Assume that
\begin{equation}
\label{34}
\lim_{n\to \infty} n\left(\frac{g_i(n)}{m_i}-1\right) \,=\, b  \,,
\quad \text{for some $b\ge 1$}\,.
\end{equation}

The zero-range process $\{\eta_N(t) : t\geq 0\}$ is the
$\mc H_N$-valued continuous-time Markov chain induced by the generator
$L_N$ given by
\begin{equation*}
\cb{ (L_N f)(\eta) } \,:=\,
\sum_{i,j\in S} g_i(\eta_i)\,
r(i,j) \, (f(\eta^{i,j})-f(\eta))\;,  \quad \eta \in \mc H_N \,,
\quad f\colon  \mc H_N \rightarrow \bb R \,.
\end{equation*}
In this formula, $\eta^{i,j}\in \mc H_N$ is the configuration obtained
from $\eta$ by moving a particle from site $i$ to site $j$. More
precisely, if $\eta_i = 0$,  then $\eta^{i,j} = \eta$ , and if
$\eta_i\geq 1$, 
$$
\cb{ (\eta^{i,j})_k } \,:=\, 
\begin{cases}
    \eta_k-1 & \text{if } k=i \\
    \eta_k+1 & \text{if } k=j \\
    \eta_k & \text{if } k \in S \setminus \{i,j\} \,.
\end{cases}
$$

Let $\Sigma \subset \bb R^S$ be the set of non-negative coordinates
whose sum is $1$:
$$
\cb{ \Sigma } \, :=\,
\Big\{ \, (x_i)_{i\in S} \in \bb R^S : x_i\geq 0
\text{ for all }i\in S \text{ and } \sum_{i\in S}x_i = 1 \, \Big\}.
$$
Since $\mc H_N$ consists of vectors summing to $N$, we may embed
$\mc H_N$ into $\Sigma$. Let $\iota_N:\mc H_N \rightarrow \Sigma$ be
the projection given by
\begin{equation*}
\cb{ \iota_N(\eta)_i } \,:=\,  \frac{\eta_i}{N} \,, \quad i\in S\,.
\end{equation*}
Let $\Sigma_N$ be the image of $\mc H_N$ under $\iota_N$, that is,
$$
\cb{ \Sigma_N}  \,:=\,  \iota_N(\mc H_N)\,.
$$
Consequently, $\Sigma_N$ becomes a subset of $\Sigma$ consisting of
vectors whose coordinates are rational numbers with denominator $N$.

Let $X^N_t$ denote the $\Sigma_N$-valued Markov chain obtained by
projecting the speeded-up process $\eta_N(t)$ via the map $\iota_N$:
$$
\cb{X^N_t} \,:=\,  \iota_N(\eta_N(tN^2)), \; t\geq 0 \,.
$$
This defines the rescaled zero-range process on ​$\Sigma_N$, a
continuous-time Markov chain $\{X^N_t:t\geq 0\}$ taking values in
$\Sigma_N$ associated with the generator
\begin{equation*}
\cb{ (\mc L_N f)(x)  } \,:=\,  N^2\sum_{\substack{i,j\in S, \\ x_i>0}}
g_i(Nx_i) \, r(i,j)\Big[ \, f \Big( x+
\frac{\bs e_j- \bs e_i}{N}\Big)-f(x) \, \Big ] \;,
\; x\in \Sigma_N.
\end{equation*}

Denote by $\cb{D(\bb R_+, \Sigma)}$ the space of $\Sigma$-valued
right-continuous trajectories with left limits equipped with the
Skorokhod topology. For each $x\in \Sigma_N$, let $\cb{\bb P^N_{x}}$
be the probability measure on $D(\bb R_+, \Sigma)$ induced by the
Markov chain $X^N_t$ starting from $x$.

Consider a sequence $(x_N:N\ge 1)$ that converges to some
$x\in \Sigma$ as $N\to\infty$. The main result of this article states
that the sequence of measures $\bb P^N_{x_N}$ ​ converges in
distribution to a measure $\bb P_x$, which is induced by a
dimension-decaying diffusion on $\Sigma$ and characterized as the
unique solution to a corresponding martingale problem.

\subsection*{Martingale Problem} \label{sec:mp}

To introduce the martingale problem, we first define its domain.  Let
$\bs 1$ be the vector with all coordinates equal to $1$:
$\cb{\bs 1 = \sum_{i\in S} \bs e_i}$, and let $\mathring{\Sigma}$ be
the interior of the set $\Sigma$, defined as
$$
\cb{\mathring{\Sigma}} \,: =\,
\{x \in \Sigma :   x_i > 0 \; \forall i \in S \,\} \,.
$$
Since $\mathring{\Sigma}$ is an open subset of the hyperplane
$$ \{x \in \bb R^S : \sum_{i\in S} x_i = 1\}, $$
a vector $\bs V=(V_i)_{i\in S} \in \bb R^S$ is said to be a tangent
vector to $\mathring{\Sigma}$ if it is orthogonal to $\bs 1$. Denote
by $T_\Sigma$ the linear space of formed by these vectors:
\begin{equation}
\label{19}
\cb{T_\Sigma} \,:=\, \big\{ \bs V \in \bb R^S : 
\bs V \cdot \bs 1 \,=\, \sum_{i\in S} V_i = 0\, \big\}\,,
\end{equation}
where $\cdot$ denotes the standard inner product in $\bb R^S$.  Note
that each vector $\bs v_i$ introduced in \eqref{17} belongs to
$T_\Sigma$.

Denote by $\cb{C(\mathring{\Sigma}) }$ the space of continuous
functions $f\colon \mathring{\Sigma} \to \bb R$, and by
$\cb{C(\Sigma)} $ the elements of $C(\mathring{\Sigma})$ which can be
continuously extended to $\Sigma$.  A function
$f\in C(\mathring{\Sigma})$ is said to be differentiable at
$x\in \mathring{\Sigma}$ if there exists a vector in $T_\Sigma$,
denoted by $\cb{\nabla^\Sigma f(x)}$, such that
\begin{equation}
\label{eq:differentiability}
\lim_{ \bs V \in T_{\Sigma} ,  |\bs V| \to 0}
\frac{ f(x + \bs V) - f(x) -\bs V \cdot \nabla^{\Sigma} f(x)}{|\bs V|}
= 0 \,.
\end{equation}

For convenience, we often abbreviate
$\bs V \cdot \nabla^\Sigma f$ as $\cb{\nabla_{\bs V} f}$. Denote by
$\cb{C^1(\mathring{\Sigma})}$ the space of functions
$f\in C(\mathring{\Sigma})$ which are differentiable at every
$x\in \mathring{\Sigma}$ and such that the map
$x \mapsto \nabla_{\bs V} f (x)$ belongs to
$C(\mathring{\Sigma})$ for any $\bs V \in T_\Sigma$.

Finally, let $\cb{C^2(\mathring{\Sigma})}$ be the space of functions
$f \in C^1(\mathring{\Sigma})$ such that
$\nabla_{\bs V} f \in C^1(\mathring{\Sigma})$ for any
$\bs V \in T_\Sigma$.  Denote by $C^1(\Sigma)$, $C^2(\Sigma)$ the
elements of $C^1(\mathring{\Sigma})$, $C^2(\mathring{\Sigma})$ which
can be continuously extended to $\Sigma$, respectively:
\begin{gather*}
\cb{ C^1(\Sigma)} \,:=\, \big \{ \, f  \in C(\Sigma) \cap 
C^1(\mathring{\Sigma}) : 
\nabla_{\bs V} f \text{ continuously extends to }  \Sigma
\text{ for all } \bs V \in T_\Sigma \, \big \},
\\
\cb{ C^2(\Sigma)} \,:=\, \big \{  \, f  \in C^1(\Sigma) 
\cap C^2(\mathring{\Sigma}) : 
\nabla_{\bs V} (\nabla_{\bs W} f) \text{ continuously extends to } 
\Sigma \text{ for all } \bs V, \bs W \in T_\Sigma \,\big\} \,.
\end{gather*}
We denote by the same symbol $\nabla_{\bs V} f$, $\nabla_{\bs V}
(\nabla_{\bs W} f)$ the continuous extension to $\Sigma$ of these
functions. 

By Whitney's theorem \cite{W34}, for any function $F$ in
$C^1(\Sigma)$, there exists an extension
$\hat F\colon \bb H_1 := \{x\in \bb R^S : \sum_{i\in S} x_i = 1\} \to
\bb R$ of class $C^1(\bb H_1)$ such that
$(\nabla_{\bs V} \hat F)(x) = (\nabla_{\bs V} F)(x)$ for all
$x\in \mathring{\Sigma}$. We may further extend $\hat F$ to $\bb R^S$
by setting the extension, denoted by $\tilde F$, to be constant along
the orthogonal direction to $\Sigma$:
$\tilde F(x+k \bs 1) = \hat F(x)$ for all $x\in \bb H_1$, $k\in\bb R$,
where, recall, $\bs 1$ is the vector with all coordinates equal to
$1$. This procedure provides a function
$\tilde F\colon \bb R^S\to \bb R$ of class $C^1(\bb R^S)$ such that
\begin{equation*}
\bs V \cdot \nabla^\Sigma F (x) \,=\,  \bs V \cdot (\nabla \tilde{F} )
(x) \,=\,
\sum_{i\in S} V_i \, \partial_{x_i} \tilde{F} (x) \;\;
\text{for all}\;\;  x \in \mathring{\Sigma} \,,\;\; \bs V  \in T_\Sigma\,,
\end{equation*}
where $\bs V = \sum_{i\in S} V_i \, \bs e_i$, and $\nabla \tilde{F}$
is the gradient of $\tilde{F}$ with respect to the standard inner
product in $\bb R^S$.

Analogously, any function $F$ in $C^2(\Sigma)$ can be extended to
a function in $C^2(\bb R^S)$: there exists a function $\tilde F\colon
\bb R^S\to \bb R$ of class $C^2(\bb R^S)$ such that
\begin{equation*}
\nabla_{\bs V} (\nabla_{\bs W} F) (x) \,=\,
\nabla_{\bs V} (\nabla_{\bs W} \tilde F) (x)\;\;
\text{for all}\;\;  x\in
\mathring{\Sigma} \,,\; \bs V\,, \bs W \in T_\Sigma\,.
\end{equation*}

\begin{definition}
\label{def:dom}
For $i\in S$, define the vector field
$\bs b_i\colon \Sigma\to T_\Sigma$ by
\begin{equation*}
{\color{blue} \bs b_i(x)} \,:=\,  b \, \mtt 1 \{x_i > 0\} \, 
\left( \frac{{m}_i}{x_i}\right) \bs v_i\,  \quad x\in \Sigma\,,.
\end{equation*}
where $\cb{\mtt 1\{A\}}$ represents the indicator function of the set
$A$.  The associated differential operator, denoted by
$\nabla_{\bs b_i}$ acts on $F\in C^1(\Sigma)$ as
\begin{equation*}
\cb{ (\nabla_ { {\bs b}_i}  F) (x)}  \,:= \, b \,  \mtt 1\{x_i > 0\}
\left( \frac{{m}_i}{x_i}\right) (\nabla_{\bs v_i} F) (x) \,,
\quad x\in \Sigma\, .
\end{equation*}
In addition, let
\begin{equation*}
{\color{blue} \mc D_i}  \,:=\, \left\{ \, F\in C^2(\Sigma) \; :  \; \textrm{$
\nabla_{\bs b_i }  \, F$ is continuous on $\Sigma$} \,\right\}
\quad {\rm and} \quad \cb{\mc D_A} \,:= \, \bigcap_{i\in A}\mc D_i
\end{equation*}
for any nonempty subset $A$ of $S$.
\end{definition}

Denote by $(a_{ij} :  i,j \in S)$ the matrix whose entries are given by
\begin{equation}
\label{22}
\cb{a_{ij}}  \, :=\,  -\, m_i \, r(i,j)  \,=\, -\,
{m}_i \, {\bs v}_i\cdot {\bs e_j}, \;\;
i\neq j\in S\,, \quad
\cb{a_{ii}}  \, :=\, m_i \, \lambda_i \,, \quad i\in S\,.
\end{equation}

\begin{definition}
Denote by $\mf L\colon \mc D_S \to C(\Sigma)$ the
differential operator defined by
\begin{equation}
\label{eq:L}
\cb{ (\mf LF) (x)} \,:=\,   (\nabla_{\bs b} F)(x)
+ \frac{1}{2} \sum_{i,j} m_i \, r(i,j)\,
[\nabla_{\bs e_i - \bs e_j} (\nabla_{\bs e_i - \bs e_j} F)](x), \quad x\in \Sigma\,,
\end{equation}
where $\bs b\colon \Sigma \to \bb R^S$ is the vector field, and
$\nabla_{\bs b} F$ the derivative defined by
\begin{equation*}
\cb{ {\bs b}(x) } \,:=\,  \sum_{i\in S} {\bs b}_i(x) \,, \quad
\cb{ (\nabla_{\bs b} F)(x) } \,:=\,
\sum_{i\in S} (\nabla_{{\bs b}_i} F)(x) \,, \quad x\in \Sigma\,,
\end{equation*}
respectively.
\end{definition}

Clearly, for any $C^2$-extension $\tilde{F} \colon \bb R^S \to \bb R$
of $F\in C^2(\Sigma)$,
\begin{gather*}
(\mf LF)(x) \,=\,  (\nabla_{\tilde{{\bs b}} } \, \tilde F) (x) \,+\,
(D_{\tilde{ {\bs a} }  } \, \tilde F) (x) \quad \forall\, x\in
\Sigma\,,
\end{gather*}
where
\begin{equation}
\label{eq:ba}
\begin{gathered}
( \nabla_{{\tilde{\bs b} }} \, F)(x) \,=\,  - \, b \, \sum_{i\in S}
\mtt 1\{x_i > 0\}\, \frac{1}{x_i}\, \sum_{k\in S}
a_{ik} \, [\, (\partial_{x_k} -  \partial_{x_i}) \tilde F\,] \,  (x)
\\
\quad {\rm and}
\quad ( D_{\tilde{{\bs a} }}  \, \tilde F)(x)
= \sum_{i,j\in S}  a_{ij}\, \partial_{x_i} \partial_{x_j}
\tilde{F}(x)\,.
\end{gathered}
\end{equation}

Denote by $\cb{C(\bb R_+, \Sigma)}$ the space of continuous
trajectories $\omega\colon \bb R_+\to \Sigma$ equipped with the
topology of uniform convergence on bounded intervals, and its
corresponding Borel $\sigma$-field $\ms F$. Denote by
$\cb{X_t}\colon C(\bb R_+, \Sigma)\to \Sigma$, $t\ge 0$, the process
of coordinate maps and by $(\ms F_t)_{t\ge 0}$ the generated
filtration $\cb{\ms F_t:=\sigma(X_s:s\le t)}$, $t\ge 0$. A probability
measure $\bb P$ on $C(\bb R_+, \Sigma)$ is said to start at
$x\in \Sigma$ when $\bb P[X_0=x]=1$.

\begin{definition}
\label{def1}
A probability measure $\bb P$ on $C(\bb R_+, \Sigma)$ is a solution
for the $(\mf L, \mc D_S)$-martingale problem if, for any
$H\in \mc D_S$,
\begin{equation}
\label{f06}
H(X_{t}) - \int_{0}^{t} (\mf L H)(X_s) \, ds\;,\quad t\ge 0
\end{equation}
is a $\bb P$-martingale with respect to the filtration $(\ms F_t)_{t\ge 0}$.
\end{definition}

We are now ready to state the main theorem.

\begin{thm}
\label{thm:mainthm}
For each $x\in \Sigma$, there exists a unique probability measure on
$C(\bb R_+, \Sigma)$, denoted by $\bb P_x$, which starts at $x$ and is
a solution of the $(\mf L,\mc D_S)$-martingale problem.  Furthermore,
let $\bb P^N_{x_N}$ be the probability measure on $D(\bb R_+, \Sigma)$
induced by the Markov chain $X^N_t$ starting from $x_N \in
\Sigma_N$. If $x_N$ converges to $x\in \Sigma$, then, $\bb P^N_{x_N}$
converges to $\bb P_x$ in the Skorohod topology.
\end{thm}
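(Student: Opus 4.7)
The plan is to follow the classical three-step scheme for deriving diffusion limits via martingale problems: first establish tightness of $\{\bb P^N_{x_N}\}_{N\ge 1}$ in $D(\bb R_+, \Sigma)$; second, identify every subsequential limit as a solution of the $(\mf L, \mc D_S)$-martingale problem starting at $x$; and third, prove that the martingale problem admits at most one such solution. Skorokhod convergence, as well as continuity of the limiting trajectories, then follows automatically. Of these steps I expect the third to absorb essentially all of the real work.

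\emph{Tightness and identification of limits.} Since $\Sigma$ is compact, only the temporal oscillations need to be controlled, which I would do through Aldous' criterion applied to coordinate functions $F(x)=x_i$; although $\nabla_{\bs b_i} F$ is singular near $\{x_i=0\}$, the discrete rate $g_i(\eta_i)$ is bounded, so $\mc L_N F$ and the quadratic variation of the associated martingale are bounded uniformly in $N$. For identification, pick $H\in \mc D_S$ and start from the martingale $H(X^N_t)-\int_0^t \mc L_N H(X^N_s)\, ds$. A second order Taylor expansion, combined with the asymptotics $N(g_i(Nx_i)/m_i-1)\to b/x_i$ from \eqref{34}, shows on compact subsets of $\mathring{\Sigma}$ that $\mc L_N H$ converges uniformly to $\mf L H$; the singular contribution assembles precisely into $\nabla_{\bs b_i}H$, which is exactly why $\mc D_S$ is defined to force this derivative to extend continuously to $\Sigma$. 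Boundedness of $\mf L H$ on $\Sigma$ and the continuous mapping theorem then let one pass \eqref{f06} to any subsequential limit.

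\emph{Uniqueness of the martingale problem.} I would argue by induction on $|A|$ over the faces $\Sigma_A$. When $|A|=1$, $\Sigma_A$ is a single vertex and the only continuous trajectory starting there is the constant one, since $\mf L$ annihilates constants. For the inductive step, given a solution $\bb P$ starting at $x\in \mathring{\Sigma}_A$, I would first show that up to the hitting time $\tau$ of $\partial \Sigma_A$ the law of $X$ is uniquely determined, by a localization/stopping argument that reduces matters to the Stroock--Varadhan uniqueness theorem for the strictly elliptic, smooth operator obtained by restricting $\mf L$ to the open face. Then at the boundary one must prove \emph{absorption} onto a single face $\Sigma_B$ with $B\subsetneq A$, after which the inductive hypothesis closes the argument. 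Absorption is established through an optional stopping argument applied to a family of nonnegative superharmonic functions $W_B$ vanishing on $\Sigma_B$ and positive elsewhere on $\Sigma_A$; iterating this across nested faces yields the desired sequence of dimension reductions ending at a vertex.

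\emph{Main obstacle.} The crux of the proof — and where the paper's technical novelty must be invoked — is the construction of these superharmonic functions. Because $\bs b$ diverges at $\partial \Sigma$, the quantity $\mf L H$ fails to be continuous at the boundary for generic smooth $H$, and functions with the required one-sided vanishing on a target face $\Sigma_B$ and prescribed behaviour on other strata appear not to exist within $\mc D_S$ as originally defined. Hence the domain must be enlarged to include functions that are discontinuous at the boundary in a controlled way, while still preserving the martingale identity used in the identification step. Carrying out this enlargement rigorously, and verifying that subsequential limits still solve the enlarged martingale problem so that the extended superharmonic functions can be plugged into optional stopping, is the step I expect to be by far the most delicate.
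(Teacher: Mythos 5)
Your high-level outline matches the paper: tightness and identification give that every subsequential limit of $\bb P^N_{x_N}$ solves the $(\mf L,\mc D_S)$-martingale problem (Propositions~7.6--7.7 of \cite{BJL}, which carry over verbatim to $b=1$), while uniqueness reduces to proving absorption at the boundary (Theorem~\ref{thm:abs}) via superharmonic functions and optional stopping, followed by a recursion over faces through an alternative, compactly supported martingale problem (Theorem~\ref{thm:altmart}, Proposition~\ref{prop:altuni}). You also correctly locate the crux: for $b=1$ the needed superharmonic functions do not lie in $\mc D_S$, so the domain must be enlarged.

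Where your proposal has a genuine gap is precisely at that crux, and the route you sketch for closing it is not the one that works. You propose to ``verify that subsequential limits still solve the enlarged martingale problem,'' i.e.\ to redo the identification step for the larger domain $\mc E_S$. But the extended operator $\mf L^{\mc E}H$ is only piecewise continuous on $\Sigma$ — discontinuous across faces — so one cannot pass $\mc L_N H\to \mf L^{\mc E}H$ uniformly, nor invoke the continuous mapping theorem as in the identification for $\mc D_S$. The paper avoids this by decoupling the issue from the scaling limit altogether: Theorem~\ref{thm:emp} shows that \emph{any} solution of the original $(\mf L,\mc D_S)$-martingale problem is automatically a solution of the extended $(\mf L^{\mc E},\mc E_S)$-problem. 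This is proved by a density argument in a tailor-made ``$\mu$-topology'' (Section~\ref{sec:proofthmemp}): for each finite signed measure $\mu$ one constructs, via the norm-like maps $J_A$ of Section~\ref{sec:aux} and a filtration of domains $\ms K^{\ms F}$ indexed by piles $\ms F\subset 2^S$, a sequence $H_n\in\mc D_S$ with $H_n\to H$ and $\int \mf L^{\mc E}H_n\,d\mu\to\int \mf L^{\mc E}H\,d\mu$ for the given $H\in\mc E_S$. Without this mechanism the superharmonic functions of Lemmata~\ref{lem:supharm}--\ref{lem:funcext} cannot legitimately be fed into the martingale identity, and the absorption (hence uniqueness) argument does not close.
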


\begin{remark}
\label{rm10}
In Section \ref{sec8} we introduce and alternative martingale problem
and show in Theorem \ref{thm:altmart} that any solution of the
$(\mf L,\mc D_S)$-martingale problem is also a solution of the
alternative one. The uniqueness part in Theorem
\ref{thm:mainthm} is proved through the alternative martingale problem.
\end{remark}

The statement of the theorem is identical to \cite[Theorem 2.2]{BJL}
and \cite[Theorem 2.6]{BJL}, and we adopt a similar strategy.  We
begin by showing that the solution to the martingale problem is
boundary dimension-decaying, or, equivalently, absorbing at the
boundary (Theorem \ref{thm:abs}).  Using this property, we then
establish uniqueness, following the argument in \cite[Section 6]{BJL}.
Finally, we prove that the sequence $\bb P^N_{x_N}$ is tight and
converges to the unique solution of the martingale problem, adapting
the method from \cite[Section 7]{BJL}.

Although the overall structure of the proof follows that of
\cite{BJL}, the case \(b=1\) introduces a difficulty in establishing
the absorbing property of the solution to the martingale problem. In
particular, this step requires constructing a super-harmonic function
that lies in the domain of the generator, and this construction
becomes delicate when \(b=1\).

To construct such a function, we introduce a method, presented in
Section~\ref{sec:edmp}, which we refer to as the \emph{extension of
the domain}. The core idea of this approach is Theorem~\ref{thm:emp},
whose proof is given in Section~\ref{sec:proofthmemp}. This
result states that any solution to the $(\mf L,\mc D_S)$-martingale
problem is also a solution to a $(\mf L^{\mc E},\mc E_S)$-martingale
problem, where the domain $\mc E_S$ contains $\mc D_S$ and
$\mf L F = \mf L^{\mc E} F$ for all $F\in \mc D_S$. This result
therefore extends the domain of the generator $\mf L$.  and allows to
construct super-harmonic functions which do not belong to $\mc D_S$,
but only to $\mc E_S$.

The following two subsections summarize the additional results that
can be obtained.

\subsection*{A boundary dimension-decaying diffusion}
\label{sec:An absorbed diffusion}

For each $x\in \Sigma$, denote
$$
\ms A(x) \,:=\, \{j\in S: x_j = 0\},\;\; \ms B(x) \,:=\,
S\setminus \ms A(x).
$$
For all nonempty subset $B \subset S$, define
$h_B\colon C(\bb R_+, \Sigma) \to \bb R_+$ as the first time one of
the coordinates in $B$ vanishes
$$
\cb{ h_B(x)} \,:=\,   \inf\{t\geq 0: \prod_{j\in B} X_t(j) = 0 \}\,.
$$

Let $\cb{(\theta_t)_{t\geq 0}}$ be the semigroup of time translation
in $C(\bb R_+, \Sigma)$. Define a sequence of pairs of stopping times
and sets $(\sigma_n, \ms B_n)_{n\geq 0}$ as follows. Set
$\cb{\sigma_0 = 0}$ and $\cb{\ms B_0 = \ms B(X_0)}$.  For $n\geq 1$,
we define
$$
\cb{\sigma_n}\,:=\, \sigma_{n-1} + h_{\ms B_{n-1}} \circ
\theta_{\sigma_{n-1}},\;\;
\cb{\ms B_{n} }\,:=\,
\{j\in S: X_{\sigma_n}(j) > 0\}
$$
on $\{\sigma_{n-1} < \infty$\} and $\sigma_n = \infty$ on
$\{\sigma_{n-1} = \infty\}$.

We say that a probability measure $\bb P$ on $C(\bb R_+, \Sigma)$ is
absorbing if

$$
\bb P\{ \ms B_n \supseteq B(X_t) \text{ for all } t\ge \sigma_n \} =
1, \text{ for every } n\ge 0.
$$

If $\bb P$ is absorbing then $\bb P-$a.s., $(\ms B_n)_{n\geq 0}$ is
decreasing and
$$
\exists \;
1\leq n_0 \leq |\ms B_0| \text{ such that } \sigma_{n_0} = \infty
\;\text{ and }\; \ms B_{n-1} \supsetneq \ms B_n \text{ for all } 1\leq
n< n_0.
$$

As an intermediate step in proving the uniqueness of the martingale
problem solution, we prove the following theorem, which gives an
interesting property of the process itself.

\begin{thm}
\label{thm:abs}
For each $x\in \Sigma$, the probability measure $\bb P_x$ is
absorbing.
\end{thm}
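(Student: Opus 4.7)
Let $\bb P$ be any solution of the $(\mf L, \mc D_S)$-martingale problem starting at $x \in \Sigma$. The strategy is to show that, for every $i \in S$, once the coordinate $X_t(i)$ reaches zero it remains at zero thereafter. Applied at the stopping times $\sigma_n$, this single-coordinate statement gives $\ms B(X_t) \subseteq \ms B_n$ for all $t \geq \sigma_n$, which is exactly the absorbing property.

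The reduction to the zero coordinates would be effected by a non-negative super-martingale. For each $i \in S$, I would construct a continuous function $F_i \colon \Sigma \to \RR_+$ lying in the extended domain $\mc E_S$ introduced via Theorem~\ref{thm:emp}, with the properties $F_i(x) > 0$ iff $x_i > 0$, and $(\mf L^{\mc E} F_i)(x) \leq C\, F_i(x)$ on $\mathring{\Sigma}$ for some finite constant $C$. Since by Theorem~\ref{thm:emp} any solution of the $(\mf L, \mc D_S)$-martingale problem is also a solution of the $(\mf L^{\mc E}, \mc E_S)$-martingale problem, the process $e^{-Ct} F_i(X_t)$ is a non-negative continuous super-martingale under $\bb P$. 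Because a continuous non-negative super-martingale remains at zero once it reaches zero, and $F_i(X_t) = 0$ iff $X_t(i) = 0$, the desired single-coordinate statement follows, and hence the theorem.

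The heart of the proof, and the source of its difficulty in the critical case $b = 1$, is the construction of $F_i$. A direct computation of $\mf L F$ for $F(x) = \phi(x_i)$ shows that near $\{x_i = 0\}$ the diffusion contribution behaves like $m_i \lambda_i \phi''(x_i)$ and the drift contribution like $-\, b\, m_i \lambda_i\, \phi'(x_i)/x_i$, which are of the same order. The natural candidate $\phi(x_i) = x_i^\alpha$ therefore yields strict super-harmonicity only for $\alpha < 1 + b = 2$, whereas membership in $\mc D_S$ forces $\alpha \geq 2$; no power function works at $b = 1$. The correct choice is of the type $\phi(x_i) = -\, x_i \log x_i$, for which $(\mf L F) \sim m_i \lambda_i\,(\log x_i - 1)/x_i < 0$ near the face. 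However, its drift component $\nabla_{\bs b_i} F \propto (\log x_i + 1)/x_i$ diverges at $\{x_i = 0\}$, which places $F$ outside $\mc D_S$. The extended domain $\mc E_S$ is designed precisely to admit such functions, and its role here is indispensable; this is exactly the gap in \cite{BJL} that the paper's machinery closes.

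The remaining task is globalization: multiplying the local choice above by a smooth cutoff near $\{x_i = 0\}$ and adding corrections to obtain an $F_i$ that is non-negative on all of $\Sigma$ with zero set equal to $\{x_i = 0\}$, that lies in $\mc E_S$, and that satisfies $(\mf L^{\mc E} F_i) \leq C F_i$ globally. The main technical obstacle is controlling the contributions $\nabla_{\bs b_k} F_i$ for $k \neq i$ near the other faces $\{x_k = 0\}$, since these terms carry factors $m_k/x_k$ that also diverge there; the boundary corrections must be chosen so that these cross contributions remain dominated by $F_i$ itself. Once such an $F_i$ has been exhibited for every $i \in S$, the super-martingale argument described above concludes the proof of Theorem~\ref{thm:abs}.
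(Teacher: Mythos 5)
The overall strategy—construct a superharmonic function in the extended domain $\mc E_S$ and exploit the $(\mf L^{\mc E},\mc E_S)$-martingale problem granted by Theorem~\ref{thm:emp}—matches the paper's, and the observation that a single power $x_i^\alpha$ fails at $b=1$ is correct. The proposal, however, breaks on two concrete points, one conceptual and one structural.

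First, your candidate $\phi(x_i)=-x_i\log x_i$ does not belong to $\mc E_S$. The extended domain relaxes the \emph{continuity} of $\nabla_{\bs b_i}F$ required for $\mc D_S$, but condition $\mathfrak{E1}(i)$ still demands that $x\mapsto x_i^{-1}\nabla_{\bs v_i}F(x)$ be \emph{bounded} on $\{x_i>0\}$. For $-x_i\log x_i$ this quotient is proportional to $(\log x_i+1)/x_i$, which diverges; the function is outside both $\mc D_S$ and $\mc E_S$, so the martingale identity you invoke does not apply to it. What actually works at $b=1$ is the paper's choice $\phi(x_i)=x_i^{1+b}(1-x_i^\gamma)=x_i^2-x_i^{2+\gamma}$ (Lemma~\ref{lem:supharm}): the pure power $x_i^2$ makes the drift and diffusion cancel to leading order, and the $-x_i^{2+\gamma}$ correction tips the balance strictly negative, $\mf L\phi\sim -m_i\lambda_i(2+\gamma)\gamma\,x_i^\gamma<0$. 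Its drift quotient $\phi'(x_i)/x_i=2-(2+\gamma)x_i^\gamma$ is bounded but tends to $2$ rather than $0$ at the face, so it is discontinuous there—precisely the regularity gap that $\mc E_S$, and only $\mc E_S$, is built to accommodate.

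Second, even granting a well-behaved $\phi$, a function of the form $F_i(x)=\phi(x_i)$ with $F_i>0$ iff $x_i>0$ cannot lie in $\mc E_S$. For $k\neq i$, $\nabla_{\bs v_k}F_i=r(k,i)\,\phi'(x_i)$ does not vanish as $x_k\to 0$, so $x_k^{-1}\nabla_{\bs v_k}F_i$ is unbounded and condition $\mathfrak{E1}(k)$ fails. Any cutoff that forces the drift terms at the other faces to vanish will kill $F_i$ near $\{x_k=0\}$, destroying the zero-set property on which your supermartingale argument rests. The paper avoids this incompatibility: it works with $F_A=\prod_{k\in A}\phi(x_k)$, which lies in $\mc E_A$ and vanishes on the \emph{union} $\bigcup_{k\in A}\{x_k=0\}$, multiplies by a cutoff $\Phi\in\mc D_S$ to enter $\mc E_S$ (Lemma~\ref{lem:funcext}), and then starts from a point $z$ at which $F_A(z)=0$ so that Dynkin's formula and superharmonicity force $F_A(X_t)\equiv 0$. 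A separate induction over subsets $D\subset A$ with the functions $F_{A\setminus D,D}$ (Lemmata~\ref{lem:auxfunc}, \ref{lem:LHDbdd}, \ref{lem:prodbdd}) is then needed to pass from the vanishing of the product $\pi_A(X_t)$ to the vanishing of each individual coordinate $X_t(j)$, $j\in A$. This induction is the genuine extra work that your single-coordinate picture does not account for.
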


\begin{remark}
In Section \ref{sec8}, we present further properties of the process.
Propositions \ref{prop:feller} and \ref{prop:strongmarkov} state that
the process has the Feller property. Proposition \ref{ft} states that
the time it takes to reach the boundary has finite expectation. In
particular, the time needed to reach the set of extremal points of the
simplex $\Sigma$ has finite expectation.
\end{remark}

\subsection*{Behavior after absorption}
\label{sec:BAA}

Similarly to \cite[Section 2.5]{BJL}, we expect the process $X_t$ to
have a recursive absorbing structure in the sense that, after
absorption, the process again follows the same dynamics with $\bs r$
replaced by the jump rates of the trace process.

For each $B\subset S$ with $|B|\geq 2$, consider the simplex
$\Sigma_B$ and its interior $\mathring{\Sigma}_B$:
$$
\cb{\Sigma_B} \,:=\,  \{x \in \Sigma : \sum_{j\in B} x_j =1 \}, \quad
\cb{\mathring{\Sigma}_B} \,:=\,
\{x \in \Sigma_B : x_j > 0, \forall j\in B\}.
$$
Mind that $\Sigma_B$, $\mathring{\Sigma}_B$ are subsets of $\bb R^S$
and not $\bb R^B$.

Since $\mathring{\Sigma}_B$ is an open subset of the  affine subspace
$$ \{x \in \bb R^S : \sum_{j\in B} x_j =1, x_i = 0 \text{ for all } i\in S\setminus B \} $$
of $\bb R^S$, a vector $\bs V$ in $\bb R^S$ will be said to be
tangent to $\mathring{\Sigma}_B$ if
$$
\sum_{j\in B} V_j = 0, \text{ and } V_i = 0 \text{ for all } i\in
S\setminus B.
$$
We denote by $\cb {T_{\Sigma_B}}$ the linear space of all vectors
tangent to $\mathring{\Sigma}_B$.

We extend the notion of differentiability introduced in
\eqref{eq:differentiability}.  Denote by $\cb{C(\mathring{\Sigma}_B) }$
the space of continuous functions
$f\colon \mathring{\Sigma}_B \to \bb R$, and by $\cb{C(\Sigma_B)} $ the
elements of $C(\mathring{\Sigma}_B)$ which can be continuously extended
to $\Sigma_B$.  A function $f\in C(\mathring{\Sigma}_B)$ is said to be
differentiable at $x\in \mathring{\Sigma}_B$ if there exists a vector in
$T_{\Sigma_B}$, denoted by $\nabla^{\Sigma_B} f(x)$, such that
\begin{equation*}
\lim_{ \bs V \in T_{\Sigma_B} , |\bs V| \to 0}  \frac{ f(x + \bs V) - f(x)
-\bs V \cdot \nabla^{\Sigma_B} f(x)}{|\bs V|} \,=\,0\,.
\end{equation*}

For convenience, we often abbreviate
$\bs V \cdot \nabla^{\Sigma_B} f$ as $\cb{\nabla_{\bs V} f}$. Denote
by $\cb{C^1(\mathring{\Sigma}_B)}$ the space of functions
$f\in C(\mathring{\Sigma}_B)$ which are differentiable at every
$x\in \mathring{\Sigma}_B$ and such that the map
$x \mapsto \nabla_{\bs V} f (x)$ belongs to $C(\mathring{\Sigma}_B)$
for any $\bs V \in T_{\Sigma_B}$.

Finally, let $\cb{C^2(\mathring{\Sigma}_B)}$ be the space of functions
$f \in C^1(\mathring{\Sigma}_B)$ such that
$\nabla_{\bs V} f \in C^1(\mathring{\Sigma}_B)$ for any
$\bs V \in T_{\Sigma_B}$.  Denote by $C^1(\Sigma_B)$, $C^2(\Sigma_B)$
the elements of $C^1(\mathring{\Sigma}_B)$, $C^2(\mathring{\Sigma}_B)$
which can be continuously extended to $\Sigma_B$, respectively:
\begin{align}
\label{eq:C2}
\cb{ C^1(\Sigma_B)} := \big \{ \, f  \in C(\Sigma_B) \cap 
C^1(\mathring{\Sigma}_B) : \!
\nabla_{\bs V} f \text{ continuously extends to }  \Sigma_B
\text{ for all } \bs V \in T_{\Sigma_B} \, \big \},\,
\end{align}
\begin{align*}
\cb{ C^2(\Sigma_B)} \,:=\, \big \{  \, f  \in C^1(\Sigma_B) 
\cap C^2(\mathring{\Sigma}_B) : 
\nabla_{\bs V} (\nabla_{\bs W} f) \text{ continuously extends to } 
\Sigma_B \text{ for all } \bs V, \bs W \in T_{\Sigma_B} \,\big\} \,.
\nonumber
\end{align*}
We denote by the same symbol $\nabla_{\bs V} f$, $\nabla_{\bs V}
(\nabla_{\bs W} f)$ the continuous extension to $\Sigma_B$ of these
functions.

Denote by
$$
\cb{ \bs{r^B} }\, :=\, \{r^B(x,y):x,y\in B\}
$$
the jump rates of the trace on $B$ of the Markov process generated by
$\mc L_S$. Detailed explanation of this process is given in Section
\ref{sec:trace}.  Let $\{\bs v_j^B:j\in B\}$ be the vectors in
$T_{\Sigma_B}$ defined by
\begin{equation}
\label{eq:vjB}
\cb{ \bs v_j^B } \,:=\, \sum_{k\in B} r^B(j,k)(\bs e_k-\bs
e_j) \,.
\end{equation}
where $\{\bs e_j:j\in B\}$ stands for the subset of the canonical
basis of $\bb R^S$ indexed by $B$,  and let
$\bs b^B:\Sigma_B\rightarrow \bb R^S$ be the vector field defined by
$$
\cb{ \bs b^B(x)} \,:=\, b \, \sum_{j\in B}
\frac{m_j}{x_j}\, \bs v_j^B \, \mtt  1\{x_j>0\},\quad x\in
\Sigma_B\,.
$$

Similar to $\mc D_S$ from Definition \ref{def:dom}, for $j\in B$, let
$\cb{\mc D^B_j}$ be the space of functions $H$ in $C^2(\Sigma_B)$ for
which the map
$x \mapsto \mtt 1\{x_j>0\} \, (m_j/x_j)\, (\nabla_{\bs v_j^B} H)
(x)$ is continuous on $\Sigma_B$, and let
\begin{equation}
\label{18}
\cb{\mc D^B_A} \,: =\,  \bigcap_{j\in A}\mc D^B_j\,,
\quad \text{for}\;\; \varnothing \subsetneq A \subset B\,.
\end{equation}

Let $\mf L^B \colon \mc D^B_B \to C(\Sigma_B)$ be the second
order differential operator which acts on functions in $\mc D^B_B$ as
\begin{equation}
\label{eq:LB}
\cb{(\mf L^B F)(x)} \,:=\,  (\nabla_{\bs b^B} F)(x)
+ \frac{1}{2}\sum_{j,k\in B} m_j \, r^B(j,k)
\, [\nabla_{\bs e_j - \bs e_k} (\nabla_{\bs e_j - \bs e_k} F)] (x),
\end{equation} 
where
\begin{equation*}
\cb{ (\nabla_{\bs b^B} F)(x)}  \,:=\,
b \,  \sum_{j\in B}
\frac{m_j}{x_j}\, \mtt 1\{x_j>0\}\, (\nabla_{\bs v_j^B} F) (x)
\end{equation*}
for $x \in \Sigma_B$ and $F \in \mc D^B_B$.

Fix $x$ in $\Sigma$ and assume that
$\ms A(x) = \{j \in S: x_j = 0\} \neq \varnothing$. Let
$B = \ms A(x)^c$.  Take a measure $\bb P_{x}$ which is a solution of
the $(\mf L, \mc D_S)$-martingale problem starting at $x$. By
Theorem \ref{thm:abs}, $\bb P_{x}$ is concentrated on trajectories
which belong to $C(\bb R_+,\Sigma_B)$.  Let $\bb P_{x}^B$ be the
restriction of $\bb P_{x}$ to $C(\bb R_+,\Sigma_B)$:
$$
\cb{ \bb P_{x}^B(\Xi) } \,:=\,
\bb P_{x}(\Xi) \,, \quad  \Xi \subset C(\bb R_+,\Sigma_B) \,.
$$
which is a probability measure on $C(\bb R_+,\Sigma_B)$.  Then the
following proposition holds analogously to \cite[Proposition
2.4]{BJL}.

\begin{prop}
\label{prop:restriction}
Fix $x$ in $\Sigma$. Assume that
$\ms A(x) = \{j \in S: x_j = 0\} \neq \varnothing$, and set
$B = \ms A(x)^c$. Let $\bb P_{x}$ be the unique solution of the
$(\mf L, \mc D_S)$-martingale problem with starting point $x$.  Denote
by $\bb P_{x}^B$ the restriction of $\bb P_{x}$ to
$C(\bb R_+,\Sigma_B)$. Then, the measure $\bb P_{x}^B$ solves the
$(\mf L^B, \mc D^B_B)$-martingale problem.
\end{prop}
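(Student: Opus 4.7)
The plan is to reduce the $(\mf L^B,\mc D^B_B)$-martingale problem for $\bb P_x^B$ to the $(\mf L,\mc D_S)$-martingale problem for $\bb P_x$ by constructing a harmonic extension of test functions from $\Sigma_B$ to $\Sigma$. First, I would invoke Theorem \ref{thm:abs}: since $x\in\Sigma_B$ gives $\ms B_0=B$, the absorbing property implies that $X_t\in \Sigma_B$ for every $t\ge 0$, $\bb P_x$-almost surely. Hence $\bb P_x^B$ is a bona fide probability measure on $C(\bb R_+,\Sigma_B)$, and it suffices to prove that, for each $H\in \mc D^B_B$, the process $M^H_t := H(X_t) - \int_0^t (\mf L^B H)(X_s)\,ds$ is a $\bb P_x$-martingale.

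Fix $H\in \mc D^B_B$. For each $j\in B$, let $\phi_j\colon S\to \bb R$ be the harmonic extension of $\delta_j$ with respect to $\mc L_S$, characterized by $\phi_j(k)=\delta_{jk}$ on $B$ and $(\mc L_S\phi_j)(k)=0$ for $k\in S\setminus B$. Define the linear projection $\pi\colon\bb R^S\to\bb R^S$ by $\pi(y)_j := \sum_{k\in S}\phi_j(k)\, y_k$ for $j\in B$ and $\pi(y)_i := 0$ for $i\in S\setminus B$; it maps $\Sigma$ into $\Sigma_B$ and acts as the identity on $\Sigma_B$. Extending $H$ smoothly to $\bb R^S$ as a function of $(y_j)_{j\in B}$, I set $\tilde H(y) := H(\pi(y))$. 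A chain-rule computation yields, for $y\in \Sigma_B$,
\begin{equation*}
\nabla_{\bs v_i} \tilde H(y) \,=\, \sum_{j\in B} (\mc L_S\phi_j)(i)\,(\partial_j H)(y).
\end{equation*}
For $i\in S\setminus B$ this vanishes identically by harmonicity of $\phi_j$. For $i\in B$, the defining characterization of the trace rates gives $(\mc L_S\phi_j)(i) = r^B(i,j) - \delta_{ij}\sum_{k\in B} r^B(i,k)$, which reduces the right-hand side to $\nabla_{\bs v_i^B} H(y)$. The diffusion part is handled by the classical Dirichlet-form identity for the trace process,
\begin{equation*}
\sum_{k,l\in S} m_k r(k,l)\,(\phi_j(k)-\phi_j(l))\,(\phi_{j'}(k)-\phi_{j'}(l)) \,=\, \sum_{k,l\in B} m_k r^B(k,l)\,(\delta_j(k)-\delta_j(l))\,(\delta_{j'}(k)-\delta_{j'}(l)),
\end{equation*}
valid for all $j,j'\in B$. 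Combining these identities gives $(\mf L\tilde H)(y) = (\mf L^B H)(y)$ for every $y\in \Sigma_B$.

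The main obstacle is verifying that $\tilde H$ lies in the domain of the generator appearing in the martingale problem for $\bb P_x$. For $i\in S\setminus B$ the drift term $\mtt 1\{y_i>0\}(m_i/y_i)\nabla_{\bs v_i}\tilde H(y)$ is identically zero, hence trivially continuous on $\Sigma$. For $i\in B$, however, $\mtt 1\{y_i>0\}(m_i/y_i)\,\nabla_{\bs v_i^B} H(\pi(y))$ may be discontinuous on $\Sigma$: along sequences $y^{(n)}\to y^\ast\in \Sigma_B$ with $y^\ast_i=0$ and with $(y^{(n)})_i$ vanishing faster than $\sum_{k\in S\setminus B}(y^{(n)})_k$, the ratio $\pi(y^{(n)})_i / (y^{(n)})_i$ can diverge, so that $\tilde H\notin \mc D_S$ in general. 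This is precisely the obstruction the domain extension of Section \ref{sec:edmp} is designed to bypass. I would therefore check that $\tilde H$ belongs to the enlarged domain $\mc E_S$ with $(\mf L^{\mc E}\tilde H)(y) = (\mf L^B H)(y)$ on $\Sigma_B$. Theorem \ref{thm:emp} then produces the $\bb P_x$-martingale $\tilde H(X_t) - \int_0^t(\mf L^{\mc E}\tilde H)(X_s)\,ds$, and since $X_t\in \Sigma_B$ almost surely this process coincides with $M^H_t$, which closes the argument.
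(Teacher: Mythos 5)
Your approach is genuinely different from the paper's, which proves Proposition~\ref{prop:restriction} at the end of Section~\ref{sec8} by observing that $\bb P_x^B$ is an absorbing solution of the alternative $\mc L$-martingale problem restricted to $\Sigma_B$ (via Theorem~\ref{thm:altmart}) and then invoking the uniqueness in Proposition~\ref{prop:altuni}. However, your direct-verification argument contains a genuine gap at the assertion that $\tilde H = H\circ\gamma_B$ belongs to the extended domain $\mc E_S$. You correctly flag the troublesome ratio $\gamma_B(y)_i/y_i$ for $i\in B$, which diverges, for instance, along sequences $y^{(n)}\to y^\ast\in\Sigma_B$ with $(y^{(n)})_i\to 0$ while $\gamma_B(y^{(n)})$ converges to an \emph{interior} point of $\Sigma_B$. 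But this divergence makes
$$
\frac{1}{y_i}\,\nabla_{\bs v_i}\tilde H(y)
\;=\; \frac{\gamma_B(y)_i}{y_i}\cdot\frac{1}{\gamma_B(y)_i}\,\nabla_{\bs v_i^B}H(\gamma_B(y))
$$
\emph{unbounded}, not merely discontinuous, and condition $\mathfrak{E1}(i)$ --- required of every member of $\mc E_S$ for every $i\in S$ --- demands boundedness of exactly this quantity. Hence $\tilde H\notin\mc E_S$ in general, and Theorem~\ref{thm:emp} does not apply.

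The paper's framework is in fact consistent with this obstacle: Lemma~\ref{lem:extregularity} establishes $\gamma_B^*(F|_{\Sigma_B})\in\mc E_A$ with $A = S\setminus B$, never $\mc E_S$; the conditions $\mathfrak{E1}(i)$ for $i\in B$ are precisely what a pullback through $\gamma_B$ fails to inherit. To repair your argument you would need to localize, multiplying $\tilde H$ by the cutoff $G$ of Lemma~\ref{phi} to obtain a genuine element of $\mc E_S$ that agrees with $\tilde H$ on $\{\min_{j\in B}x_j\ge\epsilon\}$ (the mechanism of Lemma~\ref{lem:funcext}), argue with the stopped process up to the exit time of that set, and let $\epsilon\downarrow 0$. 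Even so, this only yields the martingale property up to the time $X$ first leaves $\mathring{\Sigma}_B$, after which a recursive step is still needed --- exactly the recursion that the alternative $\mc L$-martingale problem of Theorem~\ref{thm:altmart}, with its compactly supported test functions on each face, and Proposition~\ref{prop:altuni} are designed to handle in one stroke.
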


{
\begin{remark}
\label{rem:sodo}
We may wish to apply the differential operators $\nabla_{\bs b^B}$,
$(\nabla_{\bs e_i - \bs e_j})^2 (i,j\in B)$ to functions that do not
belong to $C^2(\Sigma_B)$ but are locally smooth.  Accordingly, for
each \( x \in \Sigma \), we define local quantities
$(\nabla_{\bs b} F)(x)$,
$[\nabla_{\bs e_i - \bs e_j} (\nabla_{\bs e_i - \bs e_j} F)] (x)$ when
$F \in C^2(U)$ for some open neighborhood $U$ (in $\Sigma_B$) of
\( x \). In particular, we may interpret the equation \eqref{eq:L}
locally. This viewpoint is crucial in extending the domain of the
generator $\mf L$ (see the Definition \ref{def:ed}), which plays a
fundamental role in the proof of the martingale problem uniqueness.
In case of any ambiguity, we clarify that $\mf L$ refers to a local
second-order differential operator by explicitly stating it as such.
\end{remark}

\noindent{\bf Organization:} The article is organized as follows. In
the next section we introduce the trace on
$\varnothing \subsetneq B\subsetneq S$ of the $S$-valued Markov chain
induced by the generator $\mc L_S$. This process provides the
diffusion coefficient and the drift of the diffusion when it evolves
on $\Sigma_B$. In Section \ref{sec:edmp}, we introduce a relaxed
version of the martingale problem, extending the domain of the
generator. The main result of this section, Theorem \ref{thm:emp},
states that a solution of the orignal martingale problem is also a
solution of the extended martingale problem. The proof of this result,
presented in Section \ref{sec:proofthmemp}, appeals to maps
$J_A\colon \bb R_+^A \to \bb R_+$, $A\subset S$, introduced in Section
\ref{sec:aux}, which mimics the norm on $\Sigma_A$ but are adapted to
the domain of the generator.  In Section \ref{sec:proofthmemp} we
prove that the solution of the martingale problem is absorbing. The
proof is based on the construction of a superharmonic functions in the
domain of the maringale problem. This is the part of the argument
which requires the extension of the domain.  Finally, in Section
\ref{sec8}, we prove the convergence of the condensed zero-range
process to the boundary dimension-decaying diffusion process. In this
last section we present further properties of this process. We show,
for example, that it reaches one of the vertices of the simplex in a
time which has finite expectation.

\section{The trace process}
\label{sec:trace}

We introduce in this section the trace of the $S$-valued Markov chain
induced by generator $\mc L_S$ on a nonempty proper subset $B$ of $S$.
We also define a projection map
\begin{equation*}
\gamma_B \colon \bb R^S \to \{x\in\bb R^S : x_i =0 \;
\forall\; i\in S\setminus B\}
\end{equation*}
which plays an important role in the extension of the martingale
problem.

Recall from \cite[Section~6]{BL10} the definition of trace process,
and from \eqref{23} that $\mc L_S$ stands for the generator of the
$S$-valued, continuous time, irreducible Markov chain induced by the
jump rates $r$ over $S$.  Denote by $\cb{\mc D(\bb R_+,S)}$ the space
of $S$-valued, right-continuous trajectories with left-limits
$x\colon \bb R_+\to S$ equipped with the Skorohod topology and its
associated Borel $\sigma$-field.  Denote by $\cb{\bb P_j}$ the
probability measure on $\mc D(\bb R_+, S)$ induced by the Markov
process with generator $\mc L_S$, starting from state $j\in S$. For a
nonempty, proper subset $B$ of $S$, let $T_B$, $T^+_B$ be the hitting
time of $B$, and the return time to $B$, respectively:
$$
\cb{T_B} \,:=\,  \inf \{t\geq 0: x_t \in B\},
\quad \cb{T^+_B}\,:=\,  \inf \{t\geq \tau_1: x_t \in B\},
$$
where $\tau_1$ represents the time of the first jump:
\begin{equation*}
\tau_1 \,=\, \inf \{t\geq 0: x_t \neq x_{0}\}\,.
\end{equation*}

Assuming $|B|\geq 2$, let $(x^B_t)_{t \geq 0}$ denote the trace of the
process $(x_t)_{t \geq 0}$ on $B$ (for details, see \cite{BL10}). This
trace process is an irreducible, $B$-valued Markov chain with jump
rates $\bs r^B = r^B(j, k)$ given by
\begin{equation}
\label{eq:rB}
\cb{r^B(j,k) } \, = \,\lambda_j \,
\bb P_j[T_k = T_B^+]\,, \;\;  j\neq k\in B \;, \quad 
\cb{ r^B(j,j) } \,= \,0\,, 
\end{equation}
where, recall, $\lambda_j$, $j\in S$, represent the holding rates of
the Markov chain induced by the generator $\mc L_S$.  Denote by
$\cb{\mc L^B_S}$ the generator of the $B$-valued Markov chain with
jump rates $r^B(\cdot \, , \cdot)$, and by $\lambda^B(j)$ the holding
times: $\cb{\lambda^B(j) } := \sum_{k\in B\setminus\{j\}} r^B(j,k) $.

Fix a subset $B$ of $S$ with at least two elements.  For each
$i\in B$, let $\cb{u^B_i \colon S\rightarrow \bb [0,1]}$ be the
$\mc L_S$-harmonic extension to $S$ of the indicator function of $i$
on $B$.  In other words, $u^B_i$ is the unique solution to
\begin{equation}
\label{10}
\begin{cases}
u^B_i(j) = \delta_{i,j} & \text{for } j\in B \\
(\mc L_S u^B_i)(j) = 0 & \text{for } j\in S\setminus B.
\end{cases}
\end{equation}
It is well-known that the solution $u^B_k(\cdot)$ has a stochastic
representation given by
\begin{equation}
\label{eq:probrep}
u^B_k(j) \,=\, \bb P_j[T_k = T_B] \,, \quad j\in S\,.
\end{equation}

We turn to the formula \eqref{eq:rB} of the jump rates $r^B(j,k)$.  By
the strong Markov property applied to the identity \eqref{eq:probrep} at
the time of the first jump, for all $i\not\in B$, $k\in B$.
\begin{equation}
\label{24}
\lambda_i\, u^B_k(i) \,=\, \lambda_i\, \bb P_i[T_k = T_B] 
\,=\, \sum_{j\in S} r(i,j)\, \bb P_j [T_k = T_B]
\,=\, \sum_{j\in S} r(i,j)\, u^B_k(j)\,.
\end{equation}
Fix $j$, $k\in B$, $k\neq j$.  By the strong Markov property applied
to the identity \eqref{eq:rB} at the time of the first jump,
\begin{equation}
\label{20}
r^B(j,k) = r(j,k) + \sum_{l\in B^c}
r(j,l) \, \bb P_l[T_k = T_B] = \sum_{l\in S} r(j,l) \, u^B_k(l), \;\;
\text{for}\;\; k\neq j\in B.
\end{equation}
As $u^B_k(j)=0$, we may subtract $u^B_k(j)$ from $u^B_k(l)$ to obtain that
\begin{equation}
\label{01}
r^B(j,k) = (\mc L_S \, u^B_k)(j)\,.
\end{equation}

Similarly, for $k\in B$,
\begin{equation*}
\lambda_k\, \bb P_k[\, T^+_k = T^+_B\,]
\,=\, \sum_{\ell\in B^c} r(k,\ell)\, \bb P_\ell[\, T_k = T_B\,]
\,=\, \sum_{\ell\in B^c} r(k,\ell)\, u^B_k(\ell)\,.
\end{equation*}
On the other hand, by \eqref{eq:rB}, and the definition of
$\lambda^B(k)$,
\begin{equation*}
\lambda_k\, \bb P_k[\, T^+_k = T^+_B\,]
\,=\, \lambda_k\, \big\{\, 1 \,-\, \sum_{j\in B\setminus\{k\}}
\bb P_k[\, T^+_j = T^+_B\,]\,\big\}
\,=\, \lambda_k \,-\, \sum_{j\in B\setminus\{k\}}
r^B(k,j) \,=\,
\lambda_k  \,-\, \lambda^B(k) \,.
\end{equation*}
Therefore, by the two previous identities 
\begin{equation}
\label{25}
\sum_{\ell\in B^c} r(k,\ell)\, u^B_k(\ell) \,=\,
\lambda_k  \,-\, \lambda^B(k) \,.
\end{equation}
Moreover, since $\sum_{k\in B} u^B_k(l) =1$ for all $l\in S$,
summing \eqref{20} over $k\in B\setminus \{j\}$ yields that
\begin{equation*}
\lambda^B(j) = \sum_{k\in B\setminus \{j\}} r^B(j,k) =
\sum_{l\in S} r(j,l)  \sum_{k\in B\setminus \{j\}} u^B_k(l)
=
\sum_{l\in S} r(j,l) \, [ 1 - u^B_j(l) ]\,.
\end{equation*}
As $u^B_j(j)=1$, we conclude that
\begin{equation}
\label{02}
\lambda^B(j) \,=\, -\, (\mc L_S u^B_j)(j)\,.
\end{equation}

\subsection{The projection map}
    
Let $\cb{A = S\setminus B}$.  Define the linear projection map
$\color{blue} \gamma_B:\bb R^S\rightarrow \{ x\in \bb R^S : x_i = 0 \;
\forall i\in A\}$ by
\begin{equation}
\label{eq:proj}
[\gamma_B(x)]_j = u^B_j \cdot x = x_j
+  \sum_{k\in A} u^B_j(k) x_k, \;\; j\in B,
\;\;\text{ and }\;\; [\gamma_B(x)]_i = 0, \;\; i\in A.
\end{equation}
Note that the restriction of $\gamma_B$ to $\Sigma$ maps into
$\Sigma_B$, so we may write
$\gamma_B \colon \Sigma\rightarrow \Sigma_B$.  By the definition of
$\gamma_B$, $\bs v_j$, \eqref{01}, and \eqref{02},
\begin{equation}
\label{12}
\begin{gathered}
[\gamma_B(\bs v_j)]_k = u^B_k \cdot \bs v_j = \mc L_S
u^{B}_k(j) = r^B(j,k),\;\; j\,,\, k\in B\,, \;\; k\neq j.
\\
[\gamma_B(\bs v_j)]_j = u^B_j \cdot \bs v_j =
(\mc L_S u^{B}_j) (j) \,=\,  -\, \lambda^B(j),\;\; j\in B\,.
\end{gathered}
\end{equation}
Thus, the vectors $\gamma_B(\bs v_j)$, $j \in B$, relate to the
generator $\mc L^B_S$ in the same way that the vectors $\bs v_i$,
$i\in S$, relate to the generator $\mc L_S$.

On the other hand, as $u^{B}_k$, $k\in B$, is $\mc L_S$-harmonic on
$A$,
\begin{equation*}
[\gamma_B(\bs v_i)]_k \,=\, u^{B}_k \cdot \bs v_i = \mc L_S
u^{B}_k(i) = 0 \;\;\text{ for all $i\in A = S\setminus B$}\,.
\end{equation*}
In conclusion,
\begin{equation}
\label{eq:projofvect}
\gamma_B(\bs v_j) = \bs v^B_j,\;\; j\in B \;\;
\text{ and } \;\; \gamma_B(\bs v_i) = 0\,,
\;\; i\in A=S\setminus B\, .
\end{equation}

\begin{lemma}
\label{lem:lincomb}
The following properties hold:

\begin{enumerate}[leftmargin=*]
\item[\textup{(1)}] For $\varnothing \neq B\subsetneq S$,
$\{\bs v_i, i\in B\}$ are linearly independent.
\item[\textup{(2)}] For all $x\in \bb R^S$, $\gamma_B(x) - x$ is a
linear combination of $\bs v_k$ for $k\in A=S\setminus B$.
\end{enumerate}
\end{lemma}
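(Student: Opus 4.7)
The plan is to reduce both claims to standard facts about the generator matrix of an irreducible Markov chain.

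For (1), I would identify $\bs v_i$ with the $i$-th row (viewed as a column vector) of the generator matrix $L$ defined by $L_{ij} = r(i,j)$ for $i \neq j$ and $L_{ii} = -\lambda_i$; this is immediate from \eqref{17}, since the $k$-th coordinate of $\bs v_i$ equals $r(i,k)$ when $k\neq i$ and $-\lambda_i$ when $k=i$. A relation $\sum_{i \in B} c_i \bs v_i = 0$ then reads $\sum_{i \in B} c_i L_{ik} = 0$ for every $k \in S$. Extending the coefficients by zero on $A$ to a vector $\tilde c \in \bb R^S$, this becomes $\tilde c^{T} L = 0$, so $\tilde c$ lies in the left null space of $L$. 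By irreducibility this null space is one-dimensional and spanned by the strictly positive stationary vector $(m_i)$, so $\tilde c = \alpha m$. Since $\tilde c$ vanishes on the nonempty set $A$ while every $m_i > 0$, we must have $\alpha = 0$, and hence $c_i = 0$ for all $i \in B$.

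For (2) the strategy is to show that $\gamma_B$ is a linear projection and then match dimensions using (1). Linearity is built into \eqref{eq:proj}, and for $y$ in the $|B|$-dimensional subspace $V := \{y \in \bb R^S : y_i = 0 \text{ for all } i \in A\}$ the correction $\sum_{k \in A} u^B_j(k) y_k$ vanishes, giving $\gamma_B(y) = y$. Hence $\gamma_B$ is idempotent with image $V$, so $\gamma_B(x) - x \in \ker \gamma_B$ for every $x \in \bb R^S$. From \eqref{eq:projofvect}, $\bs v_k \in \ker \gamma_B$ for each $k \in A$, and applying (1) to the nonempty proper subset $A$ (which is legitimate precisely because $\varnothing \neq B \subsetneq S$) shows these vectors are linearly independent. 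Since $\operatorname{Im}(\gamma_B) = V$ has dimension $|B|$, the rank--nullity theorem forces $\dim \ker \gamma_B = |A|$, so the inclusion $\operatorname{span}\{\bs v_k : k \in A\} \subseteq \ker \gamma_B$ is actually an equality, and the conclusion follows.

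The only input I would call nontrivial is the classical fact that the left null space of an irreducible generator is spanned by the stationary distribution. Everything else is linear-algebraic bookkeeping, and I do not expect any serious obstacle; the main point to keep in mind is that part (1) must be applied symmetrically to $A$ in the proof of part (2).
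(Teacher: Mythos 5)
Your proof is correct and follows essentially the same route as the paper's: part (2) is verbatim the paper's argument (idempotence of $\gamma_B$, rank--nullity, and the inclusion $\{\bs v_k : k\in A\}\subset \ker\gamma_B$ via \eqref{eq:projofvect}). For part (1) you work with the left null space of the generator matrix directly, whereas the paper first establishes that the span of all $\bs v_i$ has dimension $|S|-1$ via the right kernel and then invokes the explicit relation $\sum_k m_k\bs v_k=0$; these are dual formulations of the same underlying fact about irreducible generators, so the difference is presentational rather than substantive.
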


\begin{proof}
For the first part, fix a vector $\bs f\in \bb R^S$. By definition of
$\bs v_i$, $\bs v_i \cdot \bs f = 0$ if and only if
$(\mc L_S \bs f)_i =0$. As the chain is irreducible,
$\bs v_i \cdot \bs f = 0$ for all $i\in S$ implies that $\bs f$ is
constant. Hence,
$ \text{dim}(\text{span}\{\bs v_k : k\in S\}) = |S|-1$. Therefore,
together with the fact that
\begin{equation*}
\sum_{k\in S} m_k \, \bs v_k = 0\,,
\end{equation*}
for any $i\in S$ the vectors $\{ \bs v_j,\; j\in S\setminus \{i\}\}$ are
linearly independent. This proves the first part.

We turn to the second assertion of the lemma.
Recall that
$$[\gamma_B(x)]_i = x_i + \sum_{j\in A} u^B_i(j) x_j, \;\;i\in B.$$
Thus, $\gamma_B( \gamma_B(x)) = \gamma_B(x)$ so that 
$$\gamma_B(\gamma_B(x) - x) = \gamma_B(x) - \gamma_B(x) = 0.$$
Therefore, to prove the assertion  we need
to analyze the kernel of $\gamma_B$.
    
We claim that
$\text{ker}(\gamma_B) = \text{span}\{\bs v_k : k\in A\}$.  On the one
hand, by \eqref{eq:projofvect},
$\text{span}\{\bs v_k : k\in A\} \subset \text{ker}(\gamma_B)$, and by
the first part of the lemma,
$\text{dim\,} ( \text{span}\{\bs v_k : k\in A\} ) = |A|$.  On the
other hand, since $\gamma_B$ preserves
$\bb R^B \times \{\bs 0\} \subset \bb R^S$,
$\text{dim}(\text{Im}(\gamma_B))\geq |B|$.  Therefore,
$\text{dim}(\text{ker}(\gamma_B)) \leq |S|-|B| = |A|$, so that
$$\text{span}\{\bs v_k : k\in A\} = \text{ker}(\gamma_B)\,,$$
which completes the proof of the lemma.
\end{proof}

The next result  is a direct consequence of Lemma \ref{lem:lincomb}.

\begin{lemma}
\label{lem:RAlinmap}
Fix $\varnothing \neq B \subsetneq S$. Let $A = S \setminus B$. For
$x\in \bb R^S$, let $\cb{x_A}$ be the cannonical projection of $x$ to
$\bb R^A$ defined by $[x_A]_i = x_i$ for $i\in A$. Then, there exists
a linear map $L_A : \bb R^A \to \bb R^A$ such that
\begin{equation}
\label{eq:RAlinmap}
\gamma_B(x) - x = \sum_{i\in A} [L_A(x_A)]_i \bs v_i.
\end{equation}
\end{lemma}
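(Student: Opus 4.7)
The plan is to combine the two parts of Lemma \ref{lem:lincomb}. By part (2), for every $x \in \bb R^S$, the vector $\gamma_B(x) - x$ belongs to $\mathrm{span}\{\bs v_k : k \in A\}$, so I may write $\gamma_B(x) - x = \sum_{i \in A} c_i(x)\, \bs v_i$ for some scalars $c_i(x)$. Applying part (1) of Lemma \ref{lem:lincomb} to the (nonempty proper) subset $A \subsetneq S$, the family $\{\bs v_i : i \in A\}$ is linearly independent, and consequently the scalars $c_i(x)$ are uniquely determined. The assignment $x \mapsto \gamma_B(x) - x$ is linear (since $\gamma_B$ is), and by uniqueness of the expansion each coefficient $x \mapsto c_i(x)$ is a linear functional on $\bb R^S$.

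Next, I would show that each $c_i$ depends on $x$ only through $x_A$. The crucial observation is that if $x_A = 0$, i.e., $x \in \bb R^B \times \{\bs 0\}$, then the formula \eqref{eq:proj} gives $[\gamma_B(x)]_j = x_j + \sum_{k \in A} u^B_j(k)\, x_k = x_j$ for $j \in B$ and $[\gamma_B(x)]_i = 0 = x_i$ for $i \in A$. Hence $\gamma_B(x) - x = \bs 0$, which by the uniqueness of the expansion forces $c_i(x) = 0$ for all $i \in A$. Thus each linear functional $c_i$ vanishes on the subspace $\bb R^B \times \{\bs 0\} = \ker(x \mapsto x_A)$, so it factors through the canonical projection: there exists a linear functional $\ell_i \colon \bb R^A \to \bb R$ with $c_i(x) = \ell_i(x_A)$. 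Setting $[L_A(y)]_i := \ell_i(y)$ for $y \in \bb R^A$ defines the desired linear map $L_A \colon \bb R^A \to \bb R^A$ satisfying \eqref{eq:RAlinmap}.

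The argument is essentially a bookkeeping exercise once the two halves of Lemma \ref{lem:lincomb} are in place: linear independence yields uniqueness of the coefficients, linearity of $\gamma_B$ transfers to linearity of the coefficients, and the direct verification that $\gamma_B$ fixes vectors in $\bb R^B \times \{\bs 0\}$ lets the coefficients factor through the projection to $\bb R^A$. There is no real obstacle here; the only point requiring a moment's care is confirming that $A$ is both nonempty and a proper subset of $S$ so that part (1) of Lemma \ref{lem:lincomb} applies to $A$ rather than to $B$, which is guaranteed by the hypothesis $\varnothing \neq B \subsetneq S$.
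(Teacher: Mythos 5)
Your proof is correct and follows essentially the same route as the paper's: apply Lemma \ref{lem:lincomb}(2) to obtain the expansion, invoke Lemma \ref{lem:lincomb}(1) (for the subset $A$) to get linear independence and hence uniqueness and linearity of the coefficients, verify from \eqref{eq:proj} that $\gamma_B$ is the identity on $\bb R^B\times\{\bs 0\}$, and conclude that the coefficient map factors through $\pi_A$. The only difference is that you spell out explicitly why $\gamma_B$ acts as the identity on $\ker\pi_A$ (which the paper abbreviates as ``$\gamma_B$ preserves $\bb R^B\times\{\bs 0\}$''), and you correctly note that $A$ is itself a nonempty proper subset so that part (1) applies.
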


\begin{proof}
From Lemma \ref{lem:lincomb}(2), there exists a linear map
$L: \bb R^S \to \bb R^A$ such that
$$
\gamma_B(x) - x = \sum_{i\in A} [L(x)]_i \, \bs v_i\,.
$$
Denote by $\pi_A: \bb R^S \to \bb R^A$ the canonical projection
defined by $\pi_A(x) = x_A$, we need to show that there exists a
factorization map $L_A: \bb R^A \to \bb R^A$ such that
$L = L_A \circ \pi_A$. This is equivalent to
$$
\text{ker} \, \pi_A \subset \text{ker} L\,.
$$
It is clear that $\text{ker} \, \pi_A = \bb R^B \times \{\bs 0\}$.  By
definition, $\gamma_B$ preserves $\bb R^B \times \{\bs 0\}$, which
implies that $\bb R^B \times \{\bs 0\} \subset \text{ker} L$
because, by Lemma \ref{lem:lincomb}.(1), the vectors $\{\bs v_i, i\in A\}$ are
linearly independent.
\end{proof}

An important property of the map $\gamma_B$ is that the function maps
an interior of a subsimplex of $\Sigma$ into the one of
$\Sigma_B$. This is stated as follows:

\begin{lemma}
\label{lem:splxrest}
Let $B, C \subset S$. For each $i\in B$, either $[\gamma_B(x)]_i=0$
for all $x\in \mathring{\Sigma}_C$ or $[\gamma_B(x)]_i\neq 0$ for all
$x\in \mathring{\Sigma}_C$.  Therefore, there exists $D\subset B$ such
that $\gamma_B(\mathring{\Sigma}_C) \subset \mathring{\Sigma}_D$.
\end{lemma}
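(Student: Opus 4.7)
The plan is to exploit the explicit formula \eqref{eq:proj} together with the probabilistic interpretation \eqref{eq:probrep} of the harmonic extensions $u^B_i$, and to notice that on the open face $\mathring{\Sigma}_C$ the expression $[\gamma_B(x)]_i$ becomes a sum of non-negative terms whose vanishing depends only on combinatorial data, not on the particular $x$.

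First I would fix $i \in B$ and any $x \in \mathring{\Sigma}_C$. Writing $A = S\setminus B$ and using \eqref{eq:proj} together with the fact that $x_j > 0$ for $j \in C$ and $x_j = 0$ for $j \notin C$, we obtain
$$
[\gamma_B(x)]_i \,=\, x_i\, \mtt 1\{i \in C\} \,+\, \sum_{k \in A \cap C} u^B_i(k)\, x_k .
$$
Since $u^B_i(k) \in [0,1]$ by \eqref{eq:probrep} and the coordinates $x_j$ with $j \in C$ are strictly positive, every term on the right-hand side is non-negative, and the sum vanishes if and only if (i) $i \notin C$, and (ii) $u^B_i(k) = 0$ for every $k \in A \cap C$. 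Neither condition depends on the specific $x \in \mathring{\Sigma}_C$, which establishes the dichotomy stated in the first half of the lemma.

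For the second assertion, define $D \subset B$ to be the set of indices $i \in B$ for which $[\gamma_B(x)]_i \neq 0$ for some (equivalently, for all) $x \in \mathring{\Sigma}_C$. To conclude $\gamma_B(\mathring{\Sigma}_C) \subset \mathring{\Sigma}_D$, I would verify three properties for $y := \gamma_B(x)$: first, $y_i = 0$ for $i \notin D$, which is immediate from the definition of $\gamma_B$ on $A$ and the definition of $D$; second, $y_i > 0$ for $i \in D$, which is immediate from the first part of the lemma; and third, $\sum_{i \in D} y_i = 1$. Because $y_i = 0$ outside $D$, the last point reduces to checking $\sum_{i \in B} [\gamma_B(x)]_i = 1$, which follows from a direct expansion using the identity $\sum_{i \in B} u^B_i(k) = 1$ for every $k \in S$; this identity in turn is a consequence of \eqref{eq:probrep} and the irreducibility of the chain, which guarantees $T_B < \infty$ almost surely.

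No serious obstacle is expected here; the content of the lemma is essentially combinatorial bookkeeping layered on top of the algebraic definition of $\gamma_B$. The only conceptually non-trivial point is the realization that on an open face of $\Sigma$, the strict positivity of the coordinates indexed by $C$ is what collapses the question of whether $[\gamma_B(x)]_i$ vanishes into a purely index-dependent criterion, which is precisely what forces the existence of a well-defined set $D$.
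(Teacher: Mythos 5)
Your proof is correct and follows essentially the same route as the paper: both compute $[\gamma_B(x)]_i$ via \eqref{eq:proj}, observe that on $\mathring{\Sigma}_C$ the vanishing of each coordinate reduces to a criterion depending only on $i$ and the $u^B_i(k)$ (not on $x$), and then take $D$ to be the set of indices where the coordinate is nonzero. The only cosmetic difference is that you explicitly verify $\sum_{i\in B}[\gamma_B(x)]_i=1$ via $\sum_{i\in B}u^B_i(k)=1$, whereas the paper takes this for granted from the already-established fact that $\gamma_B$ maps $\Sigma$ into $\Sigma_B$.
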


\begin{proof}
Let $ A = S \setminus B$.  By \eqref{eq:proj}, for every $i\in B$, 
$x\in \mathring{\Sigma}_C$,
\begin{equation}
\label{21}
[\gamma_B(x)]_i = x_i + \sum_{k\in A} u_i^B(k) \, x_k
= x_i + \sum_{k\in A \cap C} u_i^B(k) \, x_k \,.
\end{equation}
Since $x_j>0$ for all $j\in C$ and $u_\ell^B(k)\ge 0$ for
all $\ell\in B$, $k\in S$,
$[\gamma_B(x)]_i=0$ if, and only if, $x_i=0$ (that is $i\not \in C$)
and $u_i^B(k)=0$ for all $k\in A \cap C$. This condition does not
depend on the point $x\in \mathring{\Sigma}_C$, but only on the
indices $i$ and  $u^B_i(k)$. Therefore, either it holds for all points in
$\mathring{\Sigma}_C$ or it holds for none. This is the first
assertion of the lemma.

Let
\begin{gather*}
G\,=\, \{i\in B : u_i^B(k)=0 \text{ for all } k\in A \cap C \}  \,,
\\
D  \,=\,  B \setminus
\big\{\, [B\setminus C] \,\cap \, G \,\big\}
\,=  \,  [ B \cap C ]\,\cup\, [ B \setminus G ]\,,
\end{gather*}
so that $D\subset B$.

We claim that
$\gamma_B(\mathring{\Sigma}_C) \subset \mathring{\Sigma}_D$.  Fix
$x\in \mathring{\Sigma}_C$. To prove the assertion, we have to show
that $[\gamma_B(x)]_i >0$ if $i\in D$ and $[\gamma_B(x)]_i =0$
otherwise. 

Consider first the case $i\in D$. If $i\in B\cap C$, then $x_i>0$
because $x\in \mathring{\Sigma}_C$. Thus, by \eqref{21},
$[\gamma_B(x)]_i >0$. If $i\in B \setminus G$, then, $u_i^B(k)>0$ for
some $k\in A \cap C$. As $x\in \mathring{\Sigma}_C$, $x_k>0$.  Thus,
by \eqref{21}, $[\gamma_B(x)]_i \ge u_i^B(k)\, x_k >0$. In conclusion,
$[\gamma_B(x)]_i >0$ for all $i\in D$.

Fix $i\not\in D$. If $i\not\in B$, by \eqref{eq:proj},
$[\gamma_B(x)]_i =0$. Suppose that $i\in B\setminus D$, so that
$i\in (B \cap G)\setminus C$. As $i\not\in C$ and
$x\in \mathring{\Sigma}_C$, $x_i=0$. Since $i\in G$, $u_i^B(k)=0$ for
all $k\in A \cap C$. Thus, by \eqref{21}, $[\gamma_B(x)]_i =0$, as
claimed. This completes the proof of the lemma.  
\end{proof}

We conclude this section by presenting a formula for the composition
of projection maps associated with distinct simplices. This identity
plays a central role in establishing the absorbing structure of the
process, as stated in Proposition  \ref{prop:restriction}.

\begin{lemma}
\label{lem:recursive}
Fix $\varnothing \neq B \subset C \subset S$. Then,
$\gamma_B \circ \gamma_C = \gamma_B$.
\end{lemma}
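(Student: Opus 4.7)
The plan is to reduce the identity $\gamma_B\circ\gamma_C=\gamma_B$ to two facts already recorded in the excerpt: the description of $\gamma_C(x)-x$ as a combination of the vectors $\bs v_k$ with $k\in S\setminus C$ (Lemma \ref{lem:lincomb}(2)), and the identity $\gamma_B(\bs v_k)=0$ for every $k\in S\setminus B$ from \eqref{eq:projofvect}. Since $\gamma_B$ is linear, once we can write $\gamma_C(x)-x$ in terms of vectors killed by $\gamma_B$, the conclusion is immediate.

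Concretely, I would fix $x\in\bb R^S$ and apply Lemma \ref{lem:lincomb}(2) to the pair $(C,S\setminus C)$: there exist scalars $(c_k)_{k\in S\setminus C}$ such that
\begin{equation*}
\gamma_C(x)-x \;=\; \sum_{k\in S\setminus C} c_k\, \bs v_k\,.
\end{equation*}
The hypothesis $B\subset C$ gives $S\setminus C\subset S\setminus B$, so for every index $k$ appearing in the sum above we have $k\in S\setminus B$, and therefore $\gamma_B(\bs v_k)=0$ by \eqref{eq:projofvect}. Applying the linear map $\gamma_B$ to both sides yields
\begin{equation*}
\gamma_B(\gamma_C(x))-\gamma_B(x) \;=\; \sum_{k\in S\setminus C} c_k\, \gamma_B(\bs v_k) \;=\; 0\,,
\end{equation*}
which is the claim.

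I do not anticipate any real obstacle here: Lemma \ref{lem:lincomb} and \eqref{eq:projofvect} do all the work, and the hypothesis $B\subset C$ is used in exactly one place, namely to ensure the inclusion $S\setminus C\subset S\setminus B$ so that $\gamma_B$ annihilates every $\bs v_k$ in the expansion of $\gamma_C(x)-x$. If instead one preferred a direct probabilistic derivation avoiding Lemma \ref{lem:lincomb}, the same identity could be obtained from the stochastic representation \eqref{eq:probrep} by applying the strong Markov property at the hitting time $T_C$: for $m\in S$ and $i\in B$, partitioning according to the state $\ell\in C$ at which the chain first visits $C$ gives $u^B_i(m)=\sum_{\ell\in C} u^C_\ell(m)\, u^B_i(\ell)$, which, inserted into the formula \eqref{eq:proj} for $\gamma_B\circ\gamma_C$, reproduces $\gamma_B$. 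The algebraic proof via Lemma \ref{lem:lincomb}(2) is however shorter and keeps the argument purely at the level of linear algebra.
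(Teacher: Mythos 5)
Your argument is correct, and it is genuinely different from the paper's proof. The paper proceeds by expanding $[\gamma_B(\gamma_C(x))]_j$ coordinate-by-coordinate from \eqref{eq:proj}, subtracting $[\gamma_B(x)]_j$, and reducing the residual identity to
\begin{equation*}
u^C_j(i) + \sum_{k \in A\cap C} u^B_j(k) \, u^C_k(i) = u^B_j(i)\,,
\end{equation*}
which is then verified probabilistically via the strong Markov property at $T_C$ — essentially the alternative route you sketch in your last paragraph. Your main argument is cleaner: it uses Lemma \ref{lem:lincomb}(2) to write $\gamma_C(x)-x$ as a combination of $\bs v_k$ with $k\in S\setminus C$, observes that $B\subset C$ forces $S\setminus C\subset S\setminus B$ so each such $\bs v_k$ lies in $\ker\gamma_B$ by \eqref{eq:projofvect}, and concludes by linearity. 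This buys you a purely linear-algebraic one-liner that isolates precisely where the hypothesis $B\subset C$ enters, at the cost of relying on Lemma \ref{lem:lincomb}(2) (which, in the paper, itself rests on the probabilistic identity \eqref{24}); the paper's version makes the strong Markov step explicit but requires more bookkeeping. Both are valid, and yours is arguably the more transparent proof given the machinery already in place.
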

    
\begin{proof}
Fix $x\in \Sigma$, and recall that $A=S\setminus B$. By
\eqref{eq:proj}, for $j\in B$,
\begin{align*}
[\gamma_B(\gamma_C(x))]_j
& = [\gamma_C(x)]_j + \sum_{k\in A\cap C} u^B_j(k)\, [\gamma_C(x)]_k
\\
&= x_j + \sum_{i\in S\setminus C} u^C_j(i) \,x_i
+ \sum_{k\in A\cap C} u^B_j(k)\,
\Big( x_k + \sum_{i\in S\setminus C} u^C_k(i) \, x_i \Big)\,.
\end{align*}
We restricted the second sum in the first line to $k\in A\cap C$
because $[\gamma_C(x)]_k =0$ for $k\not\in C$.
    
On the other hand,
$$
[\gamma_B(x)]_j = x_j + \sum_{k\in A} u^B_j(k) \, x_k\,.
$$
Hence, deleting the common terms, to complete the proof it remains to
show that
$$
\sum_{i \in S\setminus C} u^C_j(i) \, x_i + \sum_{k\in A\cap C}
u^B_j(k) \sum_{i\in S\setminus C} u^C_k(i) \, x_i = \sum_{k\in
A\setminus C}u^B_j(k) \, x_k\,.
$$
Since $B \subset C$, in the last sum we may rewrite $A\setminus C$ as
$S\setminus C$. Comparing the coefficient of
$x_i, \; i\in S\setminus C$, the equation is equivalent to
$$
u^C_j(i) + \sum_{k \in A\cap C} u^B_j(k) \, u^C_k(i) = u^B_j(i)\,.
$$
By \eqref{eq:probrep}, this identity can be rewritten as
$$
\bb P_i[T_B=T_j] = \bb P_i[T_C = T_j] + \sum_{k\in A\cap C} \bb
P_i[T_C=T_k] \, \bb P_k[T_B = T_j]\,.
$$
which can be directly verified probabilistically.
    
We just proved that $[\gamma_B(\gamma_C(x))]_j = [\gamma_B(x)]_j$ for
all $x\in \Sigma$, $j\in B$. By \eqref{eq:proj}, this identity holds
trivially for $j\not \in B$. This completes the proof of the lemma.
\end{proof}

\section{Extension of the martingale problem}
\label{sec:edmp}

In this section, we introduce a relaxed version of the martingale
problem. This modification is motivated by the need to construct an
appropriate superharmonic function, which is essential for
establishing the absorbing property and, in turn, for proving the
uniqueness of the solution to the martingale problem.

Unlike in \cite{BJL}, the construction of the superharmonic function
in Section~\ref{sec:proofthmabs} cannot be carried out using only
functions from the original domain $\mc D_S$ of the martingale
problem. Consequently, it becomes necessary to enlarge the domain to
include functions with weaker regularity. The purpose of this section
is to develop this extended framework. 

To motivate this extension, we begin by examining the operator
$\mf {L}$ from a different perspective.  Recall from \eqref{18} and
\eqref{eq:LB} the definition of the domains $\mc D^{B}_A$ for
$\varnothing \subsetneq A \subset B \subset S$, $|B|\ge 2$ and the
operator $\mf L^B$. The next result provides an alternative formula
for the value of $\mf L F$ on the set $\Sigma_B$ in terms of the
generator $\mf L^B$.

\begin{proposition}
\label{prop:domrest}
Fix $F \in \mc D_S$. Then, for every subset $B$ of $S$ with at least
two elements, the restriction of $F$ to $\Sigma_B$, denoted by
$\cb{F|_{\Sigma_B}}$, belongs to the domain $\mc D^B_B$.  Moreover,
$(\mf L F) |_{\Sigma_B} = \mf L^B (F|_{\Sigma_B})$.
\end{proposition}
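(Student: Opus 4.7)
The statement splits into three claims: (a) $F|_{\Sigma_B}\in C^2(\Sigma_B)$; (b) $F|_{\Sigma_B}\in \mc D^B_B$; (c) $(\mf L F)|_{\Sigma_B} = \mf L^B(F|_{\Sigma_B})$. Claim (a) is immediate from the inclusion $T_{\Sigma_B}\subset T_\Sigma$: directional derivatives of $F\in C^2(\Sigma)$ along $\bs V\in T_{\Sigma_B}$ already exist and are continuous on $\Sigma$, so restrict continuously to $\Sigma_B$. For (b) and the drift portion of (c), I would use \eqref{eq:projofvect} to write $\bs v_j^B = \gamma_B(\bs v_j)$ and Lemma~\ref{lem:lincomb}(2) to express $\bs v_j^B - \bs v_j$ as a linear combination of $\{\bs v_k : k\in A\}$, where $A := S\setminus B$. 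Since $F\in \mc D_k$ forces the continuous extension of $\mtt 1\{x_k>0\}(m_k/x_k)\nabla_{\bs v_k} F$ on $\Sigma$ to equal $0$ on $\{x_k=0\}\cap \Sigma\supset \Sigma_B$, one has $\nabla_{\bs v_k} F\equiv 0$ on $\Sigma_B$ for $k\in A$; combined, these give $\nabla_{\bs v_j^B} F = \nabla_{\bs v_j} F$ on $\Sigma_B$ for $j\in B$, which simultaneously yields (b) and the drift match $(\nabla_{\bs b^B}(F|_{\Sigma_B}))(x) = (\nabla_{\bs b}F)(x)$ on $\Sigma_B$---the $i\in A$ terms of the latter vanishing pointwise via the factor $\mtt 1\{x_i>0\}$.

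The main step is the second-order identity at $x\in\mathring\Sigma_B$. Fix a $C^2$ extension $\tilde F$ of $F$ to $\bb R^S$, set $H_{ij}:=\partial_i\partial_j\tilde F(x)$, and let $M$ be the generator matrix of $\mc L_S$. Using \eqref{eq:ba}, the diffusion parts of $\mf L F(x)$ and $\mf L^B(F|_{\Sigma_B})(x)$ rewrite as $-\operatorname{tr}(D_m M H)$ and $-\operatorname{tr}(D_{m|_B} M^B H^{BB})$ respectively, with $H^{BB}$ the $B$-principal submatrix of $H$. The crucial input is a Hessian-level refinement of $F\in \mc D_i$ for $i\in A$: Taylor-expanding $\nabla_{\bs v_i} F(x+t\bs u) = t\nabla_{\bs u}(\nabla_{\bs v_i} F)(x) + O(t^2)$ along a ray entering $\mathring\Sigma$ from $x$ and dividing by $(x+t\bs u)_i = tu_i$, the continuity of $\mtt 1\{x_i>0\}(m_i/x_i)\nabla_{\bs v_i} F$ at $x$ forces $\nabla_{\bs u}(\nabla_{\bs v_i}F)(x)=0$ for every $\bs u$ in the inward tangent cone to $\Sigma$, hence for every $\bs u\in T_\Sigma$ by linearity. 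In matrix terms, $\sum_l u_l(MH)_{il}=0$ for all $\bs u\in T_\Sigma$, which together with $\sum_l H_{kl}=0$ gives $(MH)_{il}=0$ for every $i\in A$ and every $l\in S$.

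To close, invoke the Schur-complement identity $M^B = M^{BB} - M^{BA}(M^{AA})^{-1}M^{AB}$ (already implicit in the harmonic characterisation of $u^B_k$ leading to \eqref{20}). The block identity $M^{AB}H^{BB} + M^{AA}H^{AB}=0$ yields $H^{AB} = -(M^{AA})^{-1}M^{AB}H^{BB}$, and for $i\in B$,
\[
(MH)_{ii} \,=\, (M^{BB}H^{BB})_{ii} + (M^{BA}H^{AB})_{ii} \,=\, (M^B H^{BB})_{ii};
\]
summing against $m_i$ produces the diffusion identity at $x\in\mathring\Sigma_B$, and continuity of both sides on $\Sigma_B$ extends it to the whole face. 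The step I expect to be most delicate is the lift from the first-order boundary vanishing to the Hessian-level condition $(MH)_{il}=0$, in particular checking that the admissible inward directions span the full $T_\Sigma$.
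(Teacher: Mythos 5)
Your proof is correct, but the diffusion identity is established by a genuinely different route. The paper handles it through the projection maps $\gamma_B$, $\gamma_B^\dagger$ and the adjoint chain $\mc L_S^\dagger$: Lemma~\ref{l32} and Corollary~\ref{l33} convert $\sum_{i,j}a_{ij}\nabla_{\bs e_i}\nabla_{\gamma_B(\bs e_j)}F$ (and its adjoint counterpart) into the trace-chain quadratic form, while Lemma~\ref{l31} annihilates the leftover term $\sum_{i\in A}m_i\nabla_{\bs v_i}\nabla_{\bs e_i-\gamma_B^\dagger(\bs e_i)}\tilde F$ by a second-order vanishing along specific directions $\bs e_k-\bs w$, $\bs w\in\Sigma_B$. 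You instead express both diffusion parts as block-Hessian traces and invoke the Schur-complement factorisation $M^B=M^{BB}-M^{BA}(M^{AA})^{-1}M^{AB}$, which is the matrix form of the harmonic characterisation \eqref{10}--\eqref{20} and dispenses with the adjoint process altogether. Your Hessian-level claim $(MH)_{il}=0$ for all $l\in S$ is a modest strengthening of what the paper's Lemma~\ref{l31} proves: it follows because $\{\bs u\in T_\Sigma:u_k>0\ \forall\,k\in A\}$ is a nonempty open cone in $T_\Sigma$ (so spans it), which makes $(MH)_{i\cdot}$ constant over $l$, and the row-sum identity $H\bs 1=0$ (valid for the extension taken constant in the $\bs 1$-direction) then forces it to vanish. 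The route is arguably cleaner than the adjoint manipulations, but it does rely on invertibility of $M^{AA}$ (which holds by irreducibility since $A\subsetneq S$) and on the specific choice of extension behind $H\bs 1=0$, so those points should be stated explicitly. The drift computation, the identity $\nabla_{\bs v_j^B}F=\nabla_{\bs v_j}F$ on $\Sigma_B$, and the verification that $F|_{\Sigma_B}\in\mc D^B_B$ are handled essentially as in the paper via \eqref{eq:projofvect} and Lemma~\ref{lem:lincomb}.
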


To prove this second-order identity, we first establish a few auxiliary
lemmata controlling the second-order derivatives of $F$.

\begin{lemma}
\label{l31}
For any $k\in S\setminus B$, $F\in \mc D_k$, $\bs w \in \Sigma_B$,
$$
\nabla_{\bs e_k - \bs w}
\nabla_{\bs v_k} F = 0 \;\; \text{ \rm on } \;\;\Sigma_B\,.
$$
\end{lemma}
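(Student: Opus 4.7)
The plan is to exploit the key structural consequence of $F \in \mc D_k$. Setting
\[
G(x) \;:=\; \mtt 1\{x_k > 0\}\, \frac{m_k}{x_k}\, (\nabla_{\bs v_k} F)(x),
\]
the hypothesis forces $G$ to be continuous on $\Sigma$; moreover, since $k \in S \setminus B$ implies $x_k = 0$ for every $x \in \Sigma_B$, the indicator makes $G \equiv 0$ on $\Sigma_B$. Combined with the continuity of $\nabla_{\bs v_k} F$ on $\Sigma$ coming from $F \in C^2(\Sigma)$, this upgrades to the identity
\[
(\nabla_{\bs v_k} F)(x) \;=\; \frac{x_k}{m_k}\, G(x), \qquad x \in \Sigma,
\]
valid throughout $\Sigma$: on $\{x_k > 0\}$ it is a rearrangement of the definition of $G$, and for $x_k = 0$ both sides vanish (the right-hand side obviously, the left-hand side by approaching $x$ from interior points where the identity already holds). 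In particular $\nabla_{\bs v_k} F \equiv 0$ on $\Sigma_B$.

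The next step is to treat $y \in \mathring{\Sigma}_B$ by a path argument. I would first verify that $\gamma(t) := y + t(\bs e_k - \bs w)$ remains in $\Sigma$ for $t \in [0, t_0]$ with $t_0$ small: the $k$-th coordinate equals $t \ge 0$; the coordinates $y_i - t w_i$ for $i \in B$ remain positive since $y_i > 0$; the coordinates in $S \setminus (B \cup \{k\})$ stay at zero; and the sum is preserved because $\bs e_k - \bs w \in T_\Sigma$. Since $\nabla_{\bs v_k} F \in C^1(\Sigma)$, the fundamental theorem of calculus along $\gamma$ (justified by a Whitney extension of $\nabla_{\bs v_k} F$ to a $C^1$ function on $\bb R^S$) yields
\[
\nabla_{\bs v_k} F(\gamma(t)) \;=\; t \int_0^1 \nabla_{\bs e_k - \bs w}(\nabla_{\bs v_k} F)(\gamma(st))\, ds.
\]
The left-hand side equals $(t/m_k)\, G(\gamma(t))$ by the structural identity, so dividing by $t$ and letting $t \to 0^+$ gives $(1/m_k)\, G(y) = 0$ on the left, and $\nabla_{\bs e_k - \bs w}(\nabla_{\bs v_k} F)(y)$ on the right by continuity of the second-order derivative on $\Sigma$ (part of the assumption $F \in C^2(\Sigma)$). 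This establishes the claim on $\mathring{\Sigma}_B$.

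Finally, the identity extends to all of $\Sigma_B$ by continuity on $\Sigma$ of $\nabla_{\bs e_k - \bs w}(\nabla_{\bs v_k} F)$ together with the density of $\mathring{\Sigma}_B$ in $\Sigma_B$ (the cases $|B|\le 1$ being trivial, since then $\mathring{\Sigma}_B = \Sigma_B$ is empty or a single point). The only delicate point in the argument is the choice of approach direction: for a boundary point $y$ of $\Sigma_B$ the straight segment $\gamma(t)$ may leave $\Sigma$ as soon as some $y_i = 0$ has $w_i > 0$, which is precisely why the direct path computation must be confined to $\mathring{\Sigma}_B$ and then propagated by the continuous extension.
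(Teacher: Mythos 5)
Your argument is correct and follows the paper's proof in essence: both rest on the structural identity $\nabla_{\bs v_k}F(x) = (x_k/m_k)\,G(x)$ with $G$ continuous on $\Sigma$ and vanishing on $\{x_k=0\}$, together with the observation that $[\bs e_k - \bs w]_k = 1$, so that moving from $x$ in direction $\bs e_k-\bs w$ makes the $k$-th coordinate equal to $t$. The paper reads off the second derivative directly as the quotient $\lim_{t\to 0}(\nabla_{\bs v_k}F)(x+t(\bs e_k-\bs w))/t = \lim_{t\to 0}(1/m_k)\,G(x+t(\bs e_k-\bs w)) = 0$; your fundamental-theorem-of-calculus detour on $\mathring{\Sigma}_B$ followed by a density step is an equivalent, slightly more explicit version of that same limit computation, with the minor added benefit of checking in detail that the test segment stays in $\Sigma$.
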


\begin{proof}
Fix $k\in S\setminus B$, $F\in \mc D_k$.  As $F\in \mc D_k$,
$\nabla_{\bs b_k} F$ is a continuous function. Therefore,
$(\nabla_{\bs v_k} F)(x) = (x_k/m_k)\, H(x)$ for some continuous
function $H\in C(\Sigma)$ which vanishes if $x_k=0$. As $k\not\in B$,
$H|_{\Sigma_B} = 0$.

For $x\in \Sigma_B$, since $(\nabla_{\bs v_k} F)(x)=0$, 
\begin{align*}
(\nabla_{\bs e_k - \bs w} \nabla_{\bs v_k} F)(x)
\,=\, \lim_{t\to 0}  \frac{ (\nabla_{\bs v_k}F)
(x + t(\bs e_k - \bs w)) }{t}\,.
\end{align*}
As $\bs w \in \Sigma_B$, $[\bs w]_k =0$ so that
$[\bs e_k - \bs w]_k=1$. Hence, By definition of
$H$, as $H$ is continuous and $x_k=0$, this expression is equal to
\begin{align*}
\lim_{t\to 0} \frac{1}{m_k}\,
H(x + t(\bs e_k - \bs w)) \,=\,
\frac{1}{m_k}\,  H(x) \,=\, 0\,,
\end{align*}
as claimed.
\end{proof}

The following lemma describes how the trace process and the projection
map are related through the Hessian term.

\begin{lemma}
\label{l32}
For any $F \in C^2(\bb R^S)$, 
\begin{equation} \label{eq:l32}
\sum_{i, j\in S} a_{ij} \nabla_{\bs e_i} \nabla_{\gamma_B(\bs e_j)} F
= \frac{1}{2}\sum_{i,j\in B} m_i \, r^B(i,j)
\, (\partial_{x_i}- \partial_{x_j})^2 F\,.
\end{equation}
\end{lemma}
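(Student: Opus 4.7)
The plan is to exploit the bilinearity of both sides of \eqref{eq:l32} in their directional-derivative arguments. Since $\nabla_{\bs V}$ is linear in $\bs V$ and $\gamma_B$ is a linear map, I rewrite the LHS as $\sum_{i \in S} \nabla_{\bs e_i} \nabla_{\bs w_i} F$ with $\bs w_i := \sum_{j \in S} a_{ij}\, \gamma_B(\bs e_j)$. The entire proof then reduces to identifying these vectors $\bs w_i$ in terms of the trace-chain data.

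I claim $\bs w_i = 0$ for $i \in A := S\setminus B$ and $\bs w_i = -m_i \bs v_i^B$ for $i \in B$. For $i \in A$, computing the coordinate $[\bs w_i]_k$ with $k \in B$ by splitting the sum over $j$ into $j \in B$ and $j \in A$ gives $m_i[\lambda_i u_k^B(i) - \sum_{j \in A} r(i,j) u_k^B(j) - r(i,k)]$, which vanishes by \eqref{24} (harmonicity of $u_k^B$ on $A$); and $[\bs w_i]_k = 0$ for $k \in A$ since $\gamma_B$ annihilates the $A$-coordinates. For $i \in B$, the off-diagonal coefficients $[\bs w_i]_k$ ($k \in B,\, k\neq i$) equal $-m_i r^B(i,k)$ directly by \eqref{20}, while the diagonal coefficient is $[\bs w_i]_i = m_i\lambda_i - m_i\sum_{j \in A} r(i,j) u_i^B(j) = m_i \lambda^B(i)$, where the last equality is \eqref{25} with $k = i$. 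Recognizing the resulting vector via \eqref{eq:vjB} yields $\bs w_i = -m_i \bs v_i^B$.

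Substituting these identifications,
\[
\sum_{i,j\in S} a_{ij}\, \nabla_{\bs e_i} \nabla_{\gamma_B(\bs e_j)} F
\;=\; -\sum_{i\in B} m_i\, \nabla_{\bs e_i} \nabla_{\bs v_i^B} F
\;=\; \sum_{i,k\in B} m_i\, r^B(i,k)\,\bigl(\partial_{x_i}^2 - \partial_{x_i}\partial_{x_k}\bigr) F.
\]
Expanding $(\partial_{x_i} - \partial_{x_j})^2 = \partial_{x_i}^2 - 2\partial_{x_i}\partial_{x_j} + \partial_{x_j}^2$ on the right-hand side of \eqref{eq:l32}, the desired identity reduces to the symmetrization $\sum_{i\in B} m_i\, \lambda^B(i)\, \partial_{x_i}^2 F = \sum_{j\in B}\bigl(\sum_{i\in B} m_i\, r^B(i,j)\bigr)\, \partial_{x_j}^2 F$, i.e., to the stationarity relation $\sum_{i\in B} m_i\, r^B(i,j) = m_j\, \lambda^B(j)$ for the trace chain. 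I derive this stationarity by applying the invariance $\sum_{j\in S} m_j (\mc L_S f)(j) = 0$ to $f = u_k^B$, noting that $(\mc L_S u_k^B)(j) = 0$ for $j \in A$ by \eqref{10}, and then inserting \eqref{01} and \eqref{02}.

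I expect the main obstacle to be the careful coordinate-by-coordinate bookkeeping in computing $\bs w_i$, in particular the separate treatment of the diagonal entry when $i \in B$, where \eqref{25} enters. The conceptual point is that contracting one of the two directional-derivative arguments through $\gamma_B$ implements precisely the passage from the $\mc L_S$-Hessian bilinear form on $\bb R^S$ to the $\mc L^B_S$-Hessian form on $\bb R^B$, and this is made algebraically transparent through the harmonicity identities for $u_k^B$.
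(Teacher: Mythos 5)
Your proof is correct and follows essentially the same route as the paper's: both reduce the left-hand side to the trace-chain quadratic form $\sum_{i\in B} m_i \lambda^B(i)\,\partial_{x_i}^2 F - \sum_{i,k\in B} m_i r^B(i,k)\,\partial_{x_i}\partial_{x_k} F$ using the harmonicity relation \eqref{24} for $i\in A$ and the identities \eqref{20}, \eqref{25} for $i\in B$, though you organize the bookkeeping by first identifying $\sum_j a_{ij}\gamma_B(\bs e_j) = -m_i\bs v_i^B\,\mtt 1\{i\in B\}$ rather than manipulating the double sum directly. One small merit of your write-up: you make explicit the final symmetrization step via the trace-chain stationarity $\sum_{i\in B} m_i r^B(i,j)=m_j\lambda^B(j)$, which the paper's proof stops just short of and leaves to the reader.
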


\begin{proof}
By the definition \eqref{eq:proj} of $\gamma_B(\bs e_j)$,
\begin{equation}
\label{26}
\begin{aligned}
\sum_{i, j\in S} a_{ij} \nabla_{\bs e_i} \nabla_{\gamma_B(\bs e_j)} F
& = \sum_{i, j\in S}\sum_{k\in B} a_{ij} \,  \nabla_{\bs e_i} \nabla_{\bs e_k} F
\Big\{ \delta_{j,k} + \sum_{\ell\in A} u^B_k(\ell )\,
\delta_{j,\ell}\, \Big\}
\\
& = \sum_{i\in S}\sum_{k\in B} a_{ik}\,  \nabla_{\bs e_i} \nabla_{\bs e_k} F
\,+\, 
\sum_{i\in S}\sum_{k\in B} \nabla_{\bs e_i} \nabla_{\bs e_k} F \sum_{\ell\in A}
a_{i\ell} \,  u^B_k(\ell ) \,,
\end{aligned}
\end{equation}
where $ A=S\setminus B$,

If $i\in A$, by the definition \eqref{22} of $a_{i\ell}$, and
\eqref{24},
\begin{equation*}
\sum_{\ell\in A} a_{i\ell} \,  u^B_k(\ell )\,=\, -\, a_{ik}\,.
\end{equation*}
The right-hand side of \eqref{26} is thus equal to
\begin{equation*}
\sum_{i, k\in B} a_{ik} \,  \nabla_{\bs e_i} \nabla_{\bs e_k} F
\,+\, 
\sum_{i,k\in B} \nabla_{\bs e_i} \nabla_{\bs e_k} F \sum_{\ell\in A}
a_{i\ell}  \,  u^B_k(\ell ) \,.
\end{equation*}
On the other hand, by the first identity in \eqref{20} and \eqref{25},
if $i\in B$, $i\neq k$, 
\begin{equation*}
\sum_{\ell\in A} a_{i\ell} \,  u^B_k(\ell )\,=\, -\, a_{ik}\,-\,
m_i\, r^B(i,k) \quad\text{and}\quad
\sum_{\ell\in A} a_{k\ell} \,  u^B_k(\ell )\,=\, -\,
m_{k}\,[\, \lambda_k - \lambda^B(k)\,]\,.
\end{equation*}
The right-hand side of \eqref{26} can be further simplified to
\begin{equation*}
\sum_{k\in B} m_k\, \lambda^B(k)\, \nabla^2_{\bs e_k} F
\,-\, 
\sum_{i\in B} \sum_{k\in B\setminus\{i\}} m_i\, r^B(i,k) \, 
\nabla_{\bs e_i} \nabla_{\bs e_k} F \,.
\end{equation*}
This completes the proof of the lemma.
\end{proof}

A direct use of Lemmata \ref{l31} and \ref{l32} does not yield full
control of \cp{$D_{\tilde{a}}$}, since the left-hand side of
\eqref{eq:l32} is not symmetric in $i$ and $j$.  In effect, we need to
swap the indices, and this is achieved by introducing the adjoint
generators $\mc L_S^\dagger$ and $(\mc L_S^B)^\dagger$.

Let $r^\dagger $ be the adjoint of $r$ with respect to
stationary measure $m$, explicitly given by
$$
\cb{r^\dagger (i,j)} \,:=\, m_j\, r(j,i) / m_i\,, \quad i\neq j\in S\,.
$$
Denote by $\cb{\mc L^\dagger_S}$ the generator of the Markov chain
associated to the jump rates $r^\dagger (i,j)$, and let
\begin{equation*}
\cb{\bs v^\dagger_i} \,: =\,
\sum_{k\in S} r^\dagger(i,k) \, (\bs e_k - \bs e_i)\,, \quad
\cb{a^\dagger_{ij}}
\,:=\, -\, m_i \, \bs v^\dagger_i \cdot \bs e_j, \;\; i,j\in S\,,
\end{equation*}
so that
\begin{equation*}
a^\dagger_{ij} \,=\, -\, m_i \, r^\dagger(i,j) \;\;\text{for}\;\;
i\neq j\in S\,,\;\;\text{and}\;\;
a^\dagger_{ii}  \,=\, m_i \, \lambda^\dagger(i)\,,\;\;
\text{where}\;\;
\cb{\lambda^\dagger(i)}\,:=\, \sum_{j\neq i}
r^\dagger(i,j)  \,.
\end{equation*}

For a subset $B$ of $S$ with at least two elements, denote by
$\{ \cb{r^{B,\dagger} (j,k)} : j,k\in B\}$ the jump rates of the trace
on $B$ of the Markov chain with generator $\mc L^\dagger_S$.  As in
\eqref{10}, \eqref{eq:proj}, introduce the equilibrium potentials
$\cb{u^{B,\dagger}_i}\colon S\to [0,1]$ and the projection maps
$\cb{\gamma_B^\dagger}\colon\Sigma\rightarrow \Sigma_B$ replacing the
generator $\mc L_S$ by its adjoint $\mc L^\dagger_S$.

\begin{corollary}
\label{l33}
For any $F \in C^2(\bb R^S)$, 
$$
\sum_{i, j\in S} a_{ij} \nabla_{\bs e_j}
\nabla_{\gamma_B^\dagger(\bs e_i)} F
\,=\, \frac{1}{2}\sum_{i,j\in B} m_i \, r^{B,\dagger} (i,j)
(\partial_{x_i} - \partial_{x_j})^2 F.
$$
\end{corollary}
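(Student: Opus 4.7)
The plan is to reduce Corollary \ref{l33} to Lemma \ref{l32} applied to the \emph{adjoint} Markov chain, combined with the matrix identity $a^\dagger_{ij}=a_{ji}$ and a relabeling of the dummy summation indices on the left-hand side. The point is that the non-symmetry in $i$ and $j$ on the left of \eqref{eq:l32} can be transferred through the reversal.

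First, I would verify that Lemma \ref{l32} applies to the adjoint data $(r^\dagger,\gamma_B^\dagger,r^{B,\dagger})$ in place of $(r,\gamma_B,r^B)$. The adjoint chain is irreducible since $r^\dagger(i,j)>0\iff r(j,i)>0$, and $m$ is again its stationary measure, because $\sum_{i\neq j} m_i\, r^\dagger(i,j) = \sum_{i\neq j} m_j\, r(j,i) = m_j\,\lambda_j$. Applying Lemma \ref{l32} to the adjoint chain therefore yields
\[
\sum_{i,j\in S} a^\dagger_{ij}\,\nabla_{\bs e_i}\nabla_{\gamma_B^\dagger(\bs e_j)} F
\,=\, \frac{1}{2}\sum_{i,j\in B} m_i\, r^{B,\dagger}(i,j)\,(\partial_{x_i}-\partial_{x_j})^2 F\,.
\]

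Next I would establish the identity $a^\dagger_{ij}=a_{ji}$. For $i\neq j$, $a^\dagger_{ij}=-m_i\, r^\dagger(i,j) = -m_j\, r(j,i) = a_{ji}$ directly from the definitions. For the diagonal case, the invariance of $m$ under $\mc L_S$ gives $\sum_{k\neq i} m_k\, r(k,i) = m_i\,\lambda_i$, so $a^\dagger_{ii} = m_i\,\lambda^\dagger(i) = \sum_{k\neq i} m_k\, r(k,i) = m_i\,\lambda_i = a_{ii}$. Substituting $a^\dagger_{ij}=a_{ji}$ into the displayed equation and relabeling the dummy indices $(i,j)\mapsto(j,i)$ on the left-hand side produces the left-hand side of Corollary \ref{l33} exactly, while the right-hand side is untouched and matches the claimed expression. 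I expect no real obstacle: the only point requiring a moment of thought is the legitimacy of applying Lemma \ref{l32} to the adjoint chain, but inspection of its proof shows that it relies solely on stationarity of $m$ and the defining relations of $u^B_k$, $\gamma_B$, and $r^B$, all of which admit direct adjoint analogues.
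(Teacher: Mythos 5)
Your proof is correct and is essentially the paper's proof run in the reverse direction: the paper first rewrites the left-hand side via the substitution $a^\dagger_{ij}=a_{ji}$ and an index swap, then invokes Lemma \ref{l32} for the adjoint rates, whereas you start from the adjoint version of Lemma \ref{l32} and then substitute and relabel. You additionally spell out the verifications that $m$ is stationary for the adjoint chain and that $a^\dagger_{ii}=a_{ii}$, which the paper leaves implicit, but the underlying argument is identical.
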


\begin{proof}
Since  $a^\dagger_{ij} = a_{ji}$, 
\begin{align*}
\sum_{i, j\in S} a_{ij} \nabla_{\bs e_j}
\nabla_{\gamma_B^\dagger (\bs e_i)} F
= \sum_{i, j\in S} a^\dagger_{ij} \nabla_{\bs e_i}
\nabla_{\gamma_B^\dagger(\bs e_j)} F \,.
\end{align*}
By the previous lemma with the adjoint rates in place of the rates,
this expression is equal to
\begin{equation*}
\frac{1}{2}
\sum_{i,j\in B} m_i \,  r^{B,\dagger} (i,j) \, (\partial_{x_i}- \partial_{x_j})^2 F\,,
\end{equation*}
as claimed.
\end{proof}

Since $m_i \, r^{B,\dagger} (i,j) = m_j \, r (j,i) $, the right-hand
side of Lemma \ref{l32} and Corollary \ref{l33} are the same. This is
because the adjoint of the trace process is the trace of the adjoint
process.

\begin{proof} [Proof of Proposition \ref{prop:domrest}]
We first claim that $F|_{\Sigma_B} \in \mc D^B_B$. Recall that
$A=S\setminus B$. Fix $i\in B$ and $x^n\in \mathring{\Sigma}_{B}$ with
$(x^n)_i \to 0$.  It is enough to show that
$$ \frac{\nabla_{\bs v^B_i} F(x^n)}{(x^n)_i} \to 0\,. $$
By \eqref{eq:projofvect} and as $F$ belongs to $\mc D_S$, 
$$ \nabla_{\bs v^B_i} F(x^n) = \nabla_{\gamma_B(\bs v_i)} F(x^n)
= \nabla_{\bs v_i} F(x^n) - \nabla_{\bs v_i - \gamma_B(\bs v_i)}
F(x^n)\,.
$$
By definition of $\mc D_S$, $(\nabla_{\bs v_j} F)(y) = 0$ for
$y\in \Sigma_{B}$, $j\in A$. Thus, by Lemma \ref{lem:lincomb},
$$ \nabla_{\bs v_i - \gamma_B(\bs v_i)} F(x^n) = 0\,, $$
so that
$$
\lim_{n\to\infty} \frac{\nabla_{\bs v^B_i} F(x^n)}{(x^n)_i} \,=\,
\lim_{n\to\infty} \frac{\nabla_{\bs v_i} F(x^n)}{(x^n)_i} \,=\, 0$$
because $F\in \mc D_S$.  This completes the proof of the first part of
the proposition.

It remains to show that
$\mf L^B (F|_{\Sigma_B}) = (\mf L F)|_{\Sigma_B}$.  Fix
$x\in \Sigma_B$. Let $\tilde{A} = \{i\in S : x_i = 0\}$ and
$\tilde{B} = S\setminus \tilde{A}$. By definition,
$A\subset \tilde{A}$ and $\tilde{B} \subset B$.

We start from the definition of $\mf L$:
\begin{align*}
\mf L F(x) &= \nabla_{\bs b} F(x) + \frac{1}{2} \sum_{i,j} m_i \, r(i,j)\,
[\nabla_{\bs e_i - \bs e_j} (\nabla_{\bs e_i - \bs e_j} F)](x).
\end{align*}
By definition of the operator $\nabla_{\bs b} $, the first term is
equal to
\begin{align*}
\nabla_{\bs b} F(x) &= \sum_{j\in \tilde{B}} \nabla_{\bs b_j} F(x)
\,=\, b \,  \sum_{j\in \tilde{B}} m_j \,
\frac{\nabla_{\bs v_j}F(x)}{x_j}
\,=\, b \,  \sum_{j\in \tilde{B}} m_j
\frac{\nabla_{\bs v_j - \gamma_B(\bs v_j)}F(x)}{x_j}
\,+\, b \,  \sum_{j\in \tilde{B}} m_j \frac{\nabla_{\gamma_B(\bs v_j)}F(x)}{x_j}\,\cdot
\end{align*}
As in the first part of the proof, by definition of $\mc D_S$,
$\nabla_{\bs v_i} F(y) = 0$ for $i\in A$, $y\in \Sigma_{B}$. Thus,
since $x\in \Sigma_{B}$, by Lemma
\ref{lem:lincomb}, the first term on the right-hand side vanishes, so
that 
\begin{equation}
\label{eq:SB1}
\nabla_{\bs b} F(x) \,=\, b \, 
\sum_{j\in \tilde{B}} m_j \frac{\nabla_{\gamma_B(\bs v_j)}F(x)}{x_j}
\,=\, \nabla_{\bs b^B} F(x)\,.
\end{equation}

We turn to the second term. Let $\tilde{F}\in C^2(\bb R^S)$ be an
extension of $F$.  By \eqref{eq:ba} and Corollary \ref{l33}, the
second term is equal to
\begin{align*}
\sum_{i,j\in S} a_{ij} \partial_{x_i} \partial_{x_j} \tilde{F}(x)
&= \sum_{i,j\in S} a_{ij} \nabla_{\bs e_j} \nabla_{\bs e_i}
\tilde{F}(x)
\\
&= \frac{1}{2}\sum_{i,j\in B} m_i \, r^{B,\dagger} (i,j)\,
(\partial_{x_i} - \partial_{x_j})^2 \tilde{F}(x)
\,+\, \sum_{i,j\in S} a_{ij} \nabla_{\bs e_j}
\nabla_{\bs e_i - \gamma^\dagger_B(\bs e_i)} \tilde{F}(x)\,. 
\end{align*}
Since $\sum_j a_{ij} \bs e_j = -m_i\, \bs v_i$ and
$\gamma^\dagger_B(\bs e_i) = \bs e_i$ for $i\in B$, the second term
is equal to
\begin{align*}
- \, \sum_{i\in S}  m_i \, \nabla_{\bs v_i}\, 
\nabla_{\bs e_i - \gamma^\dagger_B(\bs e_i)} \, \tilde{F}(x)
\,=\,
- \, \sum_{i\in A}  m_i \, \nabla_{\bs v_i}\, 
\nabla_{\bs e_i - \gamma^\dagger_B(\bs e_i)} \, \tilde{F}(x)
\end{align*}
Since $x\in \Sigma_B$, by Lemma \ref{l31}, this expression vanishes.

Therefore,
\begin{equation}
\label{eq:SB2}
\sum_{i,j\in S} a_{ij} \, \partial_{x_i} \,
\partial_{x_j}  \tilde{F}(x)
\,=\, \frac{1}{2}\sum_{i,j\in B} m_i\,
r^{B, \dagger} (i,j) \, (\partial_{x_i}- \partial_{x_j})^2 \tilde{F}(x)\,.
\end{equation}
As $m_i\, r^{B, \dagger} (i,j) = m_j\, r^{B} (j,i) $, combining
\eqref{eq:SB1} and \eqref{eq:SB2} yields that
\begin{equation}
\label{eq:SB}
\mf L F(x) = \mf L^B F(x)\,,
\end{equation}
which  completes the proof of the proposition.
\end{proof}

In light of Proposition \ref{prop:domrest}, we redefine the operator
$\mf L$ on the domain $\mc D_S$, then further extend the domain of the
operator.  To do so, we need the space $\cb{C_{\rm pc}(\Sigma)}$ (pc
for piecewise), the space of functions $F\colon \Sigma \to \bb R$ such
that, for each subset $B$ of $S$ with at least two elements, the
function
$F|_{\mathring{\Sigma}_B} \colon \mathring{\Sigma}_B \to \bb R$ is
continuous. To differentiate the new operator (with extended domain)
from the original one, represented by $\mf L$, we denote it by
$\mf L^{\mc E}$.

\begin{definition}
\label{def:ed}
For $F\in C(\Sigma)$ satisfying
$F|_{\mathring{\Sigma}_C}\in C^2(\mathring{\Sigma}_C)$ for all
nonempty $C\subset S$, $|C|\geq 2$, define the operator
$\mf L^{\mc E}$ as follows: For $x\in \Sigma$, let
$B = \{ i \in S : x_i \neq 0 \}$. Then
$\mf L^{\mc E} F \in C_{\rm pc}(\Sigma)$ is defined by
\begin{equation}
\label{eq:eop}
\mf L^{\mc E} F(x) =\begin{cases}
[\mf L^B ( F|_{\mathring{\Sigma}_B} )] (x),
& \text{ if } B = \{ i \in S : x_i \neq 0 \},\; |B|\geq 2 \\
0, & \text{ otherwise.}
\end{cases}
\end{equation}
Here, $\mf L^B$ is the operator defined in \eqref{eq:LB}, considered
as a local second order differential operator on
$\mathring{\Sigma}_B$, as mentioned in Remark \ref{rem:sodo}.
\end{definition}

Note that $\mf L^{\mc E}$ equals $\mf L$ on the domain $\mc D_S$ by
Proposition \ref{prop:domrest}. The functions introduced in Definition
\ref{def:ed} can have pathological behavior near the boundary of the
domain, as the value of $\mf L^{\mc E} F$ may diverge as $x$
approaches the boundary of $\Sigma_B$, causing the martingale problem
to be ill-defined.  For this reason, it is necessary to introduce
further conditions.

\smallskip\noindent
\textbf{Condition $\mathfrak{E1}$:} For each $i\in S$,
$F\in C^1(\Sigma)$ satisfies condition $\mathfrak{E1}(i)$ if the map
\begin{equation}
\label{eq:condC1}
x \mapsto \frac{1}{x_i} \nabla_{\bs v_i} F(x),
\end{equation}
is bounded on $\{x\in \Sigma : x_i>0\}$. If $F$ satisfies condition
$\mathfrak{E1}(i)$ for all $i\in S$, then we say that $F$ satisfies
condition $\mathfrak{E1}$.

Similar to the definition in \eqref{eq:C2}, for $A\subset S$ with
$|A|\geq 2$, let
\begin{equation*}
\cb{ C^2_b(\mathring{\Sigma}_A)} \,:=\,
\big\{\, f \in C^2(\mathring{\Sigma}_A) :
f \text{ has bounded
second derivatives} \,\big\} \,.
\end{equation*}
The expression``$f$ has bounded second derivatives'' means that for
all tangent vectors $\bs V$ and $\bs W$ in $T_{\Sigma_A}$, the second
derivative $\nabla_{\bs V} (\nabla_{\bs W} f)$ is bounded on
$\mathring{\Sigma}_A$.  In other words,
there exists a finite constant $C_0>0$ such that
\begin{equation}
\label{27}
\big| \, [ \nabla_{\bs V} (\nabla_{\bs W} f) ] (x) \, \big| \,\le\,
C_0 \, \Vert \bs V\,\Vert\, \Vert \bs W\,\Vert
\end{equation}
for all $\bs V, \bs W \in T_{\Sigma_A}$ and
$x\in \mathring{\Sigma}_A$.

\smallskip\noindent \textbf{Condition $\mathfrak{E2}$:} We say that
$F\in C^1(\Sigma)$ satisfies condition $\mathfrak{E2}(A)$, 
$A \subset S$ with $|A|\geq 2$, if
$F|_{\mathring{\Sigma}_A}\in C^2_b(\mathring{\Sigma}_A)$.  If $F$
satisfies condition $\mathfrak{E2}(A)$ for all $A\subset S$ with
$|A|\geq 2$, then we say that $F$ satisfies condition $\mathfrak{E2}$.

\begin{remark}
\label{rem:C1}
By Lemma \ref{app:6}, a function $F\in C(\Sigma)$ such that
$F|_{\mathring{\Sigma}}\in C^2_b(\mathring{\Sigma})$ belongs to
$C^1(\Sigma)$, and, for any $B\subset S$ with $|B|\geq 2$,
$F|_{\Sigma_B}$ belongs to $C^1(\Sigma_B)$.
\end{remark}

\begin{definition}
For each $i\in S$, denote by $\mc E_i$ the collection of functions
$F\in C^1(\Sigma)$ satisifying condition
$\mathfrak{E1}(i)$ and condition $\mathfrak{E2}$.  In addition, let
\begin{equation*}
\cb{\mc E_A} \, :=\, \bigcap_{i\in A} \mc E_i \quad
\textrm{ for each nonempty $A\subseteq S$.}
\end{equation*}
\end{definition}

The next result is a consequence of Proposition \ref{prop:domrest}.

\begin{proposition}
\label{prop:naturalextend}
It holds that $\mc D_S\subset \mc E_S$. Moreover,  $\mf L^{\mc E} F =
\mf L F$ for all $F\in \mc D_S$.
\end{proposition}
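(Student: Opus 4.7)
The plan is to establish the proposition in two parts: first the set-theoretic inclusion $\mc D_S \subset \mc E_S$, and then the pointwise operator equality on this inclusion. Both steps are driven by Proposition~\ref{prop:domrest}, with the extreme points of $\Sigma$ requiring an extra, delicate argument.

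For the inclusion, fix $F \in \mc D_S$; since $F \in C^2(\Sigma) \subset C^1(\Sigma)$, only conditions $\mathfrak{E1}$ and $\mathfrak{E2}$ need to be checked. For each $i \in S$, the continuous function $\nabla_{\bs b_i} F = b(m_i/x_i)\,\mathtt 1\{x_i > 0\}\,\nabla_{\bs v_i} F$ is bounded on the compact set $\Sigma$, which immediately yields the boundedness of $x_i^{-1} \nabla_{\bs v_i} F$ on $\{x_i > 0\}$ required by $\mathfrak{E1}(i)$. For each $A \subset S$ with $|A| \ge 2$, Proposition~\ref{prop:domrest} provides $F|_{\Sigma_A} \in \mc D^A_A \subset C^2(\Sigma_A)$, so its second derivatives are continuous on the compact simplex $\Sigma_A$ and hence bounded on $\mathring{\Sigma}_A$, yielding $\mathfrak{E2}(A)$.

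For the operator equality, fix $x \in \Sigma$ and set $B := \{i \in S : x_i > 0\}$. If $|B| \ge 2$, then $x \in \mathring{\Sigma}_B$ and Proposition~\ref{prop:domrest} gives $\mf L F(x) = \mf L^B(F|_{\Sigma_B})(x)$; since $\mf L^B$ is a local second-order differential operator in the sense of Remark~\ref{rem:sodo} and $x$ is interior to $\Sigma_B$, the right-hand side equals $\mf L^B(F|_{\mathring{\Sigma}_B})(x) = \mf L^{\mc E} F(x)$. If $|B| = 1$, so $x = \bs e_k$ is an extreme point, then $\mf L^{\mc E} F(\bs e_k) = 0$ by definition and it remains to prove $\mf L F(\bs e_k) = 0$. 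The plan here is to fix any $j \neq k$ and apply Proposition~\ref{prop:domrest} with the two-element set $B = \{k, j\}$, obtaining $\mf L F(\bs e_k) = \mf L^B(F|_{\Sigma_B})(\bs e_k)$ with $F|_{\Sigma_B} \in \mc D^B_B$. Parametrizing $\Sigma_B$ by $t \mapsto t\bs e_k + (1-t)\bs e_j$ and writing $\phi(t) := F(t\bs e_k + (1-t)\bs e_j)$, the condition $F|_{\Sigma_B} \in \mc D^B_j$ forces the map $t \mapsto m_j\,r^B(j,k)\,\phi'(t)/(1-t)$ to have vanishing limit as $t \to 1^-$; since $\phi \in C^2([0,1])$, a short L'Hopital argument yields both $\phi'(1) = 0$ and $\phi''(1) = 0$. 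Because only these values enter the explicit expression for $\mf L^B(F|_{\Sigma_B})(\bs e_k)$, the drift and Hessian contributions both vanish, completing the proof.

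The main obstacle is exactly the extreme-point case. The identity $\mf L^{\mc E} F = \mf L F$ at a vertex $\bs e_k$ holds only because the boundary regularity encoded in $\mc D_S$ annihilates both first- and second-order derivatives of $F$ transversal to the vertex along each one-dimensional face, and Proposition~\ref{prop:domrest} packages this phenomenon so that the cancellation can be read off from a single L'Hopital computation on $\Sigma_{\{k,j\}}$.
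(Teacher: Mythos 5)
Your proposal is correct and follows the same skeleton as the paper's proof, namely reducing both parts to Proposition~\ref{prop:domrest}, but it is considerably more explicit. Two remarks. First, for condition $\mathfrak{E2}(A)$ you route through Proposition~\ref{prop:domrest} to get $F|_{\Sigma_A}\in\mc D^A_A\subset C^2(\Sigma_A)$ and then read off boundedness from compactness of $\Sigma_A$; the paper obtains the same conclusion more directly, since $T_{\Sigma_A}\subset T_\Sigma$ and $F\in C^2(\Sigma)$ already give continuous (hence bounded) second derivatives on the compact set $\Sigma_A$. Both are valid. Second, and more substantively, the paper's proof of the operator identity is a one-line citation of \eqref{eq:eop} and Proposition~\ref{prop:domrest}, which only covers points with $|B(x)|\ge 2$; the vertex case $x=\bs e_k$, where $\mf L^{\mc E}F(\bs e_k)=0$ by definition but one must check $\mf L F(\bs e_k)=0$, is left implicit. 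You fill this in correctly: pick $j\neq k$, apply Proposition~\ref{prop:domrest} with $B=\{j,k\}$ to reduce to the one-dimensional face, and observe that membership of $F|_{\Sigma_B}$ in $\mc D^B_j$ forces $\phi'(t)/(1-t)\to 0$ as $t\to 1^-$, whence $\phi'(1)=\phi''(1)=0$ by continuity of $\phi'$ and $\phi''$ on $[0,1]$; since the drift of $\mf L^B$ at $\bs e_k$ is proportional to $\phi'(1)$ (using $m_k\bs v^B_k+m_j\bs v^B_j=\bs 0$) and the diffusion term to $\phi''(1)$, the generator vanishes there. This is exactly the delicate step that the paper's terse proof glosses over, so your version is arguably a more complete account.
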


\begin{proof}
Fix $F\in \mc D_S$. By Definition \ref{def:dom}, $F\in C^1(\Sigma)$,
and $F$ satisfies condition $\mf E_1$. Fix a subset $A$ of $S$ with at
least two elements. By the same definition,
$F|_{\mathring{\Sigma}_A} \in C^2_b(\mathring{\Sigma}_A)$. This proves
that $F$ belongs to $\mc E_S$.  On the other hand, by \eqref{eq:eop}
and Proposition \ref{prop:domrest}, $\mf L^{\mc E} F = \mf L F$, which
completes the proof of the proposition..
\end{proof}



We finally define the extended martingale problem.  Let
$\cb{C_{\rm pc}^b(\Sigma)}$ be the space of bounded functions in
$C_{\rm pc}(\Sigma)$.  It is clear that the operator $\mf L^{\mc E}$
maps $\mc E_S$ to $C_{\rm pc}^b(\Sigma)$.  Since
$C_{\rm pc}^b(\Sigma)$ is a subset of bounded Borel functions, we can
consider the following martingale problem.
\begin{definition} A probability measure $\bb P$ on
$C(\bb R_+, \Sigma)$ is a solution for the
$(\mf L^{\mc E}, \mc E_S)$-martingale problem if, for any
$H\in \mc E_S$,
\begin{equation*}
\mf L^{\mc E} H(X_{t}) - \int_{0}^{t} (\mf L^{\mc E} H)(X_s) \, ds\;,\quad t\ge 0
\end{equation*}
is a $\bb P$-martingale with respect to the filtration
$(\ms F_t)_{t\ge 0}$, the same as in Definition \ref{f06}.
\end{definition}

In the next section, we prove the following theorem.

\begin{thm}
\label{thm:emp}
For each $x\in \Sigma$, denote by $\bb P_{x}$ a probability measure on
$D(\bb R_+, \Sigma)$ which starts at $x$ and is a solution of the
$(\mf L,\mc D_S)$-martingale problem.  Then $\bb P_{x}$ also solves
the $(\mf L^{\mc E}, \mc E_S)$-martingale problem.
\end{thm}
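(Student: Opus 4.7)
The plan is to approximate any $H\in\mc E_S$ by a sequence $H_n\in\mc D_S$ and then pass to the limit in the martingale identity supplied by the $(\mf L,\mc D_S)$-martingale problem for $\bb P_x$. More precisely, I would construct $H_n\in\mc D_S$ satisfying (i) $H_n\to H$ uniformly on $\Sigma$, (ii) $\sup_n\Vert\mf L H_n\Vert_\infty<\infty$, and (iii) for every $x$ in the relative interior $\mathring{\Sigma}_B$ of some face with $|B|\ge 2$, $(\mf L H_n)(x)\to (\mf L^{\mc E} H)(x)$. Since $\Sigma=\bigsqcup_{\varnothing\neq B\subset S}\mathring{\Sigma}_B$ and $\mf L^{\mc E}H\equiv 0$ at vertices, these three properties combined with bounded convergence transfer the martingale identity for $H_n$ into the desired one for $H$.

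The construction of the approximating sequence is where the functions $J_A\colon\bb R_+^A\to\bb R_+$ of Section~\ref{sec:aux} enter. These are meant to mimic a distance-to-the-boundary function while themselves belonging to $\mc D_S$, and from them one builds smooth cutoffs $\chi^B_n\in\mc D_S$ that equal one on a large region around $\mathring{\Sigma}_B$ that avoids all faces disjoint from it, and vanish on a small neighborhood of those other faces. For each nonempty $B\subset S$ with $|B|\ge 2$, condition $\mathfrak{E2}(B)$ gives a $C^2_b$ function on $\mathring{\Sigma}_B$, which can be extended to a $C^2$ function $\widehat H_B$ on $\bb R^S$; condition $\mathfrak{E1}$ combined with Hadamard's lemma ensures $\widehat H_B\in\mc D_S$. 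Gluing the $\widehat H_B$'s together with the cutoffs $\chi^B_n$ produces $H_n\in\mc D_S$ with $H_n\equiv\widehat H_B$ on a neighborhood of $\mathring{\Sigma}_B$ isolated from the other faces. Property~(iii) then follows from Proposition~\ref{prop:domrest}, since on $\mathring{\Sigma}_B$ we have $\mf L\widehat H_B=\mf L^B(H|_{\mathring{\Sigma}_B})=\mf L^{\mc E}H$.

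The main obstacle is establishing the uniform bound (ii). The operator $\mf L H_n$ contains the drift $\sum_{i\in S}(m_i/x_i)\nabla_{\bs v_i}H_n$, whose prefactor $1/x_i$ blows up near $\{x_i=0\}$, and a Hessian term whose evaluation on $H_n$ brings in derivatives of the cutoffs $\chi^B_n$ that are $O(n)$ in narrow transition regions. The very purpose of the space $\mc D_S$ is that $\nabla_{\bs v_i}H$ vanishes to order $x_i$ as $x_i\to 0$, cancelling the singular prefactor; what makes $J_A$ useful is precisely that this vanishing is preserved by the cutoff construction, so the product rule does not spoil the $\mathfrak{E1}$ cancellation when derivatives fall on $\chi^B_n$. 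Carrying this through requires a careful computation using the adjoint identities (Lemmas~\ref{l31}--\ref{l32} and Corollary~\ref{l33}) to rewrite the Hessian term in each transition region between face $\Sigma_B$ and face $\Sigma_{B'}$ in a form compatible with the conditions $\mathfrak{E1}(i)$ and $\mathfrak{E2}(B\cup B')$. I expect the bulk of Section~\ref{sec:proofthmemp} to be devoted to this quantitative bookkeeping, after which the passage to the limit in the martingale identity is routine bounded convergence.
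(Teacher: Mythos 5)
Your plan and the paper's proof share the same skeleton: build approximants $H_n\in\mc D_S$ by gluing the face-restricted versions of $H$ with cutoffs manufactured from the $J_A$'s, show that $\mf L H_n$ stays uniformly bounded and agrees with $\mf L^{\mc E}H$ outside a shrinking transition region, and pass to the limit. The paper formalizes the mode of convergence you call (i)--(iii) via a $\mu$-topology: for each martingale identity one uses the Riesz representation to produce a finite signed measure $\mu$ and only needs $\|H_n-H\|_\infty\to 0$ together with $\int\mf L^{\mc E}(H_n-H)\,d\mu\to 0$. Your conditions (i)--(iii) are slightly stronger but achievable, and indeed are exactly what Lemma~\ref{lem:FFe} establishes (compact support of $H-H_\epsilon$, shrinking discrepancy set, uniform bound on $\mf L^{\mc E}(H-H_\epsilon)$).

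The real gap is in your proposed single-shot gluing over all faces. Faces of $\Sigma$ are nested -- $\Sigma_{B'}\subset\Sigma_B$ whenever $B'\subset B$ -- so a naive partition of unity $\{\chi^B_n\}_B$ has transition regions that abut several faces at once, and one cannot verify membership in $\mc D_S$ directly for the glued function. The paper resolves this with a recursive, one-face-at-a-time scheme: it introduces \emph{piles} $\ms F\subset 2^S$ and the intermediate spaces $\ms K^{\ms F}$ of functions already fiberwise constant (with respect to $\gamma_B$) near $\Sigma_B$ for every $B\in\ms F$. Proposition~\ref{prop:piledense} shows each enlargement $\ms F\mapsto\ms F\cup\{B\}$ is a $\mu$-dense inclusion, Lemma~\ref{lem:2SsubsetD} shows $\ms K^{2^S}\subset\mc D_S$, and Lemma~\ref{lem:recursive} ($\gamma_B\circ\gamma_C=\gamma_B$ for $B\subset C$) guarantees that flattening near one face does not destroy the flattening already achieved near its subfaces. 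Your proposal also claims a bit too quickly that ``condition $\mathfrak{E1}$ combined with Hadamard's lemma ensures $\widehat H_B\in\mc D_S$'': the natural extension $\gamma_B^*(H|_{\Sigma_B})$ only lands in $\mc E_A$ with $A=S\setminus B$ (Lemma~\ref{lem:extregularity}), and pushing it into $\mc D_S$ precisely requires the cutoff machinery of Lemma~\ref{lem:FFe}. Finally, Lemmas~\ref{l31}--\ref{l32} and Corollary~\ref{l33} are used one step earlier than you place them -- they prove the restriction identity $\mf L F\big|_{\Sigma_B}=\mf L^B(F|_{\Sigma_B})$ of Proposition~\ref{prop:domrest}, which is what licenses the very definition of $\mf L^{\mc E}$; they are not needed again inside the transition-region estimates.
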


\subsection{Extension map}
\label{sec:extmap}

Since our extended domain is defined by combining information from
subsimplices, we may want to extend functions defined on subsimplices
to the entire simplex $\Sigma$ in a way that the extended function
belongs to the domain of the generator $\mf L^{\ms E}$. This approach
plays a crucial role in Sections \ref{sec:proofthmemp} and
\ref{sec:proofthmabs}.

Given a function $f\colon \Sigma_B \to \bb R$, define the function
$ \gamma^*_B f\colon \Sigma \to \bb R$ by
\begin{equation}
\label{eq:pullback}
\cb{(\gamma^*_B f)(x)} \,:=\,  f(\gamma_B(x))\,, \;\; x\in \Sigma\,. 
\end{equation}

\begin{lemma}
\label{lem:extregularity}
Suppose that $F$ satisfies condition $\mathfrak{E2}$.  Fix
$\varnothing \neq B\subsetneq S$, and let $F_B$ be the restriction of
$F$ to $\Sigma_B$: $\cb{F_B = F|_{\Sigma_B}}$. Then, $\gamma_B^*F_B$
belongs to $\mc E_A$, where $A = S \setminus B$.
\end{lemma}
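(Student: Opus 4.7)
The plan is to verify the three requirements in the definition of $\mc E_A$ for the pullback $G := \gamma_B^* F_B$: namely, (i) $G\in C^1(\Sigma)$, (ii) $G$ satisfies condition $\mathfrak{E1}(i)$ for every $i\in A$, and (iii) $G$ satisfies condition $\mathfrak{E2}$. The workhorse throughout is the chain rule: since $\gamma_B$ is linear,
\begin{equation*}
\nabla_{\bs V} G(x) \,=\, (\nabla_{\gamma_B(\bs V)} F_B)(\gamma_B(x)),
\qquad
\nabla_{\bs W}\nabla_{\bs V} G(x) \,=\, (\nabla_{\gamma_B(\bs W)}\nabla_{\gamma_B(\bs V)} F_B)(\gamma_B(x)),
\end{equation*}
whenever the right-hand sides make sense. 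This reduces every computation on $\Sigma$ to a computation on $\Sigma_B$.

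For (i), condition $\mathfrak{E2}(S)$ and Remark \ref{rem:C1} yield $F_B \in C^1(\Sigma_B)$; composing with the continuous linear map $\gamma_B$ shows both that $G \in C(\Sigma)$ and, via the chain rule, that each directional derivative $\nabla_{\bs V} G$ is continuous on $\mathring{\Sigma}$ and admits a continuous extension to $\Sigma$. For (ii), the identity $\gamma_B(\bs v_i) = \bs 0$ for $i\in A$ recorded in \eqref{eq:projofvect} makes the chain rule collapse to $\nabla_{\bs v_i} G \equiv 0$, so $x \mapsto (1/x_i)\nabla_{\bs v_i} G(x)$ vanishes identically on $\{x_i>0\}$ and condition $\mathfrak{E1}(i)$ is trivially satisfied.

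The substance of the proof lies in (iii). Fix $C\subset S$ with $|C|\ge 2$. Lemma \ref{lem:splxrest} furnishes $D\subset B$ such that $\gamma_B(\mathring{\Sigma}_C)\subset \mathring{\Sigma}_D$, and a short computation from the formula \eqref{eq:proj} shows that $\gamma_B$ also maps $T_{\Sigma_C}$ into $T_{\Sigma_D}$. When $|D|\ge 2$, condition $\mathfrak{E2}(D)$ applied to $F$ provides a constant $C_0$ such that $|[\nabla_{\bs W'}\nabla_{\bs V'} F](y)| \le C_0 \|\bs V'\|\,\|\bs W'\|$ for all $y\in \mathring{\Sigma}_D$ and all $\bs V', \bs W'\in T_{\Sigma_D}$; substituting $\bs V' = \gamma_B(\bs V)$, $\bs W' = \gamma_B(\bs W)$ in the chain-rule formula and absorbing the operator norm of $\gamma_B$ into the constant yields the $C^2_b$-estimate required on $\mathring{\Sigma}_C$. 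When $|D|=1$, one has $T_{\Sigma_D} = \{\bs 0\}$, so the chain-rule expressions vanish identically and $G$ is constant on $\mathring{\Sigma}_C$.

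I expect the main technical point to be the inclusion $\gamma_B(T_{\Sigma_C})\subset T_{\Sigma_D}$: one must verify both that $\sum_{j\in B}[\gamma_B(\bs V)]_j=0$ (which follows from $\sum_{j\in B}u^B_j(k)=1$ together with $\bs V\cdot\bs 1=0$) and that $[\gamma_B(\bs V)]_j=0$ for $j\in B\setminus D$, which reuses the explicit description of $D$ appearing in the proof of Lemma \ref{lem:splxrest}. Beyond this piece of bookkeeping, the lemma reduces to a direct application of the chain rule together with the two conditions $\mathfrak{E1}$ and $\mathfrak{E2}$ satisfied by $F$.
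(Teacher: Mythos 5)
Your proof is correct and follows the same approach as the paper's: the chain rule via Lemma \ref{app:4}, the identity $\gamma_B(\bs v_i)=\bs 0$ from \eqref{eq:projofvect} for condition $\mathfrak{E1}(A)$, and Lemma \ref{lem:splxrest} combined with the $C^2_b$ bound for condition $\mathfrak{E2}$. You add the explicit verification that $\gamma_B(T_{\Sigma_C})\subset T_{\Sigma_D}$ and invoke $\mathfrak{E2}(D)$ rather than $\mathfrak{E2}(B)$, both of which are details that the paper's ``thus'' glosses over but that are indeed needed to legitimately transport the second-derivative bound from $\mathring\Sigma_D$ to $\mathring\Sigma_C$.
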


\begin{proof}
We need to check condition $\mathfrak{E1}(A)$ and condition
$\mathfrak{E2}$. First, note that $F \in C^1(\Sigma)$.
By Remark \ref{rem:C1}, $F_B = F|_{\Sigma_B} \in C^1(\Sigma_B)$.

\noindent \textbf{(1) $\gamma_B^*F_B$ satisfies condition
$\mathfrak{E1}(A)$.}
    
\noindent
Fix $i\in A$. By Lemma \ref{app:4}, as $F_B \in C^1(\Sigma_B)$,
$$
\frac{1}{x_i} \, [\nabla_{\bs v_i} (\gamma_B^*F_B) ] (x) \,=\,
\frac{1}{x_i}\, 
(\nabla_{\gamma_B (\bs v_i)} F_B)  (\gamma_B (x)) \;\;  \text{ on the
set} \;\; \{x\in \Sigma : x_i >0\} \,.
$$
By \eqref{eq:projofvect}, as $i\in A$, $\gamma_B (\bs v_i)=0$. The
right-hand side thus vanishes. In particular, it is bounded as
required.

\noindent \textbf{(2) $\gamma_B^*F_B$ satisfies Condition
$\mathfrak{E2}$.}

\noindent
We need to check condition $\mathfrak{E2}(C)$ for all sets
$C \subset S$, $|C|\ge 2$. As $F$ belongs to $\mc E_A$,
it satisfies condition $\mf E\mf 2 (B)$. Hence,
$F|_{\mathring{\Sigma}_B}\in C^2_b(\mathring{\Sigma}_B)$, so that
$F_B|_{\mathring{\Sigma}_B} \in C^2_b(\mathring{\Sigma}_B)$.

On the other hand, by Lemma \ref{lem:splxrest},
$\gamma_B(\mathring{\Sigma}_C) \subset \mathring{\Sigma}_D$ for some
$D\subset B$.  Thus $(\gamma_B^*F_B)|_{\mathring{\Sigma}_C}$ belongs
to $C^2_b(\mathring{\Sigma}_C)$, as claimed.
\end{proof}

The next Lemma states  that some particular directional
derivatives of the extension map vanish. 

\begin{lemma}
\label{lem:resvan}
Fix $\varnothing \neq B \subset C \subset S$ and
$G \in C^1(\Sigma_B)$. Then,
$[\, \nabla_{\bs v^C_i} (\gamma_B^* G)] (x) = 0$ for all
$i\in C\setminus B$, $x\in \mathring{\Sigma}$.
\end{lemma}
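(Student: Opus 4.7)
The plan is to reduce the statement to the vanishing identity $\gamma_B(\bs v^C_i)=0$ for $i\in C\setminus B$, which then combined with the chain rule for $\gamma_B^*$ yields the claim.

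First, since $\gamma_B\colon\bb R^S\to\bb R^S$ is a linear map and $G\in C^1(\Sigma_B)$ admits a $C^1$ extension to a neighborhood of $\gamma_B(x)$, the standard chain rule (as recorded in Lemma \ref{app:4}) gives, for every $x\in \mathring{\Sigma}$ and every $\bs V\in\bb R^S$,
\begin{equation*}
[\,\nabla_{\bs V}(\gamma_B^*G)\,](x) \,=\, [\,\nabla_{\gamma_B(\bs V)} G\,](\gamma_B(x))\,.
\end{equation*}
Applying this with $\bs V=\bs v^C_i$ reduces the problem to showing that $\gamma_B(\bs v^C_i)=0$ whenever $i\in C\setminus B$.

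To establish this vanishing, I would combine the two structural identities already proved in the paper. By \eqref{eq:projofvect} applied to the generator $\mc L_S$ and the subset $C$, we have $\gamma_C(\bs v_i)=\bs v^C_i$ for every $i\in C$. By Lemma \ref{lem:recursive}, since $B\subset C$, $\gamma_B\circ \gamma_C=\gamma_B$. Hence, for $i\in C\setminus B\subset S\setminus B$,
\begin{equation*}
\gamma_B(\bs v^C_i) \,=\, \gamma_B(\gamma_C(\bs v_i)) \,=\, \gamma_B(\bs v_i) \,=\, 0\,,
\end{equation*}
where the last equality is the second part of \eqref{eq:projofvect} applied to $\gamma_B$ (noting $i\notin B$). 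Inserting this into the chain-rule identity above gives $[\,\nabla_{\bs v^C_i}(\gamma_B^*G)\,](x)=0$ for all $x\in \mathring{\Sigma}$, which is the desired conclusion.

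The argument is essentially bookkeeping once the two earlier lemmas are in hand; the only point requiring a moment of care is justifying the chain rule at points of $\mathring{\Sigma}$ whose image under $\gamma_B$ may lie in the relative boundary of $\Sigma_B$ (for instance, in $\mathring{\Sigma}_D$ with $D\subsetneq B$, as in Lemma \ref{lem:splxrest}). This is handled by the observation that $G\in C^1(\Sigma_B)$ means $G$ extends continuously with its directional derivatives to all of $\Sigma_B$, so the chain-rule computation is valid at $\gamma_B(x)$ regardless of whether $\gamma_B(x)\in\mathring{\Sigma}_B$ or not.
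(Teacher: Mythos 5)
Your proof is correct and follows essentially the same route as the paper: apply Lemma~\ref{app:4} to reduce to $\gamma_B(\bs v^C_i)=0$, then combine $\bs v^C_i = \gamma_C(\bs v_i)$ from \eqref{eq:projofvect} with $\gamma_B\circ\gamma_C=\gamma_B$ from Lemma~\ref{lem:recursive} and the vanishing $\gamma_B(\bs v_i)=0$ for $i\notin B$. The concluding remark about $\gamma_B(x)$ possibly lying on the boundary of $\Sigma_B$ is actually unnecessary here, since for $x\in\mathring{\Sigma}$ one has $\gamma_B(x)\in\mathring{\Sigma}_B$ (as noted in the proof of Lemma~\ref{app:4}), but it does no harm.
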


\begin{proof}
Fix $i\in C$. By Lemma \ref{app:4}, and since, by \eqref{eq:projofvect},
$\bs v^C_i = \gamma_C (\bs v_i)$,
\begin{equation*}
[ \, \nabla_{\bs v^C_i} \, (\gamma_B^*G)\,] \,  (x)\,=\,
(\nabla_{\gamma_B (\bs v^C_i)} G) \, (\gamma_B (x))
\,=\, (\nabla_{\gamma_B \circ \gamma_C (\bs v_i)} G) \,
(\gamma_B (x))
\end{equation*}
for all $x\in \mathring{\Sigma}$.  By Lemma \ref{lem:recursive}, this
expression is equal to
$(\nabla_{\gamma_B (\bs v_i)} G)\, (\gamma_B (x))$. As $i\not\in B$,
by \eqref{eq:proj}, $\gamma_B (\bs v_i) =0$, which completes the proof
of the lemma.
\end{proof}

\section{A norm map and related Lemmas}
\label{sec:aux}

This section is devoted to the construction of maps
$J_A\colon \bb R^A_+\to \bb R_+$, $A$ a nonempty subset of $S$, which
mimic the norm of $\bb R^A$ and whose directional derivatives at the
boundary of the simplex $\Sigma$ behave nicely (see Lemmata
\ref{lem:normmapbdd}, \ref{lem:normmapvanish}). These functions play a
crucial role in Section \ref{sec:proofthmemp} to approximate functions
in $\mc E_S$ by functions in $\mc D_S$ (Lemma \ref{lem:FFe}) and in
the construction of a superharmonic function in the domain $\mc E_S$,
see Lemma \ref{lem:auxfunc}.

For each nonempty $A\subseteq S$, endow $\bb R^A$ with the norm
$\color{blue} \Vert x \Vert_A := \sum_{i\in A} |x_i|$ and consider the
cone $\bb R_+^A:=\{x\in \bb R^A : x_i\ge 0, \forall i\in A\}$ with
vertex ${\bs 0}\in \bb R^A$.

\begin{lemma}
\label{johp}
For each nonempty subset $A$ of $S$ there exists a function
$J_A \colon \bb R_+^A \to [0,\infty)$ such that
\begin{enumerate} [leftmargin=*]
\item[$a)$] $J_A(\alpha x)= \alpha \,J_A(x)$, for all $\alpha\ge 0$
and $x\in \bb R_+^A$.
\item[$b)$] There exist constants $0<c_1\le c_2<\infty$ so that
\begin{equation*}
c_1 \Vert x \Vert_A \le J_A(x) \le
c_2 \Vert x \Vert_A, \quad \forall x\in \bb R_+^A.
\end{equation*}
\item[$c)$] $J_A$ admits a $C^{\infty}$ extension on a open subset of
$\bb R^A$ containing $\bb R_+^A\setminus \{{\bs 0}\}$.

\item[$d)$] Assume that $|A|\ge 2$. For all
$x\in \bb R_+^A\setminus \{{\bs 0}\}$, and $i\in A$,
\begin{equation*}
x_i=0 \quad \implies \quad \nabla_{\bs w_i} J_A \;\;
\textrm{vanishes on a neighborhood of $x$},
\end{equation*}
where each $\bs w_i$ is the canonical projection (restriction) of $\bs
v_i\in \bb R^S$ on $\bb R^A$, i.e.,
$$
[\bs w_i]_j = [\bs v_i]_j,\;\; j\in A\, 
$$
\end{enumerate}
\end{lemma}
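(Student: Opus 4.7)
The plan is to split into the trivial case $A=S$ and the substantive case $A\subsetneq S$.

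When $A=S$, take $J_S(x) := \sum_{i \in S} x_i$. Properties (a)--(c) are immediate, and (d) holds in the strong form $\nabla_{\bs w_i} J_S \equiv 0$, because $\bs w_i = \bs v_i \in T_\Sigma$ (see the comment right after \eqref{19}), hence $\sum_j [\bs v_i]_j = \bs v_i \cdot \bs 1 = 0$.

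When $A\subsetneq S$ a linear choice cannot work: if $J_A(x)=\sum_{j\in A} c_j x_j$, then $\nabla_{\bs w_i} J_A = (W\bs c)_i$ with $W := ([\bs w_i]_j)_{i,j\in A}$, and $W$ is an invertible sub-generator (its row sums are $-\sum_{k\in S\setminus A} r(i,k)\le 0$, strictly negative for some $i$ by irreducibility and $A\subsetneq S$), so (d) would force $\bs c = 0$. I would instead seek $J_A$ in the homogeneous form
\[
J_A(x) \,=\, \Bigl(\sum_{i\in A} x_i\Bigr)\, h\!\left(\frac{x}{\sum_{i\in A} x_i}\right),
\]
where $h$ is a smooth strictly positive function on a neighborhood of the cross-section $K_A = \{\omega\in\mathbb R^A_+ : \sum_i \omega_i = 1\}$. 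On the open set $\{x\in\mathbb R^A : \sum_i x_i > 0\}\supset \mathbb R^A_+\setminus\{\bs 0\}$ such a $J_A$ is manifestly homogeneous of degree one, smooth, and (by compactness of $K_A$ together with the positive bounds on $h$) equivalent to $\Vert x\Vert_A$, so (a)--(c) are automatic. Condition (d) then becomes: for each nonempty $I\subsetneq A$ and each $\omega$ in the relative interior of the stratum $F_I := \{\omega'\in K_A : \omega'_i = 0 \text{ for } i\in I\}$, $J_A$ must be invariant under each flow $t\mapsto x + t\bs w_i$, $i\in I$, on a full neighborhood of $\omega$.

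The enabling algebraic fact is the linear independence of $\{\bs w_i : i\in I\}$ for every $\emptyset\ne I\subsetneq A$: the $|I|\times|I|$ submatrix $W_I := ([\bs w_i]_j)_{i,j\in I}$ is a sub-generator whose row sums $-\sum_{k\in S\setminus I} r(i,k)$ are $\le 0$ and strictly negative for at least one $i$ (because $I\subsetneq S$ and the chain is irreducible), so $-W_I$ is a nonsingular $M$-matrix. Consequently, for each such $I$ one produces $|A|-|I|$ linear forms $\ell^I_j$, $j\in A\setminus I$, that annihilate $V_I := \mathrm{span}\{\bs w_i : i\in I\}$ and give coordinates on $\mathbb R^A/V_I$. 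Prescribing $h$ on a neighborhood of $F_I$ as a smooth positive function solely of the $\ell^I_j$'s forces $\nabla_{\bs w_i} J_A \equiv 0$ there for every $i\in I$.

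The main obstacle is to glue these local $I$-models into a single globally smooth positive function $h$ on $K_A$ without a partition of unity, since cutoff factors would reintroduce unwanted $\bs w_i$-derivatives on the transition regions. I plan to carry out the gluing inductively on the codimension of $F_I$, starting from the vertices of $K_A$ ($|I|=|A|-1$) and proceeding outward along the stratification. The crucial structural input is that the stratification is nested and that invariance under $V_J$ for $J\supsetneq I$ implies invariance under $V_I$, so models chosen on deeper strata remain admissible on shallower ones; one adjusts the free parameters (e.g.\ an overall positive scalar) in each $I$-model so that it matches the already-defined $J$-models on the overlaps. Verifying that the resulting $h$ is globally smooth and bounded away from $0$ and $\infty$ is the technical heart of the proof, and is the step I expect to require the most care.
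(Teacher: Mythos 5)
Your $A=S$ case matches the paper exactly. For $A\subsetneq S$, the paper does not actually construct $J_A$: it simply defers to \cite[Section 8]{BJL} (the proof of Lemma~4.1 there), so there is no in-text argument to compare against. That said, your preliminary analysis is sound — the observation that a linear ansatz must fail because $W=([\bs w_i]_j)_{i,j\in A}$ is an invertible strict sub-generator is correct and well argued, the reduction to a degree-one homogeneous function $J_A = \Vert x\Vert_A\, h(x/\Vert x\Vert_A)$ is the right shape, and the linear independence of $\{\bs w_i : i\in I\}$ for every $\varnothing\neq I\subsetneq A$ (same $M$-matrix argument) is the right enabling fact. The issue is that everything from ``The main obstacle is to glue\ldots'' onward is a plan, not a proof: you identify the gluing across the boundary stratification of $K_A$ as the technical heart, and then stop before carrying it out. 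A proposal cannot end on ``\ldots is the step I expect to require the most care'' and be judged complete.

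Two more specific remarks. First, your stated reason for rejecting a partition of unity — that ``cutoff factors would reintroduce unwanted $\bs w_i$-derivatives on the transition regions'' — overstates the difficulty. Condition~$d)$ only constrains $\nabla_{\bs w_i}J_A$ in a neighborhood of the face $\{x_i=0\}$, and cutoff functions can themselves be built so that their $\bs w_i$-directional derivatives vanish identically near $\{x_i=0\}$; this is precisely the mechanism used in Lemma~\ref{phi} of the present paper (the product over $W\subseteq B$ of the functions $\varphi_{W\cup\{k\}}$ built from the very $J$'s you are constructing) and in the source proof in~\cite{BJL}. So the approach you discard as problematic is in fact the standard one. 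Second, if you wish to pursue your inductive-matching route instead, you need to be explicit about what ``adjusting the free parameters (e.g.\ an overall positive scalar) in each $I$-model so that it matches the already-defined $J$-models on the overlaps'' means: a single scalar per stratum will generically not suffice to match two functions on an open overlap, so some interpolation device is unavoidable, which brings you back to the cutoff question you were trying to avoid. As written, the construction for $A\subsetneq S$ is not established.
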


\begin{proof}
For $A\not = S$, this follows from the proof of Lemma 4.1 in
\cite[Section 8]{BJL}. For $A=S$, it is clear that
$J_S(x) = \sum_{i\in S} x_i$, $x\in \bb R ^S_+$, satisfies all the
stated properties.
\end{proof}

We shall use each function $J_A$ as a suitable perturbation of
$\|\cdot\|_A$ satisfying the above properties. When $A$ is a
singleton, properties $a)$ and $b)$ imply that $J_A(x)=\lambda x$, for
some $\lambda>0$.

Let us now add some properties derived from Lemma \ref{johp}. Fix a
nonempty set $A\subseteq S$. It follows from property $a)$ that for
every $\alpha>0$ and $x\in \bb R_+^A\setminus \{\bs 0\}$,
\begin{equation}
\label{joh1}
(\nabla J_A) (\alpha x) \,=\,  (\nabla J_A) (x) \quad \textrm{and}
\quad ({\rm Hess} \,J_A) (\alpha x)=\alpha^{-1} ({\rm Hess}  \,J_A) (x).
\end{equation}
Since $\nabla J_A$ and ${\rm Hess} \, J_A$
are continuous on $\bb R_+^A\setminus \{0\}$, then, by \eqref{joh1},
\begin{equation}\label{joh2}
\sup_{x\in \bb R_+^A\setminus \{\bs 0\}} \|\nabla J_A (x) \|_A <\infty
\;\;\;\text{ and } \sup_{x\in \bb R_+^A\setminus \{\bs 0\}}
\|x\|_A \|\text{Hess} \, J_A (x) \|_A <\infty.
\end{equation}
Lastly, assume that $|A|\ge 2$, fix an arbitrary $i\in A$ and recall
the vector ${\bs w}_i$ from property $d)$. In virtue of this property,
\begin{equation}
\label{03}
\mtt 1\{x_i>0\} \, \frac{\nabla_{\bs w_i} J_A(x)}{x_i} , \quad x\in
\bb R_+^A\setminus \{\bs 0\} \,, \;\;
\text{ is continuous.} 
\end{equation}
Then, by \eqref{joh1},
\begin{equation}
\label{johl}
\sup_{x\in \bb R_+^A\setminus \{\bs 0\}, \;x_i>0} \; \|x\|_A
\;\frac{|\nabla_{\bs w_i} J_A(x)|}{x_i}  <\infty\,.
\end{equation}

Let $\mf J_A\colon \bb R^S_+ \to \bb R_+$ be given by
\begin{equation}
\label{mfi}
\cb{ \mf I_A(x)} \,:=\, J_A( x_A )\,.
\end{equation}
To keep the notation simple, we define that $\|x\|_A= \|x_A\|_A $,
where $x_A$ stands for the canonical projection of $x$ on $\bb R^A$,
$A$ is a nonempty subset of $S$ and $x\in \Sigma$.  The next result is
the estimate \eqref{johl} stated in terms of this new notation
$\mf I_A$.

\begin{lemma}
\label{lem:normmapbdd}
For all nonempty $A\subseteq S$ and $i\in A$ we have
$$
\sup_{x\in \bb R^S_+ : \|x\|_A >0, \, x_i>0} \; \|x\|_A \;
\frac{| \nabla_{\bs v_i} \mf I_A(x)|}{x_i}  <\infty. $$
\end{lemma}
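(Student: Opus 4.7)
The proof is essentially a direct translation of the already-established estimate \eqref{johl} for $J_A$ into the notation of $\mf I_A$. The plan is as follows.

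First I would observe that, by the very definition \eqref{mfi}, $\mf I_A(x)=J_A(x_A)$ depends on $x$ only through its canonical projection $x_A\in\bb R^A$. Since $\bs w_i\in\bb R^A$ is the restriction of $\bs v_i\in\bb R^S$ to the coordinates in $A$, one has $(x+t\bs v_i)_A = x_A + t\bs w_i$, so by the chain rule
\begin{equation*}
(\nabla_{\bs v_i}\mf I_A)(x) \;=\; (\nabla_{\bs w_i} J_A)(x_A)
\end{equation*}
for every $x\in\bb R^S_+$ with $x_A\in \bb R_+^A\setminus\{\bs 0\}$, where $J_A$ is smooth by Lemma~\ref{johp}$(c)$.

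Second, the hypotheses on the supremum translate directly: the conditions $\|x\|_A>0$ and $x_i>0$ are, by the convention $\|x\|_A=\|x_A\|_A$, equivalent to $x_A\in\bb R_+^A\setminus\{\bs 0\}$ and $[x_A]_i>0$. Substituting the identity above,
\begin{equation*}
\|x\|_A\,\frac{|\nabla_{\bs v_i}\mf I_A(x)|}{x_i}
\;=\;\|x_A\|_A\,\frac{|\nabla_{\bs w_i} J_A(x_A)|}{[x_A]_i}\,.
\end{equation*}

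Third, I would invoke \eqref{johl}, which gives exactly that the right-hand side is bounded uniformly for $x_A\in\bb R_+^A\setminus\{\bs 0\}$ with $[x_A]_i>0$. Taking the supremum over $x\in\bb R^S_+$ with $\|x\|_A>0$ and $x_i>0$ therefore yields the desired bound. There is no genuine obstacle here; the lemma is a bookkeeping reformulation of \eqref{johl}, and the only thing to be careful about is the $|A|=1$ case, which is immediate because property $a)$ then forces $J_A(x)=\lambda x$ and so $(\nabla_{\bs w_i}J_A)/x_i \equiv \lambda/[\bs v_i]_i$ on its domain, while $\|x\|_A = x_i$ cancels the factor $x_i$ cleanly.
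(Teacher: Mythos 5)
Your proposal is correct and matches the paper's intent exactly: the paper does not write a separate proof of Lemma~\ref{lem:normmapbdd}, but introduces it explicitly as ``the estimate \eqref{johl} stated in terms of this new notation $\mf I_A$,'' which is precisely the chain-rule translation $(\nabla_{\bs v_i}\mf I_A)(x) = (\nabla_{\bs w_i} J_A)(x_A)$ that you carry out. Your observation that \eqref{johl} is only formulated for $|A|\ge 2$ and that the $|A|=1$ case must be checked directly is a valid refinement; note only that in that side remark the constant should be $\lambda\,[\bs v_i]_i$ rather than $\lambda/[\bs v_i]_i$, since $\nabla_{\bs w_i} J_A = \lambda[\bs v_i]_i$ and $\|x\|_A/x_i = 1$, but the conclusion of boundedness is of course unaffected.
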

    
Next result is a  corollary of Lemma \ref{lem:normmapbdd}.

\begin{lemma}
\label{lem:normmapvanish}
Fix $x\in \Sigma$ and let $C = \{i\in S : x_i \neq 0\}$.  Suppose
$B\subsetneq C$. For $A = S\setminus B$, $\mf I_A(x) > 0 $ and
    $$ \nabla_{\bs v_k} \mf I_A(x) = 0, \quad \forall k \notin C.$$
\end{lemma}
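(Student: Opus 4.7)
The plan is to unpack the definition $\mf I_A(x) = J_A(x_A)$ and reduce everything to the four properties $a)$--$d)$ of $J_A$ listed in Lemma \ref{johp}.

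First I would verify the positivity statement $\mf I_A(x) > 0$. Since $B \subsetneq C$, the set $C \setminus B$ is nonempty and is contained in $A = S \setminus B$. Pick any $i \in C \setminus B \subset A$; then $x_i > 0$ by definition of $C$, so the projection $x_A$ satisfies $\Vert x_A \Vert_A \geq x_i > 0$. Property $b)$ of Lemma \ref{johp} then gives $\mf I_A(x) = J_A(x_A) \geq c_1 \Vert x_A \Vert_A > 0$.

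Next I would handle the vanishing of $\nabla_{\bs v_k} \mf I_A(x)$ for $k \notin C$. The key observation is that $k \notin C$ implies $k \in A$, since $S \setminus C \subset S \setminus B = A$ (as $B \subset C$). By the chain rule applied to $\mf I_A = J_A \circ \pi_A$, where $\pi_A \colon \bb R^S \to \bb R^A$ is the canonical projection, we have
\begin{equation*}
\nabla_{\bs v_k} \mf I_A(x) \,=\, \nabla J_A(x_A) \cdot (\bs v_k)_A \,=\, \nabla_{\bs w_k} J_A(x_A),
\end{equation*}
where $\bs w_k$ is the restriction of $\bs v_k$ to $\bb R^A$, matching the notation of property $d)$. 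Since $k \in A$ and $(x_A)_k = x_k = 0$, and since $x_A \in \bb R_+^A \setminus \{\bs 0\}$ by the first step, property $d)$ of Lemma \ref{johp} asserts that $\nabla_{\bs w_k} J_A$ vanishes on a neighborhood of $x_A$. In particular, $\nabla_{\bs w_k} J_A(x_A) = 0$, which completes the proof.

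The argument is essentially bookkeeping: the main thing to notice is the inclusion $S \setminus C \subset A$, which ensures that the indices $k \notin C$ for which we want to test the derivative are precisely indices in $A$ where the coordinate vanishes, so property $d)$ applies directly. There is no real obstacle; the lemma is stated in the language of $\mf I_A$ on $\Sigma$, and the content is just the corresponding property of $J_A$ on $\bb R_+^A$ transported through the projection $\pi_A$.
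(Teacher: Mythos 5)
Your proof is correct. The paper takes a slightly less direct route: instead of invoking property $d)$ of Lemma \ref{johp} at $x$ itself, it approaches $x$ through a sequence $(x^n)\subset\Sigma$ with $(x^n)_k>0$, applies the quantitative bound of Lemma \ref{lem:normmapbdd} (itself a consequence of property $d)$) to get $|\nabla_{\bs v_k}\mf I_A(x^n)|\le C\,(x^n)_k/\|x^n\|_A\to 0$, and then concludes via continuity of $\nabla_{\bs v_k}\mf I_A$ at $x$ (available because $\|x\|_A>0$ and $J_A$ is smooth away from the origin). Your direct use of property $d)$ avoids both the limit and the continuity step and in fact yields the stronger conclusion that $\nabla_{\bs v_k}\mf I_A$ vanishes on a whole neighborhood of $x$. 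One small loose end: property $d)$ carries the hypothesis $|A|\ge 2$, which you do not mention. This costs nothing, because $B\subsetneq C$ forces $S\setminus C\subsetneq A$, so when $|A|=1$ there is no index $k\notin C$ and the derivative assertion is vacuous; but it is worth a line to say so.
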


\begin{proof}
We first show that $\|x\|_A > 0$.  Suppose not. Then, $\|x\|_A = 0$
implies $x_i = 0$ for all $i\in A$.  This implies $A \subset C^c$,
therefore $A^c = B \supset C$. This contradicts the assumption that
$B\subsetneq C$.

Fix $k\not \in C$, and let $(x^n)_{n\in \bb N}\subset \Sigma$ be a
sequence such that
    $$ x^n \to x \;\; \text{ and } \;\; (x^n)_k >0,\;\; \forall n\in \bb N.$$
    By Lemma \ref{lem:normmapbdd}, 
    $$ |\nabla_{\bs v_k} \mf I_A(x^n)| \to 0,$$
    which implies the assertion of the lemma.
\end{proof}

Using the auxiliary function $\mf I_A$, we derive estimates of
functions in $\mc E_A$.  For $U \subset \Sigma$, let
\begin{equation*}
\cb{BC(\Sigma,U) } \,:=\,  \big\{ f: \Sigma \to \bb R \text{ is
Borel measurable, bounded, 
and } f|_{U} \text{ is continuous} \big\}\,.
\end{equation*}

\begin{lemma}
\label{lem:estEA}
Fix $\varnothing \neq B \subsetneq S$, and let $A = S \setminus B$.
For any $G\in \mc E_A$ with $G|_{\Sigma_B}=0$,
\begin{enumerate}[leftmargin=*]

\item[\textup{(1)}] $G = H \mf I_A^2$ for some
$H\in BC(\Sigma,\Sigma\setminus \Sigma_B)$.

\item[\textup{(2)}] For all $i, j \in S$,
$(\partial_{x_i} -\partial_{x_j} ) G = K \mf I_A$ for some
$K\in BC(\Sigma,\Sigma\setminus \Sigma_B)$.
\end{enumerate}
\end{lemma}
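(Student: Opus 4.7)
The overall strategy is a Taylor expansion around the projection $\gamma_B(x)$, exploiting that $G$ vanishes on $\Sigma_B$. For (1), I would argue that both $G$ and its full gradient $\nabla^\Sigma G$ vanish identically on $\Sigma_B$, so Taylor's integral remainder formula combined with condition $\mathfrak{E2}$ produces a second-order bound $|G(x)|\le C\,\mf I_A(x)^2$. For (2), a first-order Taylor expansion of $\nabla_{\bs e_i-\bs e_j}G$ along the same segment, again starting from its vanishing value on $\Sigma_B$, gives $|(\partial_{x_i}-\partial_{x_j})G(x)|\le C\,\mf I_A(x)$.

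The crucial and most subtle step, which I expect to be the main obstacle, is to establish $\nabla^\Sigma G(y)=0$ for every $y\in \Sigma_B$. Since $G|_{\Sigma_B}=0$ and $G\in C^1(\Sigma)$, the tangential part vanishes immediately: $\nabla_{\bs V} G(y)=0$ for all $\bs V\in T_{\Sigma_B}$, first on $\mathring{\Sigma}_B$ from the restriction of $G$ to $\Sigma_B$ and then on all of $\Sigma_B$ by continuity of $\nabla_{\bs V}G$. For the remaining directions I invoke condition $\mathfrak{E1}$: for each $i\in A$, boundedness of $\nabla_{\bs v_i} G(y)/y_i$ on $\{y_i>0\}$ forces $\nabla_{\bs v_i} G(y)\to 0$ as $y_i\to 0$, so approximating $y\in\Sigma_B$ by $y^{(n)}\in \Sigma$ with $y^{(n)}_i>0$ (built by moving a tiny bit of mass from some $j\in B$ with $y_j>0$ into coordinate $i$), together with continuity of $\nabla_{\bs v_i}G$, yields $\nabla_{\bs v_i} G(y)=0$. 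The direct-sum decomposition $T_\Sigma = T_{\Sigma_B}\oplus \mathrm{span}\{\bs v_i:i\in A\}$, a consequence of Lemma~\ref{lem:lincomb}~(1) and the dimension count $(|B|-1)+|A|=|S|-1$, then promotes these two families of constraints to the full identity $\nabla^\Sigma G(y)=0$.

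With this in hand I would first prove both bounds in the case $x\in \mathring\Sigma$. Set $y=\gamma_B(x)$; since $x_k>0$ for every $k\in S$, the open segment $\Psi(t,x):=(1-t)y+tx$, $t\in(0,1)$, lies in $\mathring{\Sigma}=\mathring{\Sigma}_S$, where condition $\mathfrak{E2}(S)$ supplies a uniform constant $C_0$ such that $|\nabla_{\bs V}\nabla_{\bs W} G|\le C_0\,\|\bs V\|\,\|\bs W\|$ for all $\bs V,\bs W\in T_\Sigma$. Lemma~\ref{lem:RAlinmap} combined with the equivalence $c_1\|x\|_A\le \mf I_A(x)\le c_2\|x\|_A$ from Lemma~\ref{johp}~(b) yields $|x-y|\le C'\mf I_A(x)$. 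For $\phi(t):=G(\Psi(t,x))$, the identities $\phi(0)=G(y)=0$ and $\phi'(0)=\nabla_{x-y}G(y)=0$ (from the preceding step), plus Taylor's integral formula, give $|G(x)|\le \tfrac12 C_0|x-y|^2\le C\,\mf I_A(x)^2$. The same segment argument applied to $\psi(t):=\nabla_{\bs e_i-\bs e_j}G(\Psi(t,x))$, with $\psi(0)=0$ (this is precisely where we need the full vanishing of $\nabla^\Sigma G$ on $\Sigma_B$ rather than merely its tangential part), produces $|\nabla_{\bs e_i-\bs e_j}G(x)|\le C_0|x-y|\,\|\bs e_i-\bs e_j\|\le C\,\mf I_A(x)$.

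These bounds then extend to every $x\in\Sigma$ by continuity, since $G$, $\nabla_{\bs e_i-\bs e_j}G$ and $\mf I_A$ are all continuous on $\Sigma$ and $\mathring{\Sigma}$ is dense in $\Sigma$. Defining $H:=G/\mf I_A^2$ and $K:=\nabla_{\bs e_i-\bs e_j}G/\mf I_A$ on $\Sigma\setminus\Sigma_B$ and extending them by zero to $\Sigma_B$ produces bounded Borel functions that are continuous on $\Sigma\setminus\Sigma_B$, hence members of $BC(\Sigma,\Sigma\setminus\Sigma_B)$; the representations $G=H\mf I_A^2$ and $(\partial_{x_i}-\partial_{x_j})G=K\mf I_A$ hold on $\Sigma\setminus\Sigma_B$ by construction and extend trivially to $\Sigma_B$, where both sides vanish. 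A pleasant feature of this plan is that the reduction to $x\in\mathring{\Sigma}$ sidesteps the difficulty that, on intermediate faces $\mathring{\Sigma}_C$ with $C\subsetneq S$, the direction $\bs e_i-\bs e_j$ need not lie in $T_{\Sigma_C}$, which would otherwise put the mixed second-order derivative $\nabla_{x-y}\nabla_{\bs e_i-\bs e_j}G$ out of the scope of condition $\mathfrak{E2}(C)$.
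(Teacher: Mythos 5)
Your proposal is correct, but it follows a route that differs from the paper's in both parts. The pivotal new ingredient you introduce—and which the paper never states explicitly—is that $\nabla^\Sigma G$ vanishes identically on $\Sigma_B$: the tangential directions vanish because $G|_{\Sigma_B}=0$, the transversal directions $\bs v_i$ ($i\in A$) vanish by the linear bound from $\mathfrak{E1}(i)$, and the direct-sum decomposition $T_\Sigma = T_{\Sigma_B}\oplus\mathrm{span}\{\bs v_i:i\in A\}$ (which does hold, since $\gamma_B|_{T_\Sigma}$ is an idempotent with image $T_{\Sigma_B}$ and kernel $\mathrm{span}\{\bs v_i:i\in A\}$ by Lemma~\ref{lem:lincomb}) promotes these to the full gradient. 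With this in hand you run a second-order Taylor expansion for part (1) using the $\mathfrak{E2}(S)$ bound, and a first-order expansion of $\nabla_{\bs e_i-\bs e_j}G$ for part (2) with $\psi(0)=0$ coming directly from the gradient vanishing. The paper proceeds differently: for part (1) it never invokes $\mathfrak{E2}$ at all, instead expanding $x-\gamma_B(x)=-\sum_{i\in A}[L_A(x_A)]_i\bs v_i$ and bounding each $\nabla_{\bs v_i}G$ along the segment by $C\,z_i$ via $\mathfrak{E1}$, with $z_i\le x_i$ there; this gives the quadratic bound purely from first-order information. For part (2) the paper splits $\bs e_i-\bs e_j = \gamma_B(\bs e_i-\bs e_j) + [(\bs e_i-\bs e_j)-\gamma_B(\bs e_i-\bs e_j)]$ and handles the second piece by $\mathfrak{E1}$ and the first by the tangential vanishing $\nabla_{\gamma_B(\bs e_i-\bs e_j)}G(\gamma_B(x))=0$ plus $\mathfrak{E2}(S)$, whereas your stronger gradient-vanishing fact lets you avoid the direction splitting entirely. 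The trade-off: your part (1) needs $\mathfrak{E2}$ where the paper gets by with $\mathfrak{E1}$ only, but your part (2) is a bit cleaner; your reduction to $x\in\mathring{\Sigma}$ (so that the open segment to $\gamma_B(x)$ sits in $\mathring{\Sigma}$ and $\mathfrak{E2}(S)$ applies) is correctly handled and is in fact essential.
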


\begin{proof}
[Proof of (1)]
By condition $\mathfrak{E1} (i)$, $i\in A$, $G$ belongs to
$C^1(\Sigma)$ and there exists a bounded Borel
function $h_i\colon \Sigma \to \bb R$, such that
\begin{equation*}
\frac{\nabla_{\bs v_i} G(x)}{x_i} = h_i(x) \quad
\text{for}\;\; \{x_i>0 \}\,.
\end{equation*}
Let $C>0$ be an upper bound of the functions $|h_i|$, that is,
$\max_{i \in A}\sup_{x\in \Sigma} |h_i(x)| \leq C$, so that
\begin{equation}
\label{04}
|\, \nabla_{\bs v_i} G(x)\,|\,\le\, C \,x_i \quad 
\text{for all}\;\; x \in \Sigma \,.
\end{equation}

We claim that there exists a finite constant $C_0$ such
that $|G(x)|\le C_0 \mf I^2_A(x)$ for all $x\in \Sigma$. This
inequality trivially holds on $\Sigma_B$ because both functions vanish
on this set. Fix $x\in \Sigma\setminus \Sigma_B$.  Since
$G$ is of class $C^1 (\Sigma)$, $G|_{\Sigma_B}=0$ and $\gamma_B(x) \in
\Sigma_B$, by Lemma \ref{lem:RAlinmap} and Lemma \ref{app:1},
\begin{align*}
G(x) & = \int_0^1 \nabla_{x-\gamma_B(x)} G(\gamma_B(x) +
t(x-\gamma_B(x))) \, dt
\\
&= \, - \, 
\sum_{i\in A}\int_0^1 [L_A(x_A)]_i \nabla_{\bs v_i}
G(\gamma_B(x) + t(x-\gamma_B(x))) \, dt\,.
\end{align*}
By definition of the constant $C$, the absolute value of the previous
expression is bounded by
\begin{align*}
&  C \sum_{i\in A}
\big | \, [L_A(x_A)]_i \,\big| \, \int_0^1
\big|\, [\gamma_B(x) + t(x-\gamma_B(x))]_i \,\big|\; dt \\
&\quad \le\, C \sum_{i\in A} \big | \, [L_A(x_A)]_i \,\big| \,
\Big\{ \, |\, [\gamma_B(x) ]_i  \,| \,+\,  |\,x_i \,| \,  \Big\}  
\, \leq \, C'\,  \|x_A\|^2 
\end{align*}
for some new finite constant $C'$. Thus,
$|G(x)| \leq C_0 \mf I_A^2(x)$ for some finite constant $C_0>0$, as
claimed.

To complete the proof of assertion (1), it remains to
define $H(x)$ as $G(x)/\mf I_A^2(x)$ for
$x\in \Sigma \setminus \Sigma_B$ and $H(y) =0$ for $y\in
\Sigma_B$. Clearly, $G = H \mf I^2_A$, $H$ is  bounded and
measurable, and $H$ restricted to $\Sigma \setminus \Sigma_B$ is
continuous because so are $G(\cdot)$ and $\mf I_A(\cdot)$ on this set.

\smallskip \noindent \textit{Proof of (2).} Fix
$i \neq j\in S$. We claim that there exists a finite constant $C$ such
that
\begin{equation}
\label{30}
|\, (\partial_{x_i} - \partial_{x_j}) G (x) \,| \le C\, \mf I_A(x)
\end{equation}
for all $x\in \mathring{\Sigma}$.

Rewrite $(\partial_{x_i} - \partial_{x_j}) G$ as
$\nabla_{\bs e_i - \bs e_j} G$, so that
\begin{equation}
\label{05}
|\, (\partial_{x_i} - \partial_{x_j}) G(x)\,|  \,\le\,
|\, \nabla_{\gamma_B (\bs e_i - \bs e_j) } G (x)\,|
\,+\,  |\, \nabla_{\gamma_B (\bs e_i - \bs e_j)  - (\bs e_i - \bs e_j) } G (x)\,|
\end{equation}
By Lemma \ref{lem:lincomb}.(2), \eqref{04}, and the definition
\eqref{mfi} of $\mf I_A$,
the second term is less than or equal to
\begin{equation*}
C \, \sum_{k\in A} |\, \nabla_{\bs v_k} G(x) \, |
\, \leq \,  C \, \mf I_A(x) 
\end{equation*}
for some finite constant $C$, which may change from line to line. 

We turn to the first term on the right-hand side of \eqref{05}.  As
$x\in \mathring{\Sigma}$, the interior of the line segment  between
$x$ and $\gamma_B(x)$ is contained in $\mathring{\Sigma}$.

For convenience, let $\bs w = \gamma_B(\bs e_i - \bs e_j)$.  By
\eqref{eq:proj}, $\bs w_k =0$ for $k\in A$. Thus, as $G$ vanishes on
$\Sigma_B$, and $\gamma_B(x) \in \mathring{\Sigma}_B$,
$\nabla_{\bs w} G(\gamma_B(x)) =0$. By condition
$\mf E_2(S)$,
\begin{align*}
|\nabla_{\bs w} G(x)| = |\nabla_{\bs w} G(x)
- \nabla_{\bs w} G(\gamma_B(x)) |
= \Big|\, \int_0^1 \nabla_{x-\gamma_B(x)} \nabla_{\bs w} G(\gamma_B(x)
+ t(x-\gamma_B(x))) \, dt\, \Big|\, .
\end{align*}
As $G$ satisfies condition $\mathfrak{E2} (S)$, $\bs w$,
$x-\gamma_B(x)$ belong to $T_{\Sigma}$, and
$\gamma_B(x) + t\, (x-\gamma_B(x))$ to $\mathring\Sigma$, by
\eqref{27}, there exists a finite constant $C>0$ such that
\begin{equation*}
\big| \, \nabla_{x-\gamma_B(x)} \nabla_{\bs w} G(\gamma_B (x)
+ t(x-\gamma_B(x)))\, \big| \,\le\,  C\,  \Vert\, x-\gamma_B(x)\Vert
\, \Vert \bs w \Vert \, .
\end{equation*}
By Lemma \ref{lem:RAlinmap}, this expression is equal to
\begin{equation*}
C \, \Vert \bs w\Vert\, \Big\Vert\sum_{i\in A} \, [L_A(x_A)]_i \,
\bs v_i\, \Big\Vert \, \le C' \, \|x_A\|\, .
\end{equation*}
This proves the claim \eqref{30}.

We may extend the estimate \eqref{30} to $x\in \Sigma$, Consider a
sequence $x^n \to x$ such that $x^n \in \mathring{\Sigma}$. Since
$ | (\nabla_{\bs e_i - \bs e_j} G) (x^n)| \leq C\, \mf I_A(x^n)$ for
all $n\ge 1$, as $G\in C^1(\Sigma)$ and $\mf I_A$ is continuous on
$\Sigma$, letting $n\rightarrow \infty$, yields that \eqref{30} holds
for $x\in \Sigma$.

To complete the proof of assertion (2), it remains to define $K(x)$ as
$\nabla_{\bs e_i - \bs e_j} G(x)/\mf I_A(x)$ for
$x\in \Sigma \setminus \Sigma_B$ and $K(y) =0$ for $y\in
\Sigma_B$. Clearly, $K$ is bounded and measurable, and $K$ restricted
to $\Sigma \setminus \Sigma_B$ is continuous because so are
$\nabla_{\bs e_i - \bs e_j} G(\cdot)$ and $\mf I_A(\cdot)$ on this
set.
\end{proof}

We conclude this section by constructing a set of cutoff
functions $\Phi_A$ in the domain of the generator which are equal to
$1$ if $\min_{i\in A}^{} x_i\ge \epsilon$ and equal to $0$ if
$\min_{i\in A}^{} x_i \le \delta$ for some $0<\delta<\epsilon$. This
is the content of the next result, which is an adaptation of
\cite[Lemma 3.5]{ABCJ}.

\begin{lemma} 
\label{phi}
Let $A$ be a nonempty subset of $S$ and
$ B = S\setminus A$. Given $\epsilon>0$,
there exist $\Phi: \Sigma \to [0,1]$ and $\delta\in (0,\epsilon)$ such
that $\Phi \in \mathcal D_{S}$,
\begin{enumerate} [leftmargin=*]
\item[$i)$] $\build\min_{i\in A}^{} x_i\ge \epsilon$ \; $\implies$ \;
$\Phi(x)=1$,\quad  and 
\item[$ii)$] $\build\min_{i\in A}^{} x_i \le \delta$ \; $\implies$ \; $\Phi(x)=0$.
\end{enumerate}
\end{lemma}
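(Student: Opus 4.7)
The plan is to construct $\Phi$ as a smooth cutoff in the $A$-coordinates and to handle each of the requirements in turn, using the projection map $\gamma_A$ of Section~\ref{sec:trace} to overcome the main obstacle. I would first fix a smooth function $\varphi\colon\mathbb{R}\to[0,1]$ with $\varphi\equiv 0$ on $(-\infty,\delta]$ and $\varphi\equiv 1$ on $[\epsilon,\infty)$, where $\delta\in(0,\epsilon)$ will be chosen; the crucial feature is that $\varphi$ is flat on $[0,\delta]$, so all of its derivatives vanish there. The natural first candidate is $\Phi(x):=\prod_{i\in A}\varphi(x_i)$, which lies in $C^\infty(\Sigma)$ and satisfies conditions (i) and (ii) directly from the definition of $\varphi$. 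The regularity $\Phi\in\mathcal{D}_i$ for $i\in A$ is also immediate: on the open set $\{x\in\Sigma:x_i<\delta\}$, a neighborhood of the face $\{x_i=0\}$, the factor $\varphi(x_i)$ vanishes together with all of its derivatives, so $\Phi$ itself is identically zero there. In particular $\nabla_{\bs v_i}\Phi\equiv 0$ on this neighborhood and $\nabla_{\bs b_i}\Phi$ extends continuously by zero across the face.

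The main obstacle is the condition $\Phi\in\mathcal{D}_i$ for $i\in B$. A direct calculation gives
\[
(\nabla_{\bs v_i}\Phi)(y)\,=\,\sum_{k\in A}r(i,k)\,\varphi'(y_k)\prod_{j\in A,\,j\ne k}\varphi(y_j),
\]
which depends only on $y_A$ (not on $y_i$) and is generally nonzero in the transition region. Consequently $(1/y_i)\nabla_{\bs v_i}\Phi(y)$ cannot be extended continuously to the face $\{y_i=0\}$ with value zero, so the product form above is not in $\mathcal{D}_i$ for $i\in B$. To remedy this, I would replace the candidate by a pullback through the trace-projection $\gamma_A$: by Lemma~\ref{lem:resvan}, the pullback $\gamma_A^*H$ of any $H\in C^1(\Sigma_A)$ satisfies $\nabla_{\bs v_i}(\gamma_A^*H)\equiv 0$ on $\mathring{\Sigma}$ for every $i\in B$, hence on all of $\Sigma$ by continuity, so $\gamma_A^*H$ lies trivially in $\mathcal{D}_i$ for $i\in B$.

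Concretely, I would take $\Phi(x):=\Psi(\gamma_A(x))$ with $\Psi$ a smooth cutoff on $\Sigma_A$ of product form $\Psi(y)=\prod_{j\in A}\varphi(y_j)$. Condition (i) persists immediately because $[\gamma_A(x)]_j\ge x_j\ge\epsilon$ for $j\in A$ whenever $\min_{i\in A}x_i\ge\epsilon$. The most delicate step is recovering condition (ii): since $\gamma_A$ can increase coordinates, $\min_{j\in A}[\gamma_A(x)]_j$ need not be small even when $\min_{i\in A}x_i$ is. Following the strategy of \cite[Lemma~3.5]{ABCJ}, I would multiply by an auxiliary factor built from the norm-like map $J_A$ of Lemma~\ref{johp}: its property~(d) guarantees that the resulting function preserves the vanishing of the $\bs v_i$-derivatives near the faces $\{x_i=0\}$ for $i\in A$, and the flatness of $\varphi$ at $0$, together with choosing $\delta$ sufficiently small relative to $\epsilon$, forces the combined factor to vanish in the region $\{\min_{i\in A}x_i\le\delta\}$, thereby closing the argument for condition (ii) while keeping $\Phi\in\mathcal{D}_S$.
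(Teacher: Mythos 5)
Your opening diagnosis is right and matches the paper's main obstacle: the naive product $\prod_{i\in A}\varphi(x_i)$ lies in $\mc D_A$ and satisfies (i)--(ii), but for $j\in B$ the derivative $\nabla_{\bs v_j}$ of it does not vanish near $\{x_j=0\}$, so it fails $\mc D_j$. The pullback idea $\Phi_2:=\gamma_A^*\Psi$ is a genuinely different device from the paper's (the paper never invokes $\gamma_A$ here), and it does cleanly force $\nabla_{\bs v_j}\Phi_2\equiv 0$ for $j\in B$ via Lemma~\ref{lem:resvan}, at the cost of destroying condition (ii).

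The gap is in the final step, and it is not cosmetic. You propose to restore (ii) by multiplying by ``an auxiliary factor built from $J_A$,'' but no such single factor can do the job. A factor of the form $\psi(J_A(x_A))$ depends only on $J_A(x_A)\asymp\sum_{i\in A}x_i$ by Lemma~\ref{johp}(b), so it vanishes only when \emph{every} $x_i$, $i\in A$, is small --- whereas (ii) requires $\Phi$ to vanish as soon as a \emph{single} $x_i$, $i\in A$, drops below $\delta$. Conversely, any factor $\Xi$ that does vanish when some single $x_i$ is small (e.g.\ anything of product form $\prod_{i\in A}\psi_i(x_i)$, or $\prod_i\psi(J_{\{i\}}(x_i))$) reintroduces exactly the $\mc D_j$-problem the pullback was meant to kill: writing $\Phi=(\gamma_A^*\Psi)\cdot\Xi$ and using $\nabla_{\bs v_j}(\gamma_A^*\Psi)=0$, one gets $\tfrac{1}{x_j}\nabla_{\bs v_j}\Phi=(\gamma_A^*\Psi)\cdot\tfrac{1}{x_j}\nabla_{\bs v_j}\Xi$, and $\gamma_A^*\Psi$ provides no cancellation because it has no reason to vanish as $x_j\to 0$ for $j\in B$. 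So the product is not in $\mc D_j$. The hard content of the lemma is precisely this tension between (ii) and membership in $\mc D_B$; the paper resolves it by the combinatorial product $\Phi_k=\prod_{W\subseteq B}\varphi_{W\cup\{k\}}$ and a dichotomy argument (near $\{x_j=0\}$, for each pair $\varphi_{E\cup\{k\}}\varphi_{E\cup\{j,k\}}$ either the first factor is locally $\equiv 1$ or the second is locally $\equiv 0$), and your appeal to ``the strategy of \cite[Lemma~3.5]{ABCJ}'' is pointing at that very construction without actually carrying it out. A secondary issue: $\gamma_A$ and the trace structure are only defined for $|A|\ge 2$, while the lemma must also cover $|A|=1$, where $\mc D_B$-membership is still nontrivial.
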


\begin{proof}
Fix a nonempty subset $A$ of $S$.  Let
$\cb{\phi } \colon \bb R \to [0,1]$ be a smooth function such that
\begin{align*}
\phi(x)=0, \;\textrm{for} \; x\le 2/3
\quad \textrm{and} \quad \phi(x)=1, \;\textrm{for} \; x\ge 1.
\end{align*}
Let $(J_D)_{\varnothing \varsubsetneq D \subseteq S}$ be functions
satisfying all properties in Lemma \ref{johp}. Since each $J_D$ can be
rescaled, without loss of generality we may assume that property $b)$
in Lemma \ref{johp} is satisfied for $1=c_1\le c_2=C$. For each
nonempty $D\subseteq S$, we define
\begin{equation*}
\cb{\varepsilon_D}\, := \,\epsilon \,(3C)^{|D|-1-|W|}
\end{equation*}
and $\varphi_D \in C^{\infty}(\Sigma)$ as
\begin{equation*}
\cb{ \varphi_D} ( x ) \,:=\,  \phi\left(  \frac{J_D( x_D
)}{\varepsilon_D} \right),
\quad x\in \Sigma,
\end{equation*}
where $x_D$ is the canonical projection of $x$ on $\bb R ^D$. To keep
notation simple we set $\cb{\Vert x \Vert_D=\Vert x_D \Vert_D}$, for
$x\in \Sigma$. It is clear that ${\varphi}_D$ can be extended smoothly
to an open set containing $\Sigma$. By \eqref{03},
${\varphi}_D \in {\mathcal D}_D$, and by definition of $\phi$,
\begin{equation}\label{jh1}
{\varphi}_D(x) = \left\{
\begin{array}{cl}
0, & \textrm{if $\|x\|_D\le 2\varepsilon_D(3C)^{-1}$}, \medskip\\
1, & \textrm{if $\|x\|_D\ge \varepsilon_D$}.
\end{array}
\right.
\end{equation}

Define for each $k\in A$,
\begin{align}
\label{a24}
\cb{ \Phi_k} \,:=\, 
\prod_{W\subseteq B} {\varphi}_{W\cup\{k\}}\,,
\end{align}
so that $\Phi_k={\varphi}_{\{k\}}$ if $W$ is empty. Let us finally
check that
\begin{align*}
\cb{\Phi} \,:=\, \prod_{k\in A}\Phi_k
\end{align*}
fulfills the conditions of the lemma. For each $k\in A$, and
$W\subseteq B$, by \eqref{jh1}
\begin{equation*}
x_k\ge \epsilon \quad \implies \quad \Vert x \Vert_{W \cup
\{k\}} \ge \epsilon \ge \varepsilon_{W \cup \{k\}} \quad
\implies \quad \varphi_{W \cup \{k\}}  (x) = 1\,.
\end{equation*}
Thus, by \eqref{a24}, $\Phi_k(x)= 1$.  Hence, $i)$ holds.

By \eqref{jh1}, for each $k\in A$ we have
\begin{align*}
x_k \le \frac{2 \varepsilon_{\{k\}}}{3C}  \quad \implies
\quad {\varphi}_{\{k\}}(x)=0 \quad \implies \quad \Phi(x)=0.
\end{align*}
Therefore, $ii)$ holds by choosing
\begin{align*}
\delta := \frac{2 \epsilon}{(3C)^{|W|+1}} < \epsilon.
\end{align*}

By $ii)$, it is obvious that $\Phi\in \mathcal D_A$. It remains to
prove that $\Phi\in \mathcal D_B$ if $B$ is nonempty. This property
holds if we show that
\begin{align*}
\Phi_k \in \mathcal D_{j}, \quad \textrm{for all $k\in A$ and $j\in B$.}
\end{align*}
Fix $k\in A$ and $j\in B$, and  write
\begin{align*}
\Phi_k = \prod_{E\subseteq B\setminus \{j\}}
\left( \varphi_{E\cup \{k\}} \,\varphi_{E\cup \{j,k\}}  \right).
\end{align*}
It is therefore enough to verify that
\begin{align*}
\varphi_{E\cup \{k\}} \,\varphi_{E\cup \{j,k\}} \in \mathcal{D}_j,
\quad \textrm{for each $E\subseteq B\setminus\{j\}$}.
\end{align*}
By \eqref{03},
$\varphi_{E\cup \{j,k\}} \in \mc D_{E\cup \{j,k\}} \subset
\mathcal{D}_{j}$, it remains to show that 
\begin{equation}
\label{c}
x  \; \mapsto \;  \mtt  1 \{x_j>0\} \, \left(\frac{1}{x_j}\right)
\varphi_{E\cup \{j,k\}}(x) \; \nabla_{{\bs v}_j} \varphi_{E\cup
\{k\}}(x)
\quad \textrm{is continuous on $\Sigma$}\,.
\end{equation}
Fix some $x\in \Sigma$ such that $x_j=0$. On the one hand, by \eqref{jh1},
\begin{equation*}
\Vert x \Vert_{E\cup \{k\}} > \varepsilon_{E\cup\{k\}}
\quad \implies \quad  \varphi_{E\cup\{k\}} \equiv 1 \;\;
\textrm{on a neighborhood of $x$}.
\end{equation*}
On the other hand, by \eqref{jh1},
\begin{align*}
\Vert x \Vert_{E\cup \{j,k\}} = \Vert x \Vert_{E\cup \{k\}}
< 2 \varepsilon_{E\cup\{k\}} = \frac{2 \varepsilon_{E\cup\{j,k\}}}{3C}
\quad \implies \quad  \varphi_{E\cup\{j,k\}} \equiv 0 \;\;
\textrm{on a neighborhood of $x$}.
\end{align*}
Therefore,
$\varphi_{E\cup \{j,k\}}(x) \; \nabla_{{\bs v}_j} \varphi_{E\cup
\{k\}}(x)$ vanishes on a neighborhood of $\{x_j=0\}$, as claimed in
\eqref{c}. This completes the proof of the lemma.
\end{proof}

\section{Proof of Theorem \ref{thm:emp}}
\label{sec:proofthmemp}

The proof of Theorem \ref{thm:emp} is divided in a few steps.  First,
for each finite signed measure $\mu$ on $\Sigma$, we introduce a
topology on the space $\mc E_S$, called the $\mu$-topology. This
topology is tailor-made for martingale problems. More precisely, fix
$H\in \mc E_S$. Suppose that for each finite signed measure $\mu$ on
$\Sigma$, there exists a sequence of functions $(H_n)_{n\ge 1}$ in
$\mc E_S$ converging to $H$ in the $\mu$-topology and such that, for
each $n\ge 1$,
\begin{equation*}
H_n(X_{t}) - H_n(X_{0}) - \int_{0}^{t} (\mf L^{\mc E} H_n)(X_s) \,ds
\end{equation*}
is a martingale in the canonical space
$(C([0,T], \Sigma), \bb P, (\ms F_t)_{t\ge 0})$. Then the previous
expression with $H$ replacing $H_n$ is also a martingale.

The main result of this section, Proposition \ref{prop:mudense},
states that the space $\mc D_S$ is dense in $\mc E_S$ in this
topology, in the sense that for each $H\in \mc E_S$ and finite signed
measure $\mu$ on $\Sigma$, there exists a sequence of functions
$(H_n)_{n\ge 1}$ in $\mc D_S$ converging to $H$ in the
$\mu$-topology. Theorem \ref{thm:emp} is a simple consequence of this
result. This is the content of Subsection \ref{sec6.1}.

In Subsection \ref{sec6.2}, we prove Proposition \ref{prop:mudense} in
three steps. We first define functional spaces $\mc K^{\mc G}$,
$\mc G \subset 2^S$, such that $\ms E_S \subset \mc K^{\varnothing}$,
and $\mc K^{\mc G} \supset \mc K^{\mc G'}$ if $\mc G\subset \mc G'$.
Lemma \ref{lem:2SsubsetD} asserts that $\mc K^{2^S} \subset \mc D_S$
and Proposition \ref{prop:piledense} that for any finite signed
measure $\mu$ on $\Sigma$, $\mc K^{\mc G \cup \{B\}}$ is $\mu$-dense
in $\mc K^{\mc G}$. Proposition \ref{prop:mudense} follows from the
previous results.

\subsection{A $\mu$-topology on $\mc E_S$}
\label{sec6.1}

Let $\cb{\mc M(\Sigma)}$ be the space of finite signed Borel
measures on $\Sigma$.  Fix $\mu \in \mc M(\Sigma)$.  For
$H\in \mc E_S$, we define the norm $\|H\|_{\mu}$ by
$$
\cb{ \|H\|_{\mu}}\,:=\, \|H\|_{\infty} + \Big|\,
\int_{\Sigma} \mf L^{\mc E} H \; d\mu\,\Big|\,.
$$

This norm induces a metric and a topology on $\mc E_S$. This topology
will be called the $\mu$-topology. It is clearly first
countable. Therefore, for any set $C \subset \mc E_S$, $x\in \overline{C}$
if and only if there exists a sequence $x_n\in C$ converging to $x$.
For any $C\subset D \subset \mc E_S$, we say $C$ is $\mu$-dense in $D$
if $D \subset \overline{C}$ in the $\mu$-topology.  We prove the following
proposition in the next section.

\begin{proposition}
\label{prop:mudense}
For each $\mu\in \mc M(\Sigma)$ and $f\in \mc E_S$, there exists
a sequence $f_n\in \mc D_S$ converging to $f$ in the $\mu$-topology.
\end{proposition}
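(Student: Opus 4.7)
The plan is to approximate $f \in \mc E_S$ by elements of $\mc D_S$ by successively improving regularity near one face $\Sigma_B$ at a time. Following the outline in Section \ref{sec:proofthmemp}, I would introduce a filtered family of intermediate spaces $\mc K^{\mc G}$ indexed by $\mc G \subset 2^S$, such that $\mc E_S \subset \mc K^{\varnothing}$, $\mc K^{\mc G} \supset \mc K^{\mc G'}$ whenever $\mc G \subset \mc G'$, and $\mc K^{2^S} \subset \mc D_S$ (this inclusion being the content of the announced Lemma \ref{lem:2SsubsetD}). Informally, a function $H$ lies in $\mc K^{\mc G}$ when $H$ already has $\mc D_S$-type regularity — continuity of $x\mapsto \mtt 1\{x_i>0\}\,(m_i/x_i)\,\nabla_{\bs v_i} H(x)$ for every $i\in S$ — in a neighborhood of each face $\Sigma_B$ with $B\in\mc G$.

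The heart of the argument is the inductive step (the announced Proposition \ref{prop:piledense}): for every $\mc G$ and every $B\in 2^S\setminus\mc G$, each $H\in \mc K^{\mc G}$ admits a $\mu$-approximation by elements of $\mc K^{\mc G\cup\{B\}}$. I would use the extension map from Subsection \ref{sec:extmap}: set $H^B := \gamma_B^*(H|_{\Sigma_B})$, which by Lemma \ref{lem:extregularity} lies in $\mc E_A$ with $A = S\setminus B$, and, by Lemma \ref{lem:resvan}, has precisely the directional-derivative vanishing needed to enjoy $\mc D_S$-regularity near $\Sigma_B$. The approximants would take the form
\[
H_n \,:=\, \Phi_n \, H \,+\, (1-\Phi_n)\, H^B,
\]
where $\Phi_n$ is the cutoff from Lemma \ref{phi} applied to $A=S\setminus B$ with threshold $\epsilon_n \downarrow 0$, so that $\Phi_n \equiv 1$ on $\{\min_{i\in A} x_i \ge \epsilon_n\}$ and $\Phi_n \equiv 0$ in a neighborhood of $\Sigma_B$. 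In that neighborhood $H_n = H^B$ carries the desired regularity for $B$, while away from $\Sigma_B$ one has $H_n = H$, preserving the regularity recorded by $\mc G$. Some care is needed to check that the patched $H_n$ indeed lies in $\mc K^{\mc G\cup\{B\}}$, using that $\Phi_n \in \mc D_S$ and that both $H$ and $H^B$ belong to $\mc E_S$.

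The hard part will be controlling $\bigl|\int_\Sigma \mf L^{\mc E}(H_n - H)\, d\mu\bigr|$, because $\mf L^{\mc E}$ contains the drift $\nabla_{\bs b}$ with $1/x_i$ singularities at $\Sigma_B$. Here the estimates of Lemma \ref{lem:estEA} are tailor-made: the difference $G := H - H^B$ lies in $\mc E_A$ and vanishes on $\Sigma_B$, so $G = \tilde H \,\mf I_A^2$ with $\tilde H$ bounded, while its first directional derivatives are of order $\mf I_A$. This quadratic vanishing absorbs the $1/x_i$ singularities arising when $\nabla_{\bs b}$ is applied to $(1-\Phi_n)\, G$, keeping $\mf L^{\mc E}(H_n - H)$ uniformly bounded in $n$. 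Since $\Phi_n$ and its derivatives are supported in $\{\min_{i\in A} x_i \le \epsilon_n\}$, whose $|\mu|$-measure tends to zero, dominated convergence then forces the integral to vanish. Uniform convergence $\|H_n - H\|_\infty \to 0$ follows at once from continuity of $G$ and its vanishing on $\Sigma_B$.

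To conclude Proposition \ref{prop:mudense}, I would enumerate $2^S = \{B_1,\dots,B_M\}$ and iterate: starting from $f\in \mc K^{\varnothing}$, successively produce $f^{(k)}\in \mc K^{\{B_1,\dots,B_k\}}$ with $\|f^{(k)} - f^{(k-1)}\|_\mu < \epsilon/M$, so that $f^{(M)}\in \mc K^{2^S}\subset \mc D_S$ lies within $\epsilon$ of $f$ in the $\mu$-norm. Taking $\epsilon = 1/n$ and a standard diagonal extraction yield a sequence $f_n\in \mc D_S$ converging to $f$ in the $\mu$-topology, as required.
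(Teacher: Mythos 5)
Your overall skeleton matches the paper's: intermediate spaces between $\mc E_S$ and $\mc D_S$, an inductive step adding one face $B$ at a time, and an interpolation between $H$ and $H^B := \gamma_B^*(H|_{\Sigma_B})$ using Lemma~\ref{lem:estEA} to control the singular drift. But the cutoff you chose is the wrong one, and this is not a cosmetic issue---it breaks the argument whenever $|A|\ge 2$.

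The cutoff $\Phi$ of Lemma~\ref{phi} (applied with $A=S\setminus B$) satisfies $\Phi\equiv 0$ on $\{\min_{i\in A}x_i\le\delta\}$ and $\Phi\equiv 1$ on $\{\min_{i\in A}x_i\ge\epsilon\}$. The region $\{\min_{i\in A}x_i\le\delta_n\}$ does \emph{not} shrink to $\Sigma_B$ as $\delta_n\downarrow 0$; it shrinks to the union of hyperfaces $\bigcup_{i\in A}\{x_i=0\}$, which strictly contains $\Sigma_B$ when $|A|\ge 2$. Consequently, at any point $x$ with $x_{i_0}=0$ for a single $i_0\in A$ but $x_j>0$ for all $j\in A\setminus\{i_0\}$, you have $\Phi_n(x)=0$ for every $n$, so $H_n(x)=H^B(x)=H(\gamma_B(x))\ne H(x)$ in general. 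Hence $\|H_n-H\|_\infty$ does \emph{not} tend to $0$, and the claimed $\mu$-convergence collapses at the very first step. The same issue poisons your estimate of $\mf L^{\mc E}(H_n-H)$: the quadratic vanishing of $G=H-H^B$ given by Lemma~\ref{lem:estEA} is in terms of $\mf I_A$ (comparable to $\|x\|_A$), which is \emph{not} small on $\{\min_{i\in A}x_i\le\delta_n\}$, so the $1/x_i$ singularities are not actually absorbed there.

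What the paper does instead is interpolate with $\chi_\epsilon\circ\mf I_A$ where $\chi_\epsilon$ is a one-dimensional smooth cutoff and $\mf I_A(x)=J_A(x_A)$ is comparable to $\|x\|_A$. The set $\{\mf I_A\le 2\epsilon\}$ genuinely shrinks to $\Sigma_B$, so uniform convergence and the support-shrinking argument for $\int\mf L^{\mc E}(H-F_\epsilon)\,d\mu$ go through. Note the trade-off: $\chi_\epsilon\circ\mf I_A$ is generally \emph{not} in $\mc D_S$ (it fails $\mc D_j$ for $j\in B$), which is precisely why the $\mf I_A^2$-order vanishing of $G$ and property $(d)$ of Lemma~\ref{johp} are needed to restore membership of $F_\epsilon$ in $\mc E_S$ and to bound the generator. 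Lemma~\ref{phi} is the opposite compromise: it produces a cutoff that \emph{is} in $\mc D_S$ at the cost of dying on a much larger set; it is the right tool for Lemma~\ref{lem:funcext} in Section~7, not here. Two smaller gaps in your sketch: the intermediate spaces must be indexed by \emph{piles} (downward-closed families), which forces subsets to be added in order of nondecreasing cardinality---an arbitrary enumeration of $2^S$ does not work, since Assertion~A in the proof of Lemma~\ref{lem:FFe} needs all proper subsets of $B$ to already be in $\ms F_1$; and the defining condition of $\ms K^{\ms F}$ requires $\nabla_{\bs v_j}F\equiv 0$, $j\in S\setminus B$, on a full neighborhood of $\Sigma_B$, which is strictly stronger than the continuity of $(1/x_j)\nabla_{\bs v_j}F$ that you describe informally.
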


Assuming Proposition \ref{prop:mudense}, we prove Theorem
\ref{thm:emp}

\begin{proof} [Proof of Theorem \ref{thm:emp}]
Suppose that $\bb P$ solves the $(\mf L, \mc D_S)$-martingale problem.
Fix $H\in \mc E_S$.  It is enough to show that for any $n\ge 1$,
continuous function $G\colon \Sigma^n \rightarrow \bb R$, and
$0\leq s_1 \leq \cdots \leq s_n \leq t_1 <t_2$,
\begin{equation}
\label{martcond}
\bb E\left[G(X_{s_1},\cdots, X_{s_n})\left\{H(X_{t_2})
- H(X_{t_1}) - \int_{t_1}^{t_2} (\mf L^{\mc E} H)(X_s)ds\right\}\right] = 0\,,
\end{equation}
where $\bb E$ represents the expectation with respect to $\bb P$.

For $\phi \in C(\Sigma)$, observe that
$$\phi \mapsto \bb E\left[G(X_{s_1},\cdots, X_{s_n})
\int_{t_1}^{t_2} \phi(X_s)ds\right]$$
is a bounded linear functional on $C(\Sigma)$.  Therefore, there
exists $\mu\in \mc M(\Sigma)$ such that
\begin{equation} \label{martcond2} \bb E\left[G(X_{s_1},\cdots,
X_{s_n})\int_{t_1}^{t_2} \phi(X_s)ds\right] = \int_{\Sigma}
\phi(x)\;d\mu(x) \, .
\end{equation}
Hence, \eqref{martcond} is equivalent to
$$ \bb E\left[G(X_{s_1},\cdots, X_{s_n})\left\{H(X_{t_2}) - H(X_{t_1})
\right\} \right] = \int_{\Sigma} (\mf L^{\mc E} H)(x) d\mu (x)\,.
$$

By Proposition \ref{prop:mudense}, we may take a sequence
$H^{\mu}_n\in \mc D_S$ converging to $H$ in the $\mu$-topology.  Since
$H^{\mu}_n\in \mc D_S$, and $\bb P$ solves the
$(\mf L, \mc D_S)$-martingale problem, by Proposition
\ref{prop:naturalextend}, 
$$
\bb E\big[\, G(X_{s_1},\cdots, X_{s_n}) \, \{H^{\mu}_n(X_{t_2}) -
H^{\mu}_n(X_{t_1}) \} \,\big]
= \int_{\Sigma} (\mf L H^{\mu}_n)(x) \; d \mu(x)
= \int_{\Sigma} (\mf L^{\mc E} H^{\mu}_n)(x) \; d\mu(x)\,.
$$
Taking $n\rightarrow \infty$ completes the proof.
    
\end{proof}

\subsection{Proof of Proposition \ref{prop:mudense}}
\label{sec6.2}

In this section, we prove that $\mc D_S$ is $\mu$-dense in $\mc E_S$ for
all $\mu \in \mc M(\Sigma)$.

\begin{definition}
Let $\ms F$ be a collection of subsets of $S$, that is,
$\ms F \subset 2^S$. We say $\ms F$ is a pile if for any $A\in \ms F$,
$B\subset A$ implies $B\in \ms F$.  For any pile $\ms F$, we define a
domain $\ms K^{\ms F}$ by
$$
\cb{\ms K^{\ms F} } \,:=\,  \big\{ \, F \in \mc E_S :
\forall B\in \ms F, \exists\,
\epsilon_B > 0 \text{ such that } \nabla_{\bs v_j} F(x) = 0\;\;
\forall j \in S\setminus B\, ,\,
x \text{ with } \|x\|_{S \setminus B} <\epsilon_B\, \big\},
$$ 
where we treat $\|\cdot \|_{\varnothing} = 0$.
\end{definition}

By definition, $\ms K^{\varnothing} = \mc E_S$. 
Intuitively, $\ms K^{\ms F}$ is the set of functions that are
fiberwise constant near the boundary $\Sigma_B$ for each $B\in \ms F$
with respect to the projection $\gamma_B\colon \Sigma \to \Sigma_B$.
Recall from Section \ref{sec:extmap} the definition of the function
$ \gamma^*_B F\colon \Sigma \to \bb R$ for a function
$F\colon \Sigma_B \to \bb R$.

\begin{lemma}
\label{lem:pile}
Let $\ms F$ be a pile, and fix $F\in \ms K^{\ms F}$. For
$\varnothing \neq B \in \ms F$, let $A=B^c$. Then,
$$
F(x) = F(\gamma_B(x)) =  [\gamma^*_B (F |_{\Sigma_B}) ] (x)
\text{ for all }
x \in \Sigma \text{ such that } \|x\|_A < \epsilon_B.
$$
\end{lemma}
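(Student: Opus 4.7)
The plan is to prove the two equalities separately. The second equality $F(\gamma_B(x)) = [\gamma^*_B(F|_{\Sigma_B})](x)$ is immediate from the definition \eqref{eq:pullback} of $\gamma^*_B$, since $\gamma_B(x) \in \Sigma_B$, so I only need to establish the first equality $F(x) = F(\gamma_B(x))$ under the hypothesis $\|x\|_A < \epsilon_B$.

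For this, I would use a straight-line interpolation. Define $\sigma\colon [0,1]\to\Sigma$ by $\sigma(t) := x + t(\gamma_B(x) - x) = (1-t)x + t\gamma_B(x)$. Since $\Sigma$ is convex and $x, \gamma_B(x) \in \Sigma$, the path lies in $\Sigma$. Moreover, since $[\gamma_B(x)]_i = 0$ for all $i\in A$, we have $[\sigma(t)]_i = (1-t)x_i$ for $i\in A$, and hence
\begin{equation*}
\|\sigma(t)\|_A \,=\, (1-t)\,\|x\|_A \,\le\, \|x\|_A \,<\, \epsilon_B \quad \text{for all } t\in[0,1].
\end{equation*}
Next, by Lemma \ref{lem:RAlinmap}, the tangent vector to this path can be expanded as
\begin{equation*}
\sigma'(t) \,=\, \gamma_B(x) - x \,=\, \sum_{i\in A} [L_A(x_A)]_i \, \bs v_i \,\in\, T_\Sigma.
\end{equation*}

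Since $F\in \mc E_S \subset C^1(\Sigma)$, the function $t\mapsto F(\sigma(t))$ is $C^1$ on $[0,1]$ and its derivative equals $\nabla_{\sigma'(t)} F(\sigma(t))$. Applying the expansion above yields
\begin{equation*}
\frac{d}{dt} F(\sigma(t)) \,=\, \sum_{i\in A} [L_A(x_A)]_i \, (\nabla_{\bs v_i} F)(\sigma(t)).
\end{equation*}
By the defining property of $\ms K^{\ms F}$ applied to $B\in \ms F$, together with the bound $\|\sigma(t)\|_A < \epsilon_B$ shown above, each term $(\nabla_{\bs v_i} F)(\sigma(t))$ vanishes for $i\in A = S\setminus B$. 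Hence $\tfrac{d}{dt} F(\sigma(t)) \equiv 0$ on $[0,1]$, and integrating from $0$ to $1$ gives $F(x) = F(\sigma(0)) = F(\sigma(1)) = F(\gamma_B(x))$, completing the proof.

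There is no real obstacle here: the argument is a straightforward line-integral calculation. The only thing to verify carefully is the norm bound $\|\sigma(t)\|_A < \epsilon_B$ along the entire interpolation, which is where the specific form of $\gamma_B$ (namely $[\gamma_B(x)]_i = 0$ on $A$) is used in an essential way. The pile structure of $\ms F$ itself plays no role in this particular lemma—only the property at the single set $B\in\ms F$ is invoked—so the pile hypothesis is simply carried through from the definition.
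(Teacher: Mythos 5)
Your proof is correct and follows essentially the same argument as the paper: both use the line segment between $x$ and $\gamma_B(x)$, check that it stays in the region $\{\|\cdot\|_A < \epsilon_B\}$, expand the direction $\gamma_B(x)-x$ in the vectors $\bs v_i$, $i\in A$ (you via Lemma~\ref{lem:RAlinmap}, the paper via Lemma~\ref{lem:lincomb}), and invoke the defining property of $\ms K^{\ms F}$ at $B$ to make the integrand vanish. The only cosmetic differences are the direction of the parametrization and your direct computation $\|\sigma(t)\|_A=(1-t)\|x\|_A$ versus the paper's appeal to convexity of $\|\cdot\|_A$.
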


\begin{proof}
Fix $x \in \Sigma$ such that $\|x\|_A < \epsilon_B$. Convexity of
$\|\cdot\|_A$ implies that the line segment between $x$ and
$\gamma_B(x)$ is contained in the set $ \{y\in \Sigma : \|y\|_A <
\epsilon_B\}$. 

Write
$$F(x) - F(\gamma_B(x)) = \int_0^1 \nabla_{x-\gamma_B(x)}
F(\, \gamma_B(x)+t\, (x-\gamma_B(x))\, ) \, dt \,.
$$
Since $\| \gamma_B(x) + t (x-\gamma_B(x)) \|_A < \epsilon_B$ for all
$0\le t\le 1$, and because $F$ belongs to $\ms K^{\ms F}$,
$\nabla_{\bs v_j}F(\gamma_B(x) + t (x-\gamma_B(x))) = 0$ for $j\in A$
and $0\le t\le 1$. Thus, since by Lemma \ref{lem:lincomb},
$\gamma_B(x) - x$ is a linear combination of $\bs v_k$ for $k\in A$,
the previous integral vanishes. This proves the lemma.    
\end{proof}

Let $2^S$ be the collection of all subsets of $S$. The next result
asserts that $\ms K^{2^S} \subset \mc D_S$:

\begin{lemma}
\label{lem:2SsubsetD}
It holds that $\ms K^{2^S} \subset \mc D_S$.
\end{lemma}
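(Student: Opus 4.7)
The plan is to show that every $F \in \ms K^{2^S}$ satisfies the two requirements defining $\mc D_S$: (i) $F \in C^2(\Sigma)$, and (ii) $\nabla_{\bs b_i} F \in C(\Sigma)$ for every $i \in S$. Since $F \in \mc E_S$ already provides $F \in C^1(\Sigma)$ together with $F|_{\mathring{\Sigma}_A} \in C^2_b(\mathring{\Sigma}_A)$ for every $A$ with $|A|\ge 2$, the task is only to upgrade these interior properties up to the boundary. The leverage available is the pile hypothesis applied to every $B \subset S$, which through Lemma \ref{lem:pile} allows one to replace $F$ locally near boundary points by the pullback $\gamma_B^*(F|_{\Sigma_B})$.

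For (i), I would fix a boundary point $x \in \partial\Sigma$ and set $B = \{i \in S : x_i > 0\}$. When $|B|\ge 2$, the pile condition for $B$ yields $\epsilon_B>0$ such that $\nabla_{\bs v_j} F \equiv 0$ on $\{y : \|y\|_{S\setminus B}<\epsilon_B\}$ for every $j\in S\setminus B$, so that Lemma \ref{lem:pile} gives $F(y) = F(\gamma_B(y))$ on this $\Sigma$-neighborhood of $x$. Since $x \in \mathring{\Sigma}_B$ and $\gamma_B(x)=x$, for $y$ sufficiently close to $x$ the point $\gamma_B(y)$ remains in $\mathring{\Sigma}_B$ (by continuity of $\gamma_B$ and the explicit formula \eqref{eq:proj}); on this region $F|_{\Sigma_B}$ is $C^2_b$ by condition $\mathfrak{E2}(B)$. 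Using that $\gamma_B$ is linear with $\gamma_B(T_\Sigma) \subset T_{\Sigma_B}$, the chain rule shows that the directional derivatives $\nabla_{\bs V}(\nabla_{\bs W} F)$ admit continuous extensions to $x$ from inside $\mathring{\Sigma}$. When $|B|=1$, say $B=\{i\}$, one checks directly from \eqref{eq:proj} that $\gamma_B$ collapses $\Sigma$ to the vertex $\bs e_i$, so Lemma \ref{lem:pile} forces $F$ to be constant on a $\Sigma$-neighborhood of $\bs e_i$, and the conclusion is trivial. Combined with the hypothesis $F|_{\mathring{\Sigma}}\in C^2_b(\mathring{\Sigma})$, this yields $F \in C^2(\Sigma)$.

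For (ii), I fix $i \in S$ and apply the pile condition to $B = S\setminus\{i\}\in 2^S$: there exists $\epsilon>0$ such that $\nabla_{\bs v_i} F(y)=0$ for every $y$ with $y_i<\epsilon$. Consequently
\[
\nabla_{\bs b_i} F(y) \,=\, b\, \mtt 1\{y_i>0\}\,\frac{m_i}{y_i}\,\nabla_{\bs v_i} F(y)
\]
vanishes identically on $\{y_i<\epsilon\}$ (both where $y_i=0$ and where $0<y_i<\epsilon$), while on the complementary closed set $\{y_i\ge \epsilon\}$ it is continuous because $F\in C^1(\Sigma)$. Thus $\nabla_{\bs b_i} F \in C(\Sigma)$, completing the verification that $F\in \mc D_S$.

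The main obstacle lies in part (i): one must transfer the interior $C^2_b$ regularity of $F|_{\Sigma_B}$ to $C^2$ regularity of $F$ on a full $\Sigma$-neighborhood of each boundary face, and a priori $F$ could oscillate transversally to the face even though it is smooth along the face. The pile condition is precisely the ingredient that kills this transverse freedom, forcing $F$ to depend on $y$ only through $\gamma_B(y)$ near the face and thereby reducing the boundary regularity question to smoothness along the face itself.
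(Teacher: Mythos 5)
Your proof is correct and takes essentially the same approach as the paper: both hinge on Lemma \ref{lem:pile} to identify $F$ locally with the pull-back $\gamma_B^*(F|_{\Sigma_B})$ near each boundary face and transfer the $C^2_b$ regularity of $F|_{\Sigma_B}$ via the chain rule, and both verify continuity of $\nabla_{\bs b_i}F$ by noting that the pile condition for $B=S\setminus\{i\}$ forces $\nabla_{\bs v_i}F$ to vanish on a neighborhood of $\{x_i=0\}$. The only difference is presentational: the paper organizes Claim~1 via an explicit open cover $\{U_B\}$ with locally defined candidates $G_B$ and checks consistency on overlaps using Lemma \ref{lem:recursive}, whereas you argue pointwise at each boundary point; since the local extensions automatically agree on the dense set $\mathring{\Sigma}$, your shortcut is legitimate.
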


\begin{proof}
Fix $F\in \ms K^{2^S}$.

\smallskip\noindent{\bf Claim 1:} $F \in C^2(\Sigma)$. \smallskip

For any $\bs V, \bs W \in T_\Sigma$, we need to find a function $G$ in
$C(\Sigma)$ such that
$ \nabla_{\bs V} \nabla_{\bs W} F = G \text{ on } \mathring{\Sigma}$.

Fix $\bs V, \bs W \in T_\Sigma$.  Let
$\epsilon = \min_{\varnothing \neq B \subset S} \epsilon_B$.  For
$B\subset S$, define
\begin{gather*}
U_B = \{x\in \Sigma : \forall i \in B, x_i > 0, \|x\|_{S\setminus B}
< \epsilon/3 \}\,, \\
V_B = \{x\in \Sigma : \forall i \in B, x_i > 0, \|x\|_{S\setminus B} <
\epsilon \}\,, 
\end{gather*}
so that $U_B \subset V_B$.  By Lemma \ref{lem:pile},
$F = (\gamma^*_B F)|_{\Sigma_B}$ on $V_B$. Noting that
$\Vert x\Vert_\varnothing =0$, we can easily see that the sets $U_B$,
$B\subset S$, form an open cover of $\Sigma$.  Let
$G_B\colon U_B \to \bb R$ be given by
$$
G_B(x) \,:=\,  (\nabla_{\bs \gamma_B(\bs V)}
\nabla_{\bs \gamma_B(\bs W)} F|_{\Sigma_B} ) (\gamma_B(x))\,.
$$
This value is well defined since $\gamma_B(x)_i \ge x_i >0 $ for all
$i \in B$ and $F|_{\mathring{\Sigma}_B} \in C^2(\mathring{\Sigma}_B)$.
Note that $G_S = \nabla_{\bs V} \nabla_{\bs W} F$ on
$\mathring{\Sigma}$ by definition. The function
$G_B$ is continuous
because $F$ belongs to $\mc E_S$.

As $U_B$, $B\subset S$, forms an open cover of $\Sigma$, to complete
the proof, it remains to show that
$$ G_B = G_C\; \text{ on }\; U_B \cap U_C,\;\; B,C \subset S.$$
Fix $x\in U_B \cap U_C$. By definition,
$$ x_i > 0 \text{ for all } i \in B \cup C, \;\;
\text{ and } \;\; \|x\|_{S\setminus B} < \epsilon/3, \;\;
\|x\|_{S\setminus C} < \epsilon/3.$$
By \eqref{eq:probrep} and \eqref{eq:proj},
$\|x\|_{S\setminus B} < \epsilon/3$ implies that
$$\|x - \gamma_B(x)\|_S = \sum_{j\in B}
\sum_{k\in S\setminus B} u^B_j(k) x_k
+ \sum_{j\in S\setminus B} x_j
= 2 \sum_{k\in S\setminus B} x_k < 2\epsilon/3.$$
Therefore,
$$ \|\gamma_B(x)\|_{S\setminus C}
\le \|x-\gamma_B(x)\|_{S\setminus C}
+ \|x\|_{S\setminus C} < 2\epsilon/3 + \epsilon/3 = \epsilon\,.
$$
Since $[\gamma_B(x)]_i=0$ for $i\not\in B$,
$\|\gamma_B(x)\|_{S\setminus (B\cap C)} = \|\gamma_B(x)\|_{(S\setminus
B) \cup (S\setminus C)} = \|\gamma_B(x)\|_{S\setminus C}$. Thus, by
the previous estimate,
$\|\gamma_B(x)\|_{S\setminus (B\cap C)} < \epsilon$, so that
$\gamma_B(x) \in V_{B\cap C}$.

By Lemma \ref{lem:pile}, $F= \gamma^*_{B\cap C} F|_{\Sigma_{B\cap C}}$
on $V_{B\cap C}$, so that
$F|_{\Sigma_B} = (\gamma^*_{B\cap C} F|_{\Sigma_{B\cap
C}})|_{\Sigma_B}$ on $V_{B\cap C} \cap \mathring{\Sigma}_B$.  As
$F\in \ms K^{2^S} \subset \mc E_S$,
$F|_{\mathring{\Sigma}_{B\cap C}} \in C^2(\mathring{\Sigma}_{B\cap
C})$.  Thus, by the chain rule, for any
$y\in V_{B\cap C} \cap \mathring{\Sigma}_B$ and
$\bs X, \bs Y \in T_{\Sigma_B}$,
$$
\nabla_{\bs X} \nabla_{\bs Y} F|_{\Sigma_B}(y)
= \nabla_{\bs X} \nabla_{\bs Y}
(\gamma^*_{B\cap C} F|_{\Sigma_{B\cap
C}}) (y)
= \nabla_{\bs \gamma_{B\cap C}(\bs X)}
\nabla_{\bs \gamma_{B\cap C}(\bs Y)} F|_{\Sigma_{B\cap
C}}(\gamma_{B\cap C}(y))\,.
$$
Since $x\in U_B\cap U_C$, $\gamma_B(x) \in \mathring{\Sigma}_B$. On
the other we proved above that $\gamma_B(x) \in V_{B\cap C}$, and so
$\gamma_B(x) \in \mathring{\Sigma}_B \cap V_{B\cap C}$.  Hence, by the
previous identity for $y=\gamma_B(x) $, and 
Lemma \ref{lem:recursive}, 
$$ G_B(x) = \nabla_{\bs \gamma_B(\bs V)}
\nabla_{\bs \gamma_B(\bs W)} F|_{\Sigma_B}(\gamma_B(x))
= \nabla_{\bs \gamma_{B\cap C}(\bs V)}
\nabla_{\bs \gamma_{B\cap C}(\bs W)}
F|_{\Sigma_{B\cap C}}(\gamma_{B\cap C}(x))\,.
$$
In particular,  $G_B(x) = G_C(x)$ on $U_B \cap U_C$, which proves
Claim 1.

\smallskip\noindent{\bf Claim 2:} $F \in \mc D_i $ for all $i\in S$. \smallskip

It is enough to show that for any $i\in S$, $x\in \Sigma$ with
$x_i = 0$, and a sequence $x_n \to x$ with $(x_n)_i > 0$ for all $n$,
$$
\frac{m_i}{(x_n)_i} \nabla_{\bs v_i} F(x_n) \to 0 \,.
$$
Since $F \in \ms K^{2^S}$, and $S\setminus \{i\} \in 2^S$,
$ \nabla_{\bs v_i} F(y) = 0$ for all $y$ with
$y_i < \epsilon_{S\setminus \{i\}}$. This completes the proof of the
lemma.
\end{proof}

For two piles $\ms F_1, \ms F_2$, we say
$\ms F_1 \triangleleft \ms F_2$ if there exists $B\subset S$ such that
$\ms F_2 = \ms F_1 \cup \{B\}$. The next proposition is the key result
of  this section.

\begin{proposition}
\label{prop:piledense}
Fix $\mu \in \mc M(\Sigma)$. For any pair of piles satisfying
$\ms F_1 \triangleleft \ms F_2$, $\ms K^{\ms F_2}$ is $\mu$-dense in
$\ms K^{\ms F_1}$.
\end{proposition}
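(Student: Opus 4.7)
Without loss of generality take $B\notin\ms F_1$ and $\varnothing\neq B\subsetneq S$ (the cases $B=\varnothing$ and $B=S$ are trivial since $\|x\|_S\equiv 1$ and $\|x\|_\varnothing\equiv 0$ on $\Sigma$). Set $A:=S\setminus B$. Given $F\in\ms K^{\ms F_1}$, the plan is to replace $F$ with its $\gamma_B$-fiberwise constant version in a shrinking neighborhood of $\Sigma_B$. Fix a smooth $\phi\colon[0,\infty)\to[0,1]$ with $\phi\equiv 1$ on $[0,1/2]$ and $\phi\equiv 0$ on $[1,\infty)$, set $\chi_n(x):=\phi(n\,\mf I_A(x))$, and define
\[
F_n\,:=\,\chi_n\,\gamma_B^*(F|_{\Sigma_B})+(1-\chi_n)\,F\,=\,F-\chi_n\,G,\qquad G:=F-\gamma_B^*(F|_{\Sigma_B}).
\]
By Lemma~\ref{lem:extregularity}, $\gamma_B^*(F|_{\Sigma_B})\in\mc E_A$, so $G\in\mc E_A$ with $G|_{\Sigma_B}=0$; Lemma~\ref{lem:estEA} then yields bounded Borel functions $H,K_{ij}$ (continuous on $\Sigma\setminus\Sigma_B$) such that $G=H\mf I_A^2$ and $(\partial_{x_i}-\partial_{x_j})G=K_{ij}\mf I_A$.

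The first task is to verify $F_n\in\ms K^{\ms F_2}$. On the open set $\{\mf I_A<1/(2n)\}$ one has $\chi_n\equiv 1$ and hence $F_n=\gamma_B^*(F|_{\Sigma_B})$; applying the chain rule together with $\gamma_B(\bs v_j)=0$ for $j\in A$ (see \eqref{eq:projofvect}) yields $\nabla_{\bs v_j}F_n\equiv 0$ there, which is the new $B$-condition. For each $B'\in\ms F_1$, preserving the corresponding condition requires a short case analysis (according to whether $B'\subset B$, $B\subset B'$, or neither), built on Lemma~\ref{lem:lincomb} (to decompose $\bs v_j-\gamma_B(\bs v_j)$), Lemma~\ref{lem:normmapvanish} (to control the directions in which $\chi_n$ varies), and the fiberwise-constancy of $F$ and $\gamma_B^*(F|_{\Sigma_B})$.

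Convergence in the $\mu$-topology splits into two estimates. The uniform bound $\|F_n-F\|_\infty\leq\|H\|_\infty\,\|\chi_n\mf I_A^2\|_\infty\leq\|H\|_\infty/n^2$ is immediate. For the integral part, on each stratum $\mathring{\Sigma}_C$ the local Leibniz identity gives
\[
\mf L^{\mc E}(\chi_nG)=\chi_n\,\mf L^{\mc E}G+G\,\mf L^{\mc E}\chi_n+\sum_{i,j\in S}m_i\,r(i,j)\,(\partial_{x_i}-\partial_{x_j})\chi_n\cdot(\partial_{x_i}-\partial_{x_j})G.
\]
For $C\subset B$, $G\equiv 0$ on $\Sigma_C\subset\Sigma_B$, so the expression vanishes locally. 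For $C\cap A\neq\varnothing$, each fixed $x\in\mathring{\Sigma}_C$ satisfies $\mf I_A(x)>0$, so $\chi_n$ and all its derivatives at $x$ vanish once $n\mf I_A(x)\geq 1$; thus the integrand tends to zero pointwise off $\Sigma_B$. Combined with a uniform-in-$n$ $L^\infty$ bound on $\mf L^{\mc E}(\chi_nG)$, dominated convergence closes the argument.

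The principal obstacle is establishing that uniform bound. The drift part of $\mf L^{\mc E}\chi_n$ contains factors $(m_i/x_i)\nabla_{\bs v_i}\chi_n=\phi'(n\mf I_A)\cdot n(m_i/x_i)\nabla_{\bs v_i}\mf I_A$ of nominal order $n$, which must be absorbed by the $\mf I_A^2$-smallness of $G$ (and by $\mf I_A\cdot\mf I_A$ in the cross term). The homogeneity estimates \eqref{joh1}--\eqref{johl} and Lemma~\ref{lem:normmapbdd} are the key: on the support of $\phi'(n\,\mf I_A(\cdot))$ one has $\mf I_A\asymp 1/n$, so $n\cdot\mf I_A^2=O(1/n)$ compensates the apparent blow-up. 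The Hessian bound in \eqref{joh2} handles the second-order contribution of $\mf L^{\mc E}\chi_n$, and a separate accounting of $i\in A$ versus $i\in B$ in the drift, combined with Lemma~\ref{lem:normmapvanish} for the boundary directions, yields the uniform bound and completes the proof.
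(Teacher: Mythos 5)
Your proposal follows the same overall route as the paper: the same smooth cutoff $\chi_n=\phi(n\,\mf I_A)$ centered on $\Sigma_B$, the same decomposition $F_n=F-\chi_nG$ with $G=F-\gamma_B^*(F|_{\Sigma_B})$, the same $L^\infty$ estimate via Lemma~\ref{lem:estEA}, and the same dominated-convergence finish. So the architecture is right. There is, however, a genuine gap at the most delicate point of the argument.

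The paper's proof hinges on an intermediate step (its ``Assertion~A'' in Lemma~\ref{lem:FFe}) which you do not establish or replace: there is a $\delta>0$, depending only on $F$, such that $G\equiv 0$ on $\{x\in\Sigma:\mf I_A(x)<2\epsilon,\ \min_{j\in B}x_j\le\delta\}$. Its proof is not soft — it uses that $\ms F_1$ already contains all \emph{proper} subsets of $B$ (because $\ms F_2=\ms F_1\cup\{B\}$ is a pile and $B\notin\ms F_1$), hence $F$ is $\gamma_{B'}$-fiberwise constant near each $\Sigma_{B'}$, $B'\subsetneq B$, and then combines Lemma~\ref{lem:pile} with the composition formula Lemma~\ref{lem:recursive} to conclude $F(x)=F(\gamma_B(x))$ in that region. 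Without this support fact you cannot (i) check condition $\mathfrak{E1}(j)$ for $j\in B$ for $F_n$ — note that $\gamma_B^*(F|_{\Sigma_B})$ is \emph{not} automatically in $\mc E_j$ for $j\in B$, since $[\gamma_B(x)]_j/x_j$ is unbounded; (ii) verify that $F_n$ retains the $\ms F_1$-conditions — the paper does this by showing the relevant neighborhood of $\Sigma_{B'}$ is disjoint from the support of $F-F_n$, using $B\cap(S\setminus B')\neq\varnothing$, and your claimed trichotomy ``$B'\subset B$, $B\subset B'$, or neither'' cannot be right because the pile structure rules out $B\subsetneq B'$; and (iii) bound the $j\in B$ drift contributions $\frac{m_j}{x_j}\nabla_{\bs v^C_j}(\chi_nG)$, which in the paper rely on $\min_{j\in B}x_j\ge\delta$ on the support — Lemma~\ref{lem:normmapvanish}, which you invoke here, controls the directions $\bs v_k$, $k\notin C$, for $\mf I_A$, but gives nothing about $\nabla_{\bs v_j}G$ for $j\in B$.

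In short: you have the right construction and the right ingredients for the $i\in A$ side, and the $L^\infty$ convergence is fine, but the $i\in B$ side and the membership $F_n\in\ms K^{\ms F_2}$ both require the support structure coming from the pile hypothesis, and your sketch neither states nor derives it.
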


The proof of this proposition requires to explicitly approximate a
function in $\ms K^{\ms F_1}$ by a function in $\ms K^{\ms F_2}$.  To
do so, we need to use a smooth cutoff function technique.  Consider a
smooth cutoff function $\chi: \bb R_{\geq 0} \to \bb R_{\geq 0}$
satisfying:
$$\chi(x) = 1 \text{ for } x \leq 1, \;\; \chi(x)
= 0 \text{ for } x \geq 2, \;\; \chi \text{ is decreasing.}$$
For $\epsilon>0$, define
$\chi_\epsilon: \bb R_{\geq 0} \to \bb R_{\geq 0}$ by
$\chi_\epsilon(x) = \chi(x/\epsilon)$.  Note the following properties
of the cutoff $\chi_\epsilon$:
\begin{enumerate}[leftmargin=1cm]
\item[($\chi1$)] There exists $C>0$ such that
$\sup_{\epsilon>0}\sup_{x\geq 0}\chi_\epsilon(x) \leq C $.

\item[($\chi2$)] There exists $C>0$ such that
$\sup_{\epsilon>0}\sup_{x\geq 0} x\chi_\epsilon'(x) \leq C $.

\item[($\chi3$)] There exists $C>0$ such that
$\sup_{\epsilon>0}\sup_{x\geq 0} x^2\chi_\epsilon''(x) \leq C $.
\end{enumerate}

Fix two piles $\ms F_1, \ms F_2$, and assume that
$\ms F_2 = \ms F_1 \cup \{B\}$.  Let $A = S\setminus B$.  Fix a
function $F\in \ms K^{\ms F_1}$. Let
$F_\epsilon\colon \Sigma \to \bb R$ be the function defined by
$$
\cb{F_\epsilon}\,:=\,  [\chi_\epsilon \circ \mf I_A] \, \gamma^*_B
(F|_{\Sigma_B}) + [(1-\chi_\epsilon) \circ \mf I_A] \, F.
$$
It follows from the next lemma that $F_\epsilon \rightarrow F$ in the
$\mu$-topology.

\begin{lemma}
\label{lem:FFe}
There exists $\delta>0$ that depends on $F$ such that for all small
enough $\epsilon>0$, 
$$\textup{supp}(F - F_\epsilon) \subset
\{x\in \Sigma : \mf I_A(x) \leq 2\epsilon \text{ and } \min_{j\in B}
x_j \geq \delta \}.$$ Moreover, for small enough $\epsilon>0$,
$F_\epsilon \in \ms K^{\ms F_2}$, and
\begin{enumerate}[leftmargin=*]
\item[\textup{(1)}] $(F-F_\epsilon)|_{\Sigma_B} = 0$ for all
$\epsilon>0$.

\item[\textup{(2)}]
$\{x\in \Sigma : \mf L^{\mc E} F(x) \neq \mf L^{\mc E} F_\epsilon(x)
\}$ shrinks to $\varnothing$ as $\epsilon \to 0$, i.e., the limsup of
the sequence of sets is empty.

\item[\textup{(3)}] As $\epsilon \to 0$,
$\|\mf L^{\mc E} (F- F_\epsilon)\|_{\infty}$ is uniformly bounded.
\end{enumerate}
\end{lemma}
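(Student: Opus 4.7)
My approach is to unpack
\begin{equation*}
F - F_\epsilon \,=\, (\chi_\epsilon \circ \mf I_A)\, G, \qquad G \,:=\, F - \gamma_B^*\bigl(F|_{\Sigma_B}\bigr),
\end{equation*}
and verify each assertion in turn. The support inclusion $\mathrm{supp}(F-F_\epsilon) \subset \{\mf I_A \leq 2\epsilon\}$ is immediate from $\mathrm{supp}\,\chi_\epsilon \subset [0,2\epsilon]$, and item (1) follows from $\gamma_B|_{\Sigma_B} = \mathrm{id}$, which forces $G|_{\Sigma_B} \equiv 0$.

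The central structural input is Lemma \ref{lem:estEA} applied to $G$, after checking $G \in \mc E_A$ via $F \in \mc E_S$ together with Lemma \ref{lem:extregularity}. This yields $G = H\, \mf I_A^2$ and $(\partial_{x_i} - \partial_{x_j}) G = K_{ij}\, \mf I_A$ for bounded Borel functions $H$, $K_{ij}$ continuous off $\Sigma_B$. The membership $F_\epsilon \in \ms K^{\ms F_2}$ splits into two checks: first, $F_\epsilon \in \mc E_S$, which I would verify using the product rule together with the above estimates on $G$ and the scaling bounds $(\chi1)$--$(\chi3)$ combined with Lemma \ref{lem:normmapbdd}; and second, the two types of pile constraints. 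The new constraint associated with $B$ is automatic on $\{\mf I_A < \epsilon\}$, because there $F_\epsilon = \gamma_B^*(F|_{\Sigma_B})$ and Lemma \ref{lem:resvan} gives the required vanishing of $\nabla_{\bs v_j}$ for $j \in A$. The inherited $\ms F_1$-constraints for each $C \in \ms F_1$ are handled through Lemma \ref{lem:pile} plus the commutation $\gamma_B \circ \gamma_C = \gamma_B$ from Lemma \ref{lem:recursive}, with Lemma \ref{lem:splxrest} controlling how $\gamma_B$ maps the relevant neighborhoods of $\Sigma_C$.

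For items (2) and (3) I would expand $\mf L^{\mc E}\bigl((\chi_\epsilon \circ \mf I_A)\,G\bigr)$ via Leibniz. Each resulting term contains one of $\chi_\epsilon, \chi_\epsilon', \chi_\epsilon''$, combined with one or two gradients of $\mf I_A$, multiplied by $G$, $\nabla G$ or $\nabla^2 G$. Using $(\chi1)$--$(\chi3)$, Lemma \ref{lem:normmapbdd}, and the factors of $\mf I_A$ and $\mf I_A^2$ supplied by Lemma \ref{lem:estEA}, the potential singularities from the scaling $\chi_\epsilon^{(k)} \sim \epsilon^{-k}$ are absorbed, and the singular $1/x_i$ from the drift $\bs b$ in $\mf L^{\mc E}$ is absorbed either by condition $\mathfrak{E1}(i)$ on $F$ (on the $F$-piece) or by the $\mf I_A$-factor on the $G$-side. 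This yields the uniform bound in (3). For (2), outside $\{\mf I_A \leq 2\epsilon\}$ one has $F \equiv F_\epsilon$ locally, hence $\mf L^{\mc E}$ agrees there; as $\epsilon \to 0$ the disagreement set contracts to $\Sigma_B$, where the local second-order operator interpretation of $\mf L^{\mc E}$ (Remark \ref{rem:sodo}) makes the identification transparent.

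The main obstacle is the $\delta$-support claim. Since $G = O(\mf I_A^2)$ is generically nonzero near the lower-dimensional faces $\{y \in \Sigma_B : y_{j^*} = 0\}$ for $j^* \in B$, strict equality $F = F \circ \gamma_B$ in the corner region $\{\min_{j\in B} x_j < \delta\} \cap \{\mf I_A < 2\epsilon\}$ cannot be extracted from $\mc E_S$-regularity alone. The argument must exploit the pile structure of $\ms F_1$ in an essential way: for $x$ with some $x_{j^*}<\delta$, one locates a $C \in \ms F_1$ such that $x$ lies in the Lemma \ref{lem:pile} neighborhood of $\Sigma_C$, yielding $F(x) = F(\gamma_C(x))$; Lemma \ref{lem:recursive} then transfers this to $F(x) = F(\gamma_B(x))$ provided $B \subset C$. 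The value of $\delta$ is chosen as a small fraction of $\min_{C \in \ms F_1} \epsilon_C$, and the delicate bookkeeping is identifying, for every type of corner point of $\Sigma_B$, an appropriate witness $C$ in the pile $\ms F_1$.
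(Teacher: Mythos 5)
Your high-level strategy matches the paper's: the same decomposition $F - F_\epsilon = (\chi_\epsilon \circ \mf I_A)\,G$ with $G = F - \gamma_B^*(F|_{\Sigma_B})$, the same reliance on Lemma \ref{lem:estEA} applied to $G$ and the scaling bounds $(\chi1)$--$(\chi3)$ to absorb the $\epsilon^{-k}$ singularities, and the same recognition that the $\delta$-support claim is the crux and must exploit the pile structure of $\ms F_1$. Your proposed treatments of assertions (1)--(3) and of the $\mc E_S$-membership are in the right spirit.

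However, there is a genuine gap in your argument for the $\delta$-support claim, and it is precisely the step you flagged as the ``delicate bookkeeping.'' You propose to find, for $x$ with $x_{j^*}<\delta$ and $\mf I_A(x)$ small, a witness $C\in\ms F_1$ with $B\subset C$ such that $x$ lies in the $\epsilon_C$-neighborhood of $\Sigma_C$, and then to ``transfer'' $F(x)=F(\gamma_C(x))$ to $F(x)=F(\gamma_B(x))$ via Lemma \ref{lem:recursive}. This cannot work: in the only non-trivial situation, $B\notin\ms F_1$, and since $\ms F_1$ is a pile, any $C\in\ms F_1$ with $B\subset C$ would force $B\in\ms F_1$, a contradiction. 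So the witness you seek does not exist. Moreover, even if it did, the transfer is circular: $\gamma_B\circ\gamma_C=\gamma_B$ gives $\gamma_B(x)=\gamma_B(\gamma_C(x))$, but to deduce $F(x)=F(\gamma_B(x))$ from $F(x)=F(\gamma_C(x))$ one would still need $F(\gamma_C(x))=F(\gamma_B(\gamma_C(x)))$, which is the statement being proved. The inclusion must go the other way: for $j^*\in B$ with $x_{j^*}$ small, the correct witness is $B'=B\setminus\{j^*\}\subsetneq B$, which \emph{is} in $\ms F_1$ (since $\ms F_1$ contains all proper subsets of $B$). One checks that both $x$ and $\gamma_B(x)$ lie in the $\epsilon_{B'}$-neighborhood of $\Sigma_{B'}$ (the latter requiring the elementary bound $\|\gamma_B(x)-x\|\lesssim\|x\|_A$), so Lemma \ref{lem:pile} gives $F(x)=F(\gamma_{B'}(x))$ and $F(\gamma_B(x))=F(\gamma_{B'}(\gamma_B(x)))$, and then $\gamma_{B'}\circ\gamma_B=\gamma_{B'}$ (Lemma \ref{lem:recursive} with $B'\subset B$) closes the loop: $F(\gamma_B(x))=F(\gamma_{B'}(x))=F(x)$. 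A secondary consequence of the $\delta$-support claim that you underuse: it makes the verification of the $\ms F_1$-constraints for $F_\epsilon$ essentially trivial, since for $C\in\ms F_1$ one has $(S\setminus C)\cap B\neq\varnothing$ (again by the pile argument), so the set $\{\|x\|_{S\setminus C}<\delta\}$ is disjoint from the support of $F-F_\epsilon$.
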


\begin{proof}
For  $\epsilon$, $\delta>0$, let
$$
\cb{\Lambda^B_{\delta, 2\epsilon} }\,:=\,  \{x\in \Sigma : \mf I_A(x) \leq
2\epsilon \text{ and } \min_{j\in B} x_j \geq \delta \}.$$ By
definition, $F = F_\epsilon$ when
$\mf I_A(x) \geq 2\epsilon$.

\smallskip\noindent {\it Assertion A:} There exist $\delta>0$ and
$\epsilon_0>0$, which only depend on $F$, such that $F = F_\epsilon$
in the domain
$$\{x\in \Sigma : \mf I_A(x) < 2\epsilon \text{ and } \min_{j\in B}
x_j \leq \delta \}.$$ for all $0<\epsilon<\epsilon_0$.

As $\ms F_2$ is a pile and $B\in \ms F_2$, $\ms F_2$
contains all subsets of $B$. Since $\ms F_1$ and $\ms F_2$ differ only
by the set $B$, $\ms F_1$ contains all subsets of $B$.  Thus, as $F$
belongs to $\ms K^{\ms F_1}$, by Lemma \ref{lem:pile}, there exists
$\delta'>0$, which only depends on $F$, so that for all
$\varnothing \subsetneq B'\subsetneq B$, and $x\in \Sigma$ with
$\|x\|_{{B'}^{c}} < \delta'$,
\begin{equation}
\label{06}
F(x) = [\gamma^*_{B'} (F|_{\Sigma_{B'}} ) ] (x)\,.
\end{equation}
        
We claim that there exists $\delta''>0$ such that for all $j\in B$,
$x\in \Sigma$ with $\|x\|_A < \delta''$ and $x_j \leq \delta''$, we
have the following properties:
\begin{enumerate}[leftmargin=1cm]
\item[($\delta''1$)] $\|x\|_{(B\setminus\{j\})^c} < \delta'$,
\item[($\delta''2$)]
$\|\gamma_B(x)\|_{(B\setminus \{j\})^c} < \delta'$.
\end{enumerate}

Indeed, fix $j\in B$, $x\in \Sigma$ with $\|x\|_A < \delta''$ and
$x_j \leq \delta''$. On the one hand,
\begin{equation*}
\|x\|_{(B\setminus\{j\})^c}
= \|x\|_{A \cup \{j\}} \leq \|x\|_A + \|x\|_{\{j\}} \leq 2\delta''\,.
\end{equation*}
On the other hand,
\begin{align*}
\|\gamma_B(x)\|_{(B\setminus\{j\})^c}
= \|\gamma_B(x)\|_{A \cup \{j\}} \leq \|x\|_{A \cup \{j\}}
+ \|\gamma_B(x)-x\|_{A \cup \{j\}}
\leq 2\delta'' + \|\gamma_B(x)-x\|_{A\cup \{j\}} \,.
\end{align*}
By  Lemma \ref{lem:RAlinmap},
$$
\|\gamma_B(x)-x\|_{A\cup \{j\}} \leq \|\gamma_B(x)-x\|
\leq  \sum_{i\in A} [L_A(x_A)]_i \|\bs v_i\| \leq c\,  \|x\|_A
\le c\, \delta''
$$
for some $c>0$ independent of $j$. To complete the proof of the claim
it remains to choose $\delta''$ so that $ (2+c) \delta'' <
\delta'$. We may assume that $\delta''\le \delta'$.

We turn to the proof of Assertion A.  Fix $j \in B$ and $x\in \Sigma$
such that $\|x\|_A < \delta''$, $x_j \leq \delta''$, and let
$B' = B\setminus \{j\}$. By the previous claim,
$\|x\|_{{B'}^c}, \|\gamma_{B}(x)\|_{{B'}^c} < \delta'$. Thus, by
\eqref{06} and Lemma \ref{lem:recursive}.
\begin{equation*}
F(x) = (\gamma^*_{B'} F|_{\Sigma_{B'}}) (x)
= F(\gamma_{B'}(x)) = F(\gamma_{B'}(\gamma_{B}(x)))
= (\gamma^*_{B'} F|_{\Sigma_{B'}}) (\gamma_{B}(x))\,.
\end{equation*}
Since $\|\gamma_{B}(x)\|_{{B'}^c}$ is also bounded by $\delta'$, by
\eqref{06}, 
\begin{equation*}
(\gamma^*_{B'} F|_{\Sigma_{B'}}) (\gamma_{B}(x))
= F(\gamma_{B}(x)) = (\gamma^*_B F|_{\Sigma_B}) (x)\,.
\end{equation*}
This proves Assertion A with $\delta = \delta''$ and $\epsilon_0$ small
to ensure that $\|x\|_A < \delta''$ if $\mf I_A(x)  < \epsilon_0$.

It follows from Assertion A and the first observation of the proof
that, for small enough $\epsilon>0$,
\begin{equation}
\label{29}
\text{supp}(F - F_\epsilon) \,\subset\,
\Lambda^B_{\delta'', 2\epsilon}\,.
\end{equation}
This proves the first statement of the Lemma.
        
We turn to the proof that $F_\epsilon$ belongs to $\ms K^{\ms F_2}$,
which is divided in several steps.  By definition of $F_\epsilon$ and
the first assertion of the lemma,
\begin{gather}
F_\epsilon = F \text{ on } \{x\in \Sigma : \mf I_A(x)
\geq 2\epsilon \text{ or } \min_{j\in B} x_j \leq \delta \},
\label{region1} \\
F_\epsilon = \gamma_B^* F|_{\Sigma_B}
\text{ on } \{\mf I_A (x) \leq \epsilon\}\,.
\label{region2}
\end{gather}

\smallskip
\noindent
\textbf{Step 1:  $F_\epsilon$ satisfies condition $\mathfrak{E2}$.}  By
Lemma \ref{app:6}, $F_\epsilon \in C^1(\Sigma)$.  Since $F\in\mc E_S$,
it remains to show that 
$(F - F_\epsilon)|_{\mathring{\Sigma}_C}\in
C^2_b(\mathring{\Sigma}_C)$ for all $C\subseteq S$ with $|C|\geq
2$. By definition of $F_\epsilon$,
\begin{equation}
\label{eq:FFepsilon}
F - F_\epsilon = (\chi_\epsilon
\circ \mf I_A) [F - \gamma^*_B (F|_{\Sigma_B})]\,.
\end{equation}

We consider two situations:

\vspace{1pt}
\noindent
(1-a) $A\cap C = \varnothing$.  In this case, $C\subseteq
B$. Therefore, $F = F_\epsilon$ on $\mathring{\Sigma}_C$ since
$\mf I_A = 0$ on $\mathring{\Sigma}_C$.  Therefore,
$(F - F_\epsilon)|_{\mathring{\Sigma}_C}\in
C^2_b(\mathring{\Sigma}_C)$.

\vspace{1pt}
\noindent
(1-b) $A\cap C \neq \varnothing$.  In this case, there exists
$i\in A\cap C$, so that $\mf I_A(x) > 0$ since $x_i \neq 0$ on
$\mathring{\Sigma}_C$.  Therefore, $\chi_\epsilon \circ \mf I_A$ is
smooth on $\mathring{\Sigma}_C$. On the other hand, by Lemma
\ref{lem:extregularity}, $F - \gamma^*_B (F|_{\Sigma_B})$ satisfies
condition $\mathfrak{E2}$. Thus,
$(F - F_\epsilon)|_{\mathring{\Sigma}_C}\in C^2(\mathring{\Sigma}_C)$.

It remains to bound the second derivatives of
$(F - F_\epsilon)|_{\mathring{\Sigma}_C}$ to show that it is in
$C^2_b(\mathring{\Sigma}_C)$. We cover the domain
$\mathring{\Sigma}_C$ with two open sets:
\begin{align*}
\mathring{\Sigma}_C = \{x\in \mathring{\Sigma}_C :
\mf I_A(x) < \epsilon\}
\cup \{x\in \mathring{\Sigma}_C : \mf I_A(x) > \epsilon/2\}.
\end{align*}
On the domain $\{x\in \mathring{\Sigma}_C : \mf I_A(x) < \epsilon\}$,
$F_\epsilon = \gamma^*_B (F|_{\Sigma_B})$ by the definition of
$F_\epsilon$. Lemma \ref{lem:extregularity} provides a bound for the
second derivatives of $F-F_\epsilon$ in this case.
        
On the domain
$\{x\in \mathring{\Sigma}_C : \mf I_A(x) > \epsilon/2\}$, \eqref{joh2}
provides a bound for the second derivatives of $\mf I_A$.  In
particular, a bound of the second derivatives of $F-F_\epsilon$ on
this set follows from \eqref{eq:FFepsilon} and Lemma
\ref{lem:extregularity}.

\noindent \textbf{Step 2: $F_\epsilon$ satisfies condition
$\mathfrak{E1}$.}  By \eqref{region1}, for $i\in B$, the map
$x \mapsto (1/x_i) \nabla_{\bs v_i} F_\epsilon$ is bounded.  We turn
to the case $i\in A$. As $F$ belongs to $\mc E_S$, it is enough to
show that the map 
\begin{equation}
\label{region3}
x \,\mapsto \, 
\frac{1}{x_i} \nabla_{\bs v_i}
(F-F_\epsilon) \;\text{ is bounded on }\; \mathring{\Sigma} \,. 
\end{equation}
By \eqref{eq:FFepsilon}, 
\begin{equation}
\label{eq8223}
\frac{1}{x_i} \nabla_{\bs v_i} (F - F_\epsilon)
= \frac{1}{x_i} (\nabla_{\bs v_i}(\chi_\epsilon \circ
\mf I_A))\,  [F - \gamma^*_B (F|_{\Sigma_B})]
+ (\chi_\epsilon \circ \mf I_A)
\frac{1}{x_i}\nabla_{\bs v_i} [F - \gamma^*_B (F|_{\Sigma_B})]\,.
\end{equation}
In order to bound the first term, write
\begin{equation*}
\nabla_{\bs v_i}(\chi_\epsilon \circ \mf I_A) =
(\chi_\epsilon' \circ \mf I_A) (\nabla_{\bs v_i}\mf I_A) =
(\mf I_A \cdot \chi_\epsilon'\circ \mf I_A)( \frac{1}{\mf I_A}
\nabla_{\bs v_i}\mf I_A)\,. 
\end{equation*}
The first term on the right-hand side of \eqref{eq8223} is thus equal
to
\begin{equation*}
(\mf I_A \cdot \chi_\epsilon'\circ \mf I_A)( \frac{\mf
I_A}{x_i} \nabla_{\bs v_i}\mf I_A)\Big[\, \frac{F -
\gamma^*_B (F|_{\Sigma_B})}{\mf I_A^2}\, \Big]\,.
\end{equation*}
By property ($\chi 2$), Lemma \ref{lem:normmapbdd}, property (b) of
Lemma \ref{johp}, and Lemma \ref{lem:estEA}.(1), this expression is
bounded.

We turn to the second term of \eqref{eq8223}. By definition of
$\gamma^*_B (F|_{\Sigma_B})$ and Lemma \ref{app:4},
$$ \frac{1}{x_i}\nabla_{\bs v_i} [F - \gamma^*_B
(F|_{\Sigma_B})] (x) = \frac{1}{x_i}\nabla_{\bs v_i} F (x) -
\frac{1}{x_i} \, \nabla_{\bs \gamma_B(\bs v_i)} F_B (\gamma_B (x)) \,
. $$
By \eqref{eq:projofvect}, as $i\in A$, $\gamma_B (\bs
v_i)=0$. Thus the second term vanishes. The first one is bounded
because $F\in \mc E_S$.

\noindent
\textbf{Step 3: $F_\epsilon$ is contained in $K^{\ms F_1}$.}  Fix
$C\in \ms F_1$, and let $D = S\setminus C$. We need to show that there
exists $\epsilon_C > 0$ such that
\begin{equation}
\label{28}
\nabla_{\bs v_j}F_\epsilon (x) = 0 \;\; \text{for all}\;\;
j \in D,\; x \in \Sigma \text{ with } \|x\|_D <\epsilon_C \,.
\end{equation}

We claim that $D\cap B \neq \varnothing$. Indeed, suppose, by
contradiction, that $D \cap B = \varnothing$, so that $B\subset
C$. Thus, $B\in \ms F_1$ because $C\in \ms F_1$ and $\mc F_1$ is a
pile, which is a contradiction with the hypothesis that
$B\not \in \ms F_1$.

It is enough to prove \eqref{28} for $F-F_\epsilon$.  By
\eqref{region1}, we have to show that there exists $\epsilon_C > 0$
such that
$$
\nabla_{\bs v_j} (F - F_\epsilon) (x) = 0  \;\; \text{for
all}\;\; j \in D\,, \, x \in \Lambda^B_{\delta, 2\epsilon} \text{ with
} \|x\|_D <\epsilon_C\,.
$$
Since $D\cap B \neq \varnothing$, taking $\epsilon_C = \delta$ yields
that
$$
\Lambda^B_{\delta, 2\epsilon} \cap \{\|x\|_D <\epsilon_C\}
=\varnothing\,.
$$
Because this set is empty, the condition naturally holds.
        
\noindent
\textbf{Step 4: $F_\epsilon$ is contained in $K^{\ms F_2}$.}  Recall from
Lemma \ref{johp}-(b), the definition of the constant $c_2$. Clearly,
$\mf I_A (x) \le \epsilon$ if $\| x\|_A \le \epsilon/c_2$.  Let
$\epsilon_B = \epsilon/c_2$. By \eqref{region2}, the choice of
$\epsilon_B$, and Lemma \ref{app:4}, on $\{\| x\|_A< \epsilon_B\}$, 
\begin{equation*}
\nabla_{\bs v_j} F_\epsilon(x) \,=\, \nabla_{\bs v_j}
\gamma^*_B (F|_{\Sigma_B}) (x)
= \nabla_{\bs \gamma_B(\bs v_j)} F_B (\gamma_B (x)) 
\end{equation*}
for all $j\in A$. This quantity vanishes because, by
\eqref{eq:projofvect}, $\gamma_B (\bs v_j) =0$ for $j\in A$. Thus,
$\nabla_{\bs v_j} F_\epsilon(x) = 0$ for all $j\in A$ and
$x\in \Sigma \text{ with } \|x\|_A < \epsilon_B$. This proves that
$F_\epsilon$ belongs to $K^{\ms F_2}$.
\smallskip

To complete the proof of the lemma, it remains to prove the assertions
(1)--(3).  The property (1) is obvious from the definition of
$F_\epsilon$.  By \eqref{29} and since $F_\epsilon=F$ on $\Sigma_B$, 
\begin{equation}
\label{eq:Lsupp}
\{x\in \Sigma : \mf L^{\mc E} F(x)
\neq \mf L^{\mc E} F_\epsilon(x) \} \subset \Lambda^B_{\delta,
2\epsilon} \setminus \Sigma_B.
\end{equation}
The property (2) follows from this fact.

It remains to show (3).  Let $G = F - \gamma^*_B F|_{\Sigma_B}$, so
that $G|_{\Sigma_B} = 0$. By Lemma \ref{lem:extregularity},
$G\in \mc E_A$.  By \eqref{eq:Lsupp} we only need to bound the term on
the domain $\Lambda^B_{\delta, 2\epsilon}\setminus \Sigma_B$ by some
constant independent of $\epsilon$.  Fix
$x\in \Lambda^B_{\delta, 2\epsilon}\setminus \Sigma_B$.  Let
$C = \{ i\in S : x_i \neq 0 \}$, so that $\mf I_A (x)>0$ and
$B\subsetneq C$.  By \eqref{eq:FFepsilon}, and the definition of
$\mf L^{\mc E}$ given in \eqref{eq:eop},
\begin{align*}
&\mf L^{\mc E} (F-F_\epsilon)(x)
= \mf L^{\mc E} ((\chi_\epsilon \circ \mf I_A) G)(x)
= \mf L^C ((\chi_\epsilon \circ \mf I_A) G)(x) \,.
\end{align*}
By  \eqref{eq:LB}, this expression is equal to
\begin{align}
\mf L^{C} (\chi_\epsilon \circ \mf I_A) \cdot G(x) +
\mf L^{C} G \cdot (\chi_\epsilon \circ \mf I_A)(x)
+ \sum_{i,j\in C} m_i r^C(i,j)
(\partial_{x_i} - \partial_{x_j}) (\chi_\epsilon \circ \mf I_A)
(\partial_{x_i} - \partial_{x_j}) G(x)\,.
\label{eq:expansion}
\end{align}
        
By property $(\chi2)$ and Lemma \ref{lem:estEA}-(2), the third term in
\eqref{eq:expansion} is bounded in
$\Lambda^B_{\delta, 2\epsilon}\setminus \Sigma_B$, uniformly in
$\epsilon >0$. We turn to the second. Note that
\begin{equation*}
(\mf L^{C} G)(x)
= b \,  \sum_{i\in A\cap C}
\frac{m_i (\nabla_{\bs v^C_i} G)(x)  }{x_i}
+ b \,  \sum_{j\in B}
\frac{m_j(\nabla_{\bs v^C_j} G)(x)  }{x_j}
+ \frac{1}{2} \sum_{i,j\in C}  m_i\, 
r^C(i,j) \,  [(\partial_{x_i} - \partial_{x_j}) G (x) ]^2\,.
\end{equation*}
By Lemma \ref{lem:resvan}, the first term is equal to
$$
b \,  \sum_{i\in A\cap C} \frac{ m_i [\nabla_{\bs v^C_i}
(F-\gamma_B^*F|_{\Sigma_B}) ](x)}{x_i}
= b \,  \sum_{i\in A\cap C}
\frac{ m_i (\nabla_{\bs v^C_i} F)(x)  }{x_i},
$$
which is bounded by the fact that $F \in \mc E_S$. On the other hand,
as $F\in C^1(\Sigma)$, on the set $\Lambda^B_{\delta, 2\epsilon}$,
there exists a finite constant $C_0$ such that
\begin{equation}
\label{eq:Bbdd}
b \,  \sum_{j\in B} \frac{m_j(\nabla_{\bs v^C_j} G)(x) }{x_j}
\leq \frac{C_0}{\delta} \; \text{ on } \;
\Lambda^B_{\delta, 2\epsilon}.
\end{equation}
The last term in the decomposition of $\mf L^{C} G $ is clearly
bounded. This proves that the second term in \eqref{eq:expansion} is
bounded in $\Lambda^B_{\delta, 2\epsilon}$, uniformly in $\epsilon>0$.
    
It remains to consider the first term of \eqref{eq:expansion}. It is
equal to
\begin{equation*}
(\chi_\epsilon' \circ \mf I_A) (x)\, (\mf L^C \mf I_A)(x)\,  G(x)
+ \frac{1}{2} \, (\chi_\epsilon''\circ \mf I_A)(x)\, 
\sum_{i,j\in C} m_i \, r^C(i,j) \,
[(\partial_{x_i}-\partial_{x_j})\mf I_A]^2 \, G(x) \,.
\end{equation*}
Since $G\in \mc E_A$ and $G|_{\Sigma_B}=0$, by Lemma
\ref{lem:estEA}-(1), we may rewrite this sum as
\begin{align*}
& [\, \mf I_A\, (\chi_\epsilon'\circ \mf I_A) (x) \,] \,
[\, \mf I_A (\mf L^C \mf I_A) (x) \,]  \, H(x)
\\
&\quad + [(\chi_\epsilon''\circ \mf I_A) \, \mf I_A^2 (x)]\, 
\frac{1}{2} \, \sum_{i,j\in C}  m_i \, r^C(i,j)\,
[(\partial_{x_j} - \partial_{x_i})\mf I_A (x)]^2 H(x)
\end{align*}
for some $H\in BC(\Sigma,\Sigma\setminus \Sigma_B)$.  By property
$(\chi3)$ and \eqref{joh2}, the second term is bounded in
$\Lambda^B_{\delta, 2\epsilon}\setminus \Sigma_B$, uniformly in
$\epsilon >0$.

We turn to the first. By property $(\chi2)$,
$\mf I_A (x) \, (\chi_\epsilon'\circ \mf I_A) (x)$ is uniformly
bounded in $\epsilon>0$. Fix $i\in C$. By \eqref{eq:projofvect},
$\nabla_{\bs v^C_i}\mf I_A = \nabla_{\gamma_C(\bs v_i)} \mf I_A
=\nabla_{\bs v_i} \mf I_A + \nabla_{\gamma_C(\bs v_i) - \bs v_i} \mf
I_A$. By Lemma \ref{lem:lincomb}, $\gamma_C(\bs v_i) - \bs v_i$ is a
linear combination of the vectors $\bs v_k$, $k\in C^c$. By Lemma
\ref{lem:normmapvanish}, $(\nabla_{\bs v_k}\mf I_A )(x) =0$ for all
$k\not\in C$. This implies
$\nabla_{\bs v^C_i} \mf I_A = \nabla_{\bs v_i} \mf I_A$. Thus,
\begin{align*}
\mf I_A (x)\, (\mf L^C \mf I_A) (x) &
= b \,  \sum_{i\in C}
\frac{ m_i (\nabla_{\bs v^C_i}\mf I_A)(x)}{x_i}
\, \mf I_A (x)
+ \frac{1}{2} \sum_{i,j\in C} m_i \, r^C(i,j)\,
[(\partial_{x_i}- \partial_{x_j})^2 \mf I_A (x)] \, \mf I_A(x) \\
&=
b \,  \sum_{i\in C}
\frac{ m_i (\nabla_{\bs v_i}\mf I_A)(x)}{x_i}
\, \mf I_A (x)
+ \frac{1}{2} \sum_{i,j\in C} m_i \, r^C(i,j)\,
[(\partial_{x_i}- \partial_{x_j})^2 \mf I_A (x)] \, \mf I_A(x)\,.
\end{align*}
By \eqref{joh2}.  the second term is bounded. The first one can
be rewritten as
$$
b \,  \sum_{i\in A\cap C}
\frac{ m_i (\nabla_{\bs v_i}\mf I_A)(x)}{x_i}
\, \mf I_A (x)
+ b \,  \sum_{i\in B}
\frac{ m_i (\nabla_{\bs v_i}\mf I_A)(x)}{x_i}
\, \mf I_A (x) \,.
$$
By Lemma \ref{lem:normmapbdd}, the first sum is bounded. The second
one can be estimated with the same arguments used for \eqref{eq:Bbdd}
using the bound \eqref{joh2}. This completes the proof of the lemma.
\end{proof}

\begin{proof}[Proof of Proposition \ref{prop:piledense}]
We claim that $F_\epsilon \rightarrow F$ in the $\mu$-topology.
By Lemma \ref{lem:FFe}, 
$$
\int_\Sigma \mf L^{\mc E} F_\epsilon \;d\mu
\rightarrow \int_\Sigma \mf L^{\mc E} F \; d\mu.
$$
It remains to show $F_\epsilon \to F$ in $L^\infty$.
By \eqref{eq:FFepsilon} and the definition of $\chi_\epsilon$, 
$$
|F-F_\epsilon|_\infty
\le \sup_{\mf I_A(x) \le 2\epsilon} |F- \gamma^*_B (F|_{\Sigma_B})|\,.
$$
As $F$ is continuous, the right-hand side converges to $0$ as
$\epsilon \rightarrow 0$. This completes the proof of the claim.

By Lemma \ref{lem:FFe}, $F_\epsilon \in \ms K^{\ms F_2}$ for small
enough $\epsilon>0$. Therefore, $\ms K^{\ms F_2}$ is $\mu$-dense in
$\ms K^{\ms F_1}$.
\end{proof}

\begin{corollary}
For all $\mu \in \mc M(\Sigma)$, $\ms K^{2^S}$ is $\mu$-dense in
$\mc E_S$.
\end{corollary}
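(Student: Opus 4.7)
The plan is to iterate Proposition \ref{prop:piledense} along a chain of piles that grows from $\varnothing$ to $2^S$ by adding one subset at a time. First I would fix any enumeration $B_1, B_2, \dots, B_N$ of $2^S$ (with $N=2^{|S|}$) that respects inclusion, in the sense that $B_i \subsetneq B_j$ implies $i < j$ — for instance, ordering by cardinality and breaking ties arbitrarily, so that $B_1 = \varnothing$. Define $\ms F_0 = \varnothing$ and $\ms F_k = \{B_1, \dots, B_k\}$ for $1 \le k \le N$, so that $\ms F_N = 2^S$.

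Next I would verify that each $\ms F_k$ is a pile and that $\ms F_{k-1} \triangleleft \ms F_k$. The latter is immediate since $\ms F_k = \ms F_{k-1} \cup \{B_k\}$. For the pile property, if $A \in \ms F_k$, then $A = B_i$ for some $i \le k$; any $A' \subset A$ equals $B_j$ for some $j$, and the enumeration forces $j \le i \le k$, hence $A' \in \ms F_k$. Thus Proposition \ref{prop:piledense} applies at each step and gives that $\ms K^{\ms F_k}$ is $\mu$-dense in $\ms K^{\ms F_{k-1}}$ for every $1 \le k \le N$.

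Finally I would chain these density statements. Since the $\mu$-topology is first countable and induced by a norm, the closure operator is well-behaved, and density is transitive: if $\ms K^{\ms F_k} \subset \overline{\ms K^{\ms F_{k+1}}}$ and $\ms K^{\ms F_{k-1}} \subset \overline{\ms K^{\ms F_k}}$, then $\ms K^{\ms F_{k-1}} \subset \overline{\ms K^{\ms F_k}} \subset \overline{\overline{\ms K^{\ms F_{k+1}}}} = \overline{\ms K^{\ms F_{k+1}}}$. Iterating this $N$ times yields
\begin{equation*}
\mc E_S \,=\, \ms K^{\ms F_0} \,\subset\, \overline{\ms K^{\ms F_N}} \,=\, \overline{\ms K^{2^S}},
\end{equation*}
which is exactly the claim that $\ms K^{2^S}$ is $\mu$-dense in $\mc E_S$. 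There is no genuine obstacle here: the entire content has been packaged into Proposition \ref{prop:piledense}, and the corollary is just its combinatorial unwinding along a linear extension of the subset order on $2^S$.
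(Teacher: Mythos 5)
Your proof is correct and takes essentially the same route as the paper: the paper's own proof of this corollary is just the terse observation that $\ms K^{\varnothing} = \mc E_S$, $2^S$ is a pile, and the conclusion follows from Proposition \ref{prop:piledense}. You have merely spelled out the chain of piles $\varnothing = \ms F_0 \triangleleft \ms F_1 \triangleleft \cdots \triangleleft \ms F_N = 2^S$ (via a linear extension of the inclusion order) and the transitivity of $\mu$-density that the paper leaves implicit.
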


\begin{proof}
As $2^S$ is the collection of all subsets of $S$, $2^S$ is a pile.  By
definition, $\ms K^{\varnothing} = \mc E_S$.  The assertion is thus a
consequence of Proposition \ref{prop:piledense}.
\end{proof}

\begin{proof}[Proof of Proposition \ref{prop:mudense}]
By Lemma \ref{lem:2SsubsetD}, $\ms K^{2^S} \subset \mc D_S$.
Thus, the statement follows from the previous corollary.
\end{proof}

\section{Proof of Theorem \ref{thm:abs}}
\label{sec:proofthmabs}

In this section we show that any solution of the
$(\mf L^{\mc E}, \mc E_S)$ martingale problem is absorbed at the
boundary. In the first subsection, we introduce the natural candidates
(in view of the form of the generator) to prove absorption. More
precisely, a family of positive functions which are superharmonic away
from the boundary.  These natural candidates do not belong to the
domain $\mc E_S$ and need to be regularized close to the boundary. In
the following two subsection by using the regularized versions we
prove absorption by considering the associated Dynkin's martingales.

\subsection{A superharmonic function}

As the title suggest, we construct in this subsection a superharmonic
function in three steps starting from the functions
$F_A\colon \Sigma \to \bb R_+$ introduced below in \eqref{31}, which
does not belong to the domain $\mc E_S$. At each step we improve the
regularity of the function keeping its essential properties, until
obtaining in Lemma \ref{lem:funcext} a function in $\mc E_S$.  As the
function $F_A$, introduced in \eqref{31}, belongs to $\mc E_A$, it
satisfies condition $\mf E_2$. Therefore, $F_A|_{\mathring{\Sigma}_D}$
belongs to $C^2_b(\mathring{\Sigma}_D)$ for all $D\subset S$ with
$|D|\ge 2$. In consequence, $\mf L^{\mc E} F_A$ is well defined as a
differential operator.

\begin{lemma}
\label{lem:supharm}
Fix a proper nonempty subset $B$ of $S$ and let $A = B^c$. For
$\gamma \in (0,1)$, let $F_A\colon \Sigma \to \bb R_+$ be given by
\begin{equation}
\label{31}
F_A(x) = \prod_{k\in A} x_k^{1+b}  (1-x_k^{\gamma}), \quad x\in \Sigma\,.
\end{equation}
Then, $F_A$ belongs to $\mc E_A$. Moreover, for each nonempty subset $D$
of $B$ and $\epsilon>0$, there exists
$\lambda^{A\cup D,A}(\epsilon)>0$ such that
$$
\mf L^{\mc E} F_A(z) \leq 0 \;\; \text{ for all } \;\;\ z\in \Big\{ x \in
\Sigma_{A\cup D}
: \max_{k\in A} x_k \leq \lambda^{A\cup D, A}(\epsilon) \text{ and }
\min_{i\in  D} x_i \geq \epsilon \Big\}\,.
$$
\end{lemma}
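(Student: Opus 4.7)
The plan is to establish the two assertions separately: (i) $F_A\in\mc E_A$, and (ii) the superharmonicity on the stated region. Part (ii) rests on a direct algebraic computation on $\mathring{\Sigma}_{A\cup D}$ in which the naive leading singular terms cancel identically, so that the correction factor $(1-x_k^\gamma)$ is indispensable for obtaining strict negativity at the subleading order.

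For (i), write $F_A(x)=\prod_{k\in A} f(x_k)$ with $f(t):=t^{1+b}(1-t^\gamma)=t^{1+b}-t^{1+b+\gamma}$. Since $b\ge 1$ and $\gamma>0$, $f$ lies in $C^2([0,1])$ with bounded norm. For each $C\subseteq S$ with $|C|\ge 2$, either $A\not\subseteq C$ and $F_A$ vanishes identically on $\Sigma_C$, or $A\subseteq C$ and $F_A|_{\mathring{\Sigma}_C}$ is a product of bounded $C^2$ functions, hence in $C^2_b(\mathring{\Sigma}_C)$; this gives condition $\mathfrak{E2}$. For condition $\mathfrak{E1}(i)$ with $i\in A$, the Leibniz rule expresses $\nabla_{\bs v_i}F_A/x_i$ as a linear combination of the bounded functions $f'(t)/t=(1+b)t^{b-1}-(1+b+\gamma)t^{b+\gamma-1}$ and $f(t)/t=t^b(1-t^\gamma)$, which are bounded on $[0,1]$ since $b\ge 1$.

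For (ii), fix $z$ in the stated region and set $C:=\{i\in S:z_i>0\}$, so $D\subseteq C\subseteq A\cup D$. If $A\not\subseteq C$, then $F_A|_{\mathring{\Sigma}_C}\equiv 0$ and hence $\mf L^{\mc E}F_A(z)=0$. Otherwise $C=A\cup D=:E$, $z\in\mathring{\Sigma}_E$, and $\mf L^{\mc E}F_A(z)=\mf L^E F_A(z)$. Setting $h:=f'/f$ and $H:=f''/f$, the product form of $F_A$ gives $\partial_iF_A/F_A=h(x_i)$ for $i\in A$ (else $0$), $\partial_i^2F_A/F_A=H(x_i)$ for $i\in A$ (else $0$), and $\partial_i\partial_jF_A/F_A=h(x_i)h(x_j)$ for $i\ne j$ both in $A$ (else $0$). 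Combined with the stationarity $\sum_{i\in E}m_ir^E(i,j)=m_j\lambda^E(j)$ of the trace chain, this yields
\begin{align*}
\frac{\mf L^E F_A(x)}{F_A(x)} &= \sum_{i\in A} m_i\lambda^E(i)\,H(x_i) \,-\, b\sum_{i\in A}\frac{m_i\lambda^E(i)}{x_i}\,h(x_i) \\
&\quad +\, b\sum_{\substack{i,j\in A\\ i\ne j}}\frac{m_ir^E(i,j)}{x_i}\,h(x_j) \,-\, \sum_{\substack{i,j\in A\\ i\ne j}}m_ir^E(i,j)\,h(x_i)h(x_j) \,+\, b\sum_{\substack{i\in D\\ j\in A}}\frac{m_ir^E(i,j)}{x_i}\,h(x_j).
\end{align*}

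The Taylor expansions $h(t)=(1+b)/t-\gamma\, t^{\gamma-1}+O(t^{2\gamma-1})$ and $H(t)=(1+b)b/t^2-\gamma(2b+1+\gamma)/t^{2-\gamma}+O(t^{2\gamma-2})$ produce three decisive facts. First, the $1/x_i^2$ contributions from the first two sums cancel identically, since $(1+b)b-b(1+b)=0$. Second, the $1/(x_ix_j)$ contributions for $i\ne j\in A$ sum to $-(1+b)\sum_{i\ne j\in A}m_ir^E(i,j)/(x_ix_j)\le 0$. Third, the $1/x_i^{2-\gamma}$ contributions sum to $-\gamma(b+1+\gamma)\sum_{i\in A}m_i\lambda^E(i)/x_i^{2-\gamma}$, which is strictly negative because $\lambda^E(i)>0$ for every $i\in A$ by irreducibility of the trace process on $E$. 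On the region $\{\max_{k\in A}x_k\le\lambda,\;\min_{i\in D}x_i\ge\epsilon\}$, the positive drift contribution from $D$ is bounded by $C_1/(\epsilon\lambda)$, and the positive cross remainder at order $1/(x_ix_j^{1-\gamma})$ is bounded by $C_2/\lambda^{2-\gamma}$; both are dominated by the strictly negative $1/x_i^{2-\gamma}$ contribution (of magnitude at least $c/\lambda^{2-\gamma}$) as soon as $\lambda\ll\epsilon^{1/(1-\gamma)}$, so taking $\lambda^{A\cup D,A}(\epsilon)$ to be any such value yields $\mf L^E F_A(z)\le 0$. The main obstacle is precisely the $1/x_i^2$ cancellation: the naive candidate $\prod_{k\in A}x_k^{1+b}$ provides no strictly negative leading term when $|A|=1$, or more generally when $r^E(i,j)=0$ for all distinct $i,j\in A$, and the $(1-x_k^\gamma)$ correction with $\gamma\in(0,1)$ is what generates the subleading negative $1/x_i^{2-\gamma}$ term needed to overcome the positive drift from $D$.
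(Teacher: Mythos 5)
Your overall strategy mirrors the paper's: both reduce on the stated region to the single nontrivial subsimplex $\mathring{\Sigma}_{A\cup D}$ (the cases $A\not\subseteq C$ being trivial, and $D\subseteq C$ being forced by $\min_{i\in D}x_i\ge\epsilon$), both exploit the product form $F_A=\prod_{k\in A}f(x_k)$ with $f(t)=t^{1+b}(1-t^\gamma)$, and both locate the decisive negativity in the subleading $\sim x_j^{\gamma-2}$ contribution generated by the $(1-x_k^\gamma)$ factor. The main difference is technical: the paper writes $\mf L^C F_A = \sum_j I_j + \sum_j L_j$ and shows $I_j\le 0$ and $L_j\le 0$ by an \emph{exact} algebraic factorization (using $b\,\partial_jF_A/x_j-\partial_j^2F_A=\gamma(1+b+\gamma)x_j^{b+\gamma-1}\prod_{k\ne j}f(x_k)$, so that $I_j$ is a nonpositive factor times a bracket $\ge 0$, and $L_j$ has all its factors of the right sign once $(1+\gamma)x_i^\gamma\le 1$), whereas you go through Taylor expansions of $h=f'/f$ and $H=f''/f$. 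Your expansion coefficients and the two cancellations ($b(1+b)-(1+b)b=0$ at order $1/x_i^2$, and $-(1+b)$ at order $1/(x_ix_j)$) are correct, and your final identification of the obstacle (no strictly negative leading term without the $t^\gamma$ correction, e.g.\ when $|A|=1$) is exactly right.

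However, the concluding dominance step is written with the inequalities backwards. The drift contribution from $D$ is of order $\sum_{j\in A}1/(\epsilon\,x_j)$, and since $x_j\le\lambda$ this quantity is \emph{bounded below} by $c/(\epsilon\lambda)$ and can be arbitrarily large as $\min_{j\in A}x_j\downarrow 0$; it is not bounded above by $C_1/(\epsilon\lambda)$. Likewise the positive cross remainder, of order $x_j^{\gamma}/(x_ix_j)$, is not bounded above by $C_2/\lambda^{2-\gamma}$. Your argument therefore cannot be run by comparing two global bounds on the region. The correct comparison (implicit in your numerology but not in your statement) is term-by-term, pairing each positive singularity with the negative term carrying the \emph{same} singular factor: for each $j\in A$, the drift $\lesssim 1/(\epsilon\,x_j)$ is beaten by the diagonal negative term $\gtrsim \lambda^E(j)/x_j^{2-\gamma}$ once $x_j^{1-\gamma}\lesssim\epsilon$, hence once $\lambda^{1-\gamma}\lesssim\epsilon$ (your condition, but applied to $x_j$ rather than to $\lambda$); and for each pair $i\ne j\in A$, the cross remainder $\lesssim \lambda^\gamma\, m_ir^E(i,j)/(x_ix_j)$ is absorbed into the cross negative $-(1+b)m_ir^E(i,j)/(x_ix_j)$ once $\lambda^\gamma$ is small. (In fact the cross contribution $bh(x_j)/x_i-h(x_i)h(x_j)=h(x_j)\bigl[b/x_i-h(x_i)\bigr]$ is exactly nonpositive whenever $(1+\gamma)x_i^\gamma\le 1$ and $f'(x_j)\ge 0$, so no expansion is needed there; this is the paper's $L_j$.) With that repair, and recalling $\lambda^E(j)>0$ by irreducibility of the trace chain, the choice of $\lambda^{A\cup D,A}(\epsilon)$ is as you indicate, and the proof closes. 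So: same approach, correct key computations, but the final dominance step must be restated term-by-term rather than via uniform bounds on the region.
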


\begin{proof}
We start verifying that the conditions $\mathfrak{E1}(A)$ and
$\mathfrak{E2}$ hold.  For $\mathfrak{E1}(A)$, let $ i \in A$. By
assumption, $x_i^2$ divides $F_A(x)$, i.e., $F_A(x)=x_i^2 G(x)$ for
some smooth function $G$ Therefore, the quotient
$F_A(x)/x_i^2= G(x)$ remains bounded near $x_i=0$, so the map in
\eqref{eq:condC1} is bounded.

For $\mathfrak{E2}$, we note that $F_A \in C^2(\Sigma)$, by
assumption. Thus, all second derivatives exist and are continuous,
satisfying the smoothness requirements of $\mathfrak{E2}$.
    
It remains to show the existence of $\lambda^{A\cup D, A}(\epsilon)>0$ with
the given property.  Decompose $\Sigma_{A\cup D}$ as
$$
\Sigma_{A\cup D} \,=\, \bigcup_{\substack{C\subset A\cup D, \\ |C|\geq 2}}
\mathring{\Sigma}_C \cup \bigcup_{i\in A\cup D} \Sigma_{\{i\}}\,.
$$
Since $\mf L^{\mc E}$ vanishes on $\Sigma_{\{i\}}$, it is enough to
show that for all $C \subset A\cup D$ with $|C|\geq 2$, there exists
$\lambda^{A\cup D, A}_C(\epsilon)>0$ such that
\begin{equation}
\label{eq:supharm2}
\mf L^{\mc E} F_A(x) \leq 0 \;\;
\text{ on } \;\;\left\{ x \in \mathring{\Sigma}_C : \max_{k\in A} x_k
\leq \lambda^{A\cup D, A}_C (\epsilon) \text{ and } \min_{i\in  D} x_i \geq
\epsilon \right\},
\end{equation}
and then set
$\lambda^{A\cup D, A}(\epsilon)
= \min_{\substack{C\subset A\cup D \\ |C|\geq 2}}
\lambda^{A\cup D, A}_C(\epsilon)$.

Existence of $\lambda^{A\cup D, A}_C(\epsilon)$ is obvious if
$A \nsubseteq C$ since in this case $F_A(x) = 0$ for
$x\in \mathring{\Sigma}_C$.  Assume that $A \subseteq C$, and compute
$\mf L^{\mc E} F_A(x)$ for $x\in \mathring{\Sigma}_C$. By definition,
\begin{equation}
\label{eq:supharm1}
\mf L^{\mc E} F_A(x)=\mf L^C
F_A(x)=\sum_{j\in A} \sum_{i\in C} {m}_i \, {\bs v}^C_i\cdot {\bs
e}_j\left( b \,  \frac{\partial_{x_j} F_A(x)}{x_i} - \partial_{x_i} \partial_{x_j}
F_A(x) \right) \,.
\end{equation}
Rewrite this expression as $\sum_{j\in A} I_j(x) +\sum_{j\in A}
L_j(x)$, where 
\begin{equation*}
I_j(x) = {m}_j\, {\bs v}^C_j\cdot {\bs e}_j
\left( b \,  \frac{\partial_{x_j} F_A(x)}{x_j}
- \partial^2_{x_j} F_A(x) \right)
+ b \,  \sum_{i\in C\setminus A} {m}_i{\bs v}^C_i\cdot {\bs e}_j
\left( \frac{\partial_{x_j} F_A(x)}{x_i}\right)
\end{equation*}
and
\begin{equation*}
L_j(x) = \sum_{i\in A\setminus \{j\}} {m}_i\,
{\bs v}^C_i\cdot {\bs e}_j\left( b \,  \frac{\partial_{x_j} F_A(x)}{x_i}
- \partial_{x_i} \partial_{x_j} F_A(x) \right).
\end{equation*}

By \eqref{eq:supharm1}, it is enough to show that there exists
$\lambda^{A\cup D, A}_C(\epsilon)>0$ such that $I_j(x)\le 0$ and
$L_j(x)\le 0$ on the domain in \eqref{eq:supharm2}.  To define
$\lambda^{A\cup D, A}_C(\epsilon)$, fix a constant $M>0$ such that
\begin{equation}
\label{eq:supharm3} 0\leq
-\left(\frac{1+b} {\gamma\, (\gamma+b+1)} \right)
\sum_{i\in C\setminus A} \frac{{m}_i
\, {\bs v}^C_i \cdot {\bs e}_j}{{m}_j\, {\bs v}^C_j \cdot
{\bs e}_j} \leq M \quad\text{for all}\;\; j\in A\,.
\end{equation}
Let $\lambda^{A\cup D, A}_C(\epsilon)>0$ be such that
\begin{equation}
\label{eq:supharm4}
0\leq x \leq
\lambda^{A\cup D, A}_C(\epsilon) \;\; \text{ implies } \;\; Mx^{1-\gamma}
\leq \epsilon \;\; \text{ and } \;\; (\gamma+1) \, x^\gamma \leq 1.
\end{equation}

For simplicity, let $f(x) = x^{1+b} (1-x^{\gamma})$. On the one hand,
\begin{equation*}
I_j(x) = {m}_j\, {\bs v}^C_j\cdot {\bs e}_j \left(\gamma(\gamma+b+1)
\, x_j^{\gamma+b-1}  \prod_{k\in A\setminus\{j\}} f(x_k)\right) \left(
1 + \left(
\frac{f'(x_j)\,x_j^{-\gamma- b+1}}
{\gamma(\gamma+b+1)}\right)\sum_{i\in
C\setminus A}\left(\frac{1}{x_i}\right) \frac{{m}_i \, {\bs v}^C_i
\cdot {\bs e}_j}{{m}_j \, {\bs v}^C_j \cdot {\bs e}_j} \right).
\end{equation*}
By using \eqref{eq:supharm3} and \eqref{eq:supharm4} and the fact that
$f'(x)\le (1+b) \, x^b$, on the set introduced in \eqref{eq:supharm2}
\begin{equation*}
-\left( \frac{f'(x_j)\,x_j^{-\gamma-b+1}}{\gamma(\gamma+b+1)}\right)
\sum_{i\in C\setminus A}\left(\frac{1}{x_i}\right)
\frac{{\bs m}_i {\bs v}^C_i \cdot
{\bs e}_j}{{\bs m}_j {\bs v}^C_j \cdot {\bs e}_j}
\le \frac{M\,x_j^{1-\gamma}}{\epsilon} \le 1.
\end{equation*}
This shows that $I_j(x)\le 0$ because ${\bs v}^C_j\cdot {\bs e}_j
<0$. On the other hand,
\begin{equation*}
L_j(x) = \sum_{i\in A \setminus \{j\}} {m}_i \, {\bs v}^C_i \cdot
{\bs e}_j \,
f'(x_j)\left( b\, \frac{f(x_i)}{x_i} - f'(x_i) \right) \,
\prod_{k\in A \setminus \{i,j\}} f(x_k),
\end{equation*}
where the last product equals one if
$A\setminus\{i,j\}=\varnothing$. Finally, as $x_j\le 1$,
\eqref{eq:supharm4} assures that
\begin{equation*}
f'(x)\ge 0 \quad {\rm and} \quad \frac{f(x)}{x} - f'(x) \le 0.
\end{equation*}
This completes the proof of the lemma.
\end{proof}
    
Let $\pi_D:\bb R^S \to \bb R$ be the map defined as
$$
\pi_D(x) = \prod_{k\in D} x_k, \quad x\in \bb R^S\,.
$$
Note that
\begin{equation}
\label{32}
(\mf L^{\mc E} F_D) (x) \,=\,0 \quad\text{for all $x\in \Sigma$ such
that $\pi_D(x) =0$}\,.
\end{equation}
Indeed, fix $x\in \Sigma$ such that $\pi_D(x) =0$. Thus $x_i=0$ for
some $i\in D$. Let $C=\{j\in S : x_j >0\}$ so that $i\not\in C$. By
the definition \eqref{eq:eop} of the generator $\mf L^{\mc E}$,
$(\mf L^{\mc E} F_D)(x) = [\mf L^{C} (F_D\big|_{\mathring{\Sigma}_C} )
] (x)$. By \eqref{eq:LB}, the variables $x_\ell$, $\ell\not \in C$,
act as constants for the operator $\mf L^{C}$. Thus, as $x_i=0$,
$[\mf L^{C} (F_D\big|_{\mathring{\Sigma}_C} ) ] (x) =0$, as claimed.

Note that the function $F_{A,D}$ introduced below in \eqref{07} is not
the function $F_D$ defined in \eqref{31} because their domain are
different. 

\begin{lemma}
\label{lem:auxfunc}
Fix a nonempty subset $B$ of $S$ and let $A = B^c$.  For
$\varnothing \subsetneq D \subsetneq B \subset S$ and
$\gamma \in (0,1)$, let $F_{A,D}:\Sigma_B \to \bb R$ be
\begin{equation}
\label{07}
F_{A,D}(x) = \prod_{k\in D} x_k^{1+b} (1-x_k^{\gamma}), \quad x\in \Sigma_B.
\end{equation}
Then there exists a function $F\colon \Sigma \to \bb R$ in
$\mc E_{A\cup D}$ satisfying $F(x) = F_{A,D}(x), x\in \Sigma_B,$ and
$\mf L^{\mc E} F(x) =  0$ for all $x\in \Sigma$ with $\pi_D(x) = 0$.
\end{lemma}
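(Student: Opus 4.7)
The plan is to exhibit $F$ explicitly as an interpolation between two natural extensions of $F_{A,D}$ from $\Sigma_B$ to $\Sigma$. Define
\[
F_1(x) \;:=\; \prod_{k \in D} x_k^{1+b}(1 - x_k^\gamma), \qquad
F_2(x) \;:=\; (\gamma_B^* F_{A,D})(x),
\]
so that $F_1|_{\Sigma_B}=F_2|_{\Sigma_B}=F_{A,D}$ because $\gamma_B$ is the identity on $\Sigma_B$. First, by essentially the same computation as in the proof of Lemma \ref{lem:supharm}, $F_1$ satisfies condition $\mathfrak{E2}$ and condition $\mathfrak{E1}(k)$ for every $k \in D$, and $\mf L^{\mc E} F_1(x) = 0$ whenever $\pi_D(x) = 0$ (the analog of \eqref{32}). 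Second, since $\gamma_B(\bs v_k) = 0$ for $k \in A$ by \eqref{eq:projofvect}, Lemma \ref{lem:extregularity} yields $F_2 \in \mc E_A$. Each of the two functions fails one of the required $\mathfrak{E1}$ conditions: $\nabla_{\bs v_k} F_1$ does not vanish on $\{x_k = 0\}$ for $k \in A$, and $\nabla_{\bs v_k} F_2$ is of order $[\gamma_B(x)]_k^b$ rather than $x_k^b$ for $k \in D$.

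To recover $F \in \mc E_{A \cup D}$, I interpolate via a smooth cutoff $\chi\colon\Sigma\to[0,1]$, built from the auxiliary maps $J_A$ of Lemma \ref{johp} and, where needed, the cutoff scheme of Lemma \ref{phi}. The cutoff is arranged so that $\chi = 1$ on a neighborhood of $\Sigma_B$ small enough that the ratio $[\gamma_B(x)]_k / x_k$ stays bounded there for each $k \in D$, and $\chi = 0$ outside a slightly larger neighborhood that lies in $\{\min_{j \in A} x_j \ge \delta\}$ for some $\delta > 0$. Setting
\[
F(x) \;:=\; \chi(x)\, F_2(x) \;+\; (1 - \chi(x))\, F_1(x),
\]
one has $F|_{\Sigma_B} = F_{A,D}$ automatically, and the cutoff can be arranged so that $F = F_1$ on a neighborhood of $\{x : \pi_D(x) = 0\}$, which together with $\mf L^{\mc E} F_1 = 0$ on that set yields the vanishing of $\mf L^{\mc E} F$ there.

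The verification that $F \in \mc E_{A\cup D}$ splits into the standard conditions. Condition $\mathfrak{E2}$ follows from the smoothness of $F_1$, $F_2$, and $\chi$ together with the second-derivative bounds in \eqref{joh2}, Lemma \ref{lem:normmapbdd}, and Lemma \ref{lem:estEA}. For $\mathfrak{E1}(k)$ with $k \in A$, one uses $F = F_2$ on a neighborhood of the faces $\{x_k = 0\}$, where Lemma \ref{lem:extregularity} applies, and observes that $x_k$ is uniformly bounded below on $\{\chi \neq 1\}$. For $\mathfrak{E1}(k)$ with $k \in D$, one uses the estimate $\nabla_{\bs v_k} F_2 = O([\gamma_B(x)]_k^b)$ together with the imposed bound $[\gamma_B(x)]_k / x_k \le C$ on the support of $\chi$, while on $\{\chi < 1\}$ the estimate $\nabla_{\bs v_k} F_1 = O(x_k^b)$ applies directly; the cutoff-derivative contribution is controlled by Lemma \ref{lem:normmapbdd}.

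The main obstacle is the design of the cutoff $\chi$. A cutoff depending only on the distance to $\Sigma_B$ (for instance, on $\mf I_A$) is not enough, because one can approach $\Sigma_B$ along a direction in which some $x_k$ with $k \in D$ is much smaller than the $A$-coordinates, causing $[\gamma_B(x)]_k / x_k$ to blow up. The cutoff must therefore reflect the relative sizes of $x_A$ and of the $D$-coordinates simultaneously, for instance by combining a $J_A$-based cutoff near $\Sigma_B$ with an auxiliary cutoff localized in the $D$-directions obtained from Lemma \ref{phi}. Once $\chi$ is suitably designed, the remaining verifications reduce to bookkeeping with the estimates already assembled in Sections \ref{sec:edmp} and \ref{sec:aux}.
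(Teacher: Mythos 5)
You interpolate between $F_2 := \gamma_B^* F_{A,D}$ and a second, nonzero extension $F_1(x)=\prod_{k\in D}x_k^{1+b}(1-x_k^\gamma)$, while the paper's construction multiplies $F_2$ by a cutoff and leaves the ``other branch'' identically zero. This difference is not cosmetic: with a nonzero $F_1$, the two families of $\mathfrak{E1}$ requirements impose contradictory demands on the cutoff $\chi$. For $i\in A$, $\nabla_{\bs v_i}F_2\equiv 0$ (since $\gamma_B(\bs v_i)=\bs 0$), so boundedness of $\nabla_{\bs v_i}F/x_i$ requires $(1-\chi)\,\nabla_{\bs v_i}F_1/x_i$ to be bounded. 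But $\nabla_{\bs v_i}F_1=\sum_{j\in D}r(i,j)\,\partial_{x_j}F_1$ is generically nonzero and does not depend on $x_i$, so you need $\chi\to 1$ as $x_i\to 0$. Conversely, for $k\in D$, $\nabla_{\bs v_k}F_1/x_k=O(x_k^{b-1})$ is fine, but $\nabla_{\bs v_k}F_2/x_k=\nabla_{\bs v_k^B}F_{A,D}(\gamma_B(x))/x_k$ blows up whenever $[\gamma_B(x)]_k$ stays bounded away from zero while $x_k\to 0$, so $\mathfrak{E1}(k)$ forces $\chi\to 0$ on that region.

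These demands collide as soon as $|A|\ge 2$: take $x$ with $x_i\to 0$ for one $i\in A$, a second coordinate $x_{i'}$, $i'\in A$, bounded away from zero, and $x_k\to 0$ for some $k\in D$ with $x_k\ll x_{i'}$. Then $[\gamma_B(x)]_k\ge u_k^B(i')\,x_{i'}$ is bounded away from zero, so $\mathfrak{E1}(k)$ demands $\chi\to 0$ while $\mathfrak{E1}(i)$ demands $\chi\to 1$. No choice of cutoff --- $J_A$-based, $\Phi$-based from Lemma~\ref{phi}, or any combination --- can meet both, because the conflict is at a single point. The paper instead sets the complementary branch to zero: $F=F_{A,D}(\gamma_B(x))\,\Psi(R(x)-1)$ with $R(x)=\beta\,\mf I_A(x)/\pi_D(\gamma_B(x))$, extended by $0$. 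The zero branch satisfies every $\mathfrak{E1}(i)$ trivially, so the only constraint is that $\{\Psi\neq 0\}$ lie in the good region for $F_2$, and the quantity $R$ is designed precisely so that $R(x)>2$ --- hence $F\equiv 0$ --- whenever $x_k<\|x\|_A$ for some $k\in D$. The requirement $\mf L^{\mc E}F=0$ on $\{\pi_D(x)=0\}$ then follows because the paper's $F$ vanishes identically on a neighborhood (in the ambient stratum) of each such point, so your appeal to $\mf L^{\mc E}F_1=0$ is not needed.
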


\begin{proof}
The proof presented here is the extension of the Lemma 4.3 from
\cite{BJL}.  For the sake of completeness, we provide a detailed
construction.  If $A=\varnothing$, $F$ has to be equal to $F_{A,D}$
since $\Sigma_B=\Sigma$. Moreover, $F_{A,D}=F_D$, where $F_D$ has been
introduced in the previous lemma.  By this result, $F_{D}$ belongs to
$\mc E_D$ and, by \eqref{32}, $\mf L^{\mc E} F_{D}(x) = 0$ for all
$x\in \Sigma$ such that $\pi_D(x) = 0$. This shows that the assertion
of the lemma holds if $A=\varnothing$.

Asume, from now on, that $A$ is nonempty.  Let
$$
\cb{\beta}\,:=\,  \frac{4}{c_1}\,, 
$$
where $c_1$ is the constant given in property $b)$ of Lemma
\ref{johp}. This choice of $\beta$ is made so that the following
inequality holds: If $x_j < \|x\|_A$ for some $j\in D$, then
\begin{equation}
\label{bddR}
\frac{\beta \mf I_A(x)}{[\gamma_B(x)]_j}
\ge \frac{\beta c_1 \|x\|_A}{x_j
+ \sum_{k\in A} u^B_j(k) x_k}
\ge \frac{\beta c_1 \|x\|_A}{x_j + \|x\|_A} > 2\,.
\end{equation}

Assume that the function $F_{A,D}$ introduced in \eqref{07} is defined in
$\bb R^S$, and not only on $\Sigma_B$. Let $\mc V$ be the open subset
given by
$\color{blue} \mc V := \{ x\in \Sigma : \pi_D(\gamma_B(x)) > 0\}$, and
let $\cb{\Psi} \colon \bb R \to \bb R$ be a non-increasing function in
$C^2(\bb R)$ which is equal to 1 on $(-\infty, 0]$ and 0 on
$[1, \infty)$. Denote by $R\colon \mc V \to \bb R_+$ the function
given by
$$
\cb{R(x)} \,:=\,  \beta \, \frac{\mf I_A(x)}{ \pi_D(\gamma_B(x)) }\,\cdot
$$
By \eqref{bddR},
\begin{equation}
\label{bddR1}
R(x) \,>\, 2 \quad\text{if $x_j < \|x\|_A$ for some $j\in D$}\;.
\end{equation}

Denote by $F: \bb R^S \to \bb R$ the function given by
$$
\cb{ F(x)} \,:=\,
\begin{cases}
F_{A,D} (\gamma_B (x)) \, 
\Psi (R(x)-1) \,,  & x\in \mc V  \,, 
\\
0, & \text{ otherwise },
\end{cases}
$$
where $\gamma_B:\bb R^S \to \bb R^S$ has been introduced in
\eqref{eq:proj} and $\mf I_A$ in \eqref{mfi}.  We assert  that $F$
restricted to $\Sigma$ satisfies the conditions of the lemma. The
proof of this statement is divided in a series of claims.

\smallskip \noindent{\it Claim A:} For $x\in \Sigma$, let
$C = C_x = \{i\in S: x_i > 0\}$. If $x\notin \mc V$, then
$\Sigma_C \cap \mc V = \varnothing$. \smallskip

Fix $x\in \Sigma \setminus \mc V$. Clearly
$x \in \mathring{\Sigma}_C$. Let $i\in D$. Since
$D\subset B$, by Lemma \ref{lem:splxrest}, $[\gamma_B(y)]_i = 0$ for
all $y\in \mathring{\Sigma}_C$ if, and only if,
$[\gamma_B(x)]_i=0$. Therefore, $x\notin \mc V$ implies
$ \mathring{\Sigma}_C\subset \mc V^c$. Since $\Sigma_C$ is
the closure of $\mathring{\Sigma}_C$ and $\mc V^c$ is closed, 
Claim A is proved. 

\smallskip \noindent{\it Claim B:}   $F$ belongs to
$C^1(\Sigma)$. \smallskip 

The proof of this assertion is identical to the one of \cite[Lemma
4.3.A]{BJL}.  We first prove that for $x \in \Sigma \setminus \mc V$,
$F$ is differentiable at $x$ and $\nabla F (x) = 0$. To prove this, it
is enough to show that there exists $C>0$ such that
\begin{equation}
\label{eq:diffF}
|F(x) - F(y)| \leq C \, \|x-y\|^2
\;\; \text{ for all } x\in \Sigma \setminus \mc V, y\in \bb R^S \text{
with } \|x-y\| \leq 1\,.
\end{equation}
Recall that $x \in \Sigma \setminus \mc V$.  By the definition of $F$,
if $y \in \Sigma \setminus \mc V$, $F(y) = F(x)=0$. Assume that
$y\in \mc V$. In this case,
$$ |F(x) - F(y)| = |F(y)| = |\,F_{A,D}(\gamma_B(y)) \,
\Psi(R(y)-1)\,| = \pi_D(\gamma_B(y))^2 \Phi(y),
$$
for some continuous function $\Phi$. Since
$$
\pi_D(\gamma_B(y))^2  \,=\, [\, \pi_D(\gamma_B(y))-
\pi_D(\gamma_B(x)) \,]^2 \, ,
$$
we easily obtain \eqref{eq:diffF}.

The functions $\Psi(R-1)$ and $F$ are in $C^2(\mc V)$. In particular,
to prove that $F$ belongs to $C^1(\Sigma)$, it only remains to examine
the behavior of the derivatives of $F$ close to the boundary of
$\mc V$.
    
We claim that there exists a finite constant $C_1>0$ such that
\begin{equation}
\label{08}
\|\nabla F(x)\| \leq C_1\,  \big\{\, \|\nabla F_{A,D}(w)\|
+ \pi_D(w) \,\big\}, \quad x\in \mc V,
\end{equation}
where $\cb{w = \gamma_B(x)}$.

Recall the definition of the functions $u^B_k$, $k\in B$, introduced
in \eqref{10}. An elementary computation yields that for any smooth
function $G\colon \bb R^B\to \bb R$, $j\in S$,
\begin{equation}
\label{09}
\partial_{x_j}   (G \circ \gamma_B) (x) =
\sum_{k\in B} (\partial_{x_k}   G) ( \gamma_B (x)) \,
u^B_k(j)\,,\;\; \text{so that}\quad  (\nabla G)(x) =  u^{B,*} \,
\nabla G (\gamma_B (x) )\,,
\end{equation}
where $u^{B,*}$ is the matrix given by $u^{B,*}(j,k) = u^B_k(j)$,
$k\in B$, $j\in S$.
    
Let
$$
\cb{\mc W_0}\, :=\, \{ x\in \mc V : R(x) < 1 \}\,, \quad 
\cb{\mc W_1} \,:=\,  \{ x\in \mc V : \|x\|_A>0\}.
$$
By definition of $R(\cdot)$, $R(x) =0$ if $\|x\|_A=0$. Thus,
$\mc V = \mc W_0 \cup \mc W_1$, and it is enough to show that
\eqref{08} holds in each set $\mc W_0$, $\mc W_1$.

In $\mc W_0$, $\Psi(R-1)\equiv 1$. Hence, by definition of
$\Psi(\cdot)$, $F(\cdot)$, for $j\in S$, by \eqref{09},
\begin{equation}
\label{11}
(\nabla F)(x) =  u^{B,*} \, \nabla F_{A,D} (w)\,.
\end{equation}
This proves \eqref{08} on $\mc W_0$.

On $\mc W_1$, we calculate $\nabla \Psi(R-1)$. By \eqref{09}, it is
given by
\begin{align}
\label{eq:FC1}
\nabla\Psi(R-1)(x) = \Psi'(R-1)(x) \left[
-\, R(x) \, \frac{ u^{B,*}\nabla \pi_D(w)}{\pi_D(w)} + \beta \nabla
\mf I_A(x) \frac{1}{\pi_D(w)}\right].
\end{align}
Since $\Psi(R-1)\equiv 0$ on $R>2$, $|\Psi'(R-1)(x) R(x) |$ is
bounded, uniformly in $x\in \mc V$. Thus, by \eqref{joh2},
\begin{equation}
\label{Psi1}
\|\nabla \Psi(R-1) (x)\| \leq
\frac{C}{\pi_D(w)}\;\; \text{ on }\;\; \mc W_1 \,.
\end{equation}
Hence, by definition of $F_{A,D}$,
$\| F_{A,D} (x)\, \nabla \Psi(R-1) (x)\| \leq C \pi_D(w)$ for
$x\in \mc W_1$. As $\Psi$ is bounded, \eqref{08} also holds on
$\mc W_1$.
    
By \eqref{08} and the definition of $F_{A,D}$, $(\nabla F)(x)$ converges
to $0$ as $x$ approaches the boundary of $\mc V$.  Therefore,
$\nabla F$ is a well-defined continuous function on $\Sigma$. In
particular, $F$ is $C^1(\Sigma)$.

\smallskip \noindent{\it Claim C:} $F$ belongs to $C^2(\mc V)$ and has
bounded second derivatives. \smallskip

We claim that for all $j, k \in S$, and all $x\in \mc V$,
$$\|(\partial^2_{x_j x_k})F(x)\| \leq C_0$$
for some finite constant $C_0>0$. This is clear on the set $\mc W_0$
because $\Psi\equiv 1$ on $\mc W_0$. Taking a second derivative in
\eqref{eq:FC1} yields that
\begin{equation}
\label{Psi2}
|\partial^2_{x_j, x_k} \Psi(R-1)(x)|\leq
\frac{C}{\pi_D(w)^2}\;\; \text{ on } \mc W_1.
\end{equation}
The claim follows from \eqref{Psi1}, \eqref{Psi2} and a
straightforward computation.

\smallskip \noindent{\it Claim D:} $F$ satisfies condition
$\mathfrak{E2}$. \smallskip
    
Fix $C\subset S$ with $|C|\geq 2$.  By Claim A,
$$
\mathring{\Sigma}_C \subset \mc V \text{ or } \Sigma_C
\cap \mc V = \varnothing.
$$
If $\Sigma_C \cap \mc V = \varnothing$, then $F(x) = 0$ for all
$x\in \Sigma_C$, and condition $\mathfrak{E2}(C)$ holds.  If
$\mathring{\Sigma}_C \subset \mc V$, then condition $\mathfrak{E2}(C)$
follows from Claim C.  This proves Claim D.

\smallskip \noindent{\it Claim E:}
$F$ satisfies condition $\mathfrak{E1}(j)$ for all $j\in D$. \smallskip 

The proof is identical to the one of Lemma 4.3.D in \cite{BJL}.  By
definition, $\nabla F (x) = 0$ for $x\in \Sigma\setminus \mc V$. We
turn to the set $\mc V$.  Fix $j\in D$. By \eqref{bddR1}, and the
definition of $R(\cdot)$,
$$
F\equiv 0 \text{ on the open subset } \{x\in \Sigma : x_j <
\|x\|_A\}.
$$
Thus, by \eqref{08}, there exists a finite constant $C_1$ such that 
$$
| \bs v_j \cdot \nabla F (x) | \,\le\,  C_1 \, \big\{\,
\|\nabla F_{A,D}(w)\|_B + \pi_D(w) \,\big\}\, \mtt  1\{\|x\|_A \le x_j\},
\quad x \in \mc V\, .
$$ 
For $x\in \mc V$ with $\|x\|_A \le x_j$,
$$ w_j \,:=\, [\gamma(x)]_j \le x_j + \|x\|_A \,\le\,  2 \, x_j\, .$$
Therefore,
\begin{align*}
\frac{|\, \nabla_{\bs v_j} F(x)\,|}{x_j}
\,\le \, C_1 \, \left(\frac{\|\nabla F_{A,D}(w)\|_B}{w_j} +
\frac{\pi_D(w)}{w_j} \right)
\,\le\,  C_1
\end{align*}
for some finite constant $C_1$ whose value may have changed from line
to line. This proves Claim E.

\smallskip \noindent{\it Claim F:}   The fucntion $F$ satisfies condition
$\mathfrak{E1}(i)$ for all $i\in A$. \smallskip 

Fix $k \in A$.  By \eqref{09},
\begin{equation*}
\nabla_{\bs v_k} (F_{A,D}\circ \gamma_B) (x) \,=\,
\bs v_k \cdot \nabla  (F_{A,D}\circ \gamma_B) (x) \,=\,
\bs v_k \cdot u^{B,*} \nabla  F_{A,D} (\gamma_B (x) )\,.
\end{equation*}
Thus, by \eqref{12} and \eqref{eq:projofvect}, as
$k\in A$,
\begin{equation}
\label{13}
\nabla_{\bs v_k} (F_{A,D}\circ \gamma_B) (x) \,=\,
\gamma_B(\bs v_k) \cdot \nabla  F_{A,D} (\gamma_B (x) )
\,=\, 0\,.
\end{equation}

Recall the definition of the sets $\mc W_0$, $\mc W_1$ introduced in
Claim B.  On the open set $\mc W_0$, $F(x) = F_{A,D} (\gamma_B  (x))$. Hence,
by \eqref{13}, on this set, $\nabla_{\bs v_k} F =0$.
We turn to the set $\mc W_1$. By \eqref{13}, $\nabla_{\bs v_k}
(F_{A,D}\circ \gamma_B) (x) = \nabla_{\bs v_k} (\pi_D\circ \gamma_B)
(x)=0$, so that by \eqref{eq:FC1},
\begin{align*}
(\bs v_k \cdot \nabla F)(x)
\,=\,  & F_{A,D} (\gamma_B(x)) \,
\Psi'(R(x)-1) \, \Big\{\, \beta\, \nabla_{\bs v_k} \mf I_A (x)
\frac{1}{\pi_D(\gamma_B(x))}\,\Big\}
\\
=\; & \frac{\beta^2}{R(x)} \, \Psi'(R(x)-1)\,
\frac{F_{A,D}(\gamma_B(x))} {\pi_D(\gamma_B(x))^2}\, 
(\nabla_{\bs v_k}\mf I_A)  (x) \, \mf I_A(x)\,.
\end{align*}
Therefore,
$$
\frac{(\nabla_{\bs v_k } F)(x)}{x_k} \,=\,
\frac{\beta^2}{R (x)} \, \Psi'(R(x)-1)\, \frac{F_{A,D}(\gamma_B(x))}
{\pi_D(\gamma_B(x))^2}\,
\frac{\nabla_{\bs v_k}\mf I_A (x)}{x_k} \, \mf I_A(x)\,.
$$
By definition of $F_{A,D}$, $F_{A,D}(\gamma_B(x))/\pi_D(\gamma_B(x))^2$ is
bounded in $\Sigma$. By Lemma \ref{lem:normmapbdd} and Lemma
\ref{johp}-(b). $(\nabla_{\bs v_k} \mf I_A (x) /x_k)\, \mf I_A(x)$ is
bounded in $\mc W_1$. Finally, $R (x)^{-1} \, \Psi'(R(x)-1)$ is
bounded. This completes the proof of the claim.

\smallskip \noindent{\it Claim G:}   $\mf L^{\mc E} F = 0$ if $\pi_D(x) = 0$. \smallskip 

Fix $x\in \Sigma$ such that $\pi_D(x) = 0$. Let
$C = \{ i\in S: x_i \neq 0\}$, so that $x\in \mathring{\Sigma}_C$.  If
$\|x\|_A = 0$, $C\subset B$, and $\pi_D(\gamma_B(x)) = \pi_D(x) =
0$. In particular, $x\not\in\mc V$.  By Claim A,
$\Sigma_C \subset \Sigma \setminus \mc V$. Thus, by definition, $F$
vanishes on $\Sigma_C$. Therefore,  $(\mf L^{\mc E} F)(x) =  (\mf
L^{\mc C} F|_{\mathring{\Sigma}_C} )(x) = 0$.  

Suppose that $\|x\|_A \neq 0$. As $\pi_D(x) = 0$, $x_i=0$ for some
$i\in D$. Thus, $x_i < \|x\|_A$ and, by \eqref{bddR1}, $R(x)>2$. This
implies that $F(y)=0$ for $y\in \mathring{\Sigma}_C$, $y$ near $x$,
which in turn yields that
$(\mf L^{\mc E} F)(x) = (\mf L^{\mc C} F|_{\mathring{\Sigma}_C} )(x) =
0$. This completes the proof of the lemma.
\end{proof}

The next lemma is a modification of \cite[Lemma 4.4]{BJL}.

\begin{lemma}
\label{lem:funcext}
Fix a nonempty, proper subset $A$ of $S$ and a function $F$ in
$\mc E_A$.  Let $B = S\setminus A$. For every $\epsilon >0$ there
exists a function $H=H_\epsilon\colon \Sigma \to \bb R$ in $\mc E_S$
such that
\begin{equation}
\label{14}
F(x) = H(x) \text{ and } \mf L^{\mc E} F(x) = \mf L^{\mc E} H(x)
\text{ for all $x\in\Sigma$ such that } \min_{j\in B} x_j \geq \epsilon\,.
\end{equation}
\end{lemma}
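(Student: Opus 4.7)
The plan is to construct $H$ as the localization $H := \Phi F$, where $\Phi \in \mc D_S$ is a cutoff that kills $F$ on the part of $\Sigma$ where $\min_{j\in B} x_j$ is small (there $F$ need not satisfy $\mf E_1(j)$ for $j\in B$) while being identically $1$ on an open neighborhood of the target set $\{\min_{j\in B}x_j \ge \epsilon\}$. Concretely, I would apply Lemma \ref{phi} with the role of ``$A$'' played by $B$ and with some parameter $\epsilon' \in (0, \epsilon)$, obtaining $\Phi\in \mc D_S$ and $\delta\in(0,\epsilon')$ such that $\Phi\equiv 1$ whenever $\min_{j\in B} x_j \ge \epsilon'$ and $\Phi\equiv 0$ whenever $\min_{j\in B} x_j \le \delta$. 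Since $\epsilon' < \epsilon$, the set $\{x\in\Sigma:\min_{j\in B}x_j > \epsilon'\}$ is an open (in $\Sigma$) neighborhood of $\{\min_{j\in B}x_j \ge \epsilon\}$ on which $\Phi$ is constantly $1$.

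With this choice, the identities in \eqref{14} on the target set are immediate: $H=F$ there, and by Leibniz together with the vanishing of all first and second derivatives of $\Phi$ on the neighborhood, one obtains $\mf L^{\mc E} H = \Phi\,\mf L^{\mc E} F = \mf L^{\mc E} F$ pointwise there, where $\mf L^{\mc E}$ is interpreted locally as the operator $\mf L^C$ on the appropriate subsimplex (Remark \ref{rem:sodo}). It then remains to check that $H\in\mc E_S$. Membership in $C^1(\Sigma)$ is a product rule verification. Condition $\mf E_2(C)$ for each $C\subset S$ with $|C|\ge 2$ follows from Leibniz applied to the product: $\Phi\in\mc D_S\subset C^2(\Sigma)$ has bounded derivatives on compact $\mathring{\Sigma}_C$, and $F|_{\mathring{\Sigma}_C}\in C^2_b(\mathring{\Sigma}_C)$ by hypothesis $F\in\mc E_A$. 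For condition $\mf E_1(i)$ with $i\in A$, I would decompose
\[
\frac{\nabla_{\bs v_i} H}{x_i} \,=\, \Phi \cdot \frac{\nabla_{\bs v_i} F}{x_i} \,+\, F \cdot \frac{\nabla_{\bs v_i} \Phi}{x_i},
\]
and observe that each factor is bounded: $\Phi, F$ are bounded, $(1/x_i)\nabla_{\bs v_i}F$ is bounded on $\{x_i>0\}$ because $F\in\mc E_A$ satisfies $\mf E_1(i)$, and $(1/x_i)\nabla_{\bs v_i}\Phi$ is bounded on $\{x_i>0\}$ because $\Phi\in\mc D_i$.

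The main obstacle is condition $\mf E_1(j)$ for $j\in B$, which does not follow from the hypothesis $F\in\mc E_A$; this is precisely where the cutoff $\Phi$ plays its essential role. Using the same product decomposition with $j\in B$, the summand $F\cdot (\nabla_{\bs v_j}\Phi)/x_j$ is bounded because $\Phi\in\mc D_j$. For the summand $\Phi\cdot (\nabla_{\bs v_j} F)/x_j$, I would split the set $\{x_j>0\}$ according to the size of $x_j$: on $\{x_j \le \delta\}$ we have $\min_{k\in B}x_k \le x_j \le \delta$ (because $j\in B$), hence $\Phi\equiv 0$ and the term vanishes; on $\{x_j > \delta\}$ we have $1/x_j < 1/\delta$, while $\nabla_{\bs v_j}F$ is bounded on the compact set $\Sigma$ by continuity ($F\in C^1(\Sigma)$), so the term is uniformly bounded. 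This yields $H\in\mc E_S$ and completes the construction.
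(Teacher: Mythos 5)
Your proof is correct and follows essentially the same route as the paper: both define $H = \Phi F$ (the paper writes $G$ for $\Phi$) with $\Phi$ the cutoff supplied by Lemma~\ref{phi} applied with $B$ in the role of that lemma's $A$, both conclude the two identities in \eqref{14} from $\Phi$ being identically $1$ (with vanishing derivatives) on the target region, and both verify $H\in\mc E_S$ by the product rule, with the cutoff doing the work for condition $\mathfrak{E1}(j)$, $j\in B$. The extra parameter $\epsilon'<\epsilon$ you introduce is harmless but not needed, since $\phi$ in Lemma~\ref{phi} is constant on $[1,\infty)$, so $\Phi$ and all its derivatives are already flat up to and including $\{\min_{j\in B} x_j=\epsilon\}$. (One small slip: you call $\mathring{\Sigma}_C$ compact; it is open, though the boundedness of the second derivatives you need there follows anyway from $\Phi\in C^2(\Sigma)$.)
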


\begin{proof}
By Lemma \ref{phi}, there exist a function $G : \Sigma \to \bb R$ and
$0<\delta<\epsilon$ such that 
\begin{enumerate}[leftmargin=*]
\item $G \in \mc D_S$,

\item
$G(x) = 1, \text{ for all } x\in \Sigma \text{ such that } \min_{j\in
B} x_j \geq \epsilon,$

\item $G(x) = 0$ for all
$x\in \Sigma \text{ such that } \min_{j\in B} x_j \leq \delta$.
\end{enumerate}
Define $H(x) = F(x)G(x)$. We claim that $H$ is the desired function.

\smallskip \noindent{\it Claim A:} The function $H$ belongs to
$\mc E_S$. \smallskip

As $F$ and $G$ satisfy condition $\mathfrak{E2}$, so does $H$.  We
turn to condition $\mathfrak{E1}$.  For $i \in B$, $G(x)=0$ for
$x_i \le \delta$. So condition $\mathfrak{E1}(i)$ is trivial.  For
$i \in A$, $F$ and $G$ satisfy condition $\mathfrak{E1}(i)$, and so
does $H$.  This proves that $H$ belongs to $\mc E_S$.

Since $G(x) = 1$ for all $x\in \Sigma$ such that
$\min_{j\in B} x_j \geq \epsilon$, $H$ fulfills condition \eqref{14}.
\end{proof}

We may finally construct the superharmonic function in the domain
$\mc E_S$.  Fix a nonempty subset $A$ of $S$ and let $B=S\setminus A$.
For $\epsilon>0$, set
$$
\cb{\bs a_0(\epsilon)} \,:=\, \min\{ \lambda^{B\cup D, D}(\epsilon) :
\varnothing \subsetneq D \subset A\},
$$
and let
\begin{equation}
\label{Kep}
\cb{ K_\epsilon} \,:=\,
\big\{ x \in \Sigma : \max_{k\in A} x_k \leq \bs
a_0(\epsilon) \text{ and } \min_{i\in B} x_i \geq \epsilon
\big\}\, .
\end{equation}
Fix $0<\gamma<1$, a subset $\varnothing \subsetneq D \subset A$,
and recall from \eqref{07} the definition of the function
$F_{A\setminus D,D} \colon \Sigma_{D\cup B} \to \bb R$.  Apply Lemma
\ref{lem:auxfunc} to extend the function $F_{A\setminus D,D}$ to a function
$G_D\colon \Sigma \to \bb R$ which belongs to $\mc E_A$ and such that
\begin{equation}
\label{eq:FD}
\begin{gathered}
G_D  (x) \ge 0\,, \quad 
G_D(x) = F_{A\setminus D,D} (x), \quad x\in \Sigma_{B\cup D},
\\
\mf L^{\mc E} G_D (x) = 0 \text{ if } \pi_D(x) = 0. 
\end{gathered}
\end{equation}
Apply Lemma \ref{lem:funcext} to the function $G_D\in \mc E_A$ to
obtain a function $H_D^{\epsilon}\colon \Sigma \to \bb R$ in $\mc E_S$
such that
\begin{equation}
\label{eq:HD}
H^\epsilon_D (x) = G_D(x) \text{ and }
\mf L^{\mc E} H^\epsilon_D (x) = \mf L^{\mc E} G_D(x),
\text{ for all $x\in \Sigma$ such that} \min_{j\in B} x_j \geq
\epsilon\,. 
\end{equation}
We summarize the properties of the function $H^\epsilon_D$. It
belongs to the domain $\mc E_S$, and 
\begin{equation}
\label{eq:HD2}
\begin{gathered}
H_D^\epsilon(z) = 0 \text{ for } z\in \Sigma_B
\text{ such that } \min_{j\in B} z_j \geq \epsilon\,,
\\
H_D^\epsilon(x) = F_{A\setminus D,D}(x) \geq 0  \text{ for }
x\in \Sigma_{B\cup D} \text{ such that } \min_{j\in B} x_j \geq
\epsilon\,.
\end{gathered}
\end{equation}

\subsection{First time interval}

Recall from Section \ref{sec:An absorbed diffusion} the definition of
the sequence of stopping times $(\sigma_n: n\ge 0)$. The main result
of this subsection states that the process remains absorbed at the
boundary of $\Sigma$ in the time-interval $[0,\sigma_1)$.

\begin{proposition}
\label{prop:firsttimeinterval}
Fix $z\in \Sigma$, and let $A = \{i\in S : z_i = 0\}$,
$B = S\setminus A$. Assume that $A$ is nonempty. Then,
$$
\bb P_z\big[\, \|X_t\|_A=0 \,,\,  0\leq t< \sigma_1 \,\big]
\,=\,    1\,.
$$
\end{proposition}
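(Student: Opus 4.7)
The plan is to apply optional stopping to the family of superharmonic functions $\{H_D^\epsilon\}_{\varnothing \subsetneq D \subseteq A}$ constructed just above, and then let $\epsilon \downarrow 0$. Theorem~\ref{thm:emp} is the enabling ingredient: it promotes $\bb P_z$ from a solution of the original $(\mf L, \mc D_S)$-martingale problem to a solution of the extended $(\mf L^{\mc E}, \mc E_S)$-problem, which is what makes
$H_D^\epsilon(X_t) - \int_0^t (\mf L^{\mc E} H_D^\epsilon)(X_s)\,ds$
a $\bb P_z$-martingale despite the fact that each $H_D^\epsilon$ lies in $\mc E_S \setminus \mc D_S$.

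\textbf{Execution.} Pick $\epsilon > 0$ small enough that $z$ lies in the interior of $K_\epsilon$, and set $\tau_\epsilon := \inf\{t \ge 0 : X_t \notin K_\epsilon\}$; continuity of paths makes $\tau_\epsilon > 0$ a.s. For each non-empty $D \subseteq A$, the function $H_D^\epsilon$ satisfies $H_D^\epsilon \ge 0$; $H_D^\epsilon(z) = 0$ (by \eqref{eq:HD2}, since $z \in \Sigma_B \subseteq \Sigma_{B\cup D}$ and $\min_{j\in B} z_j \ge \epsilon$ for small $\epsilon$, yielding $H_D^\epsilon(z) = \prod_{k\in D} z_k^{1+b}(1-z_k^\gamma) = 0$); and $\mf L^{\mc E} H_D^\epsilon \le 0$ on $K_\epsilon$ (from Lemma~\ref{lem:supharm} on $K_\epsilon \cap \Sigma_{B\cup D}$, combined with $\mf L^{\mc E} G_D \equiv 0$ on $\{\pi_D = 0\}$ from \eqref{eq:FD}, and with the choice $\bs a_0(\epsilon) = \min_D \lambda^{B\cup D, D}(\epsilon)$). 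Boundedness of $H_D^\epsilon$ and $\mf L^{\mc E} H_D^\epsilon$ lets me optionally sample the martingale above at $t \wedge \tau_\epsilon$; the resulting inequality $\bb E_z[H_D^\epsilon(X_{t \wedge \tau_\epsilon})] \le 0$, combined with non-negativity, forces $H_D^\epsilon(X_{t \wedge \tau_\epsilon}) = 0$ $\bb P_z$-a.s., simultaneously for every $t \ge 0$ and every non-empty $D$.

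\textbf{Conclusion and the main obstacle.} On each face $\Sigma_{B\cup D} \cap K_\epsilon$, the identification $H_D^\epsilon(x) = \prod_{k\in D} x_k^{1+b}(1-x_k^\gamma)$ makes $H_D^\epsilon$ vanish only when some $x_k$, $k\in D$, equals $0$ (using $x_k \le \bs a_0(\epsilon) < 1$). Running this for every non-empty $D \subseteq A$ and invoking path-continuity rules out any excursion of the trajectory from $\Sigma_B$ into a neighbouring face $\Sigma_{B\cup D'}$: such an excursion would make the corresponding $H_{D'}^\epsilon$ strictly positive along part of the path, contradicting the vanishing established in Step~2. Hence $\|X_{t \wedge \tau_\epsilon}\|_A = 0$ $\bb P_z$-a.s.; and since on this event the process can exit $K_\epsilon$ only through some $X_s(j)$, $j\in B$, touching $\epsilon$, we have $\tau_\epsilon \nearrow \sigma_1$ as $\epsilon \downarrow 0$, completing the proof. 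The delicate point is precisely this last synthesis: a single superharmonic function such as $F_A$ only witnesses that \emph{some} $A$-coordinate vanishes, and it is only by deploying the entire family $\{H_D^\epsilon\}_{D \subseteq A}$ in tandem with continuity that one upgrades ``some'' to ``all.''
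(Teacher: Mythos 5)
Your overall architecture is the right one — deploy the family $\{H_D^\epsilon\}_{\varnothing\subsetneq D\subseteq A}$, apply optional stopping at $t\wedge\tau_\epsilon$ (justified by Theorem \ref{thm:emp}), conclude $\|X\|_A\equiv 0$ up to $\tau_\epsilon$, then let $\epsilon\downarrow 0$. But there is a genuine gap at the central step: you assert that $\mf L^{\mc E}H_D^\epsilon\le 0$ on all of $K_\epsilon$ for \emph{every} non-empty $D\subseteq A$, and this is false for $D\subsetneq A$. The two facts you invoke — $\mf L^{\mc E}G_D\equiv 0$ on $\{\pi_D=0\}$, and $\mf L^{\mc E}F_D\le 0$ on $K_\epsilon\cap\Sigma_{B\cup D}$ — only cover the set $\{\pi_D=0\}\cup\Sigma_{B\cup D}$, i.e.\ the set where either $\pi_D(x)=0$ or $\|x\|_{A\setminus D}=0$. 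On the complementary region $\{\pi_D(x)>0,\ \|x\|_{A\setminus D}>0\}\cap K_\epsilon$, which is non-empty whenever $D\subsetneq A$, there is no sign control: indeed Lemma \ref{lem:LHDbdd} is stated precisely as $\mf L^{\mc E}H_D^\epsilon(x)\le C(\epsilon)\,\mtt 1\{\pi_D(x)>0,\|x\|_{A\setminus D}>0\}$, a one-sided bound by a positive constant rather than by zero. Without universal superharmonicity, the inequality $\bb E_z[H_D^\epsilon(X_{t\wedge\tau_\epsilon})]\le H_D^\epsilon(z)=0$ does not follow from the optional stopping theorem alone, and the rest of your argument collapses.

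The paper repairs this by an induction on $|A|-|D|$, which is the key idea you are missing (Lemma \ref{lem:prodbdd}). For $D=A$ the condition $\|x\|_{A\setminus D}=\|x\|_\varnothing=0$ is vacuous, so superharmonicity holds on all of $K_\epsilon$ and the Dynkin argument goes through cleanly, yielding $\pi_A(X_{t\wedge\tau_\epsilon})=0$ a.s. For $D'$ with $|D'|=|A|-n-1$, the inductive hypothesis — that $\pi_C(X_{t\wedge\tau_\epsilon})=0$ a.s.\ for all $C\subseteq A$ with $|C|\ge|A|-n$ — implies the process a.s.\ never visits the troublesome set $\{\pi_{D'}>0,\ \|x\|_{A\setminus D'}>0\}$ (since any such visit would put $\pi_{D'\cup\{k\}}>0$ for some $k\in A\setminus D'$). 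This kills the $C(\epsilon)$ term in the Dynkin bound, and only then does $\bb E_z[H_{D'}^\epsilon(X_{t\wedge\tau_\epsilon})]\le 0$ follow. Your final-paragraph observation — that a single superharmonic function only shows ``some'' $A$-coordinate vanishes and one must use the whole family to get ``all'' — is exactly right, but the mechanism by which the family is deployed in the paper is this induction, not simultaneous superharmonicity.
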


The proof of this result is divided into several steps.

\begin{lemma}
\label{lem:LHDbdd}
For all $\epsilon >0$ there exists a constant $C(\epsilon)>0$ such
that 
$$ \mf L^{\mc E} H^\epsilon_D(x) \leq C(\epsilon) \, \mtt 1
\big\{\, \pi_D(x)>0, \|x\|_{A\setminus D} > 0\, \big \}
$$
for all $x\in K_\epsilon$ and nonempty proper subset $D$ of $A$.
\end{lemma}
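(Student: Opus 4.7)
The plan is to split $K_\epsilon$ into three pieces according to whether $\pi_D(x)$ and $\|x\|_{A\setminus D}$ vanish: establishing $\mf L^{\mc E} H^\epsilon_D(x)\leq 0$ on the first two and a constant bound on the third. Since there are only finitely many nonempty proper $D\subset A$, one can take the maximum of the $D$-dependent constants obtained in the last case to produce the uniform $C(\epsilon)$. Because every $x\in K_\epsilon$ satisfies $\min_{j\in B} x_j\geq\epsilon$, property \eqref{eq:HD} reduces all computations to $\mf L^{\mc E} G_D$.

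When $\pi_D(x)=0$, the identity $\mf L^{\mc E} G_D(x)=0$ is precisely \eqref{eq:FD}. When $\pi_D(x)>0$ but $\|x\|_{A\setminus D}=0$, the point $x$ lies in $\mathring{\Sigma}_{B\cup D}$ and by \eqref{eq:FD} one has $G_D|_{\Sigma_{B\cup D}}=F_{A\setminus D, D}$. The globally defined function $F_D(y):=\prod_{k\in D} y_k^{1+b}(1-y_k^\gamma)$ from Lemma \ref{lem:supharm} also restricts on $\Sigma_{B\cup D}$ to $F_{A\setminus D, D}$, and since $\mf L^{\mc E}$ at $x$ depends only on values on the stratum $\mathring{\Sigma}_{B\cup D}$, we obtain $\mf L^{\mc E} G_D(x)=\mf L^{\mc E} F_D(x)$. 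Lemma \ref{lem:supharm} applied to $F_D$ with role-of-$A$ equal to $D$ and role-of-$D$ equal to $B\subset S\setminus D$, combined with $\max_{k\in D} x_k\leq \bs a_0(\epsilon)\leq \lambda^{B\cup D, D}(\epsilon)$ and $\min_{i\in B} x_i\geq \epsilon$, then yields $\mf L^{\mc E} F_D(x)\leq 0$.

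The substantive case is $\pi_D(x)>0$ and $\|x\|_{A\setminus D}>0$. Put $C:=\{i\in S : x_i\neq 0\}$, which properly contains $B\cup D$ and satisfies $B\subset C$. Then
\begin{equation*}
\mf L^{\mc E} G_D(x) \,=\, b\sum_{j\in C}\frac{m_j}{x_j}\,\nabla_{\bs v_j^C} G_D(x) \,+\, \tfrac{1}{2}\sum_{j,k\in C} m_j\, r^C(j,k)\,(\nabla_{\bs e_j-\bs e_k})^2 G_D(x).
\end{equation*}
The Hessian terms are uniformly bounded because $G_D\in \mc E_A$ satisfies condition $\mf E_2(C)$. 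In the drift, indices $j\in B$ contribute an $O(1/\epsilon)$ term via $x_j\geq\epsilon$ and the $C^1(\Sigma)$ bound on $G_D$.

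The main obstacle is the contribution of $j\in A\cap C$: condition $\mf E_1(j)$ of $G_D$ bounds $(1/x_j)\nabla_{\bs v_j} G_D$, not $(1/x_j)\nabla_{\bs v_j^C} G_D$. The resolution rests on the identity $\bs v_j^C=\gamma_C(\bs v_j)$ from \eqref{eq:projofvect}, which forces $\bs v_j^C-\bs v_j\in \ker(\gamma_C)=\operatorname{span}\{\bs v_k : k\in C^c\}$ by Lemma \ref{lem:lincomb}. Because $B\subset C$, we have $C^c\subset A$, so each $k\in C^c$ belongs to the index set of $\mc E_A$; condition $\mf E_1(k)$ makes $(1/x_k)\nabla_{\bs v_k} G_D$ bounded, so by continuity of $\nabla_{\bs v_k} G_D$ and the equality $x_k=0$ at our point, $\nabla_{\bs v_k} G_D(x)=0$. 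Thus $\nabla_{\bs v_j^C} G_D(x)=\nabla_{\bs v_j} G_D(x)$, and condition $\mf E_1(j)$ closes the bound, with a constant depending only on $\epsilon$ and on the finite collection $\{G_D : D\subset A\}$.
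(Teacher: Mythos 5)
Your proposal is correct and follows the same overall strategy as the paper: reduce $\mf L^{\mc E}H^\epsilon_D$ to $\mf L^{\mc E}G_D$ via \eqref{eq:HD}, use the vanishing in \eqref{eq:FD} when $\pi_D(x)=0$, invoke Lemma \ref{lem:supharm} (after identifying $G_D$ with $F_D$ on $\mathring{\Sigma}_{B\cup D}$) when $\|x\|_{A\setminus D}=0$, and bound the remaining case by a constant that is then maximized over the finitely many $D$. The only difference is stylistic: you re-derive the constant bound in the third case from scratch using conditions $\mf E_1$, $\mf E_2$ on $G_D$ and the vanishing of $\nabla_{\bs v_k}G_D$ for $k\notin C$, whereas the paper simply takes $C(\epsilon):=\sup\{\|\mf L^{\mc E}H^\epsilon_D(x)\|: x\in\Sigma,\ \varnothing\subsetneq D\subset A\}$, which is finite because $\mf L^{\mc E}$ maps $\mc E_S$ into $C^b_{\rm pc}(\Sigma)$.
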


\begin{proof}
The proof is an adaptation of the proof of \cite[Lemma 5.5]{BJL}.  Fix
$\epsilon > 0$. Since each function
$\mf L^{\mc E} H^\epsilon_D, \varnothing \subsetneq D \subset A$, is
bounded on $K_\epsilon$,
\begin{equation}
\label{15}
C(\epsilon) \, :=\, \sup\{\|\mf L^{\mc E} H^\epsilon_D(x)\| :
x\in \Sigma, \varnothing \subsetneq D \subset A\} < \infty\,.
\end{equation}

By definition, $\min_{j\in B} x_j \geq \epsilon$ for all $x\in
K_\epsilon$. Thus, by the third property in \eqref{eq:FD} and by
\eqref{eq:HD},
\begin{equation}
\label{16}
\mf L^{\mc E} H^\epsilon_D(x) = \mf L^{\mc E} G_D(x) \,\mtt 1
\{\pi_D(x)>0\}
\quad\text{so that}\;\;
\mf L^{\mc E} H^\epsilon_D(x) = \mf L^{\mc E} H^\epsilon_D(x) \,\mtt  1
\{\pi_D(x)>0\}
\end{equation}
for all sets $\varnothing \subsetneq D\subset A$ and $x\in
K_\epsilon$.

On the other hand, if $\pi_D(x) > 0$ and $\|x\|_{A\setminus D} = 0$
for some $x\in K_\epsilon$, then $x\in \mathring{\Sigma}_{D\cup B}$.
Therefore, by \eqref{eq:HD}, by the second property in
\eqref{eq:FD}, and since $F_{A\setminus D,D} = F_D$ on
$\mathring{\Sigma}_{D\cup B}$, 
\begin{gather*}
\mf L^{\mc E} H^\epsilon_D(x) = \mf L^{\mc E} G_D(x)
= \mf L^{\mc E} F_{A\setminus D,D} (x)
= \mf L^{\mc E} F_{D} (x)
= \mf L^{B \cup D} F_{D} (x) \,.
\end{gather*}
This quantity is negative by the definition of $\bs a_0(\epsilon)$ in
\eqref{Kep} and Lemma \ref{lem:supharm}.  Therefore, by \eqref{16},
$$\mf L^{\mc E} H^\epsilon_D(x) \leq
\, \mtt 1 \big\{\pi_D(x)>0, \|x\|_{A\setminus D} > 0\big\}\, 
\mf L^{\mc E} H^\epsilon_D(x)\,,  \quad x\in K_\epsilon.
$$
This completes the proof of the lemma in view of \eqref{15}. 
\end{proof}

For every $\epsilon>0$, define $\tau_\epsilon$ as the exit time from
the set $K_\epsilon$:
$$
\tau_\epsilon \,:=\, \inf\{t\geq 0: X_t \notin K_\epsilon\}.
$$

\begin{lemma}
\label{lem:prodbdd}
Fix $z\in \mathring{\Sigma}_B$.  For all
$0<\epsilon< \min_{j\in B} z_j$ and nonempty subset $D$ of $A$,
$$
\bb P_z  \big[\, \pi_D (X_t) = 0 \,, 0\leq t \leq \tau_\epsilon \,\big] =
1 \,.
$$
\end{lemma}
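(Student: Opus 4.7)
The plan is to apply Dynkin's martingale --- in its extended form from Theorem \ref{thm:emp} --- to the function $H^\epsilon_D \in \mc E_S$ constructed above the statement, stopped at the exit time $\tau_\epsilon$ from $K_\epsilon$, and to argue by induction on $|A\setminus D|$. Since $\bb P_z$ solves the $(\mf L^{\mc E}, \mc E_S)$-martingale problem, and both $H^\epsilon_D$ and $\mf L^{\mc E} H^\epsilon_D \in C_{\rm pc}^b(\Sigma)$ are bounded, optional sampling at $t\wedge \tau_\epsilon$ is routine. The initial value satisfies $H^\epsilon_D(z)=0$ by \eqref{eq:HD2} (since $z\in \Sigma_B$ with $\min_{j\in B} z_j \ge \epsilon$), so combining $H^\epsilon_D\ge 0$ with Lemma \ref{lem:LHDbdd} yields
\begin{equation*}
0 \;\le\; \bb E_z\big[H^\epsilon_D(X_{t\wedge \tau_\epsilon})\big]
\;\le\; C(\epsilon)\, \bb E_z\Big[\int_0^{t\wedge \tau_\epsilon}
\mtt 1\big\{\pi_D(X_s)>0,\, \|X_s\|_{A\setminus D}>0\big\}\, ds\Big]\,.
\end{equation*}

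I then induct on $|A\setminus D|$, starting from $|A\setminus D|=0$. For the base case $D=A$, the indicator vanishes identically because $\|\cdot\|_\varnothing = 0$, so $H^\epsilon_A(X_{t\wedge \tau_\epsilon})=0$ $\bb P_z$-a.s. In the degenerate instance of Lemma \ref{lem:auxfunc} (lemma's $A=\varnothing$, so $\Sigma_B=\Sigma$), the function $H^\epsilon_A$ reduces on $K_\epsilon$ to $F_A(x)=\prod_{k\in A} x_k^{1+b}(1-x_k^\gamma)$; since $\max_{k\in A} x_k \le \bs a_0(\epsilon)<1$, this vanishes iff $\pi_A(x)=0$, and path-continuity of $X$ promotes the pointwise statement to $\pi_A(X_t)=0$ for all $t\in [0,\tau_\epsilon]$ a.s. For the inductive step I fix $D\subsetneq A$ and apply the hypothesis to each $D'=D\cup\{k\}$, $k\in A\setminus D$, obtaining $\pi_D(X_s)\,X_s(k)=0$ for all $s\in [0,\tau_\epsilon]$. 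Hence whenever $\pi_D(X_s)>0$ one has $X_s(k)=0$ for every $k\in A\setminus D$, so the indicator above vanishes identically and $H^\epsilon_D(X_{t\wedge \tau_\epsilon})=0$ a.s.

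The delicate step is translating $H^\epsilon_D=0$ back into $\pi_D=0$, because the $\gamma_{B\cup D}$-pullback construction of $G_D$ from Lemma \ref{lem:auxfunc} is cut off by the bump $\Psi(R-1)$ and could a priori vanish for reasons unrelated to $\pi_D$. This is exactly where the induction pays off: on the event $\{\pi_D(X_{t\wedge \tau_\epsilon})>0\}$ the preceding argument forces $\|X_{t\wedge \tau_\epsilon}\|_{A\setminus D}=0$, i.e.\ $X_{t\wedge \tau_\epsilon}\in \Sigma_{B\cup D}\cap K_\epsilon$; on this set \eqref{eq:HD}--\eqref{eq:FD} give $H^\epsilon_D = F_{A\setminus D, D}(x) = \prod_{k\in D} x_k^{1+b}(1-x_k^\gamma)$, which is strictly positive whenever $\pi_D>0$ (using $x_k\le \bs a_0(\epsilon)<1$). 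This contradicts $H^\epsilon_D(X_{t\wedge\tau_\epsilon})=0$, forcing the event to have probability zero; path-continuity of $X$ then upgrades to $\pi_D(X_t)=0$ for all $t\in [0,\tau_\epsilon]$ a.s., closing the induction. I expect this translation step to be the main obstacle, and its resolution depends crucially both on the clean product form of $F_{A\setminus D,D}$ on the subsimplex $\Sigma_{B\cup D}$ and on the fact that the induction hypothesis on strictly larger sets $D'$ forces $X_s$ into that subsimplex before the product form is invoked.
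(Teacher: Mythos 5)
Your proof is correct and follows essentially the same strategy as the paper: optional stopping for the $(\mf L^{\mc E}, \mc E_S)$-martingale applied to $H^\epsilon_D$ at $t\wedge\tau_\epsilon$, the bound of Lemma~\ref{lem:LHDbdd}, downward induction on $|A\setminus D|$ with the induction hypothesis at each singleton enlargement $D\cup\{k\}$ killing the indicator, and the positivity of the explicit product $F_{A\setminus D,D}$ on $\Sigma_{B\cup D}\cap K_\epsilon$ to convert $H^\epsilon_D(X_{t\wedge\tau_\epsilon})=0$ into $\pi_D(X_{t\wedge\tau_\epsilon})=0$. The only slight imprecision is that you invoke Lemma~\ref{lem:LHDbdd} in the base case $D=A$, whereas that lemma is stated only for \emph{proper} nonempty $D\subsetneq A$; the paper handles $D=A$ separately by applying Lemma~\ref{lem:supharm} directly (which gives $\mf L^{\mc E} H^\epsilon_A\le 0$ on $K_\epsilon$). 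This is harmless --- the argument in the proof of Lemma~\ref{lem:LHDbdd} in fact extends verbatim to $D=A$ since $\|\cdot\|_{A\setminus A}\equiv 0$ forces every point with $\pi_A>0$ into $\mathring{\Sigma}$, where the lemma's ``other hand'' case yields $\mf L^{\mc E}H^\epsilon_A\le 0$ --- but if you quote the lemma as stated you should flag that you are using the degenerate extension.
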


\begin{proof}
Fix $z\in \mathring{\Sigma}_B$, $0<\epsilon< \min_{j\in B} z_j$. We
first prove the lemma for $D=A$.  Recall from \eqref{31} the
definition of the function $F_A\in \mc E_A$. By Lemma
\ref{lem:funcext}, there exists $H^\epsilon_A\colon \Sigma\to\bb R$ in
$\mc E_S$ with
\begin{equation}
\label{eq:HA}
F_A(x) = H^\epsilon_A(x) \text{ and } \mf L^{\mc E} F_A(x)
= \mf L^{\mc E} H^\epsilon_A(x), \text{ for all $x\in \Sigma$ such
that } \min_{j\in B} x_j \geq \epsilon\,.
\end{equation}

Fix $t>0$. Since $H^\epsilon_A \in \mc E_S$, 
$$
\bb E_z [\, H^\epsilon_A(X_{t\wedge \tau_\epsilon})\,]
\,=\, H^\epsilon_A(z) \,+\,
\bb E_z \Big[\, \int_{0}^{t\wedge \tau_\epsilon}
\mf L^{\mc E} H^\epsilon_A(X_s) ds\,\Big]\,.
$$
By the definition of $\tau_\epsilon$, by \eqref{eq:HA}, and by Lemma
\ref{lem:supharm}, the expectation term of the right hand side in the
above equation is negative. Therefore,
$$
\bb E_z\left[H^\epsilon_A(X_{t\wedge \tau_\epsilon})\right] \leq
H^\epsilon_A(z)\,.
$$
By \eqref{eq:HA}, we may replace $H^\epsilon_A$ by $F_A$, so that
$\bb E_z\left[F_A(X_{t\wedge \tau_\epsilon})\right] \leq 0$.  This
implies that
$$
\bb E_z\left[\pi_A(X_{t\wedge \tau_\epsilon})\right] = 0\,.
$$
By considering a countable dense set of times in $\bb R_+$, we
complete the proof of the lemma in the case $D=A$.

For $\varnothing \subsetneq D\subsetneq A$, we use an induction
argument on $|A|-|D|$. Fix $0\le n < |A|-1$, and assume that the
assertion of the lemma holds for all sets $C\subset A$ with
$|C|\ge |A|-n$.  Consider a subset $D'\subset A$ such that
$|D'|=|A| - n - 1$. By the first part of the proof.
\begin{equation}
\label{eq:rechyp}
\bb P_z\big[\, \pi_{D'}(X_{s\wedge\tau_\epsilon})>0\,,
\|X_{s\wedge\tau_\epsilon}\|_{A\setminus D'}>0\,\big] = 0
\end{equation}
for all $s\ge 0$. Fix $t \ge 0$. Recall from \eqref{eq:HD} the
definition of the function $H^\epsilon_{D'}$.  Since
$H^\epsilon_{D'} \in \mc E_S$,
$$\bb E_z\left[H^\epsilon_{D'}(X_{t\wedge \tau_\epsilon})\right]
= H^\epsilon_{D'}(z) + \bb E_z\Big[\, \int_{0}^{t\wedge \tau_\epsilon}
\mf L^{\mc E} H^\epsilon_{D'}(X_s) \, ds \, \Big] \,.
$$
Thus, as $\epsilon < \min_{j\in B} z_j$, by the first property in
\eqref{eq:HD2} and by Lemma \ref{lem:LHDbdd},
$$
\bb E_z\left[H^\epsilon_{D'}(X_{t\wedge \tau_\epsilon})\right] \le
C(\epsilon)\, \bb E_z\left[\int_0^{t\wedge {\tau_\epsilon}} \mtt 
1\{\pi_{D'}(X_s)>0, \|X_s\|_{A\setminus D'}>0\}\, ds\right].
$$
By \eqref{eq:rechyp}, the right-hand side of the previous expression
vanishes.  Hence, by the second property of \eqref{eq:HD2}, 
$$
\bb E_z\left[\mtt 1\{\|X_{t\wedge\tau_\epsilon}\|_{A\setminus D'}=0\}
F_{A\setminus D', D'}(X_{t\wedge\tau_\epsilon})\right]
\,\le \, \bb E_z[F_{A\setminus D', D'}(X_{t\wedge
\tau_\epsilon)})]= \bb E_z\left[H^\epsilon_{D'}(X_{t\wedge
\tau_\epsilon})\right] \le 0\,.
$$
As $F_{A\setminus D',D'} = F_{D'}$ on $\Sigma_{B\cup D'}$, and
$X_{t\wedge \tau_\epsilon} \in \Sigma_{B}$ [because
$\|X_{t\wedge\tau_\epsilon}\|_{A\setminus D'}=0$],
$$
\bb P_z\left[\|X_{t\wedge \tau_\epsilon}\|_{A\setminus D'} =0,
\pi_{D'}(X_{t\wedge \tau_\epsilon})>0 \right] = 0.
$$
Combining this identity with \eqref{eq:rechyp} yields that
$$
\bb P_z\left[\pi_{D'}(X_{t\wedge \tau_\epsilon} > 0)\right] = 0 .
$$
Finally, by considering a countable dense subset of times in
$\bb R_+$, we establish that the assertion of the lemma holds for
$D'$, which concludes the proof.
\end{proof}

\begin{proof}[Proof of Proposition \ref{prop:firsttimeinterval}]
Applying Lemma \ref{lem:prodbdd} for $D=\{j\}$, $j\in A$, yields that for
any $\epsilon < \min_{j\in B} z_j$,
$$ \bb P_z \left[\|X_t\|_A = 0 \text{ for all } 0\le t \le \tau_\epsilon \right] = 1 .$$
Since $\tau_\epsilon$ is the first time in which either $\max_{k\in A} x_k > \bs a_0(\epsilon) $ or $\min_{i\in B} x_i <\epsilon$,
$$ \bb P_z \left[\|X_t\|_A = 0 \text{ for all } 0\le t \le h_B(\epsilon) \right] = 1 ,$$
where, $h_B(\epsilon)$ is the exit time of the domain $\min_{i\in B} x_i \ge \epsilon$:
$$h_B(\epsilon) \,:=\, \inf\{t\ge 0 : \min_{i\in B} x_i < \epsilon\} .$$
Letting $\epsilon \downarrow 0$, we obtain Proposition
\ref{prop:firsttimeinterval}.
\end{proof}

\subsection{Absorption at the boundary}

Recall from Section \ref{sec:An absorbed diffusion} the definition of
$\sigma_n$ and $\ms B_n$, $n\ge 0$.  Using regular probability
distributions, as in \cite[Section 5.2]{BJL}, yields the following
proposition.

\begin{proposition}
\label{p1}
 For all $x\in \Sigma$, $n\geq 0$,
    $$ \bb P_x\big[\,
    \sigma_n = 0 \text{ or } \ms B_n = \ms B(X_t) \text{ for all }
    t\in [\sigma_n, \sigma_{n+1}) \,\big] = 1\,.$$
    \end{proposition}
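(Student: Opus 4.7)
The plan is to reduce Proposition~\ref{p1} to Proposition~\ref{prop:firsttimeinterval} by conditioning at the stopping time $\sigma_n$. Two trivial cases come first: for $n=0$ the alternative ``$\sigma_n=0$'' holds by definition of $\sigma_0$, and on $\{\sigma_n=\infty\}$ we have $\sigma_{n+1}=\infty$ by convention so the time interval $[\sigma_n,\sigma_{n+1})$ is empty. Hence I need only treat $n\ge 1$ on the event $\Omega_n:=\{\sigma_n<\infty\}$ and show $\bb P_x$-almost surely that $\ms B(X_t)=\ms B_n$ for every $t\in[\sigma_n,\sigma_{n+1})$.

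Fix such an $n$ and let $\bb Q_\omega(\cdot):=\bb P_x[\,\cdot\mid \ms F_{\sigma_n}\,](\omega)$ be a regular conditional probability distribution. The key input is the standard martingale-problem transfer à la Stroock--Varadhan, carried out in detail in \cite[Section~5.2]{BJL}: applying optional sampling to the Dynkin martingales $H(X_t)-\int_0^t(\mf L H)(X_s)\,ds$, $H\in\mc D_S$, at bounded stopping times of the form $(\sigma_n+t)\wedge T$ and letting $T\to\infty$ shows that, for $\bb P_x$-almost every $\omega\in\Omega_n$, the pushforward $\bb Q_\omega\circ\theta_{\sigma_n}^{-1}$ is a solution of the $(\mf L,\mc D_S)$-martingale problem starting at $z(\omega):=X_{\sigma_n(\omega)}(\omega)$. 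This step is where the real technical work sits; I would quote it rather than reproduce it.

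Now I check that Proposition~\ref{prop:firsttimeinterval} is applicable to $\bb Q_\omega\circ\theta_{\sigma_n}^{-1}$. For $\omega\in\Omega_n$ with $n\ge 1$, the defining identity $\sigma_n=\sigma_{n-1}+h_{\ms B_{n-1}}\circ\theta_{\sigma_{n-1}}$ being finite forces $\prod_{j\in\ms B_{n-1}}X_{\sigma_n}(j)=0$, so some coordinate in $\ms B_{n-1}$ vanishes at time $\sigma_n$; hence $\ms B_n\subsetneq\ms B_{n-1}\subseteq S$ and $\ms A(z(\omega))=S\setminus\ms B_n\ne\varnothing$. Applying Proposition~\ref{prop:firsttimeinterval} and translating back via $\theta_{\sigma_n}$, together with the identity $\sigma_1\circ\theta_{\sigma_n}=h_{\ms B_n}\circ\theta_{\sigma_n}=\sigma_{n+1}-\sigma_n$, yields
\[
\bb Q_\omega\Bigl[\,X_{\sigma_n+t}(j)=0\ \forall\,j\in S\setminus\ms B_n(\omega),\ 0\le t<\sigma_{n+1}-\sigma_n\,\Bigr]=1,
\]
that is, $\ms B(X_t)\subseteq \ms B_n$ on $[\sigma_n,\sigma_{n+1})$. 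The reverse inclusion is automatic from the construction: $\sigma_{n+1}$ is the first instant after $\sigma_n$ at which some coordinate in $\ms B_n$ vanishes, so $X_t(j)>0$ for all $j\in\ms B_n$ and all $t\in[\sigma_n,\sigma_{n+1})$, whence $\ms B_n\subseteq\ms B(X_t)$. Combining the two inclusions gives the equality, and integrating against $\bb P_x$ concludes.

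The main obstacle is the martingale-problem transfer to $\bb Q_\omega\circ\theta_{\sigma_n}^{-1}$; everything else is bookkeeping with the recursive definitions of $(\sigma_n,\ms B_n)$ and the sample-path continuity built into the state space $C(\bb R_+,\Sigma)$.
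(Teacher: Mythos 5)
Your proposal is correct and matches the paper's (very terse) approach: the paper simply says ``Using regular probability distributions, as in [Section 5.2, BJL],'' and you fill in exactly what that means --- condition at the stopping time $\sigma_n$ via a regular conditional probability distribution, invoke the Stroock--Varadhan martingale transfer to show $\bb Q_\omega\circ\theta_{\sigma_n}^{-1}$ solves the martingale problem started at $X_{\sigma_n}$, and then apply Proposition~\ref{prop:firsttimeinterval}. The reduction to the ``$\ms A(z(\omega))\ne\varnothing$'' hypothesis and the translation of $[0,\sigma_1)$ back to $[\sigma_n,\sigma_{n+1})$ are both handled correctly, and the reverse inclusion $\ms B_n\subseteq\ms B(X_t)$ from the definition of $\sigma_{n+1}$ is the right observation.

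One small overstatement: you write that, because $\prod_{j\in\ms B_{n-1}}X_{\sigma_n}(j)=0$, it follows that $\ms B_n\subsetneq\ms B_{n-1}$. Without an induction on $n$ (using the already-proved statement for $n-1$ and continuity of paths) one only knows that some coordinate of $\ms B_{n-1}$ vanishes at $\sigma_n$; a priori, a coordinate in $S\setminus\ms B_{n-1}$ might have become positive, so $\ms B_n\subseteq\ms B_{n-1}$ is not immediate. Fortunately all you actually need is $\ms A(z(\omega))\ne\varnothing$, which does follow directly from the vanishing coordinate in $\ms B_{n-1}\subseteq S$, so the gap is harmless; just drop the nested-inclusion claim or derive it inductively if you want it.
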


\begin{proof}[Proof of Theorem \ref{thm:abs}]
The assertions is a direct consequence of Proposition \ref{p1}.
\end{proof}

\section{Proof of Theorem \ref{thm:mainthm}}
\label{sec8}

In addition to proving the theorem, this section presents some
properties of the boundary dimension-decaying diffusion process
characterized by the martingale problem introduced in the Definition
\ref{def1}.

\subsection*{Existence of a solution}

As the proof of Theorem \ref{thm:mainthm} is identical to the one of
\cite[Section 6 and 7]{BJL}, we only give a brief sketch.  The
existence part consists of two steps. We start with the tightness.

\begin{proposition} \textup{(\cite[Proposition 7.6]{BJL})} For any
sequence $x_N\in \Sigma_N, N\geq 1$, the sequence of laws
$\{ \bb P^N_{x_N} : N\geq 1 \}$ is tight. Moreover, every limit point
of the sequence is concentrated on continuous trajectories.
\end{proposition}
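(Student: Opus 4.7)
The plan is to follow the standard scheme for tightness of Markov processes valued in a compact state space, combining Aldous' criterion with Dynkin's martingale identity. Since $\Sigma$ is compact, compact-containment is automatic, so everything reduces to controlling the modulus of continuity of $X^N$ uniformly in $N$. The natural test functions to use are those in the domain $\mc D_S$: for each $f\in \mc D_S$, Dynkin's formula provides that
\[
M^{N,f}_t \,:=\, f(X^N_t) - f(X^N_0) - \int_0^t (\mc L_N f)(X^N_s)\, ds
\]
is a $\bb P^N_{x_N}$-martingale with respect to the natural filtration.

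First I would establish the uniform bound
\[
\sup_N\, \sup_{x\in \Sigma_N} |(\mc L_N f)(x)| \,<\, \infty, \qquad f\in \mc D_S.
\]
This is done via a second-order Taylor expansion of $f$ at $x$ along $(\bs e_j-\bs e_i)/N$. When substituted into the defining formula for $\mc L_N$, the potential $O(N)$ drift term becomes $N\sum_{i,j} m_i\, r(i,j)\, \nabla_{\bs e_j-\bs e_i} f(x)$, which vanishes by the stationarity identity \eqref{eq:invar}. The remaining first-order correction reconstructs $(\mf L f)(x)$ by the asymptotic \eqref{34}, and this is bounded by continuity on the compact $\Sigma$ because $f\in \mc D_S$ ensures that each $\nabla_{\bs b_i} f$ extends continuously to the boundary. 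An analogous expansion of the carré du champ $\Gamma_N(f,f) = \mc L_N(f^2) - 2f\, \mc L_N f$ shows that $\Gamma_N(f,f)$ is uniformly bounded, and in fact decays like $1/N$ after the cancellation of the drift part. Using this, Aldous' criterion applied to $f(X^N_\cdot)$ follows from the standard inequality
\[
\bb E\bigl[(f(X^N_{\tau_2})-f(X^N_{\tau_1}))^2\bigr] \,\le\, 2\,\bb E\bigl[\langle M^{N,f}\rangle_{\tau_2}-\langle M^{N,f}\rangle_{\tau_1}\bigr] \,+\, 2\delta^2 \|\mc L_N f\|_\infty^2,
\]
valid for stopping times $\tau_1\le \tau_2 \le \tau_1+\delta\le T$. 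Because $\mc D_S$ contains the smooth cutoff functions constructed in Lemma \ref{phi} and their products, I can select a countable collection $\{f_n\}\subset \mc D_S$ that separates points of $\Sigma$, and then use compactness of $\Sigma$ to promote tightness of each coordinate process $f_n(X^N_\cdot)$ to tightness of $\{\bb P^N_{x_N}\}$ on $D(\bb R_+,\Sigma)$. Finally, the continuity of every limit point is automatic from the Skorokhod representation: the jumps of $X^N$ have Euclidean size $\sqrt{2}/N \to 0$, so the $\sup_t |X^N_t - X^N_{t^-}|$ tends to zero in probability.

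The main obstacle is the uniform control of $\mc L_N f$ near the boundary of $\Sigma$. At configurations with some $x_i$ comparable to $1/N$, the jump rate $N g_i(N x_i)$ is of order $N$, so the naive drift bound is lost; the compensation only works through the precise cancellation guaranteed by the definition of $\mc D_S$, namely the continuity of $(m_i/x_i)\nabla_{\bs v_i} f$ as $x_i\to 0$. The delicate point is that the Taylor remainder must be estimated in a way that survives at the minimal positive value $x_i = 1/N$, which requires treating the cases $x_i \ge \epsilon$ and $x_i < \epsilon$ separately and using condition $\mathfrak{E1}(i)$ together with a one-sided Taylor expansion in the latter regime. This is exactly the step carried out in Section 7 of \cite{BJL}, and the argument transfers verbatim to the critical case $b=1$ considered here because the stationarity identity and the structure of $\mc D_S$ are insensitive to the value of $b$.
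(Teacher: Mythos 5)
Your proposal is correct and, at the level of ideas, reproduces the argument of \cite[Section~7]{BJL}, which is exactly what the paper relies on here: the paper cites \cite[Proposition 7.6]{BJL} and merely observes that the proof transfers unchanged to the critical case. You correctly identify the crucial cancellation — writing $g_i(Nx_i)=m_i+[g_i(Nx_i)-m_i]$, the $m_i$-part of the $O(N)$ drift vanishes by \eqref{eq:invar}, while the remainder is $O(1)$ because $n\bigl(g_i(n)-m_i\bigr)$ is uniformly bounded (a consequence of \eqref{34}) and $\mc D_S$ forces $\nabla_{\bs v_i}f/x_i$ to be bounded up to the boundary. You also correctly note that neither \eqref{eq:invar} nor the definition of $\mc D_S$ depends on the value of $b>0$, which is precisely the paper's justification for not reproving the result.

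The one step you slide past is the claim that the cutoff functions of Lemma~\ref{phi} and their products give a point-separating subfamily of $\mc D_S$. Linear functions $x\mapsto x_k$ are generically \emph{not} in $\mc D_S$ (the continuity of $(m_i/x_i)\nabla_{\bs v_i}f$ at $\{x_i=0\}$ fails for them), and the cutoffs $\Phi$ of Lemma~\ref{phi} only distinguish points lying on different boundary faces $\mathring\Sigma_B\neq\mathring\Sigma_{B'}$. To separate two points of the same face one needs, e.g., to pull back a compactly supported function on $\Sigma_B$ via $\gamma_B^*$ and multiply by a cutoff, or to use the $J_A$-type constructions of Section~\ref{sec:aux}. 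This is a routine but non-trivial step that the Stone--Weierstrass/Ethier--Kurtz reduction from coordinate tightness to tightness on $D(\bb R_+,\Sigma)$ genuinely requires; the remainder of the sketch is sound.
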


Next result asserts that any limit point is a solution of the martingale
problem for $(\mf L, \mc D_S)$.

\begin{proposition} \textup{(\cite[Proposition 7.7]{BJL})} Let
$x_N \in \Sigma_N, N\geq 1$, be a sequence converging to some
$x\in \Sigma$, and denote by $\wt{\bb P}$ a limit point of the
sequence $\bb P^N_{x_N}$.  Under $\wt{\bb P}$, for any
$H \in \mc D_S$,
    \begin{equation*}
        H(X_t) - H(X_0) - \int_0^t \mf L H(X_s) ds,
    \end{equation*}
    is a martingale.
\end{proposition}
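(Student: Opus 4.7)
The plan is to pass to the limit in the discrete Dynkin martingale associated with $X^N$. By the Markov property of $X^N$ with generator $\mc L_N$, for each $N \ge 1$ and $H \in \mc D_S$,
\[
M^N_t \,:=\, H(X^N_t) \,-\, H(x_N) \,-\, \int_0^t (\mc L_N H)(X^N_s)\,ds
\]
is a $\bb P^N_{x_N}$-martingale. The goal is to show that, under the subsequential limit $\wt{\bb P}$, the corresponding expression with $\mc L_N H$ replaced by $\mf L H$ remains a martingale with respect to the canonical filtration.

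The first analytic step is the asymptotic expansion of $\mc L_N H$ for $H \in \mc D_S$. A second-order Taylor expansion of $H \in C^2(\Sigma)$, combined with the asymptotic $g_i(n)/m_i = 1 + b/n + o(1/n)$ from \eqref{34}, the invariance $\sum_i m_i \bs v_i = \bs 0$ in \eqref{eq:invar}, and the fact that $\nabla_{\bs v_i} H(x)$ vanishes at rate $x_i$ (a consequence of $H \in \mc D_i$, since $(m_i/x_i)\nabla_{\bs v_i} H$ extends boundedly and continuously to $\Sigma$), yields pointwise convergence $\mc L_N H(x_N) \to \mf L H(x)$ along sequences $x_N \to x$ of the same support, as well as a uniform bound $\sup_N \sup_{x \in \Sigma_N} |\mc L_N H(x)| < \infty$. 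Concretely, the drift $\nabla_{\bs b} H$ arises from $N g_i(N x_i) - N m_i = b m_i/x_i + o(1/x_i)$ combined with the vanishing of $\sum_i m_i \nabla_{\bs v_i} H(x)$, while the Hessian term comes from the quadratic Taylor contribution with $g_i(N x_i) \to m_i$ at interior points.

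Given this, I would invoke the Skorokhod representation theorem along a convergent subsequence $\bb P^{N_k}_{x_{N_k}} \Rightarrow \wt{\bb P}$, constructing processes $Y^{N_k}$ and $Y$ on a common probability space with $Y^{N_k} \to Y$ almost surely in the Skorokhod topology. Since $\wt{\bb P}$ is concentrated on $C(\bb R_+, \Sigma)$ by the tightness proposition, the convergence is in fact uniform on compact time intervals. Continuity of $H$ gives $H(Y^{N_k}_t) \to H(Y_t)$ almost surely, and the uniform bound on $\mc L_N H$ combined with dominated convergence reduces control of the integral term to the pointwise convergence $\mc L_{N_k} H(Y^{N_k}_s) \to \mf L H(Y_s)$ for Lebesgue-almost every $s$, almost surely. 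Passing to the limit in the discrete martingale identity then yields
\[
\wt{\bb E}\Big[\, G(X_{s_1},\ldots,X_{s_n}) \, \Big( H(X_t) - H(X_s) - \int_s^t (\mf L H)(X_u)\,du \Big)\,\Big] \,=\, 0
\]
for $s_1 \le \cdots \le s_n \le s < t$ and $G \in C_b(\Sigma^n)$, yielding the martingale property.

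The main obstacle will be the a.e. pointwise convergence of $\mc L_{N_k} H(Y^{N_k}_s)$ to $\mf L H(Y_s)$ at times $s$ where $Y_s$ lies on a proper boundary face $\Sigma_B$. The discrete operator $\mc L_N$ restricts the sum to indices $i$ with $x_i > 0$, while $\mf L H$ in \eqref{eq:L} sums over all $i \in S$; Proposition \ref{prop:domrest} reconciles these on $\Sigma_B$ via $\mf L H|_{\Sigma_B} = \mf L^B(H|_{\Sigma_B})$, but the identification has to be transported to the approximating values $Y^{N_k}_s$, whose coordinates on $S \setminus B$ may be positive but of order $1/N_k$. The quantitative bound $|\nabla_{\bs v_i} H(x)| \le C x_i$ available for $H \in \mc D_i$ is the essential tool that controls the contribution of such small coordinates and absorbs the non-uniform remainder $(N x_i)(g_i(N x_i)/m_i - 1) - b$ into a vanishing error along typical trajectories.
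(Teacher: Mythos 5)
The paper itself does not supply a proof of this proposition; it cites \cite[Proposition 7.7]{BJL} and asserts that the argument there carries over from $b>1$ to $b=1$. So I can only assess whether your outline is internally complete. Your high-level plan (pass to the limit in the discrete Dynkin martingale, Skorokhod representation, dominated convergence) is the right framework, and your treatment of the \emph{drift} term at boundary faces is correct: for $H\in\mc D_i$ one has $\nabla_{\bs v_i}H(x)=o(x_i)$ as $x_i\to 0$, and this suffices to kill the depleted-site drift contribution $N(g_i(Nx_i)-m_i)\nabla_{\bs v_i}H(x)$ regardless of whether $Nx_i$ stays bounded or diverges.

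The gap is in the \emph{Hessian} term. Expanding $\mc L_N H$ gives the second-order part
\[
\tfrac12\sum_{i:\,x_i>0}g_i(Nx_i)\sum_{j}r(i,j)\,\nabla^2_{\bs e_i-\bs e_j}H(x),
\]
while $\mf L H$ contains $\tfrac12\sum_{i,j\in S}m_i\,r(i,j)\,\nabla^2_{\bs e_i-\bs e_j}H$. Suppose $Y_s\in\mathring\Sigma_B$ with $B\subsetneq S$ and $Y^{N_k}_s\to Y_s$. For a depleted coordinate $i\notin B$, either $(Y^{N_k}_s)_i=0$ and the $i$-th row is absent from the discrete operator, or $N_k(Y^{N_k}_s)_i=c_k$ with $c_k$ bounded and the coefficient is $g_i(c_k)$ rather than $m_i$. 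In both cases the difference against the $m_i$-coefficient in $\mf L H(Y_s)$ is $O(1)$: there is no analogue of the $o(x_i)$ bound for the second derivative $\nabla^2_{\bs e_i-\bs e_j}H$, which is generically nonzero at the boundary. Your closing paragraph attributes the resolution to the bound $|\nabla_{\bs v_i}H(x)|\le Cx_i$, but that bound only controls first-order quantities; it does not touch the Hessian mismatch. You acknowledge pointwise convergence ``at interior points'' and ``along sequences of the same support,'' which is exactly the restriction that the Skorokhod representation does \emph{not} give you: approximating paths $Y^{N_k}_s$ will typically have positive (microscopic) occupation at depleted sites when $Y_s$ lies on a face.

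Moreover, this is not a measure-zero nuisance. Once the macroscopic path sits in $\mathring\Sigma_B$, the microscopic zero-range process has $O(1)$ particles at the depleted sites, equilibrating over a quasi-stationary distribution under the $N^2$ acceleration; it spends a positive fraction of \emph{macroscopic} time with $\eta_i\in\{1,\dots,M\}$, so the pointwise a.e.\ convergence you invoke fails on a set of positive Lebesgue measure. What is actually true, and what must be proved, is the convergence of the \emph{time integrals} $\int_0^t\mc L_{N}H(X^N_s)\,ds\to\int_0^t\mf L H(X_s)\,ds$; this requires an averaging/replacement argument over the fast microscopic degrees of freedom at depleted sites (matching, in effect, the trace-process identity $(\mf L H)|_{\Sigma_B}=\mf L^B(H|_{\Sigma_B})$ of Proposition \ref{prop:domrest}), or an occupation-time estimate controlling the expected time spent with depleted coordinates in the microscopic window. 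Your sketch identifies Proposition \ref{prop:domrest} as relevant but does not supply the bridge from pointwise failure to integral convergence, which is the genuinely hard part of this proposition.
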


The proofs of these results presented in \cite{BJL} for the
supercritical regime $b > 1$ apply to the critical case $b = 1$.  In
particular, these results guarantee the existence of a solution
$\bb P_x, x\in \Sigma$, of the martingale problem for the generator
$(\mf L, \mc D_S)$.

\subsection*{Uniqueness. An alternative martingale problem}

Before showing the uniqueness of solutions for the $(\mf L, \mc D_S)$
martingale problem, we show that a solution $\bb P$ of a
$(\mf L, \mc D_S)$-martingale problem also solves an alternative
martingale problem.

Let $D_0(\Sigma)$ be the set of functions $F:\Sigma \to \bb R$ such
that, for all $B\subset S$ with at least two elements, $F|_{\Sigma_B}$
belongs to $C^2(\Sigma_B)$ and has compact support contained in
$\mathring{\Sigma}_B$. For $F\in D_0(\Sigma)$, we define
$\mc L F:\Sigma \to \bb R$ as follows: For $x\in \Sigma$, let
$B = \{i\in S \;:\; x_i \neq 0\}$. Then
\begin{equation*}
    \mc L F(x) = \begin{cases}
        (\mf L^B F)|_{\Sigma_B} (x), & \text{if } |B| \geq 2, \\
        0, & \text{otherwise}.
    \end{cases}
    \end{equation*}
    
    Recall from Section \ref{sec:An absorbed diffusion} the definition
    of the sequence of stopping times $(\sigma_n)_{n\ge 0}$.
    Consider the jump process
$$ N_t \,:=\, \sup\{n\geq 0 \;:\; \sigma_n \leq t\}, \quad t\geq 0,$$
and define $N^S_t \,:=\, N_t \wedge |S|, t\geq 0$. Clearly, since $\bb P$ is absorbing,
$$ \bb P[N_t = N^S_t, \text{ for all } t\geq 0] = 1. $$

\begin{theorem} \label{thm:altmart} \textup{(\cite[Theorem 2.5]{BJL})} Suppose that $\bb P$ is a solution of the martingale problem for $(\mf L, \mc D_S)$.
    For each $x\in \Sigma$ and any $F \in D_0(\Sigma)$,
    \begin{equation*}
    F(X_t) - \int_0^t \mc L F(X_s) ds - \int_0^t F(X_s) dN^S_s, \quad t\geq 0,
    \end{equation*}
    is a $\bb P$-martingale.
\end{theorem}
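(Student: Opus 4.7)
The strategy is to decompose $F(X_t)$ along the absorbing structure of the process. By Theorem \ref{thm:abs}, on each random interval $[\sigma_n,\sigma_{n+1})$ the process $X_t$ lies in the single open face $\mathring{\Sigma}_{\ms B_n}$, and on that face one obtains a genuine continuous martingale built from $F_{\ms B_n} := F|_{\Sigma_{\ms B_n}}$. At each transition time $\sigma_n$ the function $F$ jumps because the compact-support hypothesis forces $F_{\ms B_{n-1}}$ to vanish on $\partial \Sigma_{\ms B_{n-1}}$; these jumps will be shown to add up to exactly $\int_0^t F(X_s)\,dN^S_s$. The crucial preliminary observation is that, since $F \in D_0(\Sigma)$, each $F_B$ is compactly supported in $\mathring{\Sigma}_B$, so the boundary-continuity requirements for membership in $\mc D^B_B$ are vacuously satisfied, and $F_B \in \mc D^B_B$.

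The first and most delicate step is to establish that, for each $n \ge 0$,
\begin{equation*}
M^n_t \,:=\, F_{\ms B_n}(X_{t \wedge \sigma_{n+1}}) - F_{\ms B_n}(X_{t \wedge \sigma_n}) - \int_{t\wedge\sigma_n}^{t\wedge\sigma_{n+1}} \mf L^{\ms B_n}F_{\ms B_n}(X_s)\,ds
\end{equation*}
is a $\bb P$-martingale, with $M^n_t \equiv 0$ on $\{t \le \sigma_n\}$. Since uniqueness of the $(\mf L,\mc D_S)$-martingale problem is itself proved through Theorem \ref{thm:altmart}, I cannot invoke Feller/strong Markov here; instead I will use the standard regular-conditional-distribution argument for martingale problems to show that the conditional law of $(X_{\sigma_n+s})_{s\ge 0}$ given $\ms F_{\sigma_n}$ is again a solution of the $(\mf L,\mc D_S)$-martingale problem starting at $X_{\sigma_n}$. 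By Theorem \ref{thm:abs} this conditional law is concentrated on $C(\bb R_+,\Sigma_{\ms B_n})$, and Proposition \ref{prop:restriction} then identifies its restriction as a solution of the $(\mf L^{\ms B_n},\mc D^{\ms B_n}_{\ms B_n})$-martingale problem. Applying the corresponding martingale identity to $F_{\ms B_n}\in \mc D^{\ms B_n}_{\ms B_n}$ and optionally stopping at $\sigma_{n+1}\wedge T$, then letting $T\to\infty$, yields the claim.

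For the jump identification, continuity of $X$ together with compactness of $\supp F_{\ms B_{n-1}} \subset \mathring{\Sigma}_{\ms B_{n-1}}$ gives
\begin{equation*}
\lim_{t \uparrow \sigma_n} F(X_t) \,=\, F_{\ms B_{n-1}}(X_{\sigma_n}) \,=\, 0,
\end{equation*}
because $X_{\sigma_n}$ lies on $\partial \Sigma_{\ms B_{n-1}}$. Hence the $n$-th jump of $F(X_\cdot)$ equals $F(X_{\sigma_n}) = F_{\ms B_n}(X_{\sigma_n})$, and summing over $n=1,\dots,N^S_t$ produces precisely $\int_0^t F(X_s)\,dN^S_s$. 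Writing $F(X_t) = F_{\ms B_{N^S_t}}(X_t)$, telescoping along the phases, and using $\mf L^{\ms B_n}F_{\ms B_n}(X_s) = \mc L F(X_s)$ for $s\in(\sigma_n,\sigma_{n+1})$ (which is how $\mc L$ was defined), I obtain
\begin{equation*}
F(X_t) - \int_0^t \mc L F(X_s)\,ds - \int_0^t F(X_s)\,dN^S_s \,=\, F(X_0) + \sum_{n=0}^{|S|} M^n_t\,,
\end{equation*}
which is a $\bb P$-martingale as a \emph{finite} sum of martingales; the deterministic bound $N^S_t \le |S|$ is essential here, as it ensures that the sum truncates.

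The main obstacle is the first step. Without uniqueness in hand, the ``Markov property at $\sigma_n$'' has to be built by hand via regular conditional distributions, and one must check that these conditional laws inherit the martingale property with respect to every $H \in \mc D_S$. The remaining bookkeeping, namely handling the $\ms F_{\sigma_n}$-measurability of the random face $\ms B_n$, justifying the sum-of-martingales step, and passing to the limit in the optional stopping, is routine but must be performed carefully.
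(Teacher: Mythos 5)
Your decomposition of $F(X_t)$ along the absorbing phases, the jump identification (compact support of $F_{\ms B_{n-1}}$ in $\mathring{\Sigma}_{\ms B_{n-1}}$ forces $F_{\ms B_{n-1}}(X_t) \to 0$ as $t\uparrow\sigma_n$, so the jump at $\sigma_n$ equals $F(X_{\sigma_n})$), the telescoping identity, and the deterministic truncation at $|S|$ are all correct. The regular-conditional-distribution argument at $\sigma_n$ is also the right tool, and you are correct that it does not require uniqueness and that Theorem~\ref{thm:abs} applies to \emph{every} solution of the $(\mf L,\mc D_S)$-martingale problem, not just the (eventually unique) one.

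However, there is a genuine logical gap: you invoke Proposition~\ref{prop:restriction} to conclude that the restriction of the conditional law to $C(\bb R_+,\Sigma_{\ms B_n})$ solves the $(\mf L^{\ms B_n},\mc D^{\ms B_n}_{\ms B_n})$-martingale problem. But Proposition~\ref{prop:restriction}, as stated, assumes $\bb P_x$ is \emph{the unique} solution, and in this paper its proof is given \emph{after} Theorem~\ref{thm:altmart} and explicitly uses it together with Proposition~\ref{prop:altuni} (see the end of Section~\ref{sec8}). Using Proposition~\ref{prop:restriction} inside the proof of Theorem~\ref{thm:altmart} is therefore circular. The fix is to avoid the full strength of Proposition~\ref{prop:restriction}: what you actually need is only the martingale identity for the single function $F_{\ms B_n}$, and this can be obtained directly. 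Since $F_{\ms B_n}$ has compact support in $\mathring{\Sigma}_{\ms B_n}$, the function $H := \gamma^*_{\ms B_n}(F_{\ms B_n})\cdot \Phi$, where $\Phi\in\mc D_S$ is a cutoff from Lemma~\ref{phi} chosen so that $\Phi\equiv 1$ on a neighborhood of $\supp F_{\ms B_n}$ in $\{\min_{j\in \ms B_n} x_j\ge\epsilon\}$ and $\Phi\equiv 0$ for $\min_{j\in\ms B_n} x_j\le\delta$, belongs to $\mc D_S$ (for $i\notin\ms B_n$, $\nabla_{\bs v_i}\gamma^*_{\ms B_n}(F_{\ms B_n})\equiv 0$ by \eqref{eq:projofvect} and Lemma~\ref{app:4}; for $i\in\ms B_n$, the cutoff forces $H$ to vanish near $\{x_i=0\}$). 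One then has $H|_{\Sigma_{\ms B_n}}=F_{\ms B_n}$, and by Proposition~\ref{prop:domrest} (which is purely algebraic and does not depend on Theorem~\ref{thm:altmart}), $(\mf L H)|_{\Sigma_{\ms B_n}}=\mf L^{\ms B_n}F_{\ms B_n}$. Applying the $(\mf L,\mc D_S)$-martingale property to $H$ under the conditional law, which is concentrated on $\Sigma_{\ms B_n}$-paths, yields the martingale property for $F_{\ms B_n}$ without appealing to Proposition~\ref{prop:restriction}. With this replacement the remainder of your argument goes through.
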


This new martingale is referred to as a $\mc L$-martingale.  The following
proposition gives the uniqueness of a solution of the alternative
martingale problem.

\begin{prop} \label{prop:altuni} \textup{(\cite[Proposition 6.1]{BJL})}
    For each $x\in \Sigma$, there exists at most one absorbing solution of the $\mc L$-martingale problem starting at $x$.
\end{prop}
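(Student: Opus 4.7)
The plan is to prove uniqueness by induction on the ``depth'' $k := |\ms B(x)|$, where $\ms B(x) = \{i \in S : x_i > 0\}$, and to exploit the fact that an absorbing solution decomposes into independent pieces across the successive time intervals $[\sigma_n, \sigma_{n+1})$, each of which lives inside a progressively smaller open face. The base case $k=1$ is immediate: $x$ is a vertex, so by the absorbing property $\bb P$-a.s.\ $X_t = x$ for all $t\ge 0$, hence the law is the Dirac mass on the constant trajectory. For the inductive step, assume uniqueness of absorbing solutions starting from any $y$ with $|\ms B(y)| < k$, and let $B = \ms B(x)$.

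The main step is to show that the law of $(X_t)_{0 \le t \le \sigma_1}$, including the joint law of $(\sigma_1, X_{\sigma_1})$, is uniquely determined. Because $\bb P$ is absorbing and $X_0 = x$, the trajectory stays in $\Sigma_B$ on $[0, \sigma_1)$, and in fact in $\mathring{\Sigma}_B$ on $[0, \sigma_1)$ by definition of $\sigma_1$. For any $F \in D_0(\Sigma)$ whose restriction $F|_{\Sigma_B}$ has compact support in $\mathring{\Sigma}_B$, the term $\int_0^{t\wedge \sigma_1} F(X_s)\, dN^S_s$ vanishes: on $[0,\sigma_1)$ the jump process $N^S$ is constant, and at time $\sigma_1$ one has $X_{\sigma_1}\in \Sigma_B\setminus \mathring{\Sigma}_B$, so $F(X_{\sigma_1}) = 0$. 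Therefore, for such $F$, the stopped process $F(X_{t\wedge \sigma_1}) - \int_0^{t\wedge \sigma_1} \mf L^B F(X_s)\, ds$ is a $\bb P$-martingale, i.e.\ $\bb P$ solves the classical stopped martingale problem on $\mathring{\Sigma}_B$ for the operator $\mf L^B$ with test functions compactly supported in $\mathring{\Sigma}_B$. Localizing further by the stopping times $\tau_\epsilon = \inf\{t \ge 0 : \min_{i\in B} X_t(i) \le \epsilon\}$ confines the diffusion to a compact subset of $\mathring{\Sigma}_B$ on which the drift $\bs b^B$ and diffusion coefficient of $\mf L^B$ are smooth and bounded. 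By the classical Stroock--Varadhan uniqueness theorem for nondegenerate martingale problems with smooth bounded coefficients, the law of $(X_{t\wedge \tau_\epsilon})$ is uniquely determined, and letting $\epsilon\downarrow 0$ one recovers the law of $(X_{t\wedge \sigma_1})$ together with the joint law of $(\sigma_1, X_{\sigma_1})$. Now apply the strong Markov property at $\sigma_1$, obtained via regular conditional probabilities: the shifted process $(X_{\sigma_1 + t})_{t\ge 0}$ is, conditionally on $X_{\sigma_1} = y$, again an absorbing solution of the $\mc L$-martingale problem starting at $y$. Since $|\ms B(y)| < k$, the inductive hypothesis determines its law uniquely, and pasting together the two pieces yields uniqueness of $\bb P$.

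The principal obstacle is the degeneration of $\mf L^B$ at $\partial \Sigma_B$, where the drift $b\, m_i/x_i$ diverges, so classical uniqueness cannot be invoked directly on $\mathring{\Sigma}_B$. The $\tau_\epsilon$-localization resolves this, but one must then argue that as $\epsilon\downarrow 0$ the stopped laws genuinely determine the law at time $\sigma_1$; this requires $\bb P[\sigma_1 < \infty] = 1$ and $\tau_\epsilon \nearrow \sigma_1$ $\bb P$-a.s., which follow from Proposition \ref{prop:firsttimeinterval} together with the finite-expectation estimate for the hitting time of the boundary alluded to in the remark following Theorem \ref{thm:abs}. A secondary technical point is to verify the strong Markov property for $\bb P$; this is done via the standard construction of a regular conditional probability and the observation that the martingale problem is preserved under conditioning on $\ms F_{\sigma_1}$, exactly as in \cite[Section 6]{BJL}.
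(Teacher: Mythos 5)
Your proposal follows essentially the same route as \cite[Section 6]{BJL}, which the paper cites for this proposition: restrict to the open face via the absorbing property, localize with the exit times $\tau_\epsilon$ so that the coefficients of $\mf L^B$ are smooth, bounded, and non-degenerate, invoke Stroock--Varadhan well-posedness for the localized problem, and induct on $|\ms B(x)|$ by pasting at $\sigma_1$ via a regular conditional probability. The structure and every essential idea match.

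One logical flag. Your closing paragraph asserts that the argument ``requires $\bb P[\sigma_1 < \infty] = 1$'' and traces this to the estimate in Proposition~\ref{ft}. That estimate is stated, in the logical order of the paper, for the already-unique solution $\bb P_x$, so citing it inside the uniqueness proof is circular unless one checks that its Lyapunov-type argument uses only the martingale relation and therefore applies to an arbitrary absorbing solution (plausible, but you do not verify it). More to the point, the claim is superfluous: since the trajectory is continuous and stays in $\mathring{\Sigma}_B$ before $\sigma_1$, one has $\tau_\epsilon \nearrow \sigma_1$ $\bb P$-a.s.\ automatically, so the $\tau_\epsilon$-stopped laws determine the law of $(X_{t\wedge\sigma_1})_{t\ge 0}$ and the joint law of $(\sigma_1, X_{\sigma_1})$ on $\{\sigma_1 < \infty\}$; on the event $\{\sigma_1 = \infty\}$ the full trajectory law is already pinned down and there is nothing to paste. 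Dropping the appeal to $\sigma_1 < \infty$ removes the circularity without weakening the conclusion. A secondary point you should state explicitly: the second-order part of $\mf L^B$ is non-degenerate on $T_{\Sigma_B}$ because the trace chain with rates $r^B$ is irreducible, which is what licenses the Stroock--Varadhan step after localization.
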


Theorem \ref{thm:mainthm} is a direct consequence of this result,
Theorem \ref{thm:abs}, Theorem \ref{thm:altmart}, and Proposition
\ref{prop:altuni}.

Proposition \ref{prop:altuni} also gives Proposition \ref{prop:restriction}.
Fix $x\in \Sigma$ and assume that $\ms A(x) = \{j \in S: x_j = 0\} \neq \varnothing$.
Let $B = \ms A(x)^c$. By Theorem \ref{thm:abs}, the measure $\bb P_x^B$ of Proposition \ref{prop:restriction} is a well-defined probability measure that solves $\mf L$-martingale problem.
By Theorem \ref{thm:altmart}, this also solves $\mc L$-martingale restricted to $\Sigma_B$.
The uniqueness property established in Proposition \ref{prop:altuni}
immediately yields the desired conclusion.

\subsection*{Additional properties}
According to \cite[Section 7.3]{BJL}, the solution $\{\bb P_x : x\in \Sigma\}$ of the martingale problem satisfies three additional properties.
Also for the critical case $b = 1$, these properties are satisfied by the solution of the martingale problem for $(\mf L, \mc D_S)$, defined in Theorem \ref{thm:mainthm}, and the exact same proof applies.

The first property states that the solution has the Feller continuity property.
\begin{proposition} \label{prop:feller} \cite[Proposition 7.10]{BJL}
    Let $(x_n)_{n\geq 1}$ be a sequence in $\Sigma$ converging to some $x\in \Sigma$. Then $\bb P_{x_n} \to \bb P_x$ in the sense of weak convergence of measures on $C(\bb R_+, \Sigma)$.
\end{proposition}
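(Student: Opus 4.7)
The strategy is to transfer the Feller continuity from the approximation of the limit measures by the laws of the rescaled zero-range processes, exactly as established in Theorem \ref{thm:mainthm}. The key point is that for each fixed $n$, $\bb P_{x_n}$ is the limit of measures $\bb P^N_{x_{n,N}}$ with $x_{n,N}\in\Sigma_N$ converging to $x_n$; a diagonal extraction then realises $\bb P_{x_n}$ as an approximation of $\bb P^N$-measures whose starting points converge to $x$, and a second application of Theorem \ref{thm:mainthm} identifies the limit as $\bb P_x$.

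Concretely, fix a metric $\rho$ on the space of probability measures on $D(\bb R_+,\Sigma)$ compatible with weak convergence (for instance the L\'evy--Prokhorov metric built from the Skorohod metric on $D(\bb R_+,\Sigma)$). Since the rational grid $\bigcup_N \Sigma_N$ is dense in $\Sigma$, for every $n\geq 1$ one may pick $x_{n,N}\in\Sigma_N$ with $\|x_{n,N}-x_n\|\to 0$ as $N\to\infty$. For each fixed $n$, Theorem \ref{thm:mainthm} applied with this sequence yields $\bb P^N_{x_{n,N}}\to \bb P_{x_n}$ in $D(\bb R_+,\Sigma)$; in particular there exists $N(n)\geq n$ such that
\begin{equation*}
\rho\big(\bb P^{N(n)}_{x_{n,N(n)}},\,\bb P_{x_n}\big) \,<\, \frac{1}{n}
\qquad\text{and}\qquad
\|x_{n,N(n)}-x_n\|\,<\,\frac{1}{n}\,.
\end{equation*}

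Set $y_n:=x_{n,N(n)}\in\Sigma_{N(n)}$. Then $y_n\to x$ since $x_n\to x$ and $\|y_n-x_n\|<1/n$. Applying Theorem \ref{thm:mainthm} once more, this time to the sequence $(y_n)_{n\ge 1}$, we obtain
\begin{equation*}
\bb P^{N(n)}_{y_n}\,\longrightarrow\, \bb P_x\qquad\text{in the Skorohod topology on } D(\bb R_+,\Sigma)\,.
\end{equation*}
Combining with the first display and the triangle inequality,
\begin{equation*}
\rho\big(\bb P_{x_n},\bb P_x\big)\,\le\,
\rho\big(\bb P_{x_n},\bb P^{N(n)}_{y_n}\big)\,+\,\rho\big(\bb P^{N(n)}_{y_n},\bb P_x\big)\,\longrightarrow\,0\,.
\end{equation*}
Hence $\bb P_{x_n}\to\bb P_x$ weakly as measures on $D(\bb R_+,\Sigma)$. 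As $\bb P_x$ is supported on $C(\bb R_+,\Sigma)$ (already by construction, and confirmed by the tightness/continuity result from the sketch of the existence part), weak convergence in the Skorohod topology with a continuous limit is equivalent to weak convergence in $C(\bb R_+,\Sigma)$ with the topology of uniform convergence on compact intervals, which is the statement of the proposition.

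\textbf{Main obstacle.} Once Theorem \ref{thm:mainthm} is available, there is essentially no obstacle: the argument is a clean diagonal extraction, and all the analytical work—tightness, characterisation of limit points, and uniqueness of the martingale problem—has already been carried out. The only point that deserves care is the usual one of checking that weak convergence in the Skorohod topology with a continuous limit descends to weak convergence in $C(\bb R_+,\Sigma)$, which is a standard fact.
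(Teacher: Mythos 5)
Your diagonal argument is correct and self-contained given Theorem~\ref{thm:mainthm}, but it very likely diverges from the route taken in \cite{BJL}. There, and by inference here, the natural proof of Feller continuity is intrinsic: one shows the family $\{\bb P_{x_n}\}$ is tight on $C(\bb R_+,\Sigma)$, checks that any weak limit point is a solution of the $(\mf L,\mc D_S)$-martingale problem starting from $x$ (using that $H$ and $\mf L H$ are bounded continuous for $H\in\mc D_S$, so the martingale property passes to the limit), and then invokes uniqueness. Your argument instead rides on the zero-range approximation and the full strength of Theorem~\ref{thm:mainthm}; it is shorter at this stage because the heavy lifting has already been done, but it would not survive a setting where one proves existence and uniqueness of the martingale problem without a particle-system approximation. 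Both are legitimate.

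One point to tighten: your second invocation of Theorem~\ref{thm:mainthm} is applied to $(y_n)_{n\ge1}$ with $y_n\in\Sigma_{N(n)}$, but the theorem is stated for a full sequence $(z_N)_{N\ge1}$ with $z_N\in\Sigma_N$. You should first arrange $N(n)$ strictly increasing and then interpolate, choosing $z_N\in\Sigma_N$ with $z_N\to x$ and $z_{N(n)}=y_n$; the theorem then gives $\bb P^N_{z_N}\to\bb P_x$ and in particular along the subsequence $N=N(n)$. The final step converting Skorohod convergence to convergence on $C(\bb R_+,\Sigma)$ is indeed the standard Portmanteau observation for measures supported on the closed subset $C(\bb R_+,\Sigma)\subset D(\bb R_+,\Sigma)$, on which the Skorohod and locally uniform topologies coincide.
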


The second property tells us that the solution satisfies the strong Markov property.
\begin{proposition} \label{prop:strongmarkov} \cite[Proposition 7.11]{BJL}
    Fix $x\in \Sigma$. Let $\tau$ be a finite stopping time and $\{\bb P^\tau_\omega\}$ be a regular conditional probability distribution of $\bb P_x$ given $\mc F_\tau$.
    Then, there exists a $\bb P_x$-null set $\mc N \in \mc F_\tau$, such that
    $$ \bb P^\tau_\omega \circ \theta^{-1}_{\tau(\omega)} = \bb P_{X_\tau(\omega)},\;\; \omega \in \mc N^c,$$
    where we recall $(\theta_t)_{t\geq 0}$ is the semigroup of time translations.
\end{proposition}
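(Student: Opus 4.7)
The plan is the standard route for strong Markov property of a well-posed martingale problem: show that the shifted regular conditional distribution still solves the $(\mf L,\mc D_S)$-martingale problem starting from $X_\tau(\omega)$, then invoke uniqueness from Theorem \ref{thm:mainthm}. For $\omega \in C(\bb R_+,\Sigma)$, set $\bb Q_\omega := \bb P^\tau_\omega \circ \theta_{\tau(\omega)}^{-1}$. The goal is to produce a $\bb P_x$-null set $\mc N \in \mc F_\tau$ such that, for $\omega \notin \mc N$, $\bb Q_\omega$ starts at $X_\tau(\omega)$ and solves the $(\mf L,\mc D_S)$-martingale problem. Uniqueness then forces $\bb Q_\omega = \bb P_{X_\tau(\omega)}$.

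First I would verify the starting point. By standard properties of regular conditional distributions (e.g.\ Stroock--Varadhan, Chapter~1), there is a $\bb P_x$-null set $\mc N_0 \in \mc F_\tau$ outside of which $\bb P^\tau_\omega[X_{\tau(\omega)} = X_\tau(\omega)] = 1$. Shifting by $\tau(\omega)$ then gives $\bb Q_\omega[X_0 = X_\tau(\omega)] = 1$ for $\omega \notin \mc N_0$. Since $\bb P_x$ is supported on $C(\bb R_+,\Sigma)$ by construction, so is $\bb Q_\omega$.

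Next I would establish the martingale property. Fix $H\in\mc D_S$ and write $M^H_t := H(X_t) - \int_0^t (\mf L H)(X_s)\,ds$, which is a bounded-on-compacts $\bb P_x$-martingale with respect to $(\ms F_t)_{t\ge 0}$ by Definition~\ref{def1}. By optional stopping applied to $\tau \wedge n$ and standard conditioning, for each pair of rationals $0\le s<t$ and each bounded $\ms F_s$-measurable cylinder test function $G$,
\begin{equation*}
\int G(\omega')\,\big[M^H_{t+\tau(\omega)}(\omega\ast_\tau\omega') - M^H_{s+\tau(\omega)}(\omega\ast_\tau\omega')\big]\,d\bb P^\tau_\omega(\omega\ast_\tau\omega') \;=\; 0
\end{equation*}
for $\omega$ outside a $\bb P_x$-null set $\mc N_{H,s,t,G}\in\mc F_\tau$. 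After translating by $\theta_{\tau(\omega)}$ and using continuity of $H$, $\mf L H$, and of the trajectories, this rewrites as the $\bb Q_\omega$-martingale identity for $M^H$. Taking the union of the null sets $\mc N_{H,s,t,G}$ over rational $s<t$, a countable determining family of bounded continuous cylinder $G$'s, and a countable $\|\cdot\|_\infty + \|\mf L \cdot\|_\infty$-dense subfamily of $\mc D_S$, we obtain one $\bb P_x$-null set $\mc N_1\in\mc F_\tau$ off of which $M^H_\cdot$ is a $\bb Q_\omega$-martingale for every $H$ in the dense subfamily, and hence, by approximation using that $\mf L H$ is bounded on $\Sigma$ for $H\in\mc D_S$, for every $H\in\mc D_S$.

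Setting $\mc N = \mc N_0 \cup \mc N_1$, for $\omega\notin\mc N$ the measure $\bb Q_\omega$ is a solution of the $(\mf L,\mc D_S)$-martingale problem starting at $X_\tau(\omega)$. By the uniqueness part of Theorem~\ref{thm:mainthm}, $\bb Q_\omega = \bb P_{X_\tau(\omega)}$, which is the desired conclusion. The main obstacle I anticipate is producing a single $\bb P_x$-null set that works for \emph{all} $H\in\mc D_S$ simultaneously; this is handled by choosing a countable $(\|\cdot\|_\infty,\|\mf L\cdot\|_\infty)$-dense subset of $\mc D_S$ and using boundedness of $\mf L H$ together with the dominated convergence theorem to extend the martingale identity from the countable family to all of $\mc D_S$.
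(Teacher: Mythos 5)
Your plan is correct and follows the standard Stroock--Varadhan route for the strong Markov property of a well-posed martingale problem (condition on $\ms F_\tau$ via a regular conditional probability distribution, shift by $\theta_{\tau(\omega)}$, verify the shifted measure still solves the martingale problem, and conclude by uniqueness from Theorem~\ref{thm:mainthm}); the paper does not give a proof but defers to \cite[Proposition 7.11]{BJL}, which uses this same argument. The one place worth flagging in a full write-up is the existence of the countable $(\|\cdot\|_\infty,\|\mf L\cdot\|_\infty)$-dense subfamily of $\mc D_S$: unlike $C^\infty_c(\bb R^d)$ this is not automatic, and either this separability (or a substitute, such as measurability of $y\mapsto \bb P_y$ already granted by Proposition~\ref{prop:feller}) needs to be invoked explicitly to justify passing from a countable family to all of $\mc D_S$.
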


Proposition \ref{prop:feller} and \ref{prop:strongmarkov} together
imply that the solution is actually a Feller process.  The last
property provides a uniform bound on the expected value of the
absorption time $\sigma_1$ for all initial points $x \in \Sigma$.

\begin{proposition}
\label{ft}
\cite[Proposition 7.12]{BJL} Let $z\in \Sigma$ be such that
$z\neq \bs e_j$, $j\in S$. For any $q > b$,
    $$ \bb E_z[\sigma_1] \leq \frac{|B|^{(q-1)\vee 1}}{(q+1)\, (q-b)\, d(B)},$$
    where $B = \{i\in S : z_i \neq 0\}$ and
    $d(B) = \min_{j\in B} \frac{1}{2}\sum_{k\neq j} \left(m_jr(j,k)+
    m_k r(k,j)\right)$. In particular,
    $\bb P_z[\sigma_1 < \infty] = 1$.
    \end{proposition}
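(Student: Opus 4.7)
The plan is to prove Proposition~\ref{ft} by finding a nonnegative Lyapunov function on $\Sigma$ that is strictly superharmonic on $\mathring{\Sigma}_B$, and then applying an optional-stopping argument with the stopping time $\sigma_1$. My choice is
\[
F(x) \,:=\, \frac{1}{q+1}\Bigl(1 - \sum_{j \in B} x_j^{q+1}\Bigr),
\]
which satisfies $0 \leq F \leq 1/(q+1)$ on $\Sigma$ since $\sum_j x_j^{q+1} \leq \sum_j x_j = 1$; in particular $F(z) \leq 1/(q+1)$.

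A direct computation on the open face $\mathring{\Sigma}_B$, using $\partial_{x_j} F = -x_j^q$ for $j \in B$ (and $0$ otherwise) together with the stationarity identity $\sum_i m_i r(i,j) = m_j \lambda_j$, yields
\[
(\mf L F)(x) \,=\, -(q-b)\sum_{i \in B} m_i \lambda_i\, x_i^{q-1} \,-\, b\!\sum_{\substack{i,k \in B \\ i \neq k}} \frac{m_i\, r(i,k)\, x_k^q}{x_i}\,,
\]
as the contribution $-q\sum_{i \in B} m_i \lambda_i x_i^{q-1}$ from the Hessian combines with the positive drift piece $+b \sum_{i \in B} m_i \lambda_i x_i^{q-1}$. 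Since $q > b$, both sums on the right are nonpositive; discarding the second while invoking $m_i \lambda_i \geq d(B)$ (which follows from the definition of $d(B)$ together with stationarity, as $\tfrac{1}{2}\sum_{k\neq j}(m_j r(j,k)+m_k r(k,j)) = m_j \lambda_j$) gives $(\mf L F)(x) \leq -(q-b)\, d(B) \sum_{i \in B} x_i^{q-1}$. To bound this factor from below on $\Sigma_B$: if $q \leq 2$ the pointwise inequality $u^{q-1} \geq u$ on $[0,1]$ gives $\sum_{i \in B} x_i^{q-1} \geq \sum_i x_i = 1$; if $q \geq 2$, Jensen's inequality applied to the convex map $u \mapsto u^{q-1}$ gives $\sum_{i \in B} x_i^{q-1} \geq |B|^{2-q}$. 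In both regimes $\sum_{i \in B} x_i^{q-1} \geq |B|^{-(q-1)\vee 1}$, and consequently $\mf L F \leq -c$ on $\mathring{\Sigma}_B$ with $c := (q-b)\, d(B)\, |B|^{-(q-1)\vee 1}$.

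The main technical obstacle is that $F \notin \mc E_S$: for $i \in B$ the quotient $(1/x_i)\nabla_{\bs v_i} F$ contains terms of the form $-\sum_{k \in B,\, k \neq i} r(i,k) x_k^q/x_i$ which diverge as $x_i \to 0^+$, so condition $\mathfrak{E1}$ fails and the martingale problem cannot be applied to $F$ directly. I plan to remedy this by localization. For small $\epsilon > 0$, Lemma~\ref{phi} supplies a cutoff $\phi_\epsilon \in \mc D_S$ satisfying $\phi_\epsilon \equiv 1$ on $\{x : \min_{j \in B} x_j \geq 2\epsilon\}$ and $\phi_\epsilon \equiv 0$ on $\{x : \min_{j \in B} x_j \leq \epsilon\}$; setting $F_\epsilon := \phi_\epsilon F$ produces a function in $\mc E_S$ (condition $\mathfrak{E1}$ is trivial where $F_\epsilon$ vanishes, and away from the boundary $F_\epsilon$ is smooth with bounded derivatives), with $F_\epsilon = F$ and $\mf L^{\mc E} F_\epsilon = \mf L F \leq -c$ on $\{\min_{j \in B} x_j \geq 2\epsilon\}$. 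Let $\tau_\epsilon := \inf\{s \geq 0 : \min_{j \in B} X_s^{(j)} \leq 2\epsilon\}$. Applying the $(\mf L^{\mc E}, \mc E_S)$-martingale identity of Theorem~\ref{thm:emp} to $F_\epsilon$ at the stopping time $\tau_\epsilon \wedge t$, and using $0 \leq F_\epsilon(z) \leq F(z) \leq 1/(q+1)$ together with $F_\epsilon(X_{\tau_\epsilon \wedge t}) \geq 0$, yields
\[
c\, \bb E_z[\tau_\epsilon \wedge t] \,\leq\, \frac{1}{q+1}\,.
\]
Since $\tau_\epsilon \uparrow \sigma_1$ $\bb P_z$-almost surely as $\epsilon \downarrow 0$ (by Theorem~\ref{thm:abs} combined with continuity of trajectories), monotone convergence followed by $t \to \infty$ produces the stated bound, and finiteness of the expectation immediately yields $\bb P_z[\sigma_1 < \infty] = 1$.
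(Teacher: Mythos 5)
The paper gives no proof of this proposition, merely citing [BJL, Proposition~7.12], so I assess your argument on its own terms. Your Lyapunov-function/optional-stopping strategy is the right one, and for interior starting points ($B = S$) your computation and localization are correct. However, for $B \subsetneq S$ your proposal has two related gaps.

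\textbf{Gap 1: $F_\epsilon \notin \mc E_S$ when $B \subsetneq S$.} You note that $F$ fails condition $\mathfrak{E1}(i)$ for $i \in B$ and repair this with the cutoff $\phi_\epsilon$, but you overlook that $\mathfrak{E1}(i)$ also fails for $i \in A := S\setminus B$. Indeed, for $i \in A$ one has $\partial_{x_i} F = 0$, so $\nabla_{\bs v_i} F (x) = -\sum_{k\in B} r(i,k)\, x_k^q$, which does not vanish as $x_i \to 0$; hence $(1/x_i)\nabla_{\bs v_i} F$ diverges near $\Sigma_B$. The cutoff from Lemma~\ref{phi} is supported away from $\{\min_{j\in B} x_j \le \delta\}$ but does not vanish near $\Sigma_B$, so it does not rescue $\mathfrak{E1}(i)$ for $i\in A$. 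Consequently $F_\epsilon \notin \mc E_S$ and the $(\mf L^{\mc E},\mc E_S)$-martingale identity cannot be invoked. The correct route is to first use Theorem~\ref{thm:abs} and Proposition~\ref{prop:restriction} to pass to the restricted process $\bb P^B_z$ on $\Sigma_B$, which solves the $(\mf L^B,\mc D^B_B)$-martingale problem; then your function and cutoff live on $\Sigma_B$ where only $i \in B$ matters and your argument applies verbatim.

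\textbf{Gap 2: wrong rates in $\mf L F$ on $\mathring{\Sigma}_B$.} On the face $\mathring{\Sigma}_B$ the extended generator equals $\mf L^B$ (Proposition~\ref{prop:domrest} and Definition~\ref{def:ed}), built from the trace rates $r^B$ and holding rates $\lambda^B$, not the original $r$, $\lambda$. Your displayed formula for $(\mf L F)(x)$ replaces $r^B(i,k)$ by $r(i,k)$ and $m_i\lambda^B(i)$ by $m_i\lambda_i$, and then your bound $m_i\lambda_i \ge d(B)$ (via ``$\tfrac12\sum_{k\ne j}(m_jr(j,k)+m_kr(k,j)) = m_j\lambda_j$'') implicitly reads the sum in $d(B)$ as $k \in S\setminus\{j\}$. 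The correct inequality to establish is $m_j\lambda^B(j) \ge d(B)$, and since $\lambda^B(j) \le \lambda_j$ in general, the comparison $m_j\lambda_j \ge d(B)$ does not imply what you need. The fix is to read $d(B)$ with the sum restricted to $k \in B\setminus\{j\}$ and use that $r^B(j,k) \ge r(j,k)$ for $j,k\in B$ (immediate from \eqref{eq:rB} and \eqref{20}), together with stationarity of $m|_B$ for the trace, to obtain $m_j\lambda^B(j) = \tfrac12\sum_{k\in B\setminus\{j\}}(m_jr^B(j,k)+m_kr^B(k,j)) \ge \tfrac12\sum_{k\in B\setminus\{j\}}(m_jr(j,k)+m_kr(k,j)) \ge d(B)$. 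With both repairs made — restricting to $\Sigma_B$ and computing with $r^B,\lambda^B$ — the remainder of your argument (the Jensen/pointwise bound on $\sum_{j\in B}x_j^{q-1}$, optional stopping at $\tau_\epsilon\wedge t$, and the monotone limit $\tau_\epsilon\uparrow\sigma_1$) is sound.
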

    
From the proposition, we can conclude that as time flow, the process
successively absorbs into the decreasing subsimplices and eventually
reaches the vertices $\Sigma_j$, $j\in S$, in a time which has
finite expectation. Note that we do not prove that the dimension
decays only by one at each step, though we believe that this happens.

\appendix

\section{Properties of $C^1$ functions on $\Sigma$}

We recall from the definition of tangents vectors of $\Sigma$, $T_\Sigma$, $C^1(\mathring{\Sigma})$, and $C^1(\Sigma)$ from Section 2. 

\begin{lemma} \label{app:1}
    Suppose we have a $F \in C(\Sigma)$, $V \in C(\Sigma, T_\Sigma)$. Then the following are equivalent:
    \begin{enumerate}[leftmargin=*]
        \item $F \in C^1(\Sigma)$ and $\nabla^\Sigma F = V$.
        \item For all $x, y \in \Sigma$, we have
        $$ F(y) - F(x) = \int_0^1  V(x + t(y-x)) \cdot (y-x) dt, $$
        where $\cdot$ is the standard inner product in $\bb R^S$.
    \end{enumerate}
\end{lemma}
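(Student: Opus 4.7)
The plan is to prove the two directions separately. The implication (1)$\Rightarrow$(2) is a fundamental-theorem-of-calculus argument along the segment from $x$ to $y$, while (2)$\Rightarrow$(1) is a direct differentiation estimate using the continuity of $V$.

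For (1)$\Rightarrow$(2), fix $x, y \in \Sigma$. By convexity, the segment $\{x+t(y-x):t\in[0,1]\}$ lies in $\Sigma$, and because $\sum_i y_i = \sum_i x_i = 1$, the displacement $y-x$ is a tangent vector in $T_\Sigma$. If both $x$ and $y$ lie in $\mathring{\Sigma}$, the whole open segment lies in $\mathring\Sigma$, so the function $g(t) := F(x+t(y-x))$ is well-defined on $[0,1]$ and, by the definition \eqref{eq:differentiability} of differentiability applied with displacement direction $y-x \in T_\Sigma$, satisfies $g'(t) = V(x+t(y-x)) \cdot (y-x)$. Integrating $g'$ over $[0,1]$ yields the claimed formula. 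For general $x,y\in\Sigma$, fix any reference point $z\in\mathring\Sigma$ and set $x_n:=(1-1/n)x+(1/n)z$, $y_n:=(1-1/n)y+(1/n)z$, both in $\mathring\Sigma$. Apply the interior formula to $(x_n,y_n)$ and pass to the limit: the left-hand side converges by continuity of $F$ on $\Sigma$, and the integrand converges pointwise while remaining uniformly bounded (since $V$ is continuous on the compact set $\Sigma$), so dominated convergence gives the formula for $(x,y)$.

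For (2)$\Rightarrow$(1), we are given $F\in C(\Sigma)$, so it suffices to check that $F$ is differentiable at each $x\in\mathring\Sigma$ with $\nabla^\Sigma F(x)=V(x)\in T_\Sigma$, and that the directional derivatives $\nabla_{\bs W}F = V\cdot\bs W$ extend continuously to $\Sigma$. For $x\in\mathring\Sigma$ and a tangent vector $\bs W\in T_\Sigma$ of sufficiently small norm (so $x+\bs W\in\Sigma$), the hypothesis gives
\begin{equation*}
F(x+\bs W)-F(x)-V(x)\cdot\bs W \;=\; \int_0^1 \bigl[V(x+t\bs W)-V(x)\bigr]\cdot\bs W\, dt,
\end{equation*}
whose absolute value is bounded by $|\bs W|\cdot \sup_{t\in[0,1]}|V(x+t\bs W)-V(x)|$, which is $o(|\bs W|)$ as $|\bs W|\to 0$ by continuity of $V$ at $x$. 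This verifies \eqref{eq:differentiability} with $\nabla^\Sigma F(x)=V(x)$. Since $V\in C(\Sigma,T_\Sigma)$, for every $\bs W\in T_\Sigma$ the map $x\mapsto V(x)\cdot\bs W$ is continuous on all of $\Sigma$, which is exactly the continuous extension of $\nabla_{\bs W}F$ to $\Sigma$ required in the definition of $C^1(\Sigma)$.

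There is no genuine obstacle; the only slightly delicate point is handling boundary points in (1)$\Rightarrow$(2) when the segment from $x$ to $y$ lies entirely on a face of $\Sigma$ (so differentiability on $\mathring\Sigma$ is not directly applicable along that segment). This is resolved by the interior-approximation argument via $z\in\mathring\Sigma$ described above, together with the uniform continuity of $V$ on compact $\Sigma$, which makes the passage to the limit straightforward.
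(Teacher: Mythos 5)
Your proof is correct and follows essentially the same route as the paper: the fundamental theorem of calculus on interior segments plus approximation by interior points for (1)$\Rightarrow$(2), and the direct remainder estimate from the integral formula for (2)$\Rightarrow$(1). Your version is in fact slightly cleaner in spots — you give an explicit choice of approximating points and you explicitly note the continuous extension of $\nabla_{\bs W}F$ required by the definition of $C^1(\Sigma)$, which the paper leaves implicit (and the paper's displayed identity in the (2)$\Rightarrow$(1) step has a sign typo, $x+t(x-y)$ and $(x-y)$ in place of $x+t(y-x)$ and $(y-x)$, which you have correctly).
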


\begin{proof}
    (1) $\Rightarrow$ (2): Since $F \in C^1(\Sigma)$, we have $F|_{\mathring{\Sigma}} \in C^1(\mathring{\Sigma})$.
    Therefore, for $p, q \in \Sigma$, we have
    $$ F(q) - F(p) = \int_0^1 \nabla F(p + t(q-p)) \cdot (q-p) dt, $$
    For arbitrary $x, y \in \Sigma$, we take a sequence $p_n \to x$ and $q_n \to y$ with $p_n, q_n \in \mathring{\Sigma}$, the equation
    $$ F(q_n) - F(p_n) = \int_0^1 \nabla F(p_n + t(q_n-p_n)) \cdot (q_n-p_n) dt$$
    converges to the equation
    $$ F(y) - F(x) = \int_0^1 \nabla F(x + t(y-x)) \cdot (y-x) dt.$$
    Here, we used the uniform continuity of $\nabla F$ on $\Sigma$.

    (2) $\Rightarrow$ (1): Fix $x \in \mathring{\Sigma}$. The equation implies
    $$ F(y) - F(x) - (y-x) \cdot V(x) =  \int_0^1 (V(x + t(x-y)) - V(x)) \cdot (x-y) dt .$$
    Using the uniform continuity of $V$, we have
    $$ \lim_{y\to x} \frac{ F(y) - F(x) - (y-x) \cdot V(x)}{|y-x|} = 0.$$
    This implies that $F$ is differentiable at $x$ and $\nabla F(x) = V(x)$.
    This completes the proof.
\end{proof}

\begin{lemma} \label{app:4}
    Fix $F \in C^1(\Sigma_B)$. Then $\gamma^*_B F \in C^1(\Sigma)$ and for any $ \bs V \in T_\Sigma$, we have
    $$ \nabla_{\bs V} (\gamma^*_B F) (x) = \nabla_{\gamma_B(\bs V)} F (\gamma_B(x)). $$
\end{lemma}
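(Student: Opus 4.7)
The plan is to exploit that $\gamma_B$ is a continuous linear map, so that differentiability of $\gamma_B^* F = F \circ \gamma_B$ reduces, via the chain rule, to the differentiability assumption on $F$. Before doing this, I would record two preliminary observations. First, by inspection of \eqref{eq:proj}, $\gamma_B \colon \bb R^S \to \bb R^S$ is linear and continuous. Second, $\gamma_B$ sends $T_\Sigma$ into $T_{\Sigma_B}$ and $\mathring{\Sigma}$ into $\mathring{\Sigma}_B$. The first inclusion follows because the $A$-coordinates of $\gamma_B(\bs V)$ vanish by construction, and the identity $\sum_{j \in B} u^B_j(k) = 1$ (a consequence of \eqref{eq:probrep}) yields $\sum_{j \in B}[\gamma_B(\bs V)]_j = \sum_{i\in S} V_i = 0$. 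The second inclusion is immediate: for $j \in B$ and $x \in \mathring{\Sigma}$, $[\gamma_B(x)]_j \ge x_j > 0$.

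Next, fix $x \in \mathring{\Sigma}$ and $\bs V \in T_\Sigma$ of small norm (so that $x + \bs V \in \mathring{\Sigma}$). By linearity,
$$
\gamma_B(x + \bs V) \,=\, \gamma_B(x) + \gamma_B(\bs V),
$$
with $\gamma_B(\bs V) \in T_{\Sigma_B}$ and $|\gamma_B(\bs V)| \le \|\gamma_B\|\, |\bs V|$. Since $F \in C^1(\Sigma_B)$ is differentiable at $\gamma_B(x) \in \mathring{\Sigma}_B$ in the sense of \eqref{eq:differentiability},
$$
F(\gamma_B(x) + \gamma_B(\bs V)) - F(\gamma_B(x))
\,=\, \gamma_B(\bs V) \cdot \nabla^{\Sigma_B} F(\gamma_B(x)) + o(|\bs V|).
$$
The map $\bs V \mapsto \gamma_B(\bs V) \cdot \nabla^{\Sigma_B} F(\gamma_B(x))$ is a linear functional on $T_\Sigma$, hence represented by a unique element $\nabla^\Sigma(\gamma_B^* F)(x) \in T_\Sigma$. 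This shows that $\gamma_B^* F$ is differentiable at $x$ and
$$
\nabla_{\bs V}(\gamma_B^* F)(x)
\,=\, \nabla^{\Sigma_B} F(\gamma_B(x)) \cdot \gamma_B(\bs V)
\,=\, \nabla_{\gamma_B(\bs V)} F(\gamma_B(x)),
$$
which is the claimed formula on $\mathring{\Sigma}$.

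To upgrade this to $\gamma_B^* F \in C^1(\Sigma)$, I would check that, for each fixed $\bs V \in T_\Sigma$, the map $x \mapsto \nabla_{\gamma_B(\bs V)} F(\gamma_B(x))$ is continuous on all of $\Sigma$. This follows because $\gamma_B(\bs V) \in T_{\Sigma_B}$, so $\nabla_{\gamma_B(\bs V)} F$ extends continuously to $\Sigma_B$ by hypothesis, and it is composed with the continuous map $\gamma_B\colon \Sigma \to \Sigma_B$; continuity of $\gamma_B^* F$ itself is immediate. I do not foresee any genuine obstacle; the only bookkeeping that merits attention is the inclusion $\gamma_B(T_\Sigma) \subset T_{\Sigma_B}$, which is precisely what permits the chain rule to be carried out within the intrinsic formalism of \eqref{eq:differentiability}.
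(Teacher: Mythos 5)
Your proposal is correct and takes essentially the same approach as the paper: exploit linearity of $\gamma_B$ to reduce the claim to a chain rule. Your version is in fact more scrupulous—where the paper merely evaluates the one-directional limit $\lim_{\epsilon\to 0}\epsilon^{-1}[\gamma_B^*F(x+\epsilon\bs V)-\gamma_B^*F(x)]$, you verify the Fr\'echet-type condition of \eqref{eq:differentiability} directly and make explicit the needed inclusions $\gamma_B(T_\Sigma)\subset T_{\Sigma_B}$ and $\gamma_B(\mathring\Sigma)\subset\mathring\Sigma_B$, which the paper uses implicitly.
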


\begin{proof}
    It is enough to show that the equlity holds for $x\in \mathring{\Sigma}$. Fix $x \in \mathring{\Sigma}$.
    From \eqref{eq:proj}, we also have $\gamma_B(x) \in \mathring{\Sigma}_B$. Consider small enough $\epsilon>0$ such that $x + \epsilon \bs V \in \mathring{\Sigma}$ and $\gamma_B(x) + \epsilon \gamma_B(\bs V) \in \mathring{\Sigma}_B$.
    The left hand side is equal to
    $$ \lim_{\epsilon \to 0} \frac{\gamma^*_B F(x + \epsilon \bs V) - \gamma^*_B F(x)}{\epsilon} = \lim_{\epsilon \to 0} \frac{F(\gamma_B(x) + \epsilon \gamma_B(\bs V)) - F(\gamma_B(x))}{\epsilon} = \nabla_{\gamma_B(\bs V)} F(\gamma_B(x)).$$
\end{proof}

Recall the definition of $C^2_b(\mathring{\Sigma})$. Here is an easier criteria to check whether $F\in C^1(\Sigma)$.

\begin{lemma} \label{app:6}
    Suppose $F\in C(\Sigma)$ and $F|_{\mathring{\Sigma}}\in C^2(\mathring{\Sigma})$. Then, $F \in C^1(\Sigma)$.
\end{lemma}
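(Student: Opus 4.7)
The approach is to use the integral characterization of $C^1(\Sigma)$ given by Lemma \ref{app:1}. Since $F|_{\mathring{\Sigma}}\in C^2(\mathring{\Sigma}) \subset C^1(\mathring{\Sigma})$, the gradient $V := \nabla^\Sigma F$ is a continuous $T_\Sigma$-valued map on $\mathring{\Sigma}$, and Lemma \ref{app:1} applied in the interior gives, for $x,y \in \mathring{\Sigma}$,
\[
F(y) - F(x) \,=\, \int_0^1 V(x + t(y-x)) \cdot (y-x)\, dt.
\]
My plan is to extend $V$ to a continuous map on $\Sigma$ and then pass to the limit in this identity using continuity of $F$ on $\Sigma$; the implication (2)$\Rightarrow$(1) in Lemma \ref{app:1} will then yield $F \in C^1(\Sigma)$.

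To construct the extension, I fix $\bs W \in T_\Sigma$, a point $x_0 \in \partial\Sigma$, and a sequence $x_n \in \mathring{\Sigma}$ with $x_n \to x_0$. Since $\mathring{\Sigma}$ is convex, the segment between $x_n$ and $x_m$ lies in $\mathring{\Sigma}$, so Lemma \ref{app:1} applied to $\nabla_{\bs W} F \in C^1(\mathring{\Sigma})$ produces
\[
\nabla_{\bs W}F(x_n) - \nabla_{\bs W}F(x_m)
\,=\, \int_0^1 \nabla_{x_n - x_m}(\nabla_{\bs W} F)(x_m + t(x_n - x_m))\, dt.
\]
The target is to show that this quantity tends to $0$ as $n,m \to \infty$, so that $(\nabla_{\bs W}F(x_n))$ is Cauchy and its limit defines $\nabla_{\bs W} F(x_0)$; a second application of the same estimate with mixed sequences proves independence of the limit from the approximating sequence and delivers continuity of the extension on $\Sigma$.

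The main obstacle is precisely this estimate: the second-order integrand must be controlled uniformly along segments that approach $\partial\Sigma$, while $C^2$-regularity on $\mathring\Sigma$ a priori only gives local control. My plan to handle this is a two-scale decomposition of the segment: on its compact interior portion the Hessian is bounded by continuity alone, whereas on short end-neighborhoods of $\partial\Sigma$ one bounds $\nabla_{\bs W}F$ via a telescoping use of Lemma \ref{app:1} applied to $F$ itself, inserting the uniform continuity of $F$ on the compact set $\Sigma$ as the quantitative input. Once $V$ is extended continuously to $\Sigma$, letting $x,y$ approach the boundary in the interior integral identity and invoking dominated convergence together with the continuity of $F$ transports the identity to all of $\Sigma$, and Lemma \ref{app:1} then concludes that $F \in C^1(\Sigma)$.
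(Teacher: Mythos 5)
Your skeleton — show that $\nabla F(x_n)$ is Cauchy along interior sequences $x_n\to x_0\in\partial\Sigma$ via the fundamental-theorem identity for $\nabla_{\bs W} F$, then transfer the integral identity of Lemma \ref{app:1} to $\Sigma$ by continuity — is the same as the paper's. The genuine gap is in your ``two-scale decomposition'' step, and it cannot be repaired: as literally stated the lemma is \emph{false} under the hypothesis $F|_{\mathring\Sigma}\in C^2(\mathring\Sigma)$. Consider the one-dimensional model (take $\mtt p=2$, so $\Sigma$ is a segment parametrized by $x_1\in[0,1]$) with $F(x_1)=x_1^{3/2}\sin(1/x_1)$ for $x_1>0$ and $F(0)=0$: this $F$ is continuous on $\Sigma$ and $C^\infty$ on $\mathring\Sigma$, yet $F'(x_1)$ contains the term $-x_1^{-1/2}\cos(1/x_1)$ and is unbounded, so $F\notin C^1(\Sigma)$. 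The paper's own proof, and the corresponding Remark \ref{rem:C1}, actually invoke the \emph{bounded second derivative} hypothesis $F|_{\mathring\Sigma}\in C^2_b(\mathring\Sigma)$ (cf.\ \eqref{27}); the word ``$b$'' is simply missing from the lemma statement. With that hypothesis the Hessian bound is global, and $|\nabla F(x_n)-\nabla F(x_m)|\le C_0|x_n-x_m|$ follows immediately from the identity you wrote down, with no decomposition needed.

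Your proposed repair — bounding $\nabla_{\bs W}F$ near $\partial\Sigma$ by ``a telescoping use of Lemma \ref{app:1} applied to $F$ itself, inserting the uniform continuity of $F$'' — goes in the wrong direction and cannot succeed. Lemma \ref{app:1} gives $F(y)-F(x)=\int_0^1 V(x+t(y-x))\cdot(y-x)\,dt$, which controls increments of $F$ by the size of $V=\nabla F$, not conversely; there is no mean-value-type inequality bounding the gradient pointwise by the modulus of continuity of $F$ (the counterexample above makes this explicit, since $F$ is even Hölder-$\tfrac12$ there while $F'$ blows up). You should instead strengthen the hypothesis to $C^2_b(\mathring\Sigma)$, after which the single integral estimate already yields the Cauchy property and the rest of your outline closes correctly.
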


\begin{proof}
    Take $\nabla F : \mathring{\Sigma} \to T_{\Sigma}$.
    For $x \in \Sigma$, and any sequence $x_n \to x$ with $x_n \in \mathring{\Sigma}$, it is enough to show that the sequence
    $ \nabla F (x_n)$ is cauchy. Observe that
    \begin{align*}
    \nabla F (x_n) - \nabla F (x_m) &= \int_0^1 \nabla_{x_n-x_m} \nabla F ( (1-t)x_n + t x_m ) dt \\
    &= |x_n - x_m| \int_0^1 \nabla_{\frac{x_n-x_m}{|x_n-x_m|}} \nabla F ( (1-t)x_n + t x_m ) dt.
    \end{align*}
    Since $F\in C^2_b(\mathring{\Sigma})$, we have $|\nabla F(x_n) - \nabla F(x_m) | \le C |x_n - x_m| $ for some $C>0$ from the above equation.
    This completes the proof.
    \end{proof}

\section*{Acknowledgements}
C. L. has been partially supported by FAPERJ CNE E-26/201.117/2021, by
CNPq Bolsa de Produtividade em Pesquisa PQ 305779/2022-2.

\end{document}